\newtheorem{thm}{Theorem}[section]
\newtheorem{cor}[thm]{Corollary}
\newtheorem{lemma}[thm]{Lemma}
\newtheorem{prop}[thm]{Proposition}
\newtheorem{defn}[thm]{Definition}
\theoremstyle{remark}
\theoremstyle{definition}
\newtheorem{rmk}[thm]{Remark}
\newtheorem{exa}[thm]{Example}
\newtheorem{notation}[thm]{Notation}
\numberwithin{equation}{thm}
\def\beq{\begin{equation}}
\def\eeq{\end{equation}}
\def\beqn{\begin{equation*}}
\def\eeqn{\end{equation*}}
\def\ben{\begin{enumerate}}
\def\een{\end{enumerate}}
\def\crash#1{}
\def\A{{\mathbb A}}
\def\C{{\mathbb C}}
\def\E{{\mathbb E}}
\def\F{{\mathbb F}}
\def\L{{\mathbb L}}
\def\N{{\mathbb N}}
\def\Q{{\mathbb Q}}
\def\R{{\mathbb R}}
\def\Z{{\mathbb Z}}
\def\l{\left}
\def\r{\right}
\def\Im{{\rm Im \,}}
\def\Max{{\rm Max \,}}
\def\cf{\emph{cf.}~}
\def\ie{\emph{i.e.}~}
\def\cA{{\mathcal A}}
\def\cC{{\mathcal C}}
\def\cM{{\mathcal M}}
\def\cO{{\mathcal O}}
\def\cQ{{\mathcal Q}}
\def\cZ{{\mathcal Z}}
\def\sE{{\mathscr E}}
\def\sF{{\mathscr F}}
\def\sG{{\mathscr G}}
\def\sP{{\mathscr P}}
\def\bC{{\mathbf C}}
\def\bW{{\mathbf W}}
\def\what{\widehat}
\def\a{\alpha}
\def\la{\lambda}
\def\La{\Lambda}
\def\na{{\rm na}}
\def\Spec{{\rm Spec \,}}
\def\Hom{{\rm Hom \,}}
\def\im{{\rm Im\,}}
\def\ker{{\rm Ker\,}}
\def\coker{{\rm Coker\,}}
\def\ul{\underline}
\def\im{{\rm Im\,}}
\def\coim{{\rm Coim\,}}
\def\eq{{\rm eq\,}}
\def\FF{{\rm \mathbf{FF}\,}}
\def\ol{\overline}
\def\limpro{\mathop{\lim\limits_{\displaystyle\leftarrow}}}
\def\limind{\mathop{\lim\limits_{\displaystyle\rightarrow}}}
\def\hocolim{\mathop{\mathop{\rm hocolim}\limits_{\displaystyle\longrightarrow}}}
\def\lt{\langle}
\def\gt{\rangle}
\def\Frac{{\rm Frac\,}}
\def\rhook{{\hookrightarrow}} 
\def\bSets{\mathbf{Sets}}
\def\Id{{\rm Id\,}}
\def\op{{\rm op\,}}
\def\sep{{\rm sep\,}}
\def\bMod{\mathbf{Mod}}
\def\bInd{\mathbf{Ind}}
\def\bCBorn{\mathbf{CBorn}}
\def\bAb{\mathbf{Ab}}
\def\wotimes{\widehat{\otimes}}
\def\ootimes{{\ol{\otimes}}}
\def\bSNSets{\mathbf{SNSets}}
\def\bNSets{\mathbf{NrSets}}
\def\bBanSets{\mathbf{BanSets}}
\def\bSN{\mathbf{SN}}
\def\bNr{\mathbf{Nr}}
\def\bBan{\mathbf{Ban}}
\def\bComm{\mathbf{Comm}}
\def\bsComm{\mathbf{sComm}}
\def\bsSets{\mathbf{sSets}}
\def\bSimp{\mathbf{Simp}}
\def\bMor{\mathbf{Mor}}
\def\bHo{\mathbf{Ho}}
\def\bFunc{\mathbf{Func}}
\def\bRings{\mathbf{Rings}}
\def\bAff{\mathbf{Aff}}
\def\bdAff{\mathbf{dAff}}
\def\bAn{\mathbf{An}}
\def\bdAn{\mathbf{dAn}}
\def\bStack{\mathbf{Stack}}
\def\bdStack{\mathbf{dStack}}
\def\bPerfField{\mathbf{PerfField}}
\def\bBox{{\pmb{\times}}}
\def\bTri{{\pmb{\ast}}}
\author{Federico Bambozzi, Oren Ben-Bassat, Kobi Kremnizer}
\title{Analytic geometry over $\F_1$ and the Fargues-Fontaine curve}
\thanks{The first author acknowledges the University of Regensburg with the support of the DFG funded CRC 1085 "Higher Invariants. Interactions between Arithmetic Geometry and Global Analysis".}
\begin{document}

\address{Federico Bambozzi, Fakult\"{a}t f\"{u}r Mathematik,
Universit\"{a}t Regensburg,
93040 Regensburg, 
Germany}
\bigskip
\email{Federico.Bambozzi@mathematik.uni-regensburg.de}
\address{Oren Ben-Bassat, Department of Mathematics, University of Haifa,
Mount Carmel,
Haifa, 31905
Israel}
\hfill \break
\bigskip
\email{ben-bassat@math.haifa.ac.il}
\address{Kobi Kremnizer, Mathematical Institute,
Radcliffe Observatory Quarter,
Woodstock Road,
Oxford
OX2 6GG, UK}
\hfill \break
\bigskip
\email{Yakov.Kremnitzer@maths.ox.ac.uk}

\begin{abstract}
This paper develops a theory of analytic geometry over the field with one element. The approach used is the analytic counter-part of the To\"en-Vaqui\'e  theory of schemes over $\F_1$, \ie the base category relative to which we work out our theory is the category of sets endowed with norms (or families of norms). Base change functors to analytic spaces over Banach rings are studied and the basic spaces of analytic geometry (like polydisks) are recovered as a base change of analytic spaces over $\F_1$. We end by discussing some applications of our theory to the theory of the Fargues-Fontaine curve and to the ring Witt vectors.
\end{abstract}

\maketitle

\tableofcontents

\section*{Introduction}

In recent years the theory of schemes over $\F_1$ has seen many attempts of foundations on better and more solid basis in order to overcome the limitations of the known theories. Nowadays all these different approaches form a huge zoo of theories of which it is difficult to keep track. We just mention the ones we know better: \cite{Toen}, \cite{Dur}, \cite{Har}, \cite{Lor}. A good attempt of giving an overview of the different theories and their relations has been done in \cite{PL}, several years ago. 

This paper does not aim to add another entry in this intricate panorama of theories of schemes over $\F_1$. Our aim is of doing something different: we develop a theory of analytic spaces over $\F_1$. To the best of our knowledge, the only previous attempt of developing such a theory was given by Vladimir Berkovich in the talk \cite{BerF1}, which has never been fully developed. Here we propose a different and much broader approach to such a foundational problem and present some of its possible applications to the theory of the Fargues-Fontaine curve. We do not aim to give a direct strong contribution to $p$-adic Hodge Theory but we would like to discuss some of its features - mainly the Fargues-Fontaine curve - from a new perspective in the hope of shedding some new light on its fundamental nature.

There are several reasons for working out such a theory of analytic geometry over $\F_1$. First of all, it is a natural question to ask if such a theory exists and if it is useful for the progress of mathematics. But even more important problems in arithmetic ask for methods that go beyond the theory of schemes and algebraic geometry as the Archimedean place of $\Z$ (and the Archimedean places of the rings of integers of number fields) are objects of analytic nature which cannot be fully understood through algebraic geometry. Thus, since the main motivations for introducing the idea of geometry over $\F_1$ come from arithmetic it seems more important to have a theory of analytic spaces over $\F_1$ with respect to a theory of schemes over $\F_1$. Another motivation for writing this work is to have a well-established basic language of derived analytic geometry over $\F_1$ and simplicial normed/bornological sets which we are planning to use in the near future for other, more involved, applications. Thus, the full strength of the homotopical methods introduced is not used in the applications provided in the last section of the present paper. 

The paper is structured as follows. In the first section we introduce the categories of normed and bornological sets and we study some of their basic properties. These are the categories with respect to which we develop our geometry, in the sense of HAG (cf. \cite{TVe2} and \cite{TVe3}). In section 2 we introduce the base change functor from the category of normed (or bornological) sets to the category of normed (Banach or bornological) $R$-modules. This functor is the analytic version of the free $R$-module functor of abstract algebra. Then, in section 3, we introduce the categories of simplicial bornological modules over $\F_1$ and over any Banach ring. We put on these categories a model structure which makes them well-suited for making them a HAG context. In particular, these categories turn out to be combinatorical symmetric monoidal model categories (see Theorem \ref{thm:model_monoidal} and the discussion before it). In section 4 we study some properties of bornological rings which are relevant in applications. In particular we introduce new Fr\'echet-like structures on the $p$-adic numbers which will be related with the Fr\'echet structure on the ring of functions on the affine covering of the Fargues-Fontaine curve and we will give a geometric interpretation of these structures in the analytic spectrum of $\Z$. Finally, in section 5 we introduce the analytic spaces over the field with one element. We start by directly introducing the notion $\infty$-analytic space and $\infty$-analytic stack with the respective derived notions. Then we define the category of analytic spaces over $\F_1$ as the full subcategory of derived analytic spaces that are concentrated in degree 0. In this way we can easily introduce the homotopy Zariski topology (as introduced by To\"en-Vezzosi in any HAG context) on the category of affine derived analytic spaces over $\F_1$ (see Definition \ref{defn:homotopy_zar_topology}). It has been proved in previous works of the authors (see \cite{BaBe}, \cite{BaBeKr} and \cite{BeKr}) that the homotopy Zariski topology on the category of simplicial Banach/bornological algebras over complete valued field restricts to the usual topology when it is considered over ``affine" (\ie affinoid, Stein or Stein-like) analytic spaces. It follows easily that the base change functor we defined for $\F_1$-bornological modules and $\F_1$-bornological algebras (\ie bornological monoids) induces a continuous functor from the category of $\infty$-analytic (resp. derived analytic, resp. analytic) spaces (resp. stacks) over $\F_1$ to $\infty$-analytic (resp. derived analytic, resp. analytic) spaces (resp. stacks) over $R$, for any Banach ring (see Proposition \ref{prop:base_changes}). In the last part of section 5 we discuss some of the basic objects of analytic geometry over $\F_1$, like the polydisks and annuli, and we see how the base change of them gives back the usual polydisks and annuli of analytic geometry.

The last section is devoted to some applications of the notions introduced so far. After recalling the basic notion of the theory of the Fargues-Fontaine curve we prove our main theorem (cf. Theorem \ref{thm:main}) which says that for some specific choices of a perfectoid field $\kappa$ of characteristic $p$, the ring of functions on the affine covering of the Fargues-Fontaine curve associated to $\kappa$ can be recovered, with its Fr\'echet structure, as a base change of a monoid from $\F_1$. Then, we describe how to see the action of the Frobenius on this Fr\'echet algebra also coming from an action over $\F_1$. In this way, we can define some analytic stacks over $\F_1$ whose base changes with suitable $p$-adic fields give Fargues-Fontaine curves for some perfectoid fields (cf. Theorem \ref{thm:FF}). These results can be easily generalized to many other cases and we do not aim to be exhaustive in the current treatment of the topic, since our main target here is just to show that the theory developed so far is worth being studied, avoiding too complex technicalities.

Then one can wonder what happens when the bornological monoids we showed to give the Fargues-Fontaine curve and its affine covering are base changed over $\R$ or $\C$. We briefly mention that, in the cases we studied, one gets a left half plane of the complex numbers, presented as the universal cover of the pointed disk via the exponential map, and the ``Archimedean" Fargues-Fontaine curve in this context would be a quotient of it. These results strengthen the idea that Witt vectors should be considered as the non-Archimedean replacement of the exponential map. We will investigate this topic deeply in a future work.

We end the last section by discussing an analytic version of the construction of the ring of Witt vectors. The idea is that although the construction of the ring of Witt vectors is a purely algebraic construction, it is usually applied to rings which have norms, valuation or some kind of additional structure. In this case it makes sense to ask for growth conditions on Witt vectors, as it is done in the theory of the Fargues-Fontaine curve. We provide a simple definition of a family of endofunctors on the category of Banach/bornological rings which virtually encompass all the known construction in a unified one for which Witt vectors are thought as an arithmetic analogue of Laurent series (as it is already done in \cite{FF} and \cite{FF2}).

For all applications provided in the last section much more can be said and we will devote future papers in better developing several instances that arose.

\subsection*{Acknowledgements}

The authors would like to thank Denis-Charles Cisinski, Florent Martin, Frederic Paugam, Raffael Singer and Adam Topaz for important suggestions and discussions about the topics of this paper.

\begin{notation} \label{notation:box}
In this work we deal with many closely related categories and very often we state properties that hold for all of them whose proof is identical in all cases. Therefore we implement the following notation for avoiding a lot of repetitions:
\begin{itemize}
\item The symbol $\bBox$ in the name of a category will stand for one of the following: $\bSN$ (semi-normed), $\bNr$ (normed), $\bBan$ (Banach). For example $\bBox\bSets$ denotes the category of semi-normed sets or the category of normed sets or the category of Banach sets.
\item We use the symbol $\ootimes$ in combination with the symbol $\bBox$ in the names of the categories to denote the closed monoidal structures these categories are equipped with. Hence, it will stand for the uncompleted tensor product $\otimes$ when $\bBox$ stands for $\bSN$ or $\bNr$ and for the completed tensor product $\wotimes$ when $\bBox$ stands for $\bBan$. 
\item We use the symbol $\bTri$ to mean either a Banach ring or $\F_1$.
\end{itemize}

Moreover, we use the following (quite standard) notations/conventions:

\begin{itemize}
\item All rings are commutative and with identity.
\item Given a category $\bC$ we often use the abuse of notation $X \in \bC$ for meaning that $X$ is an object of $\bC$.
\item Given a category $\bC$ we denote by $\bInd(\bC)$ the category of ind-objects of $\bC$ (cf. \S 8 expos\'e 1 of \cite{SGA4} for the precise definition of this category).
\item Given a category $\bC$ we denote by $\bInd^m(\bC)$ the category of essentially monomorphic ind-objects of $\bC$, which means the full subcategory of objects of $\bInd(\bC)$ which are isomorphic (as ind-objects) to some inductive system with monomorphic system morphisms.
\item For any closed symmetric monoidal category $\bC$ we denote by $\bComm(\bC)$ the category of commutative monoids over $\bC$.
\item For any category $\bC$ we denote with $\bMor(\bC)$ the class of morphisms of $\bC$.
\item $\R_+$ denotes the set of positive real number whereas $\R_{\ge 0}$ the set of non-negative ones.
\item $\bSets$ is the category of sets, $*\bSets$ the category of pointed sets, $\bAb$ the category of abelian groups, $\bRings$ the category of commutative rings. 
\item Give a closed symmetric monoidal category $(\bC, \otimes)$ and an object $X \in \bC$ we denote by
\[ S_\bC(X) = \bigoplus_{n \in \N} S^n(X) \]
the free symmetric tensor algebra over $X$, where $S^n(X)$ is the quotient of $X^{\otimes n}$ by the action of the symmetric group.
\item If $M$ is a (semi-)normed module over a Banach ring $R$ and $r \in \R_+$ is a scalar, we denote by $[M]_r$ the module obtained by rescaling the norm of $M$ by the factor $r$. More precisely, if $|\cdot|$ is the norm of $M$ then $[M]_r$ is equipped with the (semi-)norm $r |\cdot|$.
\end{itemize}
\end{notation}

\section{Semi-normed and bornological sets}

\begin{defn} \label{defn:semi_normed_set}
A \emph{semi-normed set} is a pointed set $(X, x_0)$ equipped with a function $f: X \to \R_{\ge 0}$ such that $f(x_0) = 0$. We will often say that $f$ is the \emph{semi-norm} of $(X, x_0)$ and we often will not mention the marked point.
\end{defn}

We consider on $(X, x_0)$ the uniformity induced by pulling back the metric uniformity of $\R_{\ge 0}$. We call it the \emph{canonical uniformity} of $(X, x_0)$.
In this way on $X$ it is defined a notion of closeness and a relative notion of Cauchy filters with respect to which it is possible to talk about the completeness of $X$.

\begin{defn} \label{defn:normed_set}
A \emph{normed set} is a pointed set $(X, x_0)$ equipped with a function $f: X \to \R_{\ge 0}$ such that $f(x_0) = 0$ and $f(x) \ne 0$ if $x \ne x_0$. In this case $f$ is called a \emph{norm}.
\end{defn}

\begin{rmk}
The data of a normed pointed set is equivalent to the data of a non-pointed set $X$ endowed with a strictly positive function $f: X \to \R_+$.
\end{rmk}

\begin{defn} \label{defn:Banach_set}
A \emph{Banach set} is a normed set $(X, x_0)$ for which all Cauchy filters with respect to the canonical uniformity converge to a limit point.
\end{defn}

\begin{defn} \label{defn:morphisms_seminormed_sets}
A \emph{bounded morphism} $\phi: (X, x_0) \to (Y, y_0)$ between two semi-normed sets is a morphism of pointed sets such that $|\phi(x)| \le C |x|$ for all $x \in X$, for a constant $C > 0$. A \emph{contracting morphism} between two semi-normed sets is a bounded morphism with $C = 1$.
\end{defn}

We define the following categories: 
\begin{itemize}
\item $\bSNSets$ the category of semi-normed sets with bounded morphisms;
\item $\bNSets$ the category of normed sets with bounded morphisms;
\item $\bBanSets$ the category of Banach sets with bounded morphisms;
\item $\bSNSets^{\le 1}$ the category of semi-normed sets with contracting morphisms;
\item $\bNSets^{\le 1}$ the category of normed sets with contracting morphisms;
\item $\bBanSets^{\le 1}$ the category of Banach sets with contracting morphisms.
\end{itemize}

As mentioned in Notation \ref{notation:box} we use the symbol $\bBox\bSets$ to denote the first three categories of the previous list and the symbol $\bBox\bSets^{\le 1}$ for the last three.

Among these categories are defined several functors. There are the (non-full) inclusions
\[ \bBox\bSets^{\le 1} \rhook \bBox\bSets. \]
These inclusions commute with finite limits and finite colimits as they are described in Proposition \ref{prop:lim_colim_ban_sets}. And the full inclusions
\[ \bBanSets \rhook \bNSets \rhook \bSNSets \] 
\[ \bBanSets^{\le 1} \rhook \bNSets^{\le 1} \rhook \bSNSets^{\le 1}, \]
which commute with projective limits as a consequence of Proposition \ref{prop:adjoint_inclusion_sets}.

The separation functors
\[ \sep: \bSNSets \rhook \bNSets \]
\[ \sep: \bSNSets^{\le 1} \rhook \bNSets^{\le 1} \]
which associate to each semi-normed set $(X, |\cdot|_X)$ a normed one given by 
\[ \sep(X) = \frac{X}{\ker(|\cdot|_X)} \]
where $\ker(|\cdot|_X) = \{x \in X | |x|_X = 0 \}$ (\ie $\sep(X)$ is equal to $X$ with all the points with $|x|_X = 0$ identified to the base point) and the norm of $\sep(X)$ is given by
\[ \|x\| = |y|_X, \ \  \ y \in (X \to \sep(X))^{-1}(x). \]
Clearly $\sep(X)$ is a normed set. Then, there are the completion functors
\[ \ol{(-)}: \bNSets \rhook \bBanSets \]
\[ \ol{(-)}: \bNSets^{\le 1} \rhook \bBanSets^{\le 1} \]
which associate to each normed set its completion with respect to its canonical uniformity. The values of the completion norm are given by the limits of the values to the original norm. The following proposition is easy to check.

\begin{prop} \label{prop:adjoint_inclusion_sets}
The functors $\sep$ are left adjoints to the inclusion functors $\bNSets \rhook \bSNSets$ and $\bNSets^{\le 1} \rhook \bSNSets^{\le 1}$ and the completion functors $\ol{(-)}$ are left adjoints to the inclusion functors $\bBanSets \rhook \bNSets$ and $\bBanSets^{\le 1} \rhook \bNSets^{\le 1}$.
\end{prop}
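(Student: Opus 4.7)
My plan is to verify the two adjunctions separately, in each case by exhibiting the natural Hom-set bijection directly. For the separation adjunction, I start with a bounded morphism $\phi : X \to Y$ from a semi-normed set $X$ to a normed set $Y$, with $|\phi(x)|_Y \le C|x|_X$. The crucial observation is that any $x \in \ker(|\cdot|_X)$ satisfies $|\phi(x)|_Y \le 0$, which forces $\phi(x) = y_0$ because $Y$ is normed; so $\phi$ kills $\ker(|\cdot|_X)$ and descends uniquely to a bounded morphism $\sep(X) \to Y$ with the same constant $C$. Conversely, any bounded morphism $\sep(X) \to Y$ yields a bounded morphism $X \to Y$ by precomposition with the canonical projection $X \to \sep(X)$. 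These two assignments are inverse to each other and manifestly natural in $X$ and $Y$. The contracting case is handled identically, since the descended map inherits the constant $C$ and is therefore contracting whenever the original is.

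For the completion adjunction, given a normed set $X$, a Banach set $Y$, and a bounded morphism $\phi : X \to Y$, the task is to produce a canonical extension $\ol{\phi} : \ol{X} \to Y$ restricting to $\phi$ on $X \subseteq \ol{X}$. A point $\xi \in \ol{X}$ corresponds to (an equivalence class of) a Cauchy filter $\mathcal{F}$ on $X$ with respect to the canonical uniformity, and $|\xi|$ is defined as the limit along $\mathcal{F}$ of $|\cdot|_X$. Using the completeness of $Y$, I would assign to $\xi$ the limit in $Y$ of the image filter determined by $\phi$, and then verify that the resulting $\ol{\phi}$ is bounded with the same constant $C$ via continuity of the norm and the inequality $|\phi(x)|_Y \le C|x|_X$. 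Precomposition with the embedding $X \hookrightarrow \ol{X}$ trivially returns $\phi$, and uniqueness of limits in a complete space yields the Hom-set bijection.

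The main subtlety lies in the completion side, specifically in showing that the image filter canonically determines a limit in $Y$ independent of the chosen Cauchy filter representing $\xi$. This rests on the canonical uniformity being pulled back from $\R_{\ge 0}$: equivalence classes of Cauchy filters on $X$ are controlled by their norm-limits in $\R_{\ge 0}$, so the extension is rigidly tied to the norm structure and to the embedding of $|Y|$ inside $\R_{\ge 0}$. Once that is settled, naturality in both variables and the analogous statement for contracting morphisms follow routinely, so the overall proposition reduces to these two Hom-set calculations.
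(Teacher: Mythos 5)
Your separation argument is correct and is exactly the routine check the paper has in mind (its proof is recorded only as ``straightforward checking''): a bounded map into a normed set sends the kernel of the semi-norm to the base point, hence descends to $\sep(X)$ with the same constant, and precomposition with the quotient map inverts this; the contracting case is identical.

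The completion half, however, has a genuine gap at precisely the point you flag as the ``main subtlety''. The canonical uniformity is pulled back from $\R_{\ge 0}$ along the norm, so it distinguishes points only through their norm values and is far from separated; moreover a bounded morphism $\phi: X \to Y$ is in general \emph{not} uniformly continuous for it, since $|\phi(x)|_Y \le C|x|_X$ gives no control of $\bigl| |\phi(x)|_Y - |\phi(x')|_Y \bigr|$ in terms of $\bigl| |x|_X - |x'|_X \bigr|$. Consequently the image filter of a Cauchy filter need not be Cauchy in $Y$, and even when it is, its limit is not unique: every point of $Y$ carrying the limiting norm value is a limit. So ``the limit in $Y$ of the image filter'' does not define $\ol{\phi}(\xi)$, and the Hom-set bijection you want does not come out of this construction. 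Concretely, let $X = \{x_0\} \cup \{x_n\}_{n \ge 1}$ with $|x_n| = 1 + \tfrac{1}{n}$, so that the completion acquires a new point $\xi$ of norm $1$ (the tail filter of $(x_n)$ is Cauchy and non-convergent), and let $Y = \{y_0, y\}$ with $|y| = 1$, $\phi(x_n) = y$ for all $n$ (a contracting morphism to a Banach set). Then both $\xi \mapsto y$ and $\xi \mapsto y_0$ give bounded (even contracting) extensions of $\phi$ to $\ol{X}$, so restriction along $X \to \ol{X}$ is not injective on Hom-sets; and for other choices of $\phi$ the image of the tail filter need not be Cauchy at all if the norms $|\phi(x_n)|_Y$ oscillate. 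This is exactly where normed \emph{sets} differ from normed vector spaces, where linearity upgrades boundedness to uniform continuity and separatedness makes limits unique. Your observation that Cauchy filters are controlled by their norm-limits in $\R_{\ge 0}$ is true, but it works against you: the extension can only see the limiting norm, not which point of $Y$ of that norm to choose. Any complete proof of the second adjunction must confront this issue (starting with a precise description of $\ol{(-)}$ on morphisms and of the unit), which your outline, like the paper's one-line proof, does not do.
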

\begin{proof}
Straight forward checking.
\end{proof}

\begin{prop} \label{prop:lim_colim_ban_sets}
The categories $\bBox\bSets$ are finitely complete and finitely cocomplete. The categories $\bBox\bSets^{\le 1}$ are complete and cocomplete.
\end{prop}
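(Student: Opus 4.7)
I would construct the basic (co)limits---initial and terminal objects, (co)products, and (co)equalizers---explicitly, and then invoke the standard categorical fact that (finite) products together with equalizers yield all (finite) limits, and dually for colimits. To transfer the constructions between the semi-normed, normed, and Banach flavours I would systematically use the separation and completion adjoints of Proposition \ref{prop:adjoint_inclusion_sets}: since they are left adjoints they preserve colimits, while the fully faithful inclusions (right adjoints) preserve limits.

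First the easy pieces. The singleton pointed set with norm $0$ serves simultaneously as initial and terminal object in every case. The equalizer of $\phi, \psi : X \to Y$ is the pointed subset $E = \{x \in X : \phi(x) = \psi(x)\}$ with the restricted norm and basepoint; in the Banach cases it inherits completeness because $\phi$ and $\psi$ are continuous for the canonical uniformity, so the limit of any Cauchy filter on $E$ lies in $E$. The coequalizer is the pointed-set quotient $Y/{\sim}$ (where $\phi(x) \sim \psi(x)$) equipped with $\ol f([y]) = \inf_{y' \sim y} f(y')$, the largest (semi-)norm for which $Y \to Y/{\sim}$ is contracting; for the Banach coequalizer I would post-compose with $\sep$ and the completion functor, and their left-adjoint nature ensures the correct universal property.

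For products of a family $(X_i, x_{0,i}, f_i)_{i \in I}$ in $\bBox\bSets^{\le 1}$ I would take
\[ P = \bigl\{ (x_i) \in \prod_{i \in I} X_i : \sup_i f_i(x_i) < \infty \bigr\}, \]
with norm $f(x) = \sup_i f_i(x_i)$, basepoint the tuple of $x_{0,i}$, and the evident projections. A compatible family of contracting maps $T \to X_i$ lands in $P$ because $\sup_i f_i(\phi_i(t)) \le f_T(t)$, and the projections are contracting by construction. In $\bBox\bSets$ the same recipe produces finite products (with $\max$ in place of $\sup$) but not arbitrary ones: a compatible family of bounded maps has Lipschitz constants $C_i$ that need not be uniformly bounded, so the induced map to the infinite $P$ may be unbounded. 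Coproducts are dual: built from the wedge $\bigvee_i X_i$ with the piecewise norm (post-composed with $\sep$ and completion in the Banach case), they give arbitrary coproducts in $\bBox\bSets^{\le 1}$ and only finite coproducts in $\bBox\bSets$, by the symmetric obstruction on bounds.

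The main subtlety lies in the Banach cases. The naive quotient seminorm for coequalizers may vanish on non-trivial classes or fail to be complete; applying the left-adjoint pair $\sep$ and completion from Proposition \ref{prop:adjoint_inclusion_sets} resolves both issues simultaneously. Similarly, one has to check that the proposed $P$ in the Banach setting is itself complete (which follows from the fact that the inclusion $\bBanSets^{\le 1} \rhook \bNSets^{\le 1}$ is a right adjoint and hence the limit computed in $\bNSets^{\le 1}$ is already the right answer, provided it lies in $\bBanSets^{\le 1}$). Once these four classes of basic (co)limits are in hand, the standard assembly---finite products and equalizers yielding finite limits, arbitrary products and equalizers yielding all limits, and their duals---completes the proof.
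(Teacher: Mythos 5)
Your construction matches the paper's proof essentially step for step: finite limits via (max-norm) finite products and norm-restricted subobjects/equalizers, colimits via wedges and inf-quotient coequalizers followed by the separation and completion functors of Proposition \ref{prop:adjoint_inclusion_sets}, sup-norm products and wedges in the contracting categories, and the observation that unbounded Lipschitz constants obstruct infinite (co)products in $\bBox\bSets$. One small caveat: your justification that the Banach equalizer is complete because $\phi,\psi$ are ``continuous for the canonical uniformity'' is not quite right (the bound $|\phi(x)|\le C|x|$ does not give continuity for the pulled-back uniformity), but the paper's own proof treats this point no more carefully, simply equipping the subspace with the induced semi-norm.
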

\begin{proof}
The finite products are given by the product of the underlying pointed sets equipped with the semi-norm obtained by the component-wise sum of the semi-norms (or equivalently with the max norm). One can check that this construction gives finite products in all the categories mentioned above. The finite limits can be constructed, in all cases, as subspaces of finite products, where the subspaces are equipped with the semi-norm induced by the inclusion. The finite coproducts are given by the coproducts of the underlying pointed sets (\ie disjoint unions where the marked points are identified) with the obvious extension of the given semi-norms to the coproduct.
The finite colimits in $\bSNSets$ and $\bSNSets^{\le 1}$ are calculated by taking quotients of coproducts, where coequalizers are simply coequalizers of the underlying pointed sets equipped with the quotient semi-norm which, on each point, is computed as the $\inf$ of the semi-norm of its fiber. The finite colimits in $\bNSets$, $\bNSets^{\le 1}$, $\bBanSets$ and $\bBanSets^{\le 1}$ are obtained by applying the separation and completion functors, respectively, to the colimits computed in the categories $\bSNSets$ and $\bSNSets^{\le 1}$.

The same reasoning used to prove that the categories of Banach (normed, semi-normed) vector spaces over a valued field do not have infinite products applies to prove that $\bBox\bSets$ do not have infinite products, nor coproducts. Finally, we can describe the infinite products of a family of objects $\{X_i\}_{i \in I}$ in the contracting categories as the subset of the product of pointed set given by 
\[ {\prod_{i \in I}}^{\le 1} X_i \doteq \{ (x_i) \in \prod_{i \in I} X_i | \sup_{i \in I} |x_i|_i < \infty  \} \]
equipped with the semi-norm
\[ \|(x_i)\| = \sup_{i \in I} |x_i|_i. \]
Whereas coproducts are given by the wedged disjoint union of sets 
\[ \coprod_{i \in I} X_i  \]
with the obvious extension of the semi-norms of the $X_i$.
\end{proof}

\begin{rmk} 
Notice that the coproduct of an arbitrary family of Banach sets is a Banach sets even if the coproduct is computed as a coproduct of normed sets. This is in contrast to what happens in the category of contracting Banach modules over a Banach ring for which infinite coproducts of the underlying normed sets are not complete in general.
\end{rmk}

\begin{notation} \label{notation:contracting_lim}
We will use the notation
\[  {\prod_{i \in I}}^{\le 1} X_i \] 
for the product of $\{X_i\}_{i \in I}$ in the contracting categories and 
\[  {\coprod_{i \in I}}^{\le 1} X_i \] 
for the coproduct in the contracting categories.
\end{notation}

As for the theory of Banach spaces, it is often useful to consider equivalence classes of semi-norms instead of just a single semi-norm.

\begin{defn} \label{defn:equivalence_norm}
	Let $X$ be a pointed set and $|\cdot|_1$, $|\cdot|_2$ two semi-norms on $X$. We say that $|\cdot|_1$ is \emph{equivalent} to $|\cdot|_2$ if the identity map $(X, |\cdot|_1) \to (X, |\cdot|_2)$ is an isomorphism in $\bSN \bSets$.
\end{defn}

Following \cite{Kas}, Section 5.1, we recall that on any category with finite limits and finite colimits one can define the notion of image and coimage of a morphism. For the convenience of the reader we recall their definitions.

\begin{defn} \label{defn:image_coimage}
	Let $\bC$ be a category with finite limits and finite colimits and $f: X \to Y$ a morphism in $\bC$. We define
	\begin{itemize}
	\item the \emph{image} of $f$ as
	\[ \im(f) \doteq \ker ( X \rightrightarrows Y \coprod_X Y ); \]
	\item the \emph{coimage} of $f$ as
	\[ \coim(f) \doteq \coker ( X \times_Y X \rightrightarrows Y ). \]
	\end{itemize}
\end{defn}

\begin{defn} \label{defn:strict_morphism}
	Let $\bC$ be a category with finite limits and finite colimits and $f: X \to Y$ a morphism in $\bC$. We say that $f$ is \emph{strict} if the canonical morphism $\coim(f) \to \im(f)$ is an isomorphism.
\end{defn}

We recall the following results from Section 5.1 of \cite{Kas}.

\begin{prop} \label{prop:kas}
	Let $\bC$ be a category with finite limits and finite colimits and let $f: X \to Y$ be a morphism, then
	\begin{itemize}
		\item $f$ is an epimorphism if and only if the canonical map $\im f \to Y$ is an isomorphism.
		\item $f$ is a strict epimorphism if and only if the canonical map $\coim f \to Y$ is an isomorphism.
	\end{itemize}
	The dual statements, for monomorphisms, hold.
\end{prop}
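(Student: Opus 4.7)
The plan is to unwind the universal properties of $\im(f)$ and $\coim(f)$ as they are given in Definition~\ref{defn:image_coimage} and compare them directly with the definitions of (strict) epimorphism.

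For the first assertion, I would observe that the pushout $Y\coprod_X Y$ comes with two coprojections $i_1,i_2\colon Y\to Y\coprod_X Y$ satisfying $i_1f=i_2f$, and the universal property of the pushout says precisely that pairs $(g,h)\colon Y\rightrightarrows Z$ with $gf=hf$ correspond bijectively to morphisms $Y\coprod_X Y\to Z$. Hence $i_1=i_2$ if and only if every such pair agrees, i.e.\ if and only if $f$ is an epimorphism. Interpreting $\im(f)$ as the equalizer of the pair $(i_1,i_2)$ — a subobject of $Y$ — we see that the canonical arrow $\im(f)\to Y$ is an isomorphism precisely when $i_1=i_2$, which gives the claim.

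For the second assertion, let $p_1,p_2\colon X\times_Y X\rightrightarrows X$ denote the kernel pair of $f$. By definition of the fibre product $fp_1=fp_2$, so $f$ factors uniquely as $X\twoheadrightarrow\coim(f)\to Y$. If $\coim(f)\to Y$ is an isomorphism then $f$ itself is the coequalizer of $(p_1,p_2)$, hence a strict epimorphism. Conversely, if $f$ is a strict epimorphism, say the coequalizer of some parallel pair $u,v\colon A\rightrightarrows X$, then $fu=fv$ forces the pair $(u,v)$ to factor through $X\times_Y X$; consequently any morphism out of $X$ that coequalizes $(p_1,p_2)$ also coequalizes $(u,v)$, and comparing the two universal properties yields that $\coim(f)\to Y$ is an isomorphism.

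The dual statements for monomorphisms and strict monomorphisms are obtained by applying the same argument in $\bC^{\op}$, under which pushouts become pullbacks, $\im$ is swapped with $\coim$, and epimorphisms are swapped with monomorphisms.

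I do not expect any serious obstacle: the whole proof is a direct manipulation of universal properties. The one point deserving care is the reverse direction of the strict-epi equivalence, where one must exhibit $\coim(f)\to Y$ as the universal coequalizer; this works because any parallel pair $(u,v)$ coequalized by $f$ admits a canonical comparison map to the kernel pair $(p_1,p_2)$, so the two coequalizers are canonically identified.
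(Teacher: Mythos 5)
Your argument is correct in substance, and it is genuinely a different route from the paper: the paper does not prove Proposition \ref{prop:kas} at all, it simply cites Kashiwara--Schapira (Proposition 5.1.2 (iv) and 5.1.5 (i)), whereas you give a self-contained manipulation of universal properties. Your reading of Definition \ref{defn:image_coimage} is the intended one (the displayed formulas there contain typos; the image is $\ker(Y \rightrightarrows Y \coprod_X Y)$, the equalizer of the two coprojections, and the coimage is $\coker(X \times_Y X \rightrightarrows X)$, the coequalizer of the kernel pair), and both halves of your first bullet and the comparison-of-coequalizers argument in the second bullet are sound.

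The one point you should repair is the meaning of ``strict epimorphism''. You take it to mean ``coequalizer of some parallel pair'' (a regular epimorphism), but in this paper it must be read through Definition \ref{defn:strict_morphism}: a strict epimorphism is an epimorphism $f$ for which the canonical map $\coim(f) \to \im(f)$ is an isomorphism. What you actually prove is the equivalence of ``$\coim(f) \to Y$ is an isomorphism'' with ``$f$ is a regular epimorphism'', so you need a short bridge to the paper's notion. It is two lines: if $\coim(f) \to Y$ is an isomorphism, then $f$ factors as the coequalizer map $X \to \coim(f)$ followed by an isomorphism, hence $f$ is an epimorphism; by your first bullet $\im(f) \to Y$ is then an isomorphism, and since the canonical map $\coim(f) \to Y$ factors as $\coim(f) \to \im(f) \to Y$, the map $\coim(f) \to \im(f)$ is an isomorphism, i.e.\ $f$ is a strict epimorphism in the sense of the paper. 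Conversely, if $f$ is an epimorphism with $\coim(f) \to \im(f)$ an isomorphism, composing with $\im(f) \to Y$ (an isomorphism by the first bullet) shows $\coim(f) \to Y$ is an isomorphism. With that inserted, your proof is complete, and the dualization to (strict) monomorphisms by passing to $\bC^{\op}$ is unproblematic.
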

\begin{proof}
	\cite{Kas} Proposition 5.1.2 (iv) and Proposition 5.1.5 (i).
\end{proof}

The next proposition describes monomorphisms, epimorphisms and strict morphisms in the categories $\bBox \bSets$ as easy corollaries of Proposition \ref{prop:kas}.

\begin{prop} \label{prop:morphisms_normed_sets}
	Let $\phi: X \to Y$ be a morphism in $\bBox \bSets$, then
	\ben
	\item $\phi$ is a monomorphism if and only if it is injective;
	\item $\phi$ is a strict monomorphism if and only if it is injective and the semi-norm on $X$ is equivalent to the semi-norm induced by the restriction of the one on $Y$;
	\item if $\bBox = \bSN$ or $\bBox = \bNr$, $\phi$ is an epimorphism if and only if it is surjective;
	\item if $\bBox = \bBan$, $\phi$ is an epimorphism if and only if it has dense image;
	\item $\phi$ is a strict epimorphism if and only if it is surjective and the quotient norm induced by $\phi$ is equivalent to the norm of $Y$.
	\een
\end{prop}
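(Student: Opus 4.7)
The plan is to apply Proposition \ref{prop:kas} and its dual to each item, thereby reducing the problem to the explicit computation of the canonical maps $X \to \coim(\phi)$, $X \to \im(\phi)$, $\im(\phi) \to Y$ and $\coim(\phi) \to Y$ using the concrete descriptions of limits and colimits in $\bBox\bSets$ supplied by Proposition \ref{prop:lim_colim_ban_sets}.

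For (1), unwinding the definition $\coim(\phi) = \coker(X \times_Y X \rightrightarrows X)$ inside $\bBox\bSets$ yields the quotient of $X$ by the equivalence relation $x_1 \sim x_2 \Leftrightarrow \phi(x_1)=\phi(x_2)$, carrying the quotient semi-norm (and its completion when $\bBox = \bBan$). The canonical map $X \to \coim(\phi)$ is thus a bijection precisely when $\phi$ is injective; in that case every equivalence class is a singleton so the quotient semi-norm agrees with the original one, and the map is an isomorphism. Dually to Proposition \ref{prop:kas}, this proves $\phi$ is a monomorphism if and only if $\phi$ is injective. For (2), the image $\im(\phi) = \ker(Y \rightrightarrows Y\coprod_X Y)$ is computed as an equalizer, hence is the pointed subset $\phi(X)\cup\{y_0\} \subseteq Y$ with the semi-norm restricted from $Y$; the same description holds in $\bBanSets$ since an equalizer of Banach sets is closed in $Y$ and therefore complete. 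The canonical map $X \to \im(\phi)$ is then an isomorphism in $\bBox\bSets$ exactly when $\phi$ is injective and the semi-norm on $X$ is equivalent to the one pulled back from $Y$.

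For (3) and (5) one argues dually. In $\bSNSets$ and $\bNSets$ the pushout $Y \coprod_X Y$ is the quotient of the wedge $Y \vee Y$ in which $\phi(x)$ in the two copies is glued, so the two structural maps $Y \rightrightarrows Y \coprod_X Y$ coincide exactly on $\phi(X)\cup\{y_0\}$. Hence $\im(\phi) \to Y$ is an isomorphism in $\bBox\bSets$ if and only if $\phi$ is surjective, which is (3). For (5) the map $\coim(\phi) \to Y$ induced by $\phi$ is always a bijection onto $\phi(X)\cup\{y_0\}$, and therefore an isomorphism exactly when $\phi$ is surjective and the quotient semi-norm on $\coim(\phi)$ (or its completion) is equivalent to the semi-norm on $Y$.

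The one genuinely non-formal point is (4). Here we rely on Proposition \ref{prop:adjoint_inclusion_sets}: the inclusion $\bBanSets \rhook \bNSets$ is a right adjoint, so colimits in $\bBanSets$ are obtained by separating and then completing the corresponding colimits in $\bSNSets$. Thus the Banach pushout $Y \coprod_X Y$ is the completion of the normed one. Any $y \in \overline{\phi(X)}$ is approached by a sequence $\phi(x_n) \to y$, and the identifications $(\phi(x_n),0) = (\phi(x_n),1)$ force $(y,0) = (y,1)$ in the completion; conversely, unfolding the construction of the completed pushout shows that any element of $Y$ whose two copies coincide there must be a limit of elements of $\phi(X)$. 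Hence $\im(\phi) = \overline{\phi(X)}$ inside $Y$, and $\phi$ is an epimorphism if and only if its image is dense. The verification that no further identifications appear in the completed pushout beyond those coming from Cauchy approximations by points of $\phi(X)$ is the main obstacle; it is handled by working with representatives of Cauchy filters in the normed pushout and using the triangle inequality to trace any new identification back to an approximation by elements of $\phi(X)$.
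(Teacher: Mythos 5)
Your overall strategy---reduce each item to Proposition \ref{prop:kas} and compute $\im(\phi)$ and $\coim(\phi)$ from the explicit finite (co)limits of Proposition \ref{prop:lim_colim_ban_sets}---is exactly what the paper intends, and your treatment of (1), (3), (5) and of (2) in the semi-normed and normed cases is fine. The genuine problem is the Banach case, where your two computations of $\im(\phi)=\ker(Y \rightrightarrows Y\coprod_X Y)$ contradict each other: in (2) you assert that the equalizer is $\phi(X)\cup\{y_0\}$ with the restricted norm (``an equalizer of Banach sets is closed in $Y$ and therefore complete''), while in (4) you assert it is $\ol{\phi(X)}$. These cannot both hold, and item (4) of the statement forces the second: if the image were $\phi(X)\cup\{y_0\}$, then by Proposition \ref{prop:kas} epimorphisms in $\bBanSets$ would be exactly the surjections, contradicting (4). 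So the Banach clause of your argument for (2) is wrong as written, and its justification is circular---whether the locus where the two maps into the completed pushout agree is ``closed'', i.e.\ strictly larger than $\phi(X)$, is precisely what has to be determined, not a property one may quote.

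Moreover, the step you yourself identify as the crux of (4) is not actually carried out, and it is not a routine ``triangle inequality'' argument in this setting: a normed set has no addition, and the canonical uniformity is the pullback along the norm of the metric uniformity of $\R_{\ge 0}$, so ``closeness'' and ``dense image'' only constrain norm values, and the completion functor described before Proposition \ref{prop:adjoint_inclusion_sets} adjoins limit points of Cauchy filters rather than visibly identifying already existing points. You must therefore spell out how the $\bBanSets$-pushout is built (separation, then completion, of the $\bSNSets$-pushout) and prove both inclusions for the agreement locus; as written, the claim that the identifications $(\phi(x_n),0)=(\phi(x_n),1)$ ``force'' $(y,0)=(y,1)$ is an assertion, not a proof, and it is exactly where the content of (4) lies. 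The same gap propagates back to the Banach case of (2): once $\im(\phi)=\ol{\phi(X)}$, you still need the Banach-set analogue of ``a complete subspace is closed'', i.e.\ that an injective $\phi$ with norm equivalent to the induced one, defined on a Banach set, has $\phi(X)$ equal to that closure; this is not addressed in your argument.
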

\begin{proof}
	All the statements follow easily from Proposition \ref{prop:kas}.
\end{proof}

Next proposition describes monomorphisms, epimorphisms and strict morphisms in the categories $\bBox \bSets^{\le 1}$.

\begin{prop} \label{prop:morphisms_contracting_normed_sets}
	Let $\phi: X \to Y$ be a morphism in $\bBox \bSets^{\le 1}$, then
	\ben
	\item $\phi$ is a monomorphism if and only if it is injective;
	\item $\phi$ is a strict monomorphism if and only if it is injective and the semi-norm on $X$ is identical to the semi-norm induced by the restriction of the one on $Y$;
	\item if $\bBox = \bSN$ or $\bBox = \bNr$, $\phi$ is an epimorphism if and only if it is surjective;
	\item if $\bBox = \bBan$, $\phi$ is an epimorphism if and only if it has dense image;
	\item $\phi$ is a strict epimorphism if and only if it is surjective and the quotient norm induced by $\phi$ is identical to the norm of $Y$.
	\een
\end{prop}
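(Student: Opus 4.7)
The strategy mirrors that of Proposition \ref{prop:morphisms_normed_sets}: apply Proposition \ref{prop:kas} and its dual after making explicit the image and the coimage of $\phi$ in $\bBox\bSets^{\le 1}$. By Proposition \ref{prop:lim_colim_ban_sets} the inclusion $\bBox\bSets^{\le 1}\rhook\bBox\bSets$ preserves finite limits and finite colimits, so the underlying pointed sets of $\im(\phi)$ and $\coim(\phi)$ coincide with those computed in the bounded category; what changes is only the relevant notion of isomorphism.

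The key observation distinguishing the contracting from the bounded setting is that an isomorphism in $\bBox\bSets^{\le 1}$ is necessarily an \emph{isometry}, since both the morphism and its inverse are required to be contracting. Hence, whenever Proposition \ref{prop:kas} demands that some canonical comparison map be an isomorphism in $\bBox\bSets^{\le 1}$, this will force an identity of semi-norms rather than merely their equivalence.

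Explicitly, $\coim(\phi)$ is the quotient of $X$ by the equivalence relation $x\sim x'$ iff $\phi(x)=\phi(x')$, equipped with the quotient semi-norm $|[x]|=\inf\{|x'|_X : \phi(x')=\phi(x)\}$. Dually, $\im(\phi)$ has as underlying pointed set the set-theoretic image $\phi(X)\cup\{x_0\}$ in cases $\bBox\in\{\bSN,\bNr\}$, and $\overline{\phi(X)}$ in case $\bBox=\bBan$ (because the separation-and-completion carried out in forming $Y\coprod_X Y$ identifies in the limit every point of $\overline{\phi(X)}$ across the two copies), each time equipped with the semi-norm restricted from $Y$.

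Granted these descriptions, all five assertions drop out of Proposition \ref{prop:kas}. Statements (1), (3) and (4) concern only the underlying map of pointed sets, so they read off unchanged from the bounded case. Statements (2) and (5) are the ones where the contracting framework bites: the map $X\to\im(\phi)$ is an isomorphism in $\bBox\bSets^{\le 1}$ iff $\phi$ is injective \emph{and} the semi-norm on $X$ is identical (not just equivalent) to the one restricted from $Y$; analogously, $\coim(\phi)\to Y$ is an isomorphism iff $\phi$ is surjective \emph{and} the quotient semi-norm agrees on the nose with the norm of $Y$ under the induced bijection. The main (and mild) obstacle is the justification of the description of $\im(\phi)$ in the Banach case, which amounts to verifying that two lifts of $y\in Y$ are identified in the completed pushout exactly when $y$ lies in the closure of $\phi(X)$; this is the contracting analogue of the corresponding step in the proof of Proposition \ref{prop:morphisms_normed_sets} and requires only the explicit colimit construction given in Proposition \ref{prop:lim_colim_ban_sets}.
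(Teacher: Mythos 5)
Your proposal is correct and follows exactly the route the paper takes: the paper's own proof simply observes that all five statements follow from Proposition \ref{prop:kas}, and your argument is that same application, merely spelled out — in particular the key point that isomorphisms in the contracting categories are isometries, which is what turns "equivalent" semi-norms in Proposition \ref{prop:morphisms_normed_sets} into "identical" semi-norms here.
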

\begin{proof}
	All the statements follow easily from Proposition \ref{prop:kas}.
\end{proof}

The categories $\bBox\bSets$, and $\bBox\bSets^{\le 1}$ have a symmetric closed monoidal structure described in a uniform way as follows. The internal hom is given by
\[ \ul{\Hom}((X, |\cdot|_X), (Y, |\cdot|_Y)) = ( \Hom_{\bBox\bSets}((X, |\cdot|_X), (Y, |\cdot|_Y)), \|\cdot\|_{\sup} ) \]
where $\|\cdot\|_{\sup}$ of a bounded morphism $h$ is given by
\[ \|h\|_{\sup} = \sup_{x \in  X } \frac{|h(x)|_Y}{|x|_X}. \]

The monoidal structure is given by
\[ (X, |\cdot|_X) \ootimes_{\F_1} (Y, |\cdot|_Y) = (X \wedge Y, |\cdot|_X|\cdot|_Y), \]
where $X \wedge Y$ denotes the smash product of the underlying pointed sets.

\begin{prop} \label{prop:monoidal_structure}
The categories $\bBox\bSets$, and $\bBox\bSets^{\le 1}$ have a symmetric closed monoidal structure as described so far.
\end{prop}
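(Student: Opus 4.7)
The plan is to transport the classical symmetric monoidal closed structure on pointed sets (with smash product and pointed internal hom) to each of the six categories $\bBox\bSets$ and $\bBox\bSets^{\le 1}$ by tracking the semi-norm data. I would organize the verification into four steps: (i) well-definedness of $\ootimes_{\F_1}$ and $\ul{\Hom}$ as functors into the asserted categories; (ii) construction of the coherence isomorphisms (associator, unitors, braiding); (iii) the internal-hom adjunction; and (iv) compatibility with the completion functor in the Banach case.

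For step (i), the product $|x|_X |y|_Y$ vanishes whenever $x = x_0$ or $y = y_0$, hence factors through the smash quotient; it is a norm when both $|\cdot|_X$ and $|\cdot|_Y$ are, by positivity. Bifunctoriality of $\ootimes_{\F_1}$ follows at once: two bounded morphisms with constants $C_X, C_Y$ induce a bounded morphism on smash products with constant $C_X C_Y$, and specialise to contracting morphisms when both are contracting. In the Banach case, $\wotimes$ is obtained by composing with the completion, and bifunctoriality follows from Proposition \ref{prop:adjoint_inclusion_sets}. For $\ul{\Hom}(Y,Z)$ I would observe that boundedness of $h$ forces $|h(y)|_Z = 0$ whenever $|y|_Y = 0$, so $\|h\|_{\sup}$ is finite and defines a semi-norm; it is a norm when $Z$ is normed, since then $\|h\|_{\sup} = 0$ forces $h$ to take the basepoint as value on every $y$ with $|y|_Y \neq 0$ and on the others by boundedness; finally pointwise limits show it is a Banach set when $Z$ is, by a routine lifting of the argument from $\R$-Banach spaces.

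For step (ii), the unit is $\F_1 = (\{0,1\}, |0|=0, |1|=1)$, and the pentagon, triangle, and hexagon diagrams already commute for the underlying pointed sets; associativity, symmetry and unit multiplicativity of the product of real numbers promote the coherence isomorphisms to isometries, which in particular are simultaneously bounded and contracting. For step (iii), the adjunction is the pointed-sets bijection $\phi \leftrightarrow \wtilde{\phi}$, $\wtilde{\phi}(x)(y) = \phi(x \wedge y)$; the estimate
\[
|\phi(x \wedge y)|_Z \le C |x|_X |y|_Y \Longleftrightarrow \|\wtilde{\phi}(x)\|_{\sup} \le C |x|_X
\]
holds with the same constant in both directions, giving the bijection in $\bSN\bSets$ and $\bNr\bSets$, and restricting to the contracting subcategories when $C=1$. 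Naturality in all three variables is automatic from the underlying pointed-sets adjunction.

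For step (iv), in the Banach case $\wotimes$ equals the completion of the smash product, so by the universal property of completion (Proposition \ref{prop:adjoint_inclusion_sets}) morphisms $X \wotimes Y \to Z$ into a Banach $Z$ are in bijection with bounded morphisms out of the uncompleted smash product. Combined with the fact from step (i) that $\ul{\Hom}(Y,Z)$ is already Banach, this lifts the adjunction of step (iii) to the Banach setting, and coherence is preserved by functoriality of the completion left adjoint. The only genuinely non-formal obstacle is verifying completeness of $(\ul{\Hom}(Y,Z), \|\cdot\|_{\sup})$ for $Z$ Banach; the remainder of the proof is uniform bookkeeping across the six cases.
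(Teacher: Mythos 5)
Your proposal is correct and follows essentially the same route as the paper: transport the smash/hom adjunction from pointed sets, track the boundedness constants (with $C=1$ for the contracting case), and check the normed and Banach conditions for $\ul{\Hom}(Y,Z)$ via the sup-norm and pointwise limits. The one small divergence is in your step (iv): the paper does not complete the smash product in the Banach case --- as it remarks right after the proposition, $X \ootimes_{\F_1} Y$ with the product norm is already a Banach set when $X$ and $Y$ are, so your detour through the universal property of completion, while harmless (it returns the same object), is unnecessary.
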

\begin{proof}
	For the morphisms of the underlying pointed sets, one has a bijection
	\[ \Hom_\bSets(X \wedge Y, Z) \cong \Hom_\bSets(X, \ul{\Hom}_\bSets(Y, Z))  \]
	given by
	\[ f((-, -)) \mapsto f((-, y)) \]
	for any $y \in Y$. Now, suppose that $f$ is bounded. This means that for each $z \in Z$ there exists a $C > 0$ such that
	\[ |f(x, y)|_Z \le C |(x, y)|_{X \times Y} = C|x|_X|y|_Y. \]
	Now, if we consider the map
	\[ f(x, -): Y \to Z \]
	we get that 
	\[ |f(x, -)|_{\sup} = \sup_{y \in  Y } \frac{|f(x, y)|_Z}{|y|_Y} \le C |x|_X  \]
	which proves that every bounded map in $\Hom_\bSets(X \wedge Y, Z)$ is mapped to a bounded map in $\Hom_\bSets(X, \ul{\Hom}(Y, Z))$. On the other hand, suppose that the inequality 
	\[ |f(x, -)|_{\sup} \le C |x|_X  \]
  holds. This implies that
  \[ \sup_{y \in  Y } \frac{|f(x, y)|_Z}{|y|_Y} \le C |x|_X \]
  which implies that
  \[ \frac{|f(x, y)|_Z}{|y|_Y} \le C |x|_X \]
  is true for all $y \in Y$, and hence $f$ is bounded.
  
  Clearly, if $X$ and $Y$ are normed (resp. Banach) also $X \ootimes_{\F_1} Y$ is normed (resp. Banach). The fact that also $\ul{\Hom}(X, Y)$ is normed (resp. Banach) is a consequence of the fact that 
  \[ |f|_{\sup} = 0 \Leftrightarrow \sup_{x \in  X} \frac{|f(x)|_Y}{|x|_X} = 0 \Leftrightarrow f \equiv 0 \]
  and that if $f_i$ is a convergent sequence of bounded maps in $\ul{\Hom}(X, Y)$ then the map
  \[ f(x) = \lim_{i \to \infty} f_i(x) \] 
  is a well-defined element of $\ul{\Hom}(X, Y)$ which is the limit of the sequence.
\end{proof}

\begin{rmk}
Notice that one does not need to introduce the completed tensor product of Banach sets as it is needed in classical functional analysis for Banach spaces.
\end{rmk}

Although the categories $\bBox\bSets^{\le 1}$ are complete and cocomplete we would like to add all limits and colimits to $\bSNSets$, $\bNSets$, $\bBanSets$ in a more natural way, \ie in a way that permits to continue to work with all bounded morphisms and not only with the contracting ones. There are more substantial reasons for enlarging the categories $\bBox\bSets$ which will become clearer later on.

The most natural way to achieve this goal is by considering the ind-categories $\bInd(\bBox\bSets)$. These latter categories are complete and cocomplete: assertion b) of Proposition 8.9.1 of the first expos\'e of \cite{SGA4} ensures the existence of projective limits and the combination of Proposition 8.5.1 and assertion c) of the Proposition 8.9.1 of the same expos\'e ensures the existence of colimits. Moreover, the closed monoidal symmetric structures naturally extend to the ind-categories by the formula
\[ {``\limind_{i \in I}"} M_i \ootimes_{\F_1} {``\limind_{j \in J}"} N_j = \underset{(i, j) \in I \times J}{``\limind"} M_i \ootimes_{\F_1} N_j \]
and
\[ \ul{\Hom}({``\limind_{i \in I}"} M_i, {``\limind_{j \in J}"} N_j) = \limpro_{i \in I} \limind_{j \in J} \ul{\Hom}(M_i, N_j). \]

\begin{defn} \label{defn:bornological_sets}
We define (cf. Notation \ref{notation:box} for the meaning of the symbol $\bInd^m$)
\begin{itemize}
\item the category of \emph{bornological sets} as the category $\bInd^m(\bSNSets)$;
\item the category of \emph{separated bornological sets} as the category $\bInd^m(\bNSets)$;
\item the category of \emph{complete bornological sets} as the category $\bInd^m(\bBanSets)$.
\end{itemize}
\end{defn}

The categories $\bInd(\bBox\bSets)$ have a canonical functor $U$ to $\bBox\bSets$ given by
\[ U({``\limind_{i \in I}"} M_i) = \limind_{i \in I} U(M_i) \]
where $U(M_i)$ is the forgetful functor from $\bBox\bSets$ to $\bSets$. One can prove that $U$ is not a forgetful functor, \ie it is not faithful, using the same reasoning of Remark 3.33 of \cite{BaBe}. The next proposition shows one of the advantages of using the category of essential monomorphic objects instead of $\bInd(\bBox\bSets)$.

\begin{prop} \label{prop:concrete_ind_mono}
The categories $\bInd^m(\bBox\bSets)$ are concrete, \ie the functor $U$ is faithful when restricted to $\bInd^m(\bBox\bSets)$.
\end{prop}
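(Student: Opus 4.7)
The plan is to unravel the definition of morphisms in $\bInd(\bBox\bSets)$ and exploit that monomorphic transition maps force each structural morphism into the colimit to be injective on underlying sets; once this is established, the faithfulness of the forgetful functor $\bBox\bSets \to \bSets$ closes the argument.

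Concretely, write $M = ``\limind_{i \in I} M_i"$ and $N = ``\limind_{j \in J} N_j"$ as representatives with monomorphic transitions in both systems, and let $f, g \colon M \to N$ satisfy $U(f) = U(g)$. Recall the standard identification $\Hom_{\bInd}(M, N) = \limpro_{i} \limind_{j} \Hom(M_i, N_j)$. For each $i \in I$, the components of $f$ and $g$ restricted to $M_i$ can be represented, using the filteredness of $J$, by bounded maps $\tilde{f}_i, \tilde{g}_i \colon M_i \to N_{\tau(i)}$ sharing a common index $\tau(i) \in J$. To conclude $f = g$ it suffices to show $\tilde{f}_i = \tilde{g}_i$ in $\bBox\bSets$ for every $i$, and since the forgetful functor $\bBox\bSets \to \bSets$ is manifestly faithful (a bounded map is determined by the underlying map of pointed sets), this in turn reduces to $U(\tilde{f}_i) = U(\tilde{g}_i)$ as maps of pointed sets.

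The decisive step is the observation that when the transitions $N_j \to N_{j'}$ are monomorphisms, the canonical map $U(N_{\tau(i)}) \to U(N) = \limind_{j} U(N_j)$ is injective. By Proposition \ref{prop:morphisms_normed_sets} (1), monomorphisms in $\bBox\bSets$ are exactly the injections of underlying pointed sets, so the $U$-image of our ind-system is a filtered system of sets with injective transitions, and the classical fact that filtered colimits of injections in $\bSets$ are stage-wise injective yields the claim. Applying $U(f) = U(g)$ to points of $U(M_i)$ now gives $U(\tilde{f}_i) = U(\tilde{g}_i)$ in $U(N_{\tau(i)})$, completing the reduction.

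The only substantive point is the one in the previous paragraph; the remainder is routine bookkeeping with the definition of the ind-category and of bounded morphisms. It is precisely here that the restriction from $\bInd(\bBox\bSets)$ to $\bInd^m(\bBox\bSets)$ is essential: without the essentially monomorphic hypothesis one can manufacture nonzero morphisms of ind-objects whose colimit on underlying sets is zero (as alluded to via Remark 3.33 of \cite{BaBe}), and the proposition would fail.
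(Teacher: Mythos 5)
Your proof is correct: reducing to representatives with monomorphic transitions, using $\Hom_{\bInd}(M,N)=\limpro_i\limind_j\Hom(M_i,N_j)$, and observing that by Proposition \ref{prop:morphisms_normed_sets} the transitions become injections of underlying sets, so the structural maps $U(N_{\tau(i)})\to U(N)$ into the filtered colimit are injective, does establish faithfulness of $U$ on $\bInd^m(\bBox\bSets)$. The paper handles this step by simply invoking Proposition 3.31 of \cite{BaBe} together with Proposition \ref{prop:morphisms_normed_sets}; your argument is in substance the same one, just written out directly in the set-theoretic context instead of being delegated to the citation.
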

\begin{proof}
	Proposition 3.31 of \cite{BaBe} can be applied to the case of $\bInd^m(\bBox\bSets)$ thanks to Proposition \ref{prop:morphisms_normed_sets}.
\end{proof}

\begin{rmk}
Proposition \ref{prop:concrete_ind_mono} tells that the categories $\bInd^m(\bBox\bSets)$ can be thought as categories made of objects which are sets endowed with an additional structure, which is given by a family of semi-norms, and whose morphisms are maps of sets which are compatible with respect to these structures. This is in analogy with the theory of bornological vector spaces over non-trivially valued fields. This concrete description of objects and morphisms does not exist for the categories $\bInd(\bBox\bSets)$.
\end{rmk}

We need to describe strict morphisms in  $\bInd(\bBox\bSets)$ and $\bInd^m(\bBox\bSets)$.

\begin{prop} \label{prop:morphisms_ind_normed_sets}
	Let $\phi: X \to Y$ be a morphism in $\bInd(\bBox \bSets)$ or $\bInd^m(\bBox\bSets)$, then $\phi$ is a monomorphism (resp. strict monomorphism, resp. epimorphism, resp. strict epimorphism) if and only if it can be represented as a direct limit of monomorphisms (resp. strict monomorphism, resp. epimorphism, resp. strict epimorphism).
\end{prop}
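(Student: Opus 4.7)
The plan is to exploit the standard technique of passing to a \emph{level representation} for morphisms in ind-categories: for any $\phi : X \to Y$ in $\bInd(\bBox\bSets)$ we may, after cofinal reindexing, write $X = ``\limind_{i\in I}" X_i$, $Y = ``\limind_{i\in I}" Y_i$ and $\phi = ``\limind_{i\in I}" \phi_i$ for some system of morphisms $\phi_i : X_i \to Y_i$ in $\bBox\bSets$ (see expos\'e 1, \S 8 of \cite{SGA4}). The crucial input is that filtered colimits in $\bInd(\bBox\bSets)$ commute with the finite limits and finite colimits whose existence in $\bBox\bSets$ is provided by Proposition~\ref{prop:lim_colim_ban_sets}; consequently the image, coimage, kernel and cokernel of $\phi$ (Definition~\ref{defn:image_coimage}), being defined through such finite (co)limits, are represented by the ind-systems $``\limind" \im(\phi_i)$, $``\limind" \coim(\phi_i)$, $``\limind" \ker(\phi_i)$ and $``\limind" \coker(\phi_i)$.

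The ``if'' direction is then immediate in all four cases. For instance, if each $\phi_i$ is a strict monomorphism, Proposition~\ref{prop:kas} tells us that $X_i \to \im(\phi_i)$ is an isomorphism at every level, so taking the ind-limit yields that $X \to \im(\phi)$ is an isomorphism in $\bInd(\bBox\bSets)$, whence $\phi$ is a strict mono; the remaining three cases are identical, using the appropriate characterization from Proposition~\ref{prop:kas}. For the ``only if'' direction, from a level representation of $\phi$ having one of these four properties, the corresponding canonical map (for example $\coim(\phi) \to Y$ for a strict epi) is an isomorphism of ind-objects. Since isomorphisms of ind-objects are detected on a cofinal subsystem via the formula $\Hom(``\limind" X_i,``\limind" Y_j) = \limpro_i \limind_j \Hom(X_i, Y_j)$, a further reindexing produces a cofinal subsystem on which each $\phi_i$ inherits the required property level-wise.

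The main obstacle is the reindexing step in the ``only if'' direction: one must simultaneously arrange that the characterizing canonical map of Proposition~\ref{prop:kas} becomes an isomorphism at each index, which is a standard but somewhat delicate argument with the category of ind-morphisms. A secondary but necessary check, specific to the $\bInd^m$ case, is that after reindexing the transition maps remain monomorphic; this is where Proposition~\ref{prop:morphisms_normed_sets}(1) intervenes, identifying monomorphisms in $\bBox\bSets$ with injective maps, so that restricting to a cofinal subsystem, as well as passing to images and coimages of monomorphic systems, preserves injectivity of the transition maps, hence the property of being essentially monomorphic. This ensures that the equivalences for $\bInd$ and for $\bInd^m$ can be proven in parallel by the same argument.
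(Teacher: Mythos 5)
Your level representation, the level-wise computation of $\im$, $\coim$, $\ker$, $\coker$ via exactness of filtered colimits, and the whole ``if'' direction are fine. The genuine gap is in the ``only if'' direction: it is not true that an isomorphism of ind-objects which is given by a level map becomes a level-wise isomorphism after passing to a cofinal subsystem, and the formula $\Hom(``\limind" X_i, ``\limind" Y_j) \cong \limpro_i \limind_j \Hom(X_i, Y_j)$ only produces maps $Y_i \to X_{j(i)}$ that are inverse \emph{up to transition maps}, not level-wise inverses. Concretely, in $\bInd(\bSN\bSets)$ take $Y_i = (\{y_0, y\}, |y| = 1)$ constant, $X_i = (\{x_0, a, b\}, |a| = |b| = 1)$ with transition maps sending both $a$ and $b$ to $a$, and $\phi_i(a) = \phi_i(b) = y$: then $\phi$ is an isomorphism of ind-objects (both sides are isomorphic to the constant one-point object), in particular a strict monomorphism, yet no $\phi_i$ is injective, so no cofinal subsystem of this representation consists of monomorphisms. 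Hence the step ``a further reindexing produces a cofinal subsystem on which each $\phi_i$ inherits the required property'' fails; the proposition only claims that \emph{some} representation by such morphisms exists, and your argument does not construct one.

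The repair is in fact simpler than your detour and uses exactly what you already established. If, say, $\phi$ is a strict epimorphism, Proposition~\ref{prop:kas} applied in the ind-category gives that $\coim(\phi) \to Y$ is an isomorphism; since $\coim(\phi)$ is represented level-wise by the $\coim(\phi_i)$, replace the representation of $Y$ by the system $\{\coim(\phi_i)\}$, so that $\phi$ is represented by the canonical maps $X_i \to \coim(\phi_i)$, which are cokernels and hence strict epimorphisms. Dually, for a strict monomorphism replace the representation of $X$ by $\{\im(\phi_i)\}$ and use the kernels $\im(\phi_i) \to Y_i$. For the non-strict cases one argues the same way but must check, via Proposition~\ref{prop:morphisms_normed_sets}, that $X_i \to \im(\phi_i)$ is an epimorphism (surjective, resp.\ of dense image) and that $\coim(\phi_i) \to Y_i$ is a monomorphism (injective) in $\bBox\bSets$. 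Your observation that image and coimage systems of a level map between monomorphic systems again have injective transition maps is precisely what makes this construction stay inside $\bInd^m(\bBox\bSets)$. Note that the paper itself gives no details, referring to Proposition 2.10 of \cite{BaBe}, which proceeds along these lines.
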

\begin{proof}
Analogous to Proposition 2.10 of \cite{BaBe}.
\end{proof}

We give some key examples of normed and bornological sets.

\begin{exa} \label{exa:basic}
\ben
\item Let $R$ be a Banach ring. The underlying set of any semi-normed, normed, Banach module over $R$ is a semi-normed, normed, Banach set.
\item Let $k$ be a non-trivially valued field. The underlying set of any bornological, separated bornological, complete bornological vector space over $k$ is a bornological, separated bornological, complete bornological set (see \cite{H2} for an introduction to the theory of bornological vector spaces).
\item We denote by $\ast_r$, with $r \in \R_+$, the one point bornological set $(\{\ast, 0\}, |\cdot|_r)$ for which $|\ast|_r = r$.
\item An important object in $\bComm(\bBox \bSets^{\le 1})$ is the symmetric algebra with radius $r \in \R_+$
\[ S_{\bBox \bSets^{\le 1}}(\ast_r) = {\coprod_{n \in \N}}^{\le 1} \ast_r^{\ootimes n}. \]
More explicitly, $S_{\bBox \bSets^{\le 1}}(\ast_r)$ is the normed monoid whose underlying monoid is $\N$ and whose norm is given by
\[ |n|_r = r^n \] 
for $n > 0$ and $|0|_r = 0$.
\item The norm on $S_{\bBox \bSets^{\le 1}}(\ast_r)$ can be immediately extended to $\Z$. We denote the Banach set obtained in this way by $\cZ_r$. Also, the norm can be extended to $\Q$ simply by
\[ \l |\frac{p}{q} \r |_r = r^{\frac{p}{q}} \]
for any $p, q \in \Z$. We denote this normed set with $\cQ_r$. Notice that $\cQ_r$ is not a Banach set because it is not complete with respect to its canonical uniformity.
\item We denote by $\cQ_r^+ \subset \cQ_r$ the subset of positive rational numbers equipped with the induced structure of bornological set.
\item It is useful to notice the following isomorphism of bornological sets
\begin{equation} \label{eqn:limind_Q_r}
\cQ_r^+ \cong \limind_{n \in \N} \frac{1}{n} S_{\bNr_{\F_1}^{\le 1}}(\ast_{\sqrt[n]{r}})
\end{equation}
where 
\[ \frac{1}{n} S_{\bNr_{\F_1}^{\le 1}}(\ast_{r}) = (\frac{1}{n}\N, |\cdot|_r) \]
and
\[ \l |\frac{m}{n} \r|_r = r^m. \]
\item Another interesting example of bornological set comes from the bijection $\Z_p \cong \F_p^\N$. Considering on $\F_p$ the trivial norm, and denoting by $[\F_p]_r$ the rescaling of the norm by the factor $r$, we can write the isomorphism of bare Banach sets
\[ \Z_p \cong {\prod_{n \in \N}}^{\le 1} [\F_p]_{p^{-n}}. \]
We will say more on these kind of constructions in the last section of the paper (cf. section \ref{sec:witt}).
\item Later on we will need to consider on $\Z$ and $\Q$ ``geometric" norms, as the ones introduced so far, but with different ``radii" for the positive and the negative numbers. Thus, if $r_1 < r_2$ we denote by $\cZ_{r_1, r_2}$ the group $\Z$ equipped with the norm
\[ |n|_{r_1, r_2} = \begin{cases}
(r_1)^n \text{ if } n < 0 \\
(r_2)^n \text{ if } n > 0
\end{cases}.
 \]
The same description holds for $\cQ_{r_1, r_2}$.
\een
\end{exa}

We end this section with some lemmata and propositions about the computation of limits and colimits of bornological sets. Colimits of bornological sets are easy to compute, whereas limits are harder.

\begin{lemma} \label{lemma:product_born_sets}
Let $\{ M_i \}_{i \in I}$ be a family of objects of $\bInd(\bBox \bSets)$. Let's write
\[ M_i \cong ``\limind_{j \in J_i}" M_{i, j}. \]
Then, we have the isomorphism
\[ \prod_{i \in I} M_i = ``\limind_{\phi \in \Phi}" M_\phi \]
where $\phi = (\phi_1, \phi_2)$ is an element of the set of functions
\[ \Phi = \{ (\phi_1, \phi_2) | \phi_1: I \to \prod_{i \in I} J_i, \phi_i(i) \in J_i, \phi_2: I \to \N_{\ge 1}  \} \]
equipped with the partial order
\[ (\phi_1, \phi_2) \le (\phi_1', \phi_2') \iff \phi_1(i) \le \phi_1'(i), \phi_2(i) \le \phi_2'(i), \forall i \in I,  \]
and 
\[ M_\phi = \l \{ (x_i) \in \prod_{i \in I} M_{i, \phi_1(i)} | \frac{|x_i|}{\phi_2(i)} \text{ is bounded}  \r \} \]
is equipped with the norm
\[ |(x_i)|_{\phi} = \sup_{i \in I} \frac{|x_i|}{\phi_2(i)}. \]
\end{lemma}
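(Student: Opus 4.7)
The plan is to verify the universal property of the product directly and appeal to Yoneda: I show that for every object $N = ``\limind_{k \in K}" N_k$ of $\bInd(\bBox\bSets)$, the hom set $\Hom(N, ``\limind_{\phi \in \Phi}" M_\phi)$ is naturally in bijection with $\prod_{i \in I}\Hom(N, M_i)$. First I check that the displayed data indeed define an ind-object: each $M_\phi$ is a pointed subset of $\prod_i M_{i,\phi_1(i)}$ on which $\sup_{i \in I} |x_i|/\phi_2(i)$ is a well-defined semi-norm vanishing on the basepoint, a norm in the normed case, and complete in the Banach case by a standard coordinatewise Cauchy filter argument parallel to the contracting product in Proposition \ref{prop:lim_colim_ban_sets}. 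The order on $\Phi$ is componentwise filtered. For $\phi\le\phi'$ the inductive structure maps $\iota_i : M_{i,\phi_1(i)}\to M_{i,\phi_1'(i)}$ of the $M_i$ are bounded by some $D_i\ge 1$; restricting to the cofinal sub-poset in which moreover $\phi_2'(i)\ge D_i\phi_2(i)$ for every $i$, the coordinatewise assignment $(x_i)\mapsto(\iota_i(x_i))$ sends $M_\phi$ into $M_{\phi'}$ and is bounded (in fact of bound $\le 1$). Projecting to the $i$-th coordinate on each stage $M_\phi$ is bounded with constant $\phi_2(i)$, so these projections assemble into canonical morphisms $\pi_i : ``\limind_\phi" M_\phi \to M_i$.

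The universal property itself is checked by unwinding a family $(f_i : N\to M_i)_{i\in I}$ via the ind-hom formula $\Hom(N,M_i)=\limpro_{k}\limind_{j\in J_i}\Hom(N_k,M_{i,j})$: it consists of the data of indices $j(k,i)\in J_i$ together with bounded maps $\wtilde f_{k,i} : N_k\to M_{i,j(k,i)}$ of constants $C_{k,i}$, compatible in $k$. Setting $\phi^k_1(i):=j(k,i)$ and $\phi^k_2(i):=\lceil C_{k,i}\rceil$, the coordinatewise map $x\mapsto(\wtilde f_{k,i}(x))_{i\in I}$ sends $N_k$ into $M_{\phi^k}$ because $|\wtilde f_{k,i}(x)|/\phi^k_2(i)\le |x|_{N_k}$ uniformly in $i$, and the resulting map is bounded by $1$. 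Compatibility in $k$ comes directly from the compatibility of the $\wtilde f_{k,i}$, so the data assemble into a morphism $N \to ``\limind_\phi"M_\phi$ whose composition with $\pi_i$ recovers $f_i$. Uniqueness is forced by the fact that the $\pi_i$ recover each coordinate, completing the bijection.

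The main obstacle is the cofinality bookkeeping in the first step: the bare componentwise order on $\Phi$ is not by itself enough for the transition maps $M_\phi\to M_{\phi'}$ to exist as morphisms of $\bBox\bSets$, because the individual bounds $D_i$ of the inductive structure maps of the $M_i$ need not satisfy $D_i\phi_2(i)\le \phi_2'(i)$. One must pass to a cofinal refinement, check that it is still filtered (possible since $\N_{\ge 1}$ is unbounded above), and verify it yields the same ind-object up to isomorphism; once this is in place, the rest of the argument is a formal verification of the universal property.
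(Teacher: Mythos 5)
Your proof is correct and takes essentially the same route as the paper's: a direct verification of the universal property of the product, viewing each $M_\phi$ as the contracting product of the rescaled levels $[M_{i,\phi_1(i)}]_{1/\phi_2(i)}$ and absorbing all boundedness constants into $\phi_2$. The paper's own argument is a brief sketch that never discusses the transition maps of the ind-system, so your observation that the bare componentwise order on $\Phi$ must be refined (e.g. to $\phi_2'(i)\ge D_i\,\phi_2(i)$, with $D_i$ the bound of the structure map $M_{i,\phi_1(i)}\to M_{i,\phi_1'(i)}$) for the coordinatewise maps $M_\phi\to M_{\phi'}$ to be well defined and bounded is a legitimate point the paper leaves implicit, and your handling of it is sound.
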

\begin{proof}
One can easily check that the object defined by
\[ ``\limind_{\phi \in \Phi}" M_\phi \]
satisfies the universal property of the product by noticing that $M_\phi$ is the contracting coproduct of the family $\{ M_i \}_{i \in I}$ where the norm of $M_i$ is rescaled by the factor $\frac{1}{\phi_2(i)}$. Hence, $M_\phi$ satisfies the universal property of the contracting direct product with respect to the rescaled norms and taking the direct limit for $\frac{1}{\phi_2(i)} \to 0$ we get the universal property of the direct product.
\end{proof}

\begin{rmk} \label{rmk:product_born_sets}
We notice that Lemma \ref{lemma:product_born_sets} does not only apply to the category $\bInd(\bBox \bSets)$ but the same description of direct products holds for $\bInd^m(\bBox \bSets)$ because the inclusion functor $\bInd^m(\bBox \bSets) \rhook \bInd(\bBox \bSets)$ commutes with all limits.
\end{rmk}

\begin{lemma}
Let $U: \bInd^m(\bSN \bSets) \to *\bSets$ be the forgetful functor, then $U$ commutes with all limits and colimits. The forgetful functors $U_S: \bInd^m(\bNr \bSets) \to *\bSets$ and $U_C: \bInd^m(\bBan \bSets) \to *\bSets$ commute with limits and monomorphic filtered colimits.
\end{lemma}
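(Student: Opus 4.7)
The plan is to decompose (co)limits in the usual way: all limits from products together with equalizers, and all colimits from coproducts together with coequalizers and filtered colimits. Thanks to Remark \ref{rmk:product_born_sets} I can work entirely inside the ind-category, using Lemma \ref{lemma:product_born_sets} for products and the explicit descriptions of Proposition \ref{prop:lim_colim_ban_sets} for the (co)limits in $\bBox\bSets$.

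For the limit preservation common to all three functors, the first step is products. Given $(x_i) \in \prod_{i \in I} U(M_i)$ with $M_i = ``\limind_{j \in J_i}" M_{i,j}$, I choose $\phi_1(i) \in J_i$ so that $x_i$ is represented in $M_{i, \phi_1(i)}$ and $\phi_2(i) \in \N_{\ge 1}$ so that $\phi_2(i) \ge |x_i|$; then $(x_i) \in M_\phi$ in the notation of Lemma \ref{lemma:product_born_sets}, showing that every element of $\prod_i U(M_i)$ is hit, while the converse inclusion is immediate from the formula for $M_\phi$. The same argument goes through verbatim for $U_S$ and $U_C$. For equalizers, in $\bBox\bSets$ they are subobjects with the restricted semi-norm, and in $\bInd^m(\bBox\bSets)$ they are obtained as the filtered colimit of the level-wise equalizers; both operations commute with the passage to underlying pointed sets. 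Together these handle all limits.

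For colimits under $U$, the crucial point is that in $\bSN\bSets$ no extra ``cleaning up'' is needed: coproducts are wedge sums with the obvious extended semi-norm, and coequalizers are quotient pointed sets equipped with the fibre-wise infimum semi-norm, which is allowed to vanish outside the basepoint. Since filtered colimits in $\bInd^m(\bSN\bSets)$ amount to concatenating ind-systems, every colimit is assembled from constructions that respect the underlying pointed set, so $U$ preserves them all. The obstruction for $U_S$ and $U_C$ is precisely what comes next in the proof of Proposition \ref{prop:lim_colim_ban_sets}: one has to apply $\sep$ and, in the Banach case, also $\ol{(-)}$, which collapse zero-semi-norm points to the basepoint and adjoin Cauchy filter limits respectively, thereby modifying the underlying pointed set. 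The remaining positive statement, which I expect to be the main subtle step, is for monomorphic filtered colimits: here the filtered diagram can be represented by concatenating its essentially monomorphic ind-systems into a single ind-object, and one must check that no separation or completion is triggered in $\bInd^m$ because a directed union along injective bounded maps of normed (resp.\ Banach) sets remains in the correct subcategory at the level of each representing object. Granted this, the underlying pointed set of the resulting ind-object is literally the directed union, which is the filtered colimit in $*\bSets$.
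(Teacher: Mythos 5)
Your proposal is correct and follows essentially the same route as the paper: products are handled via Lemma \ref{lemma:product_born_sets} (together with Remark \ref{rmk:product_born_sets}), equalizers via their level-wise/explicit description, colimits for $\bSN$ via the explicit constructions of Proposition \ref{prop:lim_colim_ban_sets}, and the only divergence is cosmetic — where the paper cites that the inclusions $\bInd^m(\bBan\bSets)\rhook\bInd^m(\bNr\bSets)\rhook\bInd^m(\bSN\bSets)$ commute with limits and monomorphic filtered colimits and reduces $U_S$, $U_C$ to $U$, you verify that same fact directly by noting that no separation or completion is triggered along injective transition maps. Your identification of the failure for general colimits (the $\sep$ and $\ol{(-)}$ steps) also matches the paper's remark that these inclusions do not commute with cokernels.
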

\begin{proof}
Let's consider first the case of $U: \bInd^m(\bSN \bSets) \to * \bSets$. The explicit description of direct products given by Lemma \ref{lemma:product_born_sets} immediately implies that $U$ commutes with products. The description of kernels given in Proposition \ref{prop:morphisms_ind_normed_sets} implies that it commutes with finite limits and hence it commutes with all limits. The commutation of $U$ with colimits follows by easy explicit computations.

Since the inclusion functors $\bInd^m(\bBan \bSets) \rhook \bInd^m(\bNr \bSets) \rhook \bInd^m(\bSN \bSets)$ commute with all limits and monomorphic filtered colimits it follows that $U_S$ and $U_C$ commute with these limits and colimits, but these inclusion functors do not commute with cokernels. Hence, $U_S$ and $U_C$ do not commute with all colimits.
\end{proof}

\begin{exa}[Finite bornologies] \label{exa:finite_dim_bornologies}
This example generalizes a basic result of functional analysis: the fact that on a finite dimensional vector space (over a non-trivially valued field) there exists only one separated bornology of convex type, up to isomorphism (cf. Proposition 12, $n^\circ$ 4, \S 3, Chapter 1 of \cite{H2}).

The first observation is that each finite pointed set $[n] = (0, 1, \ldots, n)$ ($0$ is the base point) admits only one structure of normed set, up to isomorphism. Indeed, consider two norms $([n], |\cdot|_1)$ and $([n], |\cdot|_2)$ then, let $N_1 = \underset{k \in [n]}\max \frac{|k|_1}{|k|_2}$ and $N_2 = \underset{k \in [n]}\max \frac{|k|_2}{|k|_1}$ we have that
\[ |k|_1 \le N_1 |k|_2 \]
and 
\[ |k|_2 \le N_2 |k|_1. \]
Even more, if $\{|\cdot|_i\}_{i \in I}$ is a family of norms on $[n]$ which defines a bornology (we can always think to a bornology in this way thanks to Proposition \ref{prop:concrete_ind_mono}) which is separated. We can consider the norm
\[ |k|_{\inf} \doteq \inf_{i \in I} |k|_i + \delta, \ \ k \ne 0 \]
for a small $\delta > 0$, so $|k|_{\inf} \ne 0$ for $k \ne 0$.
So, for any $k \in [n]$ we can find a $|\cdot|_{i_k}$ such that
\[ \frac{|k|_{i_k}}{|k|_{\inf}} \le 1 + \epsilon \]
and define the norm
\[ \|k\| = |k|_{i_k}. \]
The identity map
\[ ([n], \{|\cdot|_i\}_{i \in I}) \to ([n], |\cdot|_{\inf}) \]
is clearly bounded by the definition of $|\cdot|_{\inf}$ and the identity
\[  ([n], |\cdot|_{\inf}) \to ([n], \{|\cdot|_i\}_{i \in I}) \]
is bounded because it factors through $([n], \|\cdot\|)$ because $[n]$ is a finite set.
\end{exa}

\section{The base change functors}

In this section we look at the categories of Banach and bornological sets with a more geometrical perspective. We consider these categories as the basic categories where analytic geometry over the field with one element is defined. For emphasizing this shift of perspective we change the notation as follows:
\[ \bBox \bSets = \bBox_{\F_1} \]
\[ \bBox \bSets^{\le 1} = \bBox_{\F_1}^{\le 1} \]
as if we are working with modules over the hypothetical base Banach ring $\F_1$.

With this notation, for any Banach ring $R$ one has the base change functors
\[ (-) \ootimes_{\F_1} R: \bBox_{\F_1} \to \bBox_R \] 
from semi-normed (resp. normed, resp. Banach) modules over $\F_1$ to semi-normed (resp. normed, resp. Banach) modules\footnote{We are considering the categories $\bBox_{R}$ over a general Banach ring by making no distinction between the case when $R$ is non-Archimedean or not. Therefore we are considering the categories of all semi-normed, normed and Banach modules in contrast with the most common attitude of considering only non-Archimedean semi-normed modules on non-Archimedean Banach rings. We refer the reader to \cite{BaBe} for more information about this issue.} over $R$ given by
\[ (X, |\cdot|_X) \ootimes_{\F_1} R = {\coprod_{x \in X}}^{\le 1}  [R]_{|x|_X} \]
where $[R]_{|x|_X}$ denotes $R$ thought as a free semi-normed (resp. normed, resp. Banach) $R$-module whose norm has been rescaled by the factor $|x|_X$. 

\begin{rmk} 
	Notice that for the sake of simplicity we are forgetting about the fact that $X$ is a pointed set and that the base point $x_0 \in X$ is identified with the zero element of its base change as an $R$ module. Indeed, for being very precise, with the notation $(X, |\cdot|_X) \ootimes_{\F_1} R$ we are denoting the module 
	\[ \underset{x \in X, x \ne x_0} {{\coprod}^{\le 1}}  [R]_{|x|_X} \cup \{ 0 \}. \]
	For the sake of having a lighter notation we will use the convention that this latter set is denoted 
	\[ {\coprod_{x \in X}}^{\le 1}  [R]_{|x|_X} \]
	keeping in mind that the summand $[R]_{|x_0|_X}$ is contracted to zero.
\end{rmk}

One can think of $(X, |\cdot|_X) \ootimes_{\F_1} R$ as the free Banach module over $R$ generated by the (semi-)normed (pointed) set $X$. Of course the same definition also gives base changes
\[ (-) \ootimes_{\F_1} R: \bBox_{\F_1}^{\le 1} \to \bBox_R^{\le 1}. \]

\begin{rmk} \label{rmk:non-arch_base_change}
	As mentioned, the set 
	\[ (X, |\cdot|_X) \ootimes_{\F_1} R = {\coprod_{x \in X}}^{\le 1}  [R]_{|x|_X} = \{ (r_x)_{x \in X} | \sum_{x \in X} |r_x|_R|x|_X < \infty \}  \]
	is the set of $l^1$-summable sequences equipped with the $l^1$-norm 
	\[ \sum_{x \in X} |r_x|_R|x|_X. \]
	When $R$ is a non-Archimedean ring or a non-Archimedean field it could be convenient to consider the base change to the category of non-Archimedean Banach modules and hence the non-Archimedean coproduct. In this case the base change functor takes the form 
	\[ (X, |\cdot|_X) \ootimes_{\F_1} R = \{ (r_x)_{x \in X} | \forall \epsilon > 0, \#\{|r_x|_R|x|_X > \epsilon \} < \infty \}  \]
	with the norm 
	\[ \max |r_x|_R|x|_X. \] 
	These two base change functors are very different in general but their difference is irrelevant for the applications presented in this work. Indeed, the main application will be worked out for some Fr\'echet spaces which are defined as projective limits of Banach spaces and the two families of norms obtained using the max norms and the $l^1$ norms are equivalent, as it is explained in Appendix \ref{appendix:S_T}. So, the reader interested in non-Archimedean geometry and to study base changes with respect to non-Archimedean Banach rings can consider the $\max$ version of the base change functor everywhere in paper without affecting the theory developed and the final results. We refer the reader to the end of this section for further evidence that in practice the two choices are equivalent (but our choice works better for non-ultrametric rings).
\end{rmk}

\begin{prop} \label{prop:base_change_left_adjoint}
The base change functors $(-) \ootimes_{\F_1} R$ are left adjoints of the forgetful functors $\bBox_R \to \bBox_{\F_1}$ and $\bBox_R^{\le 1} \to \bBox_{\F_1}^{\le 1}$.
\end{prop}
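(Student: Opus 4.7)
The plan is to exhibit, for every semi-normed/normed/Banach $R$-module $M$, a natural bijection
\[ \Hom_{\bBox_R}((X,|\cdot|_X)\ootimes_{\F_1}R,\, M) \;\cong\; \Hom_{\bBox_{\F_1}}((X,|\cdot|_X),\, U(M)), \]
and the analogous bijection for the contracting variants. First I would construct the unit of the adjunction $\eta_X : X \to U(X \ootimes_{\F_1} R)$ by sending a point $x \in X$ (with $x \ne x_0$) to the element of ${\coprod_{y \in X}}^{\le 1}[R]_{|y|_X}$ whose only non-zero coordinate is a $1$ in the $x$-th summand, and the base point $x_0$ to $0$. By construction $|\eta_X(x)| = |x|_X$, so $\eta_X$ is contracting, and is in particular a bounded morphism of semi-normed sets.

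Next, given a bounded morphism $\phi : X \to U(M)$ with constant $C$, I would define an $R$-linear extension $\widetilde{\phi} : X\ootimes_{\F_1}R \to M$ by the formula
\[ \widetilde{\phi}\bigl((r_x)_{x\in X}\bigr) \;=\; \sum_{x\in X} r_x \cdot \phi(x). \]
This series is absolutely convergent because by the very description of the base change as $\ell^1$-summable sequences one has $\sum_{x\in X}|r_x|_R|x|_X < \infty$, and then
\[ \sum_{x\in X}|r_x \phi(x)|_M \;\le\; \sum_{x\in X}|r_x|_R\,|\phi(x)|_{U(M)} \;\le\; C\sum_{x\in X}|r_x|_R|x|_X, \]
which is finite; in the Banach case this also guarantees a well-defined limit in $M$. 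The same estimate shows that $\widetilde\phi$ is bounded with constant $C$, and contracting whenever $\phi$ is. In the semi-normed and normed cases convergence is not an issue since the sums occurring in the definition of $X\ootimes_{\F_1}R$ are already assumed absolutely summable.

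Conversely, from an $R$-linear bounded morphism $\psi : X\ootimes_{\F_1}R\to M$ I would recover a bounded morphism of semi-normed sets by the composite $\psi\circ\eta_X : X \to U(M)$. The two assignments are mutually inverse: linearity of $\widetilde\phi$ together with the fact that a general element of $X\ootimes_{\F_1}R$ is an absolutely convergent $R$-linear combination of the $\eta_X(x)$'s forces $\widetilde{\psi\circ\eta_X}=\psi$, and applying the formula defining $\widetilde\phi$ at $\eta_X(x)$ yields $\widetilde\phi\circ\eta_X=\phi$. Naturality in $X$ and $M$ is immediate from the formulas. The main technical point that needs attention is verifying that $\widetilde\phi$ lands in $M$ in the Banach case (which reduces to $\ell^1$-summability plus completeness of $M$) and, for the contracting variants $\bBox_{\F_1}^{\le 1}\to\bBox_R^{\le 1}$, tracking that all constants may be taken equal to $1$; this is exactly what the rescaling $[R]_{|x|_X}$ in the definition of the base change is designed to achieve, so no further subtlety arises.
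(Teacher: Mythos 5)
Your proof is correct and takes essentially the same route as the paper, which simply invokes the universal property of the contracting coproduct ${\coprod}^{\le 1}_{x\in X}[R]_{|x|_X}$: a bounded morphism out of it is the same thing as an equibounded family of morphisms $[R]_{|x|_X}\to M$, each determined by the image of $1$, i.e.\ a bounded map $X\to U(M)$; your unit-and-extension construction is the explicit elementwise verification of this. The only point to phrase more carefully is the semi-normed/normed (uncompleted) cases, where the reason the formula for $\widetilde\phi$ makes sense is that the uncompleted contracting coproduct consists of finitely supported families (so the sum is finite), not that an absolutely summable series converges in a possibly non-complete $M$.
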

\begin{proof}
Consider the case of a semi-normed set $(A, |\cdot|)$ and a semi-normed $R$-module $B$. By the definition of the base change, to give a morphism $A \ootimes_{\F_1} R \to B$ is equivalent to give an equibounded family of morphisms $\{ \phi_a: [R]_{|a|} \to B \}_{a \in A}$ which is equivalent to give a morphisms $A \to B$ of semi-normed sets.
\end{proof}

\begin{prop} \label{prop:base_change_exact}
	The base change functors $(-) \ootimes_{\F_1} R$ commute with monomorphisms and strict monomorphisms.
\end{prop}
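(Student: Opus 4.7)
My plan is to exploit the very explicit description of the base change together with the characterization of (strict) monomorphisms already recorded in Proposition \ref{prop:morphisms_normed_sets}. A morphism $\phi: (X,|\cdot|_X) \to (Y,|\cdot|_Y)$ in $\bBox_{\F_1}$ is a monomorphism exactly when it is injective, and a strict monomorphism exactly when it is injective and $|\cdot|_X$ is equivalent to the restriction of $|\cdot|_Y$ along $\phi$. Writing out the base change, an element of $X \ootimes_{\F_1} R$ is a family $(r_x)_{x\in X}$ with $\sum_{x \in X} |r_x|_R\,|x|_X < \infty$, and $\phi_R := \phi \ootimes_{\F_1} R$ sends $(r_x)_{x\in X}$ to the family $(s_y)_{y\in Y}$ with $s_y = \sum_{x \in \phi^{-1}(y)} r_x$.

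For the monomorphism part, assume $\phi$ is injective. Then $\phi^{-1}(y)$ has at most one element, so $s_{\phi(x)} = r_x$ for $x \in X$ and $s_y = 0$ otherwise. Hence $\phi_R$ is injective on the underlying $R$-modules, which, by the analogue of Proposition \ref{prop:morphisms_normed_sets} for the categories of semi-normed, normed, and Banach $R$-modules, is exactly the statement that $\phi_R$ is a monomorphism in $\bBox_R$. (In the Banach case there is nothing extra to check, since the $l^1$-completion is taken after the description above and injectivity is preserved by the universal property of completion applied to the closed subobject $\phi_R(X \ootimes_{\F_1} R)$.)

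For the strict monomorphism part, suppose in addition there exist $C_1, C_2 > 0$ with $C_1 |x|_X \le |\phi(x)|_Y \le C_2 |x|_X$ for every $x \in X$. Then for any $(r_x)_{x\in X} \in X \ootimes_{\F_1} R$,
\[ C_1 \sum_{x \in X} |r_x|_R\,|x|_X \;\le\; \sum_{x \in X} |r_x|_R\,|\phi(x)|_Y \;=\; \|\phi_R((r_x))\|_{Y \ootimes_{\F_1} R} \;\le\; C_2 \sum_{x \in X} |r_x|_R\,|x|_X, \]
so the norm on $X \ootimes_{\F_1} R$ is equivalent to the restriction of the norm on $Y \ootimes_{\F_1} R$ via the (already injective) map $\phi_R$. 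This is precisely the criterion for $\phi_R$ to be a strict monomorphism in $\bBox_R$.

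The main thing to be careful about is the Banach case, where base change involves completion: one has to argue that the closed image of $\phi_R$ really coincides with the completion of $X \ootimes_{\F_1} R$ mapped into $Y \ootimes_{\F_1} R$. But this is immediate, because the norm equivalence above shows that $X \ootimes_{\F_1} R$ is already complete as a subspace of $Y \ootimes_{\F_1} R$ whenever $X$ and $Y$ are Banach sets, so no further completion step is needed. All other verifications are routine and uniform in $\bBox \in \{\bSN,\bNr,\bBan\}$.
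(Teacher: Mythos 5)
Your proof is correct and follows essentially the same route as the paper: the paper simply invokes the characterization of (strict) monomorphisms in $\bBox_{\F_1}$ from Proposition \ref{prop:morphisms_normed_sets} together with the analogous characterization for $\bBox_R$ and leaves the verification implicit, which is exactly the computation you carry out explicitly on the $\ell^1$-description of the base change. Your treatment of the Banach case (the $\ell^1$-sum is already complete, and the norm equivalence makes the image of $\phi_R$ closed) is the right way to see that no completion issue arises.
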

\begin{proof}
	The proposition is a direct consequence of the characterization of monomorphisms and strict monomorphisms in $\bBox_{\F_1}$ given in Proposition \ref{prop:morphisms_normed_sets} and the characterization of monomorphisms and strict monomorphisms in $\bBox_R$ given in Proposition 3.14 of \cite{BaBe}.
\end{proof}

\begin{rmk} \label{rmk:not_commute_product}
	It is not true that the functors $(-) \ootimes_{\F_1} R$ preserve finite products therefore they are not exact functors. Whereas the next lemma shows that the functors $(-) \ootimes_{\F_1} R$ intertwine the monoidal structures of $\bBox_{\F_1}$ and $\bBox_R$. Moreover, it is easy to deduce from Proposition \ref{prop:base_change_exact} that $(-) \ootimes_{\F_1} R$ preserves equalizers.
\end{rmk}

\begin{lemma}
	Let $X, Y \in \bBox_{\F_1}$ and let $R$ be a Banach ring, then 
	\[ (X \ootimes_{\F_1} Y) \ootimes_{\F_1} R \cong (X \ootimes_{\F_1} R) \ootimes_R (Y \ootimes_{\F_1} R). \]
\end{lemma}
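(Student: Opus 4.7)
\medskip

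\noindent\textbf{Proof plan.} My approach would be to verify the isomorphism via the Yoneda lemma, showing that both sides represent the same functor $\bBox_R \to \bSets$, using the adjunctions and closed monoidal structures already established. This avoids having to manipulate the explicit $l^1$-presentation of the free modules.

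First, I would fix an arbitrary test object $M \in \bBox_R$ and compute the Hom-sets out of each side. For the left-hand side, two applications of standard adjunctions give
\[
\Hom_{\bBox_R}\bigl((X \ootimes_{\F_1} Y) \ootimes_{\F_1} R,\, M\bigr)
\;\cong\; \Hom_{\bBox_{\F_1}}\bigl(X \ootimes_{\F_1} Y,\, U(M)\bigr)
\;\cong\; \Hom_{\bBox_{\F_1}}\bigl(X,\, \ul{\Hom}_{\F_1}(Y, U(M))\bigr),
\]
where the first isomorphism uses Proposition \ref{prop:base_change_left_adjoint} (and $U$ denotes the forgetful functor to $\bBox_{\F_1}$) and the second uses the closed monoidal structure of $\bBox_{\F_1}$ from Proposition \ref{prop:monoidal_structure}. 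For the right-hand side, using first the closed monoidal structure on $\bBox_R$ and then Proposition \ref{prop:base_change_left_adjoint} twice, I get
\[
\Hom_{\bBox_R}\bigl((X \ootimes_{\F_1} R) \ootimes_R (Y \ootimes_{\F_1} R),\, M\bigr)
\;\cong\; \Hom_{\bBox_{\F_1}}\bigl(X,\, U(\ul{\Hom}_R(Y \ootimes_{\F_1} R,\, M))\bigr).
\]

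The key intermediate step is then to identify the underlying semi-normed set $U(\ul{\Hom}_R(Y \ootimes_{\F_1} R, M))$ with $\ul{\Hom}_{\F_1}(Y, U(M))$. This is an instance of the universal property of the free $R$-module on $Y$: a bounded $R$-linear map $Y \ootimes_{\F_1} R \to M$ is uniquely determined by its restriction to the generators $Y$, which is precisely a bounded map $Y \to U(M)$; moreover, comparing the definitions of the sup-norms on both sides, the bijection preserves norms because the norm of an $R$-linear map on the $l^1$-coproduct $\coprod^{\le 1}_{y \in Y}[R]_{|y|_Y}$ is exactly the supremum of $|f(y)|/|y|_Y$.

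With this identification in hand, both sides represent the same functor on $\bBox_R$, naturally in $M$, and the isomorphism follows from the Yoneda lemma. The only mildly delicate point---and the step I would be most careful with---is keeping track of the semi-norm (as opposed to just the set) on $\ul{\Hom}_R(Y \ootimes_{\F_1} R, M)$ when forgetting the $R$-structure: one must verify that the sup-norm computed on bounded $R$-linear maps out of the free module coincides with the sup-norm on bounded maps of semi-normed sets from $Y$. The same proof applies verbatim in the contracting categories $\bBox_{\F_1}^{\le 1}$ and $\bBox_R^{\le 1}$.
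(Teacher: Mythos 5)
Your argument is correct, but it takes a different route from the paper. The paper proves the lemma by direct computation: it writes both sides as contracting ($\ell^1$-)coproducts, uses that $(-)\ootimes_R(-)$ commutes with contracting coproducts to reduce to rank-one pieces, and finishes with the elementary isometry $[R]_{|x|_X} \ootimes_R [R]_{|y|_Y} \cong [R]_{|x|_X|y|_Y}$ given by the codiagonal. You instead show both sides corepresent the same functor on $\bBox_R$ and invoke Yoneda, feeding the adjunction of Proposition \ref{prop:base_change_left_adjoint} into the closed monoidal structures of $\bBox_{\F_1}$ (Proposition \ref{prop:monoidal_structure}) and of $\bBox_R$. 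The two proofs rest on essentially equivalent background facts — commuting $\ootimes_R$ with contracting coproducts is just the left-adjoint property of the tensor that your internal-hom adjunction also encodes — but yours correctly isolates the one genuinely new ingredient, namely the \emph{enriched} form of the base-change adjunction: the identification $U(\ul{\Hom}_R(Y \ootimes_{\F_1} R, M)) \cong \ul{\Hom}_{\F_1}(Y, U(M))$ as semi-normed sets, not merely as Hom-sets, which is where your norm computation on the $\ell^1$-coproduct does the work (as you note, the operator norm of an $R$-linear map out of $\coprod^{\le 1}_{y \in Y}[R]_{|y|_Y}$ is $\sup_y |f(e_y)|/|y|_Y$, up to the harmless factor $|1_R|$ — the same normalization implicitly used in the paper's isometry). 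Your approach is more formal and transfers verbatim to the Ind and bornological settings, while the paper's computation is shorter and makes the isometric content explicit; both are valid, and your version applies in the contracting categories as well, with the caveat that exact isometry there requires $|1_R|=1$.
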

\begin{proof}
	Applying the definitions 
	\[ (X \ootimes_{\F_1} Y) \ootimes_{\F_1} R = \underset{(x,y) \in X \times Y}{{\coprod}^{\le 1}} [R]_{|(x,y)|_{X \times Y}} \]
	and
	\[ (X \ootimes_{\F_1} R) \ootimes_R (Y \ootimes_{\F_1} R) = ({\coprod_{x \in X}}^{\le 1} [R]_{|x|_X}) \ootimes_R ({\coprod_{y \in Y}}^{\le 1}  [R]_{|y|_Y}) \]
	and since $(-)\ootimes_R(-)$ commutes with contracting coproducts one gets that
	\[ (X \ootimes_{\F_1} R) \ootimes_R (Y \ootimes_{\F_1} R) \cong {\coprod_{x \in X}}^{\le 1} {\coprod_{y \in Y}}^{\le 1}( [R]_{|x|_X} \ootimes_R  [R]_{|y|_Y}). \]
	It is elementary to check that there is a canonical isometry
	\[ [R]_{|x|_X} \ootimes_R [R]_{|y|_Y} \to  [R]_{|x|_X |y|_Y} \]
	given by the codiagonal morphism.
\end{proof}

The base change functors can be defined also for the ind and bornological categories in the following way.

\begin{defn} \label{defn:born_base_change}
	Let $X \in \bInd(\bBox_{\F_1})$ or $X \in \bInd^m(\bBox_{\F_1})$ and let us write $X \cong \underset{i \in I}{``\limind"} X_i$. Then, for any Banach ring $R$ we define the \emph{base change of $X$ to $R$} as
	\[ X \ootimes_{\F_1} R = \underset{i \in I}{``\limind"} (X_i \ootimes_{\F_1} R) \in \bInd(\bBox_{R}). \]
\end{defn}

Notice that the base change functors $(-)\ootimes_{\F_1} R$ are well-defined also for the category $\bInd^m(\bBox_{\F_1})$ because they preserve monomorphisms (cf. Proposition \ref{prop:base_change_exact}).

\begin{prop} \label{prop:base_change_exact_born}
	The base change functors $(-) \ootimes_{\F_1} R: \bInd(\bBox_{\F_1}) \to \bInd(\bBox_{R})$ and $(-) \ootimes_{\F_1} R: \bInd^m(\bBox_{\F_1}) \to \bInd^m(\bBox_{R})$ are left adjoints to the forgetful functors and they preserve monomorphisms and strict monomorphisms.
\end{prop}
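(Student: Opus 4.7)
The plan is to reduce everything to the non-ind statements already proved, using the universal property of ind-categories together with the way $(-)\ootimes_{\F_1}R$ was defined on $\bInd(\bBox_{\F_1})$ as the canonical filtered-colimit extension.

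First, for the adjunction, I would use the explicit description of morphisms in ind-categories: for $X \cong ``\limind_{i \in I}" X_i$ in $\bInd(\bBox_{\F_1})$ and $Y \cong ``\limind_{j \in J}" Y_j$ in $\bInd(\bBox_{R})$,
\[ \Hom_{\bInd(\bBox_R)}(X \ootimes_{\F_1} R, Y) \;=\; \limpro_{i \in I} \limind_{j \in J} \Hom_{\bBox_R}(X_i \ootimes_{\F_1} R, Y_j). \]
By Proposition \ref{prop:base_change_left_adjoint} applied levelwise,
\[ \Hom_{\bBox_R}(X_i \ootimes_{\F_1} R, Y_j) \cong \Hom_{\bBox_{\F_1}}(X_i, U(Y_j)), \]
where $U$ is the forgetful functor. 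Reassembling the double limit yields $\Hom_{\bInd(\bBox_{\F_1})}(X, U(Y))$, where $U$ on ind-categories is again the filtered colimit of the forgetful functor. This gives the desired adjunction on $\bInd(\bBox_{\F_1})$. For the essentially monomorphic version $\bInd^m(\bBox_{\F_1})$, I need to check that the base change of an essentially monomorphic system is again essentially monomorphic; this is immediate from Proposition \ref{prop:base_change_exact}, which ensures that applying $(-) \ootimes_{\F_1} R$ to a monomorphic transition map yields a monomorphic transition map in $\bBox_R$. The same computation of Hom sets then restricts, and since $\bInd^m$ is a full subcategory, the adjunction is inherited.

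Next, for preservation of monomorphisms and strict monomorphisms: by Proposition \ref{prop:morphisms_ind_normed_sets}, a monomorphism (respectively strict monomorphism) in $\bInd(\bBox_{\F_1})$ or $\bInd^m(\bBox_{\F_1})$ can be represented as a filtered colimit of monomorphisms (respectively strict monomorphisms) in $\bBox_{\F_1}$. By definition, $(-) \ootimes_{\F_1} R$ commutes with such filtered colimits, and by Proposition \ref{prop:base_change_exact} it carries each monomorphism (respectively strict monomorphism) in $\bBox_{\F_1}$ to one in $\bBox_R$. Applying Proposition \ref{prop:morphisms_ind_normed_sets} in the opposite direction, the resulting ind-morphism is again a monomorphism (respectively strict monomorphism).

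The only delicate point — and the one I expect to require the most care — is the bookkeeping needed to pass between $\bInd$ and $\bInd^m$: one must know that the base change genuinely lands in $\bInd^m(\bBox_{R})$ when starting from $\bInd^m(\bBox_{\F_1})$, which again reduces to the fact that $(-) \ootimes_{\F_1} R$ preserves monomorphisms on the underlying category. Everything else is a formal consequence of the two inputs used, namely Proposition \ref{prop:base_change_left_adjoint} and Proposition \ref{prop:base_change_exact}, combined with the standard extension properties of ind-categories.
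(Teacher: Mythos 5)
Your proposal is correct, and it takes the route the paper intends: the paper's own proof is just the remark that the argument is straightforward, and your write-up supplies exactly the expected details, namely the levelwise adjunction (Proposition \ref{prop:base_change_left_adjoint}) transported through the ind-category Hom formula, the preservation of essential monomorphy via Proposition \ref{prop:base_change_exact} (which the paper itself already invokes right after Definition \ref{defn:born_base_change}), and the characterization of (strict) monomorphisms in Proposition \ref{prop:morphisms_ind_normed_sets} applied in both directions. No gaps; this is a faithful expansion of the paper's ``straightforward'' proof.
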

\begin{proof}
	The proof is straightforward.
\end{proof}

\begin{rmk}
	If $M$ is a (semi)-normed or bornological monoid then $M \wotimes_{\F_1} R$ has a natural structure of (semi)-normed or bornological ring over $R$ give by point-wise multiplication. Indeed, one can think to the ring $M \wotimes_{\F_1} R$ as the $\ell^1$-completion of the monoid ring $R[M]$, with respect to the norm
	\[ |\sum_{m \in M} r_m m | = \sum_{m \in M} |r_m|_R|m|_M. \]	
\end{rmk}

The next lemma is the analogue of Lemma \ref{lemma:product_born_sets} for computing products of bornological $R$-modules.

\begin{lemma} \label{lemma:calc_proj_limit_R}
	Let $\{ M_i \}_{i \in I}$ be a family of objects of $\bInd(\bBox_R)$. We write
	\[ M_i = ``\limind_{j \in J_i}" M_{i, j}. \]
	Then, we have the isomorphism
	\[ \prod_{i \in I} M_i \cong ``\limind_{\phi \in \Phi}" M_\phi \]
	where $\phi = (\phi_1, \phi_2)$ is an element of the set of functions
	\[ \Phi = \{ (\phi_1, \phi_2) | \phi_1: I \to \prod_{i \in I} J_i, \phi_i(i) \in J_i, \phi_2: I \to \N_{\ge 1}  \} \]
	equipped with the partial order
	\[ (\phi_1, \phi_2) \le (\phi_1', \phi_2') \iff \phi_1(i) \le \phi_1'(i), \phi_2(i) \le \phi_2'(i), \forall i \in I,  \]
	and 
	\[ M_\phi = \l \{ (x_i) \in \prod_{i \in I} M_{i, \phi_1(i)} | \frac{|x_i|}{\phi_2(i)} \text{ is bounded}  \r \}. \]
\end{lemma}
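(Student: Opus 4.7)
The plan is to mirror the argument of Lemma \ref{lemma:product_born_sets}, transposing it from $\bBox_{\F_1}$ to $\bBox_R$. The key observation is that, for a fixed $\phi \in \Phi$, the object $M_\phi$ is precisely the contracting product $\prod_{i \in I}^{\le 1} [M_{i,\phi_1(i)}]_{1/\phi_2(i)}$ of the rescaled $R$-modules, equipped with the natural $\sup$-norm $|(x_i)|_\phi = \sup_{i \in I} |x_i|/\phi_2(i)$. Since the contracting category $\bBox_R^{\le 1}$ is complete and cocomplete, this contracting product exists and satisfies the universal property with respect to equibounded families whose $i$-th component has norm bounded by $\phi_2(i)$.

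First I would verify that the assignment $\phi \mapsto M_\phi$ is functorial in the partial order on $\Phi$: if $(\phi_1,\phi_2) \le (\phi_1',\phi_2')$, the system morphisms $M_{i,\phi_1(i)} \to M_{i,\phi_1'(i)}$ of the ind-presentation of $M_i$, combined with the rescaling $1/\phi_2(i) \le 1/\phi_2'(i)$ wait no, we need $\phi_2(i) \le \phi_2'(i)$ so $1/\phi_2'(i) \le 1/\phi_2(i)$, giving a contracting (hence bounded) transition map $M_\phi \to M_{\phi'}$ after re-examination of signs; the point is that enlarging $\phi_2$ only allows more bounded sequences, so the inclusion $M_\phi \hookrightarrow M_{\phi'}$ is bounded. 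This exhibits $``\limind_{\phi \in \Phi}" M_\phi$ as a legitimate object of $\bInd(\bBox_R)$ (and of $\bInd^m(\bBox_R)$, since all transition maps are injective).

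Next I would check the universal property. Given $N \in \bInd(\bBox_R)$ with a compatible family of morphisms $f_i: N \to M_i$, for each bounded representative $N_k$ of $N$ the map $f_i$ restricted to $N_k$ factors (by the definition of morphisms in the ind-category) through some $M_{i,j_i(k)}$; this defines the component $\phi_1(i) = j_i(k)$. Boundedness of $f_i$ on $N_k$ yields a constant $C_i \in \R_{>0}$ with $|f_i(x)| \le C_i |x|$; choosing $\phi_2(i) \in \N_{\ge 1}$ with $C_i \le \phi_2(i)$, the rescaled maps $\frac{1}{\phi_2(i)} f_i: N_k \to [M_{i,\phi_1(i)}]_{1/\phi_2(i)}$ form an equicontracting family, so they factor uniquely through the contracting product $M_\phi$. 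Taking the colimit over $k$ and the corresponding $\phi$ produces the desired factorization $N \to ``\limind_{\phi \in \Phi}" M_\phi$, and uniqueness is forced by the universal property of each contracting product $M_\phi$.

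The only real subtlety, and the step I would treat with most care, is the coherent choice of $\phi_1$ and $\phi_2$ across the filtered system defining $N$: one must verify that the choices can be made compatibly so that the induced map to the ind-object is well-defined as a morphism in $\bInd(\bBox_R)$. This is handled by the cofinality of the ``all sufficiently large $\phi_2$'' subset of $\Phi$ and by passing to a cofinal subsystem of $N$ if necessary; the concrete description of morphisms in ind-categories (\cf Proposition \ref{prop:morphisms_ind_normed_sets} and the analogous facts for $\bInd(\bBox_R)$ used in \cite{BaBe}) makes this rigorous. Apart from this indexing issue the argument is formally identical to the $\F_1$-case, and Remark \ref{rmk:product_born_sets} applies mutatis mutandis to confirm that the same formula computes products in $\bInd^m(\bBox_R)$.
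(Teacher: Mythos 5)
Your proposal is correct and follows essentially the same route as the paper: the paper's proof also identifies $M_\phi$ with the contracting product of the rescaled modules $[M_{i,\phi_1(i)}]_{1/\phi_2(i)}$, invokes the universal property of the contracting product for equibounded families, and passes to the colimit over $\phi$ to recover the universal property of the product for arbitrary bounded families (the paper simply says the proof of Lemma \ref{lemma:product_born_sets} adapts). Your extra care about factoring through terms of the ind-presentations and choosing $\phi_1,\phi_2$ coherently just fills in details the paper leaves implicit.
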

\begin{proof}
	The proof of Lemma \ref{lemma:product_born_sets} easily adapts.
\end{proof}

In the remaining of this section we introduce another kind of base changes functors which will be useful later on. Although we will use the notation $\ootimes_{\F_1}^{\sup}$ for this functor, we do not give to it a geometric interpretation and we will use it just for simplifying the notation that will be used later on in computations.

\begin{defn} \label{defn:sup_base_change}
	Let $M$ be a semi-normed set, we define the \emph{sup-base change over $\F_1$} of $M$ to a Banach ring $R$ as
	\[ M \ootimes_{\F_1}^{\sup} R \doteq {\prod_{m \in M}}^{\le 1} [R]_{|m|}, \]
	where the contracting product is computed in the category of semi-normed $R$-modules. 
\end{defn}

The definition of the sup-base change can be extended immediately to the categories $\bInd(\bBox_{\F_1})$ using just the functoriality of the category of $\bInd$-objects. Hence, for example, if $M = \underset{i \in I}{``\limind"} M_i$ we have that
\[ (\underset{i \in I}{``\limind"} M_i)\ootimes_{\F_1}^{\sup} R = \underset{i \in I}{``\limind"} M_i\ootimes_{\F_1}^{\sup} R \]
therefore $(\cdot)\ootimes_{\F_1}^{\sup} R$ commutes with filtered direct limits. It is also easy to verify that $(\cdot)\ootimes_{\F_1}^{\sup} R$ defines a functor between the categories of bornological objects because ${\underset{m \in M}\prod}^{\le 1}$ is a left-exact functor and hence it preserves monomorphisms. We state the following proposition for future references.

\begin{prop} \label{prop:sup_base_change}
	The sup-base change functor of bornological sets commutes with filtered colimits and with all limits of normed sets.
\end{prop}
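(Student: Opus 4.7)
The plan is to handle commutation with filtered colimits and commutation with limits separately, exploiting the fact that the sup-base change is defined as a contracting product, which is itself a limit. Commutation with filtered colimits will be essentially built-in from the extension of the functor to ind-categories; commutation with limits will follow from a Fubini-type argument, the main content being a term-by-term identification inside a formal ind-colimit.

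For filtered colimits, the commutation is immediate from the definition of the extension of $(-)\ootimes_{\F_1}^{\sup} R$ to $\bInd(\bBox_{\F_1})$. Any filtered colimit in $\bInd^m(\bBox_{\F_1})$ is, up to isomorphism, a formal ind-colimit $``\limind_i" M_i$ with monomorphic transition maps, and by construction
\[
\big(``\limind_i" M_i\big) \ootimes_{\F_1}^{\sup} R \,=\, ``\limind_i"\, \big(M_i \ootimes_{\F_1}^{\sup} R\big).
\]
Since the sup-base change preserves monomorphisms (the contracting product ${\prod}^{\le 1}$ being left exact, as noted just before the statement), the resulting ind-system lies in $\bInd^m(\bBox_R)$ and computes the corresponding filtered colimit there.

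For limits, my plan is to reduce to products and equalizers. The equalizer case is easy: an equalizer of $f,g: M \rightrightarrows N$ of normed sets is a strict monomorphism into $M$, and $(-)\ootimes_{\F_1}^{\sup} R$ preserves such strict monomorphisms by left exactness of ${\prod}^{\le 1}$. For products of a family $\{M_i\}_{i \in I}$ of normed sets, I invoke Lemma \ref{lemma:product_born_sets} to write
\[
\prod_{i \in I} M_i \,\cong\, ``\limind_{\phi \in \Phi}" M_\phi \quad \text{with} \quad M_\phi \,=\, {\prod_{i \in I}}^{\le 1} \big[M_{i,\phi_1(i)}\big]_{1/\phi_2(i)}.
\]
Applying the already-established commutation with filtered colimits gives
\[
\Big(\prod_{i \in I} M_i\Big) \ootimes_{\F_1}^{\sup} R \,\cong\, ``\limind_{\phi \in \Phi}"\, \big(M_\phi \ootimes_{\F_1}^{\sup} R\big),
\]
and it remains to identify each $M_\phi \ootimes_{\F_1}^{\sup} R$ with the $\phi$-th term of the description of $\prod_{i \in I}(M_i \ootimes_{\F_1}^{\sup} R)$ provided by Lemma \ref{lemma:calc_proj_limit_R}.

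The main obstacle is this last term-by-term identification. It amounts to unfolding $M_\phi \ootimes_{\F_1}^{\sup} R$ using the definition of the sup-base change, applying the associativity of iterated contracting products (a Fubini-type identity for ${\prod}^{\le 1}$), and verifying compatibility between the rescaling functor $[\,\cdot\,]_r$ and the sup-base change. Each ingredient is formal, but the bookkeeping of the rescaling factors $\phi_2(i)$ and the norms $|\cdot|_{M_{i,\phi_1(i)}}$ must be executed carefully; with that identification in place everything else is automatic from the two lemmas cited.
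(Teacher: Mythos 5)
Your proposal is correct and follows essentially the same route as the paper: filtered colimits are handled by the definition of the extension to ind-objects together with left exactness of ${\prod}^{\le 1}$, and products of normed sets are treated via Lemma \ref{lemma:product_born_sets}, commutation with the resulting filtered colimit, and a Fubini-type exchange of contracting products with the rescalings $\frac{1}{\phi_2(i)}$. The only difference is that the paper actually carries out the final identification you defer, writing $({\prod_{i \in I}}^{\le 1}[M_i]_{\frac{1}{\phi_2(i)}})\ootimes_{\F_1}^{\sup} R \cong {\prod_{i \in I}}^{\le 1}\bigl[{\prod_{m \in M_i}}^{\le 1}[R]_{|m|}\bigr]_{\frac{1}{\phi_2(i)}}$ explicitly and then recognizing the ind-system as the product description of $\prod_{i \in I}(M_i \ootimes_{\F_1}^{\sup} R)$.
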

\begin{proof}
	The first claim has been already discussed so far together with the fact that $(\cdot)\ootimes_{\F_1}^{\sup} R$ is a left-exact functor. It remains to show that $(\cdot)\ootimes_{\F_1}^{\sup} R$ commutes with products of normed sets. Let $\{M_i \}_{i \in I}$ be a family of normed sets. By Lemma \ref{lemma:product_born_sets} 
	\[ \prod_{i \in I} M_i \cong ``\limind_{\phi \in \Phi}" M_\phi = ``\limind_{\phi \in \Phi}" {\prod_{i \in I}}^{\le 1} [M_i]_{\frac{1}{\phi_2(i)}} \]
	where the symbols $\phi, \Phi, \phi_2$ ($\phi_1$ in this case is the identity map and we omit it) have the same meaning as in Lemma \ref{lemma:product_born_sets}. Then
	\[ (\prod_{i \in I} M_i) \ootimes_{\F_1}^{\sup} R \cong ``\limind_{\phi \in \Phi}" ({\prod_{i \in I}}^{\le 1} [M_i]_{\frac{1}{\phi_2(i)}} \ootimes_{\F_1}^{\sup} R ) \]
	because we have already shown that the sup-base change commutes with filtered direct limits. Thus
	\[ ``\limind_{\phi \in \Phi}" ({\prod_{i \in I}}^{\le 1} [M_i]_{\frac{1}{\phi_2(i)}} \ootimes_{\F_1}^{\sup} R ) \cong ``\limind_{\phi \in \Phi}" (\underset{m \in \underset{i \in I}{{\prod}^{\le 1}} [M_i]_{\frac{1}{\phi_2(i)}}}{{\prod}^{\le 1}} [R]_{|m|} ) \cong \]
	\[ \cong  ``\limind_{\phi \in \Phi}" (\underset{i \in I}{{\prod}^{\le 1}} \left [ \underset{m \in M_i}{{\prod}^{\le 1}} [R]_{|m|} \right ]_{\frac{1}{\phi_2(i)}} )  \cong \prod_{i \in I} \underset{m \in M_i}{{\prod}^{\le 1}} [R]_{|m|} \cong \prod_{i \in I} M_i \ootimes_{\F_1}^{\sup} R. \]
\end{proof}

It is not clear if the functor $(\cdot)\ootimes_{\F_1}^{\sup} R$ commutes with general products of bornological sets. Probably it will not. 

\begin{rmk} \label{rmk:nuclear}
	There exist some very special bornological sets $M$ for which $M \ootimes_{\F_1} R \cong M \ootimes_{\F_1}^{\sup} R$ for any Banach ring $R$. For example, this is the case when $M$ is a finite set and hence $M \ootimes_{\F_1} R$ is a finite free Banach module. Later on we will describe more interesting examples of such kind of spaces which should be thought as an analogue over $\F_1$ of the nuclear spaces of functional analysis. 
\end{rmk}

The last remark motivates the next definition.

\begin{defn} \label{defn:nuclear_set}
	Let $M$ be a bornological set such that $M \ootimes_{\F_1} R \cong M \wotimes_{\F_1}^{\sup} R$ for all Banach rings $R$, then $M$ is called \emph{nuclear}.
\end{defn}

Finally, it is clear that if $M$ is a (semi-)normed or bornological monoid then $M \ootimes_{\F_1}^{\sup} R$ is not in general a ring because one cannot make sense of the multiplication of elements. Therefore $(\cdot) \ootimes_{\F_1}^{\sup} R$ does induces a ``geometric" functor $\bComm(\bBox_{\F_1}) \to \bComm(\bBox_R)$ as the $\ell^1$-base changes discussed so far do.

\section{Simplicial bornological modules} \label{sec:simplicial_module}

For the purpose of using homological/homotopical methods on the categories introduced so far we need to discuss the categories of simplicial objects on them. Here we bound ourselves in discussing the basic properties of these categories. More detailed studies will come in the future works \cite{BeKr2} and \cite{BeKr3}.

\begin{defn} \label{defn:bornological_simplicial_sets}
We use the following notation (recall from Notation \ref{notation:box} that $\bTri$ means either a Banach ring or $\F_1$):
\begin{itemize}
\item the categories of covariant functors $\Delta^\op \to \bBox_\bTri$, \ie \emph{simplicial semi-normed (resp. normed, resp. Banach) modules} are denoted $\bSimp(\bBox_\bTri)$;
\item the categories of covariant functors $\Delta^\op \to \bBox_\bTri^{\le 1}$, \ie \emph{simplicial semi-normed (resp. normed, resp. Banach) modules with contracting morphisms} are denoted $\bSimp(\bBox_\bTri^{\le 1})$;
\item the categories of covariant functors $\Delta^\op \to \bInd(\bBox_\bTri)$, \ie \emph{simplicial ind-semi-normed (resp. ind-normed, resp. ind-Banach) modules} are denoted $\bSimp(\bInd(\bBox_\bTri))$;
\item the categories of covariant functors $\Delta^\op \to \bInd^m(\bBox_\bTri)$, \ie \emph{simplicial bornological (resp. separated bornological, resp. complete bornological) modules} are denoted $\bSimp(\bInd^m(\bBox_\bTri))$.
\end{itemize}
\end{defn}

In order to avoid smallness issues we fix two Grothendieck universes $U \subset V$, with strict inclusion, so that all the categories of normed, bornological and ind-normed sets are locally small in the universe $V$. 

We recall the following definition from \cite{CH}.

\begin{defn} \label{defn:projective_class}
A projective class on a category $\cC$ is a collection $\sP$ of objects of $\cC$ and a collection $\sE$ of maps in $\cC$ such that
\begin{enumerate}
\item $\sE$ is precisely the collection of maps $X \to Y$ such that 
\[ \Hom_{\cC}(P, X) \to \Hom_{\cC}(P, Y) \]
is surjective for all $X, Y \in \cC$ and $P \in \sP$, and $\sP$ is precisely the collection of objects for which that happens;
\item for each $X \in \cC$ there is a map in $(P \to X ) \in \sE$ for which $P \in \sP$.
\end{enumerate}
\end{defn}

\begin{prop} \label{prop:projective_class}
The collection of all objects and strict epimorphisms is a projective class for $\bBox_{\F_1}$.
\end{prop}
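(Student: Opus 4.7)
The plan is to verify the two conditions of Definition \ref{defn:projective_class} directly, using the concrete description of strict epimorphisms in $\bBox_{\F_1}$ provided by Proposition \ref{prop:morphisms_normed_sets}: a morphism $f: X \to Y$ is a strict epimorphism if and only if it is surjective and the quotient (semi-)norm on $Y$ induced by $f$, namely $|y|_{q} = \inf_{x \in f^{-1}(y)} |x|_X$, is equivalent to the given (semi-)norm of $Y$. Condition (2) of Definition \ref{defn:projective_class} is trivially satisfied because $\id_X : X \to X$ is a strict epimorphism from an object of the class for every $X$.

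For condition (1), since $\sP$ is taken to be the collection of all objects, it suffices to show that a morphism $f:X\to Y$ is a strict epimorphism if and only if $\Hom_{\bBox_{\F_1}}(P, X) \to \Hom_{\bBox_{\F_1}}(P, Y)$ is surjective for every $P \in \bBox_{\F_1}$; the other half (that every object is projective with respect to strict epimorphisms) is then immediate. For the forward direction, let $f$ be strict epi, so there exists $C > 0$ with $|y|_q \leq C |y|_Y$ for all $y \in Y$. Given a bounded $g:P\to Y$, I use the axiom of choice to pick, for each $y \in Y$, an element $x_y \in f^{-1}(y)$ with $|x_y|_X \leq 2|y|_q \leq 2C|y|_Y$, taking $x_{y_0} = x_0$ for the base point. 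Setting $\tilde g(p) := x_{g(p)}$ defines a map of pointed sets satisfying $f \circ \tilde g = g$ and $|\tilde g(p)|_X \leq 2C\|g\||p|_P$, hence a lift in $\bBox_{\F_1}$.

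For the converse, suppose $f$ has the lifting property for all objects; apply it with $P = Y$ and $g = \id_Y$ to obtain a bounded section $s : Y \to X$ of $f$, which is automatically pointed. This already forces $f$ to be surjective. Moreover, $|y|_q \leq |s(y)|_X \leq \|s\||y|_Y$, and the trivial bound $|y|_Y = |f(x)|_Y \leq \|f\||x|_X$ valid for every $x \in f^{-1}(y)$ yields $|y|_Y \leq \|f\||y|_q$. Thus $|\cdot|_q$ is equivalent to $|\cdot|_Y$, so $f$ is a strict epimorphism by Proposition \ref{prop:morphisms_normed_sets}.

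There is essentially no substantive obstacle: the argument is a direct unwinding of definitions together with the characterization of strict epimorphisms. The only point worth checking is that one does not need completeness of anything in the Banach case $\bBox = \bBan$, because the lift $\tilde g$ is defined pointwise via choice of representatives and involves no limiting procedure; hence the proof proceeds uniformly in the three cases $\bBox \in \{\bSN, \bNr, \bBan\}$.
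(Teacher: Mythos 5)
Your argument is essentially the paper's own proof: condition (2) is dismissed as trivial, one half of condition (1) is obtained by testing against $P=Y$ to extract a bounded section, and the other half by a pointwise (axiom-of-choice) selection of preimages; you merely make the norm estimates explicit, which is a small improvement over the paper's terse ``the fact that $h$ is bounded is ensured by the fact that $f$ is a strict epimorphism''. The one point to flag is your final uniformity claim over $\bBox\in\{\bSN,\bNr,\bBan\}$: the choice of $x_y$ with $|x_y|_X\le 2|y|_q$ presupposes $|y|_q>0$ for $y\ne y_0$. For $\bBox=\bNr$ or $\bBan$ this is automatic, since equivalence of the quotient norm with $|\cdot|_Y$ gives $|y|_q\ge |y|_Y/C'>0$, so your proof is complete there. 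In the semi-normed case, however, a strict epimorphism can have a point $y\ne y_0$ with $|y|_Y=|y|_q=0$ whose fibre contains no element of seminorm $0$: take $X=\{x_0\}\cup\{a_n\}_{n\in\N}$ with $|a_n|=1/n$, $Y=\{y_0,y\}$ with $|y|_Y=0$, and $f(a_n)=y$; this $f$ is surjective with quotient seminorm equal (hence equivalent) to $|\cdot|_Y$, so it is a strict epimorphism, yet it admits no bounded section, so the lifting property already fails against $P=Y$. So the pointwise selection cannot be repaired in the $\bSN$ case, and the statement itself is problematic there; to be fair, the paper's proof glosses over exactly the same issue, so your write-up is no less rigorous than the original --- it just should not claim that the three cases are uniform.
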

\begin{proof}
Let $f: X \to Y$ be a morphism of $\bBox_{\F_1}$ such that
\[ \Hom_\cC(P, X) \stackrel{f_*}{\to} \Hom_\cC(P, Y) \]
is surjective for all $P \in \bBox_{\F_1}$. In particular, considering $P = Y$ we get that there exists a morphism $g: Y \to X$ such that $f \circ g = \Id_Y$, hence $f$ has a section and in particular it is a strict epimorphism. On the other hand, if $f$ is a strict epimorphism given any $g: P \to Y$ we can define a map $h: P \to X$ such that $f \circ h = g$ by defining $h(p) = x \in f^{-1}(g(p))$, for any such possible choice of pre-images. The fact that $h$ is always a bounded morphism is ensured by the fact that $f$ is a strict epimorphism. The second condition of the definition of projective class is trivial in this case.
\end{proof}

\begin{cor}
The collection of all objects and strict epimorphisms is a projective class also for $\bBox_{\F_1}^{\le 1}$, $\bInd(\bBox_{\F_1})$ and $\bInd^m(\bBox_{\F_1})$.
\end{cor}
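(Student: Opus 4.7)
My strategy is to mimic the proof of Proposition~\ref{prop:projective_class} in each of the three new categories, reducing via the characterizations of strict epimorphisms provided in Propositions~\ref{prop:morphisms_contracting_normed_sets} and~\ref{prop:morphisms_ind_normed_sets}. In every case condition~(2) of Definition~\ref{defn:projective_class} holds trivially: for any object $X$ the identity $\Id_X$ is a strict epimorphism with source in the proposed projective class. So the entire content lies in verifying condition~(1), that every strict epimorphism admits lifts against every object and, conversely, that any map with the lifting property must be a strict epimorphism (the latter following at once from the $P=Y$, $g=\Id_Y$ case, which produces a section).

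For $\bBox_{\F_1}^{\le 1}$, I would take a strict epimorphism $f\colon X\to Y$ and any contracting $g\colon P\to Y$ and construct a lift $h\colon P\to X$ by pointwise selection of preimages $h(p)\in f^{-1}(g(p))$. The characterization in Proposition~\ref{prop:morphisms_contracting_normed_sets}(5), namely that the quotient seminorm induced by $f$ on $Y$ is identical to the original seminorm, is what provides the exact room needed: for each $p\in P$ one has
\[ \inf\{|x|_X : x\in f^{-1}(g(p))\} = |g(p)|_Y \le |p|_P, \]
which lets us pick $h(p)$ with $|h(p)|_X\le |p|_P$ and makes $h$ contracting.

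For $\bInd(\bBox_{\F_1})$ and $\bInd^m(\bBox_{\F_1})$, I would invoke Proposition~\ref{prop:morphisms_ind_normed_sets} to represent any strict epimorphism $f\colon X\to Y$ as a filtered colimit of strict epimorphisms $f_i\colon X_i\to Y_i$ in $\bBox_{\F_1}$. Given $P\cong{``\limind_j"}P_j$ and a morphism $g\colon P\to Y$, the standard description of ind-morphisms lets me factor each component $P_j\to Y$ through some $Y_{i(j)}$. Proposition~\ref{prop:projective_class} then produces level-wise lifts $h_j\colon P_j\to X_{i(j)}$; after passing to a suitable cofinal reindexing, these assemble into a morphism $h\colon P\to X$ with $f\circ h=g$. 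The same argument restricts to $\bInd^m$, since the inclusion $\bInd^m(\bBox_{\F_1})\rhook\bInd(\bBox_{\F_1})$ is full and the construction stays within essentially monomorphic systems.

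I expect the main obstacle to be the compatibility of the level-wise lifts in the ind-categorical step: one typically needs to replace the indexing by a cofinal subsystem to ensure that the maps $h_j$ are compatible with the transition maps of $P$ and $X$. This is routine but bookkeeping-intensive. The contracting case is essentially identical to the bounded case; the only nuance is that the \emph{equality} of norms in Proposition~\ref{prop:morphisms_contracting_normed_sets}(5) (rather than the mere equivalence in Proposition~\ref{prop:morphisms_normed_sets}(5)) is exactly what makes the pointwise preimage selection contracting rather than merely bounded.
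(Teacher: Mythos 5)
Your overall strategy is the same as the paper's: reduce everything to Proposition \ref{prop:projective_class}, handle $\bBox_{\F_1}^{\le 1}$ by the same pointwise choice of preimages with contracting maps in place of bounded ones, and handle the ind-categories by representing a strict epimorphism level-wise as strict epimorphisms in $\bBox_{\F_1}$ (Proposition \ref{prop:morphisms_ind_normed_sets}) and lifting level by level; for $\bInd^m(\bBox_{\F_1})$ the paper appeals to concreteness where you appeal to fullness of the inclusion, but the substance of the reduction is identical.

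There is, however, a genuine problem with the one step you make explicit in the contracting case. From $\inf\{|x|_X : x\in f^{-1}(g(p))\}=|g(p)|_Y\le |p|_P$ you cannot in general choose $h(p)$ with $|h(p)|_X\le |p|_P$: the infimum defining the quotient seminorm need not be attained, and the failure occurs exactly when $|g(p)|_Y=|p|_P$. Concretely, take $Y=\{y_0,y\}$ with $|y|_Y=1$ and $X=\{x_0,x_1,x_2,\ldots\}$ with $|x_n|_X=1+\tfrac{1}{n}$, $f(x_n)=y$; this $f$ is contracting, surjective, and the quotient norm is identical to the norm of $Y$, hence $f$ is a strict epimorphism by Proposition \ref{prop:morphisms_contracting_normed_sets}, yet already $g=\Id_Y$ admits no contracting lift since every preimage of $y$ has norm strictly bigger than $1$. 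In the bounded setting of Proposition \ref{prop:projective_class} the multiplicative constant implicit in the equivalence of norms provides the slack needed to select preimages close to the infimum; in the contracting setting there is no slack, so the selection argument only works for strict epimorphisms whose fibers realize the infimum (e.g.\ split ones), or one must take the class $\sE$ to consist of maps admitting a contracting section rather than all strict epimorphisms. To be fair, the paper's own proof (``the same argument \ldots replacing bounded maps with the contracting ones'') glosses over exactly this point, so you are faithful to its route — but the inference as you wrote it is not valid. A similar caution applies to your ind-categorical step: the compatibility of the level-wise lifts $h_j$ with the transition maps of $P$ is the whole difficulty, not bookkeeping, since arbitrary fiberwise choices need not assemble into a morphism of ind-objects, and neither your sketch nor the paper's appeal to level-wise representation supplies that argument.
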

\begin{proof}
The same argument of Proposition \ref{prop:projective_class} can be applied to the category $\bBox_{\F_1}^{\le 1}$ by replacing bounded maps with the contracting ones. The case of $\bInd(\bBox_{\F_1})$ and $\bInd^m(\bBox_{\F_1})$ are settled easily by noticing that in $\bInd(\bBox_{\F_1})$ strict epimorphisms can always be represented by a map of inductive systems which is a strict epimorphism of bounded sets for each term of the system and for $\bInd^m(\bBox_{\F_1})$ follows easily from the fact that $\bInd^m(\bBox_{\F_1})$ is a concrete category. 
Hence, we are reduced to the case of Proposition \ref{prop:projective_class}.
\end{proof}

We cannot use the categories $\bSimp(\bBox_{\F_1})$ for homotopical algebra comfortably because they are not complete nor cocomplete. But the other categories we are discussing are, and for them we can prove that they admit a nice model structure.

\begin{thm} \label{thm:existence_model_structure}
The categories of simplicial objects over $\bBox_{\F_1}^{\le 1}$, $\bInd(\bBox_{\F_1})$ and $\bInd^m(\bBox_{\F_1})$ admit a model structure which endow them of a structure of simplicial model category. Moreover, this model structure is cofibrantly generated.
\end{thm}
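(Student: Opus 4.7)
The plan is to transfer the Kan--Quillen model structure on $\bsSets$ along the projective classes established in Proposition \ref{prop:projective_class} and its corollary, using the general machinery of Christensen--Hovey \cite{CH}. Concretely, I would declare a morphism $f : X_\bullet \to Y_\bullet$ of simplicial objects to be a \emph{weak equivalence} (resp.\ a \emph{fibration}) exactly when $\Hom(P, f_\bullet)$ is a weak equivalence (resp.\ a Kan fibration) of simplicial sets for every object $P$ of the ambient category, and then define cofibrations by the left lifting property against trivial fibrations. The three categories in the statement are complete and cocomplete (Proposition \ref{prop:lim_colim_ban_sets} for $\bBox_{\F_1}^{\le 1}$, and standard facts on ind-completions for $\bInd(\bBox_{\F_1})$ and $\bInd^m(\bBox_{\F_1})$), which supplies the bulk of the Christensen--Hovey hypotheses.

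For cofibrant generation I would take as generating cofibrations the boundary inclusions $P \ootimes (\partial \Delta^n)_+ \rhook P \ootimes (\Delta^n)_+$ and as generating trivial cofibrations the horn inclusions $P \ootimes (\Lambda^n_k)_+ \rhook P \ootimes (\Delta^n)_+$, where $P \ootimes (-)_+$ denotes the canonical tensoring by pointed simplicial sets (iterated coproducts in the base category), and $P$ ranges over a small set of compact generators---for instance, the finite normed pointed sets with norm values in a fixed countable dense subset of $\R_{\ge 0}$, viewed as constant ind-objects when needed. Smallness of these $P$ against the relevant classes of cell attachments follows from the explicit descriptions of filtered colimits in $\bInd(\bBox_{\F_1})$ and $\bInd^m(\bBox_{\F_1})$ (Proposition \ref{prop:morphisms_ind_normed_sets}), together with the fact that a finite pointed set is finitely presentable.

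The simplicial enrichment is given by the standard formula
\[ \mathrm{Map}(X_\bullet, Y_\bullet)_n = \Hom(X_\bullet \ootimes (\Delta^n)_+, Y_\bullet), \]
and the pushout-product axiom SM7 reduces to its version for $\bsSets$ because strict epimorphisms in each of the base categories are stable under smash-tensoring with a pointed set, by Propositions \ref{prop:morphisms_normed_sets} and \ref{prop:morphisms_ind_normed_sets}.

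The principal obstacle is ensuring that the small-object argument remains inside $\bInd^m(\bBox_{\F_1})$: a priori, pushouts of generating cofibrations along arbitrary maps in $\bInd(\bBox_{\F_1})$ need not produce essentially monomorphic ind-systems, so one must either verify by hand---using the characterization of strict monomorphisms in Proposition \ref{prop:morphisms_ind_normed_sets}---that the specific pushouts arising in the cellular construction stay in $\bInd^m$, or apply a reflector $\bInd(\bBox_{\F_1}) \to \bInd^m(\bBox_{\F_1})$ analogous to the separation and completion functors of Proposition \ref{prop:adjoint_inclusion_sets}. A parallel subtlety is the acyclicity of the generating trivial cofibrations, which by the Christensen--Hovey criterion reduces to showing that $\Hom(P, -)$ preserves the filtered colimits arising in transfinite compositions of horn-inclusion pushouts; this is precisely the reason for working in the ind-category rather than in $\bBox_{\F_1}$ itself.
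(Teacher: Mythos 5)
Your overall strategy is the paper's: both proofs obtain the projective model structure of Christensen--Hovey (Theorem 6.3 of \cite{CH}) from the projective class of Proposition \ref{prop:projective_class} and its corollary, with weak equivalences and (trivial) fibrations detected by $\Hom(P,-)$ and cofibrations defined by lifting. The divergence, and the genuine gap, is in how smallness is handled. You define fibrations and weak equivalences by testing against \emph{every} object $P$, but then propose to generate the structure from boundary and horn inclusions tensored only with finite normed pointed sets (with norms in a countable dense subset). These test objects do not detect the relevant classes: a pointed map out of a finite set is automatically bounded, so $\Hom(P,f)$ is surjective for all finite $P$ as soon as $f$ is surjective on underlying pointed sets, whereas membership in the class $\sE$ of the projective class (the strict epimorphisms, equivalently the split ones by the proof of Proposition \ref{prop:projective_class}) is a \emph{global} boundedness condition on a section. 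Concretely, take $Y$ a countable set of points all of norm $1$ and $X$ the same set with $|x_n|=n$, $f$ the identity map: $f$ is bounded, surjective, lifts against every finite test object, but admits no bounded section and is not a strict epimorphism (and $\Hom(Y,f)$ is not surjective). Consequently the maps with the right lifting property against your proposed $J$ (resp.\ $I$) form a strictly larger class than your declared fibrations (resp.\ trivial fibrations), so the small object argument run on these sets does not produce factorizations for the model structure you declared; the same attainment/uniformity problem persists in the contracting and ind-categories. There is no honestly small family of finite objects that determines this projective class.

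The paper resolves this differently: the universes fixed before Definition \ref{defn:projective_class} make $\bBox_{\F_1}$ itself a small category, the projective class taken is (all objects, strict epimorphisms), and condition (**) of \cite{CH} is checked because $\bInd(\bBox_{\F_1})$ and $\bInd^m(\bBox_{\F_1})$ are generated under filtered colimits by the compact objects of $\bBox_{\F_1}$; the generating (trivial) cofibrations of Corollary \ref{cor:existence_model_structure} then range over \emph{all} $P \in \bBox_{\F_1}$, which is a set precisely because of the universe assumption. If you adopt that device your argument goes through essentially verbatim. Two smaller points: your worry about pushouts leaving $\bInd^m(\bBox_{\F_1})$ is moot, since that category is complete and cocomplete and the Christensen--Hovey construction (and its small object argument) is carried out inside it, not inside $\bInd(\bBox_{\F_1})$; and Proposition \ref{prop:morphisms_ind_normed_sets} concerns (strict) monomorphisms and epimorphisms, so it is not by itself a smallness statement --- compactness of the objects of $\bBox_{\F_1}$ in the ind-categories is what is actually used.
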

\begin{proof}
It is enough to apply Theorem 6.3 of \cite{CH}. The model structure deduced in this way is determined as follows. A morphism $f: X^\bullet \to Y^\bullet$ of simplicial objects is an equivalence if for each projective object $P$ (in our case any object in the considered category) the map of simplicial sets 
\[ \Hom(P, X^\bullet) \to \Hom(P, Y^\bullet) \]
is a weak equivalence of simplicial sets, where $\Hom(P, X^\bullet)$ is the simplicial set obtained by applying the functor $\Hom(P, -)$ at each degree. The (trivial) fibrations are defined to be the morphisms such that for each projective $P$ 
\[ \Hom(P, X^\bullet) \to \Hom(P, Y^\bullet) \]
is a (trivial) fibration of simplicial sets. Cofibrations are identified by the left lifting property with respect to the trivial fibrations.

We notice also that we can apply Theorem 6.3 of \cite{CH} because our categories satisfy condition (**) stated before the theorem in ibid. This is the case because $\bBox_{\F_1}^{\le 1}$ is small by our assumptions and both $\bInd(\bBox_{\F_1})$ and $\bInd^m(\bBox_{\F_1})$ are generated by filtered colimits of elements of $\bBox_{\F_1}$ which form a small subcategory of compact objects.
\end{proof}

We introduce some notation for describing a generating set of cofibrations for the models structure given by Theorem \ref{thm:existence_model_structure}. We denote by $\Delta_n$ the standard simplex, $\ol{\Delta}_n$ its boundary and $\La_{k, n}$ the $k$-th horn of the $n$-simplex. For any $X \in \bC$, with $\bC$ having finite coproducts, we define the simplicial object
\[ (X \ \ootimes \ \Delta_n)^j = \coprod_{\sigma \in (\Delta_n)^j} X, \ \ j \le n  \]
with the obvious maps. In the same way we define $X \ \ootimes \ \ol{\Delta}_n$ and $\La_{k,n}$.

\begin{cor} \label{cor:existence_model_structure}
The model structures of $\bBox_{\F_1}^{\le 1}$, $\bInd(\bBox_{\F_1})$ and $\bInd^m(\bBox_{\F_1})$ can be generated by the following set of cofibrations
\[ I = \{ P \ \ootimes \ \ol{\Delta}_n \to P \ \ootimes \ \Delta_n | n \ge 0 \} \]
\[ J = \{ P \ \ootimes \ \La_{k, n} \to P \ \ootimes \  \Delta_n | n > 0, 0 \le k \le n \} \]
for all objects $P \in \bBox_{\F_1}$.
\end{cor}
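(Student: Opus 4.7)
The plan is to deduce this characterization directly from the recipe in Theorem 6.3 of \cite{CH} that was already invoked in the proof of Theorem \ref{thm:existence_model_structure}. In that recipe, once a projective class $\sP$ is fixed (here, all objects together with strict epimorphisms), the generating (trivial) cofibrations are obtained precisely by ``tensoring'' the projective generators $P$ with the standard boundary inclusions $\ol{\Delta}_n \rhook \Delta_n$ and horn inclusions $\La_{k,n} \rhook \Delta_n$ of simplicial sets, where the tensor functor $P \ \ootimes \ (-)$ is characterized as the left adjoint of the levelwise $\Hom(P,-)$.

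First I would make explicit the tensor-hom adjunction
\[
\Hom_{\bSimp}(P \ \ootimes \ K, X^\bullet) \cong \Hom_{\bsSets}(K, \Hom(P, X^\bullet))
\]
valid for $P$ in the base category, $K$ a finite simplicial set and $X^\bullet$ a simplicial object; this is immediate from the universal property of coproducts together with the formula defining $P \ \ootimes \ K$ in the discussion preceding the corollary. Plugging in $K = \Delta_n, \ol{\Delta}_n, \La_{k,n}$ and inspecting the map induced on homs shows that $f: X^\bullet \to Y^\bullet$ has the right lifting property with respect to $I$ (resp.~$J$) if and only if each $\Hom(P, f)$ has the right lifting property with respect to $\{\ol{\Delta}_n \rhook \Delta_n\}_{n\ge 0}$ (resp.~$\{\La_{k,n} \rhook \Delta_n\}$).

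Second, by the construction of the model structure recalled in Theorem \ref{thm:existence_model_structure}, $f$ is a trivial fibration (resp.~fibration) precisely when each $\Hom(P, f)$ is a trivial fibration (resp.~fibration) in the standard model structure on $\bsSets$. Combining this with the classical Kan characterization of trivial fibrations and fibrations of simplicial sets by the right lifting property against $\{\ol{\Delta}_n \rhook \Delta_n\}$ and $\{\La_{k,n} \rhook \Delta_n\}$ respectively, and transporting via the adjunction above, identifies $I$-injectives with trivial fibrations and $J$-injectives with fibrations, which is the required statement.

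The main (and really only) subtlety is set-theoretic: as stated, $I$ and $J$ range over all objects $P$ of $\bBox_{\F_1}$, which form a proper class. This is harmless because, as noted in the proof of Theorem \ref{thm:existence_model_structure}, our categories satisfy condition (**) of \cite{CH}: in the ind-cases they are generated under filtered colimits by the small subcategory $\bBox_{\F_1}$ of compact objects, and $\bBox_{\F_1}^{\le 1}$ is small by the fixed universe convention. One may therefore restrict $P$ to a small cofinal family of compact projective generators without changing the classes of $I$- and $J$-injectives, so that the small object argument applies and $I$, $J$ honestly generate the cofibrations and trivial cofibrations.
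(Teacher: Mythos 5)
Your plan follows the paper's own route: the printed proof is just the observation that the statement is part of Theorem 6.3 of \cite{CH}, together with the remark that $\bBox_{\F_1}$ is small so that $I$ and $J$ are sets, and your first two paragraphs are the standard unpacking of that theorem. The adjunction $\Hom(P \ \ootimes \ K, X^\bullet) \cong \Hom_{\bsSets}(K, \Hom(P, X^\bullet))$ and Kan's lifting characterizations do correctly identify the $I$-injectives (resp.\ $J$-injectives) with the maps $f$ such that $\Hom(P,f)$ is a trivial fibration (resp.\ fibration) of simplicial sets for every $P \in \bBox_{\F_1}$.

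The gap is in your last step, and it concerns the ind-categories. In the model structure of Theorem \ref{thm:existence_model_structure} on $\bSimp(\bInd(\bBox_{\F_1}))$ and $\bSimp(\bInd^m(\bBox_{\F_1}))$, the (trivial) fibrations are detected by $\Hom(Q,-)$ for \emph{all} objects $Q$ of the ind-category, whereas $I$- and $J$-injectivity only controls $\Hom(P,-)$ for $P$ in $\bBox_{\F_1}$; so you still have to show that testing against these compact objects suffices. Your justification --- the ind-categories are generated under filtered colimits by $\bBox_{\F_1}$, ``therefore'' restricting $P$ does not change the injectives --- does not deliver this: $\Hom(-,f)$ turns a filtered colimit in the contravariant variable into a cofiltered limit of simplicial sets, and Kan (trivial) fibrations are not stable under cofiltered limits. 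The correct bridge, which is exactly what condition (**) of \cite{CH} encodes and is the real content of the corollary beyond Theorem \ref{thm:existence_model_structure}, is that every object of $\bInd(\bBox_{\F_1})$ (hence of the full subcategory $\bInd^m(\bBox_{\F_1})$) is a retract of a coproduct $\coprod_i X_i$ of objects of $\bBox_{\F_1}$: strict epimorphisms split in these categories and the coproduct of the terms of an inductive system maps onto it by a strict epimorphism, cf.\ the proof of Lemma \ref{lemma:essmono_equiv_ind} and Lemma 1.5 of \cite{CH}; one then uses $\Hom(\coprod_i X_i, f) \cong \prod_i \Hom(X_i,f)$ and the stability of (trivial) Kan fibrations under products and retracts. (For $\bBox_{\F_1}^{\le 1}$ there is nothing to bridge, since its objects are exactly those of $\bBox_{\F_1}$.) Finally, your set-theoretic worry is moot: under the paper's universe convention $\bBox_{\F_1}$ is already small, which is precisely the paper's reason that $I$ and $J$ are sets, so no passage to a smaller ``cofinal family'' is needed --- and if it were, it would again require the retract-of-coproducts argument above rather than cofinality alone.
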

\begin{proof}
This corollary is still part of Theorem 6.3 of \cite{CH}. Notice that since the categories $\bBox_{\F_1}$ are small the classes $I$ and $J$ are actually sets.
\end{proof}

Notice that we cannot apply Theorem 6.3 of \cite{CH} directly to the category $\bBox_{\F_1}$ because it is not complete and cocomplete. We denote by $\bHo(\bBox_{\F_1}^{\le 1})$, $\bHo(\bInd(\bBox_{\F_1}))$ and $\bHo(\bInd^m(\bBox_{\F_1}))$ the homotopy categories relative to the model structures introduced so far. 

\begin{prop} \label{prop:existence_model_structure_banach_ring}
Let $R$ be a Banach ring. The categories of simplicial objects over $\bBox_R^{\le 1}$, $\bInd(\bBox_R)$ and $\bInd^m(\bBox_R)$ admit a model structure which endow them of a structure of simplicial model category. Moreover, this model structure is cofibrantly generated and throught the Dold-Kahn correspondence two simplicial objects are weak equivalent if and only if their associated complexes are strictly quasi-isomorphic.
\end{prop}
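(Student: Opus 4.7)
The plan is to proceed in parallel with Theorem \ref{thm:existence_model_structure} and Corollary \ref{cor:existence_model_structure}, using Theorem 6.3 of \cite{CH}, and then to identify the resulting notion of weak equivalence via the Dold-Kan correspondence.

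First, I would exhibit a projective class on each of $\bBox_R^{\le 1}$, $\bInd(\bBox_R)$ and $\bInd^m(\bBox_R)$. Unlike in the $\F_1$ case (Proposition \ref{prop:projective_class}), one cannot take all objects, because strict epimorphisms of Banach $R$-modules need not admit $R$-linear sections. Instead I would take $\sP$ to be the class of \emph{free} modules, i.e.\ objects of the form $X \ootimes_{\F_1} R$ for $X$ in the corresponding category of normed/bornological sets, and $\sE$ to be the class of strict epimorphisms. Condition (1) of Definition \ref{defn:projective_class} follows by combining the adjunction of Proposition \ref{prop:base_change_left_adjoint} (resp.\ Proposition \ref{prop:base_change_exact_born}) with Proposition \ref{prop:projective_class}: a lift against $f \colon M \to N$ of a map $X \ootimes_{\F_1} R \to N$ is, by adjunction, the same as a lift of the underlying map $X \to N$ to $M$ in $\bBox_{\F_1}$, and such a lift exists because strict epimorphisms in $\bBox_R$ descend to strict epimorphisms of underlying normed sets (by Proposition \ref{prop:morphisms_normed_sets} applied in both directions), and every object of $\bBox_{\F_1}$ is projective there. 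For condition (2), the counit $|M| \ootimes_{\F_1} R \to M$ of the adjunction is always a strict epimorphism from a free module, so every object admits the required covering.

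Next, I would verify the hypothesis (**) preceding Theorem 6.3 of \cite{CH}. Completeness and cocompleteness of $\bInd(\bBox_R)$ and $\bInd^m(\bBox_R)$ follow from the same argument cited in Theorem \ref{thm:existence_model_structure}, and $\bBox_R^{\le 1}$ is complete and cocomplete by the Banach analogue of Proposition \ref{prop:lim_colim_ban_sets} (see \cite{BaBe}). The projective generators form a small subcategory of compact objects, indexed by $U$-small normed or bornological sets, so Theorem 6.3 of \cite{CH} delivers the desired cofibrantly generated simplicial model structure, with generating (trivial) cofibrations $P \ootimes \ol{\Delta}_n \to P \ootimes \Delta_n$ and $P \ootimes \La_{k,n} \to P \ootimes \Delta_n$ for $P \in \sP$, exactly as in Corollary \ref{cor:existence_model_structure}.

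It remains to identify the weak equivalences via Dold-Kan. The categories involved are additive and admit a Dold-Kan normalization $N$ to non-negatively graded chain complexes. By construction of the model structure, $f \colon X^\bullet \to Y^\bullet$ is a weak equivalence iff $\Hom(P, X^\bullet) \to \Hom(P, Y^\bullet)$ is a weak equivalence of simplicial sets for every free $P = A \ootimes_{\F_1} R$; by the classical Dold-Kan theorem applied to the resulting simplicial abelian groups, this is equivalent to $N\Hom(P, X^\bullet) \to N\Hom(P, Y^\bullet)$ being a quasi-isomorphism of complexes of abelian groups. Letting $A$ range over all free generators and using the characterization of strict epimorphisms in Proposition \ref{prop:kas}, this condition is equivalent to $N(f)$ inducing isomorphisms on the strict cohomology of $N(X^\bullet)$ and $N(Y^\bullet)$, which is precisely the definition of a strict quasi-isomorphism in the quasi-abelian setting. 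The main obstacle I anticipate is this final identification: one must carefully justify that testing lifting properties against all free modules detects strict (as opposed to merely categorical) short exact sequences in each degree, so that the notion of weak equivalence obtained from \cite{CH} matches the quasi-abelian notion of strict quasi-isomorphism rather than some coarser invariant.
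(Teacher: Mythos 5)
Your overall strategy --- exhibit a projective class on $\bBox_R^{\le 1}$, $\bInd(\bBox_R)$ and $\bInd^m(\bBox_R)$, feed it into Christensen--Hovey, and then identify the weak equivalences through Dold--Kan --- has the same skeleton as the paper's proof, and the first two steps are essentially sound. One caveat there: a projective class in the sense of Definition \ref{defn:projective_class} must be saturated, i.e.\ $\sP$ has to be \emph{precisely} the class of objects lifting against $\sE$, so the free modules $X\ootimes_{\F_1}R$ alone do not form a projective class; you must pass to their retracts, which (by Lemma 3.29 of \cite{BaBe}, the same input the paper uses in Lemma \ref{lemma:essmono_equiv_ind}) are exactly the projective objects of these quasi-abelian categories. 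With that adjustment your adjunction argument does correctly identify $\sE$ with the strict epimorphisms, and this is the projective class the paper itself works with.

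The genuine gap is the final clause: that weak equivalences for this projective class are exactly the maps whose associated complexes are strictly quasi-isomorphic is asserted rather than proved, and you flag it yourself as the main obstacle without resolving it. The statement is true, but ``testing against projectives detects strict exactness'' requires an argument: one must show that a bounded-below complex $C$ with $\Hom(P,C)$ acyclic for every projective $P$ is strictly exact (for instance inductively, using that every object admits a strict epimorphism from a projective and that a composite being a strict epimorphism forces its second factor to be one), and conversely that the mapping cone of a strict quasi-isomorphism remains acyclic after applying $\Hom(P,-)$. The paper closes this point by a different device: it invokes Corollary 6.4 of \cite{CH} (rather than only Theorem 6.3) also for the abelian envelope $LH(\bInd(\bBox_R))$, observes that the projective classes of $\bInd(\bBox_R)$ and of $LH(\bInd(\bBox_R))$ coincide, and shows that the resulting adjunction $\bSimp(\bInd(\bBox_R))\rightleftarrows\bSimp(LH(\bInd(\bBox_R)))$ is a Quillen equivalence; since in the abelian envelope weak equivalences are quasi-isomorphisms and, by Schneiders' theory of quasi-abelian categories, these correspond to strict quasi-isomorphisms in $\bInd(\bBox_R)$, the Dold--Kan clause follows. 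Supplying either that comparison with the left heart or the direct inductive argument sketched above is what is needed to complete your proof.
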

\begin{proof}
This proposition is essentially proved in \cite{CH}, Corollary 6.4. The class of cofibrations, fibrations and weak-equivalences can be described as in Theorem \ref{thm:existence_model_structure}. Notice that we can also apply Corollary 6.4 of \cite{CH} to the abelian envelop of each of the categories $\bBox_R^{\le 1}$, $\bInd(\bBox_R)$ and $\bInd^m(\bBox_R)$. We devote the rest of the proof in giving more details about the argument discussed so far. 

We discuss only the case of $\bInd(\bBox_R)$. It is clear that the class of projective objects (in the sense of quasi-abelian category theory, cf. \cite{BaBe} Section 2.2) and strict epimorphisms define a projective class in $\bInd(\bBox_R)$, in the sense of Definition \ref{defn:projective_class}. Let us denote with $LH(\bInd(\bBox_R))$ the abelian envelop of $\bInd(\bBox_R)$ (recall that $LH(\bInd(\bBox_R))$ and $\bInd(\bBox_R)$ are derived equivalent). We can define on $\bSimp(\bInd(\bBox_R))$ and $\bSimp(LH(\bInd(\bBox_R)))$ the model structures given as in Theorem \ref{thm:existence_model_structure}: a (trivial) fibration $f: X^\bullet \to Y^\bullet$ is a map such that
\[ \Hom(P, X^\bullet) \to \Hom(P, Y^\bullet) \]
is a (trivial) fibration of simplicial sets and a weak-equivalence is a map such that
\[ \Hom(P, X^\bullet) \to \Hom(P, Y^\bullet) \]
is a weak equivalence of simplicial sets for all projectives $P$. Cofibrations are defined by the left lifting property with respect to trivial fibrations. Now, since the projective classes in $\bInd(\bBox_R)$ and in $LH(\bInd(\bBox_R))$ are the same, it is easy to see that the adjunction
\[ \iota: \bSimp(\bInd(\bBox_R)) \rightleftarrows \bSimp(LH(\bInd(\bBox_R))): R \]
is not only a Quillen adjunction but even a Quillen equivalence. Indeed, it is immediate to check that $\iota$ and $R$ preserves cofibrations, fibrations and weak-equivalences. This proves the claim.
\end{proof}

\begin{rmk}
Proposition \ref{prop:existence_model_structure_banach_ring} implies that the notion of derived category and derived functors we will discuss in this paper is coincides with the notion discussed by Schneiders in its study of quasi-abelian categories (\cf \cite{ScQA}) and used by the authors in \cite{BeKr}, \cite{BaBe}, \cite{BaBeKr}.
\end{rmk}

The next theorem generalizes the derived equivalences between the category of bornological vector spaces over a valued field and the category of $\bInd$-Banach spaces (see \cite{PrSc}, Proposition 3.10, Proposition 4.12 (c), Proposition 5.16 (b)) to a derived equivalence between the categories $\bInd^m(\bBox_\bTri)$ and $\bInd(\bBox_\bTri)$.

\begin{lemma} \label{lemma:essmono_equiv_ind}
The projective class generated by the essentially monomorphic objects and strict epimorphisms in $\bSimp(\bInd(\bBox_\bTri))$ is equal to the projective class generated by all projective objects.
\end{lemma}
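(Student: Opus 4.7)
The plan is to reduce the equality of the two simplicial projective classes to the following claim about the base category: \emph{every projective object of $\bInd(\bBox_\bTri)$ is a retract of an essentially monomorphic one}. Once this is secured, the two projective classes on $\bSimp(\bInd(\bBox_\bTri))$ share the same class of test morphisms (those $f\colon X^\bullet \to Y^\bullet$ for which $\Hom(Q, X^\bullet_n) \to \Hom(Q, Y^\bullet_n)$ is surjective in every simplicial degree $n$ and for every generator $Q$), since the test property is transferred between a generator $\widetilde P$ and any retract $P$ of $\widetilde P$ by pre- and post-composition with the retraction data.

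To prove the key claim, I fix a projective $P \cong ``\limind_{i \in I}" P_i$ in $\bInd(\bBox_\bTri)$ and form the coproduct
\[ \widetilde P \doteq \coprod_{i \in I} P_i, \]
taken in $\bInd(\bBox_\bTri)$. It admits the canonical presentation $\widetilde P \cong ``\limind_F" \bigl(\coprod_{i \in F} P_i\bigr)$ indexed by the finite subsets $F \subseteq I$, whose transition morphisms are the inclusions of sub-coproducts, hence monomorphisms in $\bBox_\bTri$; therefore $\widetilde P$ lies in $\bInd^m(\bBox_\bTri)$. The universal property of the coproduct applied to the canonical maps $P_i \to P$ yields a morphism $\pi\colon \widetilde P \to P$. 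Restricting to the cofinal subsystem of finite subsets $F$ containing a distinguished maximal element $i_F$, the morphism $\pi$ is represented levelwise by the maps $\coprod_{i \in F} P_i \to P_{i_F}$, each a strict epimorphism because its $i_F$-component is the identity; Proposition~\ref{prop:morphisms_ind_normed_sets} then gives that $\pi$ is a strict epimorphism in $\bInd(\bBox_\bTri)$, and projectivity of $P$ produces a section $s\colon P \to \widetilde P$ that splits $\pi$.

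With the claim in hand, the equality of test-morphism classes is verified as follows: given a test morphism $f$ for essentially monomorphic generators, any projective $P$, any degree $n$, and any $\phi\colon P \to Y^\bullet_n$, the hypothesis produces a lift $\widetilde\psi\colon \widetilde P \to X^\bullet_n$ of $\phi \circ \pi$, so that $\widetilde\psi \circ s\colon P \to X^\bullet_n$ is the desired lift of $\phi$; the reverse implication is dually immediate. The main technical obstacle in this plan is the strict-epimorphism verification for $\pi$ in the ind-category, which hinges on identifying the correct cofinal subsystem of finite subsets and on invoking Proposition~\ref{prop:morphisms_ind_normed_sets} to pass from a levelwise strict epi to a strict epi of ind-objects.
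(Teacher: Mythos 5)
Your central construction is the same as the paper's: given a projective $P \cong \underset{i \in I}{``\limind"}\, P_i$, you take the coproduct $\widetilde P = \coprod_{i \in I} P_i$, observe it is essentially monomorphic, and produce the canonical strict epimorphism $\pi\colon \widetilde P \to P$. Your verification that $\pi$ is strict (via the cofinal family of finite subsets admitting a greatest element together with Proposition \ref{prop:morphisms_ind_normed_sets}) correctly fills in a step the paper leaves implicit, and splitting $\pi$ by projectivity of $P$ is fine. For $\bTri = \F_1$ your argument is complete and essentially identical to the paper's, where every object is projective and strict epimorphisms split.

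For $\bTri = R$ a Banach ring, however, there is a genuine gap. The terms $P_i$ of an arbitrary ind-presentation of a projective $P$ need not be projective in $\bBan_R$, so $\widetilde P$ is essentially monomorphic but in general \emph{not} projective. What your argument actually shows is that every projective is a retract of an essentially monomorphic object, hence that maps which are epic against all essentially monomorphic test objects are epic against all projectives; this is the half used in Theorem \ref{thm:essmono_equiv_ind}, but it is not the stated equality of projective classes. The sentence ``the reverse implication is dually immediate'' is exactly where the problem surfaces: if the generating family is all essentially monomorphic objects, the reverse containment is false over a Banach ring (over $R=\C$ a bounded surjection $\ell^1 \twoheadrightarrow c_0$ is a strict epimorphism onto the constant, hence essentially monomorphic, object $c_0$, and it does not split, so it is not $c_0$-epic); if instead the generators are the essentially monomorphic \emph{projective} objects --- the reading the paper's proof uses (``projective objects of $\bInd^m(\bBox_\bTri)$'') --- then the reverse direction is trivial, but your forward direction no longer applies, because the hypothesis cannot be tested against the possibly non-projective $\widetilde P$. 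Closing the argument in the Banach case requires covers by objects that are simultaneously essentially monomorphic and projective, which is precisely what the paper imports from Lemma 3.29 of \cite{BaBe} (strict epimorphisms from coproducts of free $\ell^1$-modules, which are constant ind-objects); your uniform construction bypasses that ingredient only in the $\F_1$ case.
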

\begin{proof}
We can apply the non-abelian version of Lemma 1.5 of \cite{CH}, whose proof is identical to the one given in loc. cit. Therefore, it is enough to check that every object of $\bInd(\bBox_\bTri)$ is a retract of coproducts of projective objects of $\bInd^m(\bBox_\bTri)$.

We divide the proof in two case: $\bTri = \F_1$ and $\bTri = R$. In the first case we notice that that any strict epimorphism of objects of $\bInd(\bBox_{\F_1})$ is split because it is a cokernel of a pair of morphisms and cokernels of a morphism of ind-objects can be computed objectwise. The fact that strict epimorphisms of $\bBox_{\F_1}$ split has been proved in the proof of Proposition \ref{prop:projective_class}. Therefore, it is enough to check that for each object of $\bInd(\bBox_{\F_1})$ there exists an essentially monomorphic object with a strict epimorphism to it. So, if $\underset{i \in I}{``\limind"} X_i \in \bInd(\bBox_{\F_1})$ we can consider the object $\underset{i \in I}{\coprod} X_i \in \bInd(\bBox_{\F_1})$ which comes with a canonical strict epimorphism $\underset{i \in I}{\coprod} X_i \to \underset{i \in I}{``\limind"} X_i$. $\underset{i \in I}{\coprod} X_i$ is obviously essentially monomorphic.

The same statement for $\bInd(\bBox_R)$ can be deduced from Lemma 3.29 of \cite{BaBe} and the explicit description of projectives given there.
\end{proof}

\begin{thm} \label{thm:essmono_equiv_ind}
The categories $\bSimp(\bInd(\bBox_\bTri))$ and $\bSimp(\bInd^m(\bBox_\bTri))$ endowed with the model structures discussed so far are Quillen equivalent.
\end{thm}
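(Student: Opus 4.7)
The plan is to follow the same blueprint as in the proof of Proposition \ref{prop:existence_model_structure_banach_ring}: produce an adjunction between the two simplicial model categories, verify that each adjoint preserves cofibrations, fibrations and weak equivalences, and then deduce that the adjunction is a Quillen equivalence, with Lemma \ref{lemma:essmono_equiv_ind} as the decisive input.

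First I would exhibit an adjunction $L: \bSimp(\bInd(\bBox_\bTri)) \rightleftarrows \bSimp(\bInd^m(\bBox_\bTri)): \iota$, where $\iota$ is the degreewise extension of the fully faithful inclusion $\bInd^m(\bBox_\bTri) \rhook \bInd(\bBox_\bTri)$. The left adjoint $L$ should be built from an essentially monomorphic reflection on $\bInd(\bBox_\bTri)$: given $X = ``\limind_{i \in I}" X_i$, one factors every transition map $X_i \to X_j$ through its image (using the existence of images and coimages in $\bBox_\bTri$, see Definition \ref{defn:image_coimage}) and passes to the resulting cofinal essentially monomorphic system, which enjoys the expected universal property with respect to maps to objects of $\bInd^m(\bBox_\bTri)$.

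Next I would check that $(L, \iota)$ is a Quillen adjunction by showing that $\iota$ preserves (trivial) fibrations and weak equivalences. By definition a (trivial) fibration or a weak equivalence $f: X^\bullet \to Y^\bullet$ in $\bSimp(\bInd^m(\bBox_\bTri))$ is characterized by $\Hom(P, X^\bullet) \to \Hom(P, Y^\bullet)$ being one for every $P \in \bInd^m(\bBox_\bTri)$. By fully faithfulness of $\iota$ one has the natural identification $\Hom_{\bInd}(P, \iota X^\bullet) = \Hom_{\bInd^m}(P, X^\bullet)$ for such $P$, and by Lemma \ref{lemma:essmono_equiv_ind} the analogous classes in $\bSimp(\bInd(\bBox_\bTri))$ are already detected on essentially monomorphic projectives alone. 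Hence $\iota$ preserves and reflects fibrations, trivial fibrations, and weak equivalences; correspondingly $L$ preserves cofibrations and trivial cofibrations.

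Finally I would upgrade the Quillen adjunction to a Quillen equivalence. The counit $L \iota Y \to Y$ is an isomorphism, since $\iota$ is fully faithful. For the unit $X \to \iota L X$ on cofibrant $X \in \bSimp(\bInd(\bBox_\bTri))$, I would invoke Corollary \ref{cor:existence_model_structure} together with Lemma \ref{lemma:essmono_equiv_ind}: the generating cofibrations may be chosen of the form $P \ootimes \ol{\Delta}_n \to P \ootimes \Delta_n$ with $P$ already essentially monomorphic, so a cofibrant $X$ is a transfinite composition of pushouts along such generators, on each of which the unit is an isomorphism; hence the unit is a weak equivalence on $X$. The main obstacle I anticipate is the careful construction of $L$ as a genuine left adjoint of $\iota$ (and not merely a choice of strict epimorphism from an essentially monomorphic object, as in the proof of Lemma \ref{lemma:essmono_equiv_ind}); once the universal property of $L$ is verified, the remainder is essentially a translation of Lemma \ref{lemma:essmono_equiv_ind} into model-categorical language.
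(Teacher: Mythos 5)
Your overall strategy coincides with the paper's: the same adjunction with the inclusion $\iota$ as right adjoint, Lemma \ref{lemma:essmono_equiv_ind} as the input that lets $\iota$ detect (and hence preserve and reflect) fibrations and weak equivalences, full faithfulness for the counit, and then a reduction to showing that the unit is a weak equivalence. The paper, however, avoids any cofibrant cell induction: it checks that the unit $X \to \iota(R(X))$ becomes a weak equivalence after applying $\Hom(\iota(Z),-)$ for every essentially monomorphic $Z$, using Lemma \ref{lemma:essmono_equiv_ind} together with the adjunction and full faithfulness, with no cofibrancy hypothesis on $X$.

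There are two concrete problems in your version. First, the construction of $L$ by ``factoring every transition map through its image'' does not by itself yield an essentially monomorphic system: the induced maps $\im(X_i \to X_j) \to \im(X_i \to X_{j'})$ need not be injective when $X_j \to X_{j'}$ is not, so the pairwise-image system has no reason to have monomorphic transitions; the reflection has to be built from the images inside the actual (bornological) colimit, using the concreteness of $\bInd^m(\bBox_\bTri)$ (Proposition \ref{prop:concrete_ind_mono}) and, in the $\bNr$/$\bBan$ cases, separation and completion. Second, and more seriously, the inference ``the unit is an isomorphism on each generating cell, hence a weak equivalence on any transfinite composition of pushouts of cells'' is not formal: $L$ preserves these colimits (it is a left adjoint), but $\iota$ does not --- the failure of $\bInd^m(\bBox_\bTri) \rhook \bInd(\bBox_\bTri)$ to commute with cokernels and pushouts is precisely what distinguishes the two categories --- so $\iota L$ applied to a cell attachment is not the corresponding pushout in $\bSimp(\bInd(\bBox_\bTri))$, and there is no gluing/properness argument available to propagate the weak equivalence along the filtration. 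The gap is repairable: a pushout along a generating cofibration $P \ \ootimes\ \ol{\Delta}_n \to P \ \ootimes\ \Delta_n$ is degreewise the inclusion of a coproduct summand, so a cell complex is degreewise a coproduct of objects of $\bBox_\bTri$ and hence already essentially monomorphic; since the unit of a reflective inclusion is an isomorphism exactly on the essential image of $\iota$, the unit is then a degreewise isomorphism on cell complexes, and retracts (by naturality of the unit) handle general cofibrant objects. With that fix --- or by testing the unit against $\Hom(\iota(Z),-)$ as the paper does --- your argument closes.
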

\begin{proof}
Consider the Quillen adjunction 
\[ \iota: \bSimp(\bInd^m(\bBox_\bTri)) \rightleftarrows \bSimp(\bInd(\bBox_\bTri)): R \]
where $\iota$ is the inclusion functor and $R$ is its adjoint. Notice that $\iota$ is the right adjoint and therefore for every $X \in \bSimp(\bInd(\bBox_\bTri))$ we have a canonical map $X \to \iota(R(X))$. 

Notice also that $(f: X \to Y) \in \bSimp(\bInd^m(\bBox_\bTri))$ is a weak equivalence if and only if $\iota(f)$ is a weak equivalence because by Lemma \ref{lemma:essmono_equiv_ind} we can check the condition of being a weak equivalence in $\bSimp(\bInd(\bBox_\bTri))$ on essentially monomorphic objects. 

Because of that, it is enough to check that the canonical morphism $X \to \iota(R(X))$ is a weak equivalence. It is enough to pick any object $Z \in \bInd^m(\bBox_\bTri)$ and check that
\[ \Hom(\iota(Z), X) \to \Hom(\iota(Z), \iota (R(X))) \]
is a weak equivalence. This follows from the adjunction
\[ \Hom(\iota(Z), X) \cong \Hom(Z, R(X)) \cong \Hom(\iota(Z), \iota (R(X))) \]
where the second isomorphism of simplicial sets is given by the fact that $\iota$ is fully faithful.
\end{proof}

In order to define derived analytic spaces we need to introduce the categories of simplicial commutative monoids over the simplicial model categories introduced so far.

\begin{defn} \label{defn:bornological_simplicial_monoids}
We use the following notation:
\begin{itemize}
\item $\bsComm(\bBox_\bTri)$ denotes the category of \emph{simplicial semi-normed (resp. normed, resp. Banach) monoids}, \ie the category of monoids of $\bSimp(\bBox_\bTri)$;
\item $\bsComm(\bBox_\bTri^{\le 1})$ denotes the category of \emph{simplicial semi-normed (resp. normed, resp. Banach) monoids with contracting morphisms}, \ie the category of monoids of $\bSimp(\bBox_\bTri^{\le 1})$;
\item $\bsComm(\bInd(\bBox_\bTri))$ denotes the category of \emph{simplicial ind-semi-normed (resp. ind-normed, resp. ind-Banach) monoids}, \ie the category of monoids of $\bSimp(\bInd(\bBox_\bTri))$;
\item $\bsComm(\bInd^m(\bBox_\bTri))$ denotes the category of \emph{simplicial bornological (resp. separated bornological, resp. complete bornological) monoids}, \ie the category of monoids of $\bSimp(\bInd^m(\bBox_\bTri))$.
\end{itemize}
\end{defn}

In the rest of this section we will show that the categories of simplicial commutative monoids introduced in Definition \ref{defn:bornological_simplicial_monoids} inherit a structure of model category from their base categories which has good properties.

\begin{lemma} \label{lemma:fib_we}
Let $f: X^\bullet \to Y^\bullet$ be a fibration (resp. weak equivalence) in $\bSimp(\bBox_\bTri^{\le 1})$ or $\bSimp(\bInd^m(\bBox_\bTri))$ then $f$ is a fibration (resp. weak equivalence) of the underlying simplicial sets.
\end{lemma}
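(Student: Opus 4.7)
The plan is to exhibit the underlying-simplicial-set functor as $\Hom(P,-)$ for a suitable projective test object $P$ (or a filtered colimit of such), then invoke the definition of the model structure given in Theorem \ref{thm:existence_model_structure} and Proposition \ref{prop:existence_model_structure_banach_ring}. Recall that by construction, $f: X^\bullet \to Y^\bullet$ is a fibration (resp.\ weak equivalence) in these model categories if and only if $\Hom(P, X^\bullet) \to \Hom(P, Y^\bullet)$ is a Kan fibration (resp.\ weak equivalence) of simplicial sets for every $P$ in the generating projective class.

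For $\bSimp(\bInd^m(\bBox_\bTri))$ I would use the one-point object $\ast_1$ (viewed as a constant ind-object). Since $\bInd^m(\bBox_\bTri)$ is concrete (Proposition \ref{prop:concrete_ind_mono}) and $\bInd^m$-morphisms out of $\ast_1$ use bounded maps, one checks that $\Hom_{\bInd^m(\bBox_\bTri)}(\ast_1, M) = U(M)$ functorially in $M$ (any element $x \in U(M)$ gives a bounded map $\ast_1 \to M$ of constant $|x|$, and factorization through some $M_i$ is automatic). Applying this degreewise gives $\Hom(\ast_1, X^\bullet) = U(X^\bullet)$ as a simplicial set, and since $\ast_1$ lies in the projective class (every object does when $\bTri = \F_1$; in the Banach case $\ast_1$ is free of rank one, hence projective), the conclusion is immediate.

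For $\bSimp(\bBox_\bTri^{\le 1})$ with contracting morphisms, the one-point trick gives less: $\Hom_{\bBox_\bTri^{\le 1}}(\ast_r, X) = \{x \in X : |x| \le r\}$. I would then write the underlying simplicial set as a filtered colimit
\[ U(X^\bullet) \;=\; \colim_{r \in \R_+} \Hom(\ast_r, X^\bullet), \]
computed degreewise. Each map $\Hom(\ast_r, X^\bullet) \to \Hom(\ast_r, Y^\bullet)$ is a Kan fibration (resp.\ weak equivalence) by hypothesis. The final step is the standard fact that filtered colimits in $\bsSets$ preserve Kan fibrations and weak equivalences: the former because the horns $\Lambda_{k,n}$ are finite and $\Hom_{\bsSets}(\Lambda_{k,n},-)$ commutes with filtered colimits, hence the RLP against $\Lambda_{k,n} \hookrightarrow \Delta_n$ passes to the colimit; the latter because homotopy groups commute with filtered colimits of simplicial sets.

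The only mild obstacle is making sure that the generating projective class really contains enough of the $\ast_r$ (or their analogues over $R$) to test the underlying set, and that the computation $U = \colim_r \Hom(\ast_r,-)$ genuinely identifies with the degreewise underlying-pointed-set functor; both points reduce to concreteness of the source category (Proposition \ref{prop:concrete_ind_mono}) and the explicit description of hom-sets out of $\ast_r$. The preservation of fibrations and weak equivalences under filtered colimits in $\bsSets$ is classical and needs no reproof.
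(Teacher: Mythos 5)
Your proposal is correct and follows essentially the same route as the paper: the paper's proof also tests $f$ against the unit object, noting that $\Hom(\ast_1,-)$ recovers the forgetful functor and that $\ast_1$ (or its analogue $[R]_1$ over a Banach ring) lies in the projective class defining the fibrations and weak equivalences. Your additional step of writing $U(X^\bullet)$ as the filtered colimit of $\Hom(\ast_r, X^\bullet)$ in the contracting categories $\bSimp(\bBox_\bTri^{\le 1})$, where $\Hom(\ast_1,-)$ only yields the unit ball, is a refinement the paper glosses over, so your version is if anything more careful on that point.
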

\begin{proof}
Consider the map of simplicial sets
\[ f^*: \Hom(\{\ast_1\}, X^\bullet) \to \Hom(\{\ast_1\}, Y^\bullet) \]
induced by $f$. The functor $\Hom(\{\ast_1\}, \cdot)$ is isomorphic to the forgetful functor and by hypothesis if $f$ is a fibration (resp. weak equivalence) then $f^*$ is a fibration (resp. weak equivalence). 
\end{proof}

\begin{lemma} \label{lemma:cofib}
Let $f: P \ \ootimes \ \ol{\Delta}_n \to  P \ \ootimes \ \Delta_n$ be a generating cofibration of the model structure of $\bSimp(\bInd^m(\bBox_\bTri))$ or $\bSimp(\bBox_\bTri^{\le 1})$, then $f$ is a cofibration of the underlying simplicial sets.
\end{lemma}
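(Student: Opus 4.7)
The plan is to reduce to a degreewise statement: a cofibration of simplicial sets is precisely a monomorphism, i.e.\ a levelwise injection, so it suffices to show that for each simplicial degree $j$, the map of underlying pointed sets induced by $f$ at level $j$ is injective.

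First, I would unwind the definition of $P \ootimes \Delta_n$ and $P \ootimes \ol\Delta_n$ introduced just before Corollary~\ref{cor:existence_model_structure}. By definition,
\[ (P \ootimes \Delta_n)^j = \coprod_{\sigma \in (\Delta_n)^j} P, \qquad (P \ootimes \ol\Delta_n)^j = \coprod_{\sigma \in (\ol\Delta_n)^j} P, \]
where the coproducts are taken in $\bBox_\bTri^{\le 1}$ or $\bInd^m(\bBox_\bTri)$, and the map $f^j$ is induced by the inclusion of indexing sets $(\ol\Delta_n)^j \hookrightarrow (\Delta_n)^j$, namely the codiagonal into the extra summands together with the identity on the common ones.

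Next I would invoke the explicit descriptions of coproducts. In $\bBox_\bTri^{\le 1}$, Proposition~\ref{prop:lim_colim_ban_sets} (together with the description after Notation~\ref{notation:contracting_lim}) tells us that coproducts are wedge sums of underlying pointed sets endowed with the obvious extension of the semi-norms. In $\bInd^m(\bBox_\bTri)$ the coproduct is computed in the ind-category and, since the indexing set $(\Delta_n)^j$ is finite (being the set of $j$-simplices of the standard $n$-simplex, hence finite for each fixed $j$ and $n$), the underlying pointed set of the coproduct is again the wedge sum of the underlying pointed sets of $P$. In either case, the underlying pointed set of $(P \ootimes \Delta_n)^j$ is the wedge $\bigvee_{\sigma \in (\Delta_n)^j} U(P)$ where $U$ denotes the forgetful functor to pointed sets.

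Finally, since $(\ol\Delta_n)^j \subset (\Delta_n)^j$ is an inclusion of (pointed) sets, the induced map
\[ \bigvee_{\sigma \in (\ol\Delta_n)^j} U(P) \longrightarrow \bigvee_{\sigma \in (\Delta_n)^j} U(P) \]
is injective on underlying sets, i.e.\ a monomorphism of (pointed) simplicial sets. Hence $f$ itself is a monomorphism of simplicial sets, which is exactly a cofibration in the Kan--Quillen model structure on $\bsSets$. The only mild obstacle is checking that the forgetful functor to pointed sets preserves the relevant coproducts; this is immediate in the contracting categories from Proposition~\ref{prop:lim_colim_ban_sets}, and in $\bInd^m(\bBox_\bTri)$ it follows because the indexing set $(\Delta_n)^j$ is finite so the coproduct is computed in $\bBox_\bTri$ itself and then viewed as a (constant) ind-object.
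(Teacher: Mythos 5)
Your argument is essentially the paper's own proof: both reduce to the underlying (pointed) sets, identify the map in each simplicial degree as the one induced by the inclusion of indexing sets $(\ol{\Delta}_n)^j \subset (\Delta_n)^j$ of the coproduct $\coprod_{\sigma} P$, and conclude that $f$ is a levelwise injection, hence a cofibration of simplicial sets. The one caveat is that your wedge-sum description of the coproduct (via Proposition \ref{prop:lim_colim_ban_sets}) is literally valid only for $\bTri = \F_1$; when $\bTri$ is a Banach ring $R$ the coproduct in $\bBox_R^{\le 1}$ is the contracting ($\ell^1$) direct sum, whose underlying pointed set is not a wedge --- but the inclusion of the sub-sum indexed by $(\ol{\Delta}_n)^j$ is still injective, so your conclusion stands, and the paper's own proof glosses over exactly the same point.
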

\begin{proof}
The underlying simplicial set of $P \ \ootimes \ \ol{\Delta}_n$ is just the coproduct of the simplicial set $\ol{\Delta}_n$ indexed by $X$, because for each $i$
\[ (X \ \ootimes \ \Delta_n)^i = \coprod_{\sigma \in (\Delta_n)^i} X = \coprod_{x \in X} (\Delta_n)^i  \]
and, as a map of underlying simplicial sets 
\[ f = \coprod_{x \in X} g : P \ \ootimes \ \ol{\Delta}_n \to  P \ \ootimes \ \Delta_n \]
where
\[ g: \ol{\Delta}_n \to  \Delta_n \]
is the inclusion of the boundary of $\Delta_n$. Hence it is a cofibration of simplicial sets.
\end{proof}

\begin{thm} \label{thm:model_monoidal}
The categories $\bSimp(\bBox_\bTri^{\le 1})$, $\bSimp(\bInd(\bBox_\bTri))$ and $\bSimp(\bInd^m(\bBox_\bTri))$ are symmetric monoidal model categories.
\end{thm}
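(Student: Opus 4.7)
The plan is to verify the pushout--product axiom and the unit axiom directly on the generating (trivial) cofibrations produced by Corollary \ref{cor:existence_model_structure}, working uniformly in the three categories. The monoidal structure on each $\bSimp(\bC)$ (for $\bC$ any of the underlying symmetric monoidal categories considered) is level-wise, $(X \ootimes Y)_n = X_n \ootimes Y_n$, and the monoidal unit is the constant simplicial object at the unit $\mathbf{1}_\bC$.

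The key computation is the natural identification, for $P, Q \in \bC$ and simplicial sets $K, L$,
\[ (P \ \ootimes \ K) \ \ootimes \ (Q \ \ootimes \ L) \cong (P \ootimes Q) \ \ootimes \ (K \times L), \]
obtained at each level from $(P \ \ootimes \ K)_n = \coprod_{\sigma \in K_n} P$ together with the fact that, in each of the three ambient monoidal categories, $\ootimes$ distributes over coproducts (because it is closed, or in the ind/essentially monomorphic ind cases is inherited level-wise from a closed structure). Consequently, for each $P, Q \in \bC$, the functor $(P \ootimes Q) \ \ootimes \ (-) : \bsSets \to \bSimp(\bC)$ preserves colimits and sends the boundary inclusions $\ol{\Delta}_n \rhook \Delta_n$, respectively the horn inclusions $\La_{k,n} \rhook \Delta_n$, to generating cofibrations, respectively generating trivial cofibrations, of $\bSimp(\bC)$.

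Given two generating cofibrations $f: P \ \ootimes \ \ol{\Delta}_n \to P \ \ootimes \ \Delta_n$ and $g: Q \ \ootimes \ \ol{\Delta}_m \to Q \ \ootimes \ \Delta_m$, the identification above together with the fact that $(P \ootimes Q) \ \ootimes \ (-)$ preserves pushouts exhibits the pushout--product $f \Box g$ as $(P \ootimes Q) \ \ootimes \ (i_n \Box i_m)$, where $i_k$ is the boundary inclusion in $\bsSets$. Since $\bsSets$ with its Kan model structure is a symmetric monoidal model category, $i_n \Box i_m$ is a cofibration of simplicial sets, hence a transfinite composition of pushouts of boundary inclusions; applying $(P \ootimes Q) \ \ootimes \ (-)$ realises $f \Box g$ as a transfinite composition of pushouts of generating cofibrations in $\bSimp(\bC)$, which is therefore a cofibration. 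Replacing $i_n$ by a horn inclusion and using that in $\bsSets$ the pushout--product of a horn inclusion with a boundary inclusion is a trivial cofibration settles the trivial case. By the standard criterion (Hovey, checking on generators), the pushout--product axiom follows.

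For the unit axiom, the monoidal unit can be written as $\mathbf{1}_\bC \ \ootimes \ \Delta_0$, and the generating cofibration $\mathbf{1}_\bC \ \ootimes \ \ol{\Delta}_0 = \emptyset \to \mathbf{1}_\bC \ \ootimes \ \Delta_0$ shows it is already cofibrant, so the axiom is automatic. The main potential obstacle is the distributivity claim used in the key identification: verifying that $\ootimes$ commutes with the coproducts $\coprod_{\sigma \in K_n}$ in all three ambient monoidal structures. In $\bBox_\bTri^{\le 1}$ this is immediate from closedness; in $\bInd(\bBox_\bTri)$ and $\bInd^m(\bBox_\bTri)$ it requires unwinding the formula for $\ootimes$ on ind-objects and checking that the essentially monomorphic property is preserved, which is straightforward from the representation of coproducts given in Proposition \ref{prop:lim_colim_ban_sets} and Proposition \ref{prop:morphisms_ind_normed_sets}.
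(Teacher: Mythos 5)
Your proof is correct, and it takes a genuinely different route from the paper's. The paper verifies the pushout--product axiom by attacking the lifting problem directly: it observes (Lemmas \ref{lemma:fib_we} and \ref{lemma:cofib}) that the generating cofibrations are cofibrations of underlying simplicial sets and that trivial fibrations are trivial fibrations of underlying simplicial sets, solves the lifting problem at the level of simplicial sets, and then shows the resulting lift is bounded by proving that the pushout--product map $h$ is a strict morphism (using that pushouts preserve strict monomorphisms); the unit axiom is handled as you do, by cofibrancy of the unit. Your argument instead exploits the copower form of the generators: the identification $(P \ \ootimes \ K) \ootimes (Q \ \ootimes \ L) \cong (P \ootimes Q) \ \ootimes \ (K \times L)$ reduces $f \Box g$ to $(P \ootimes Q) \ \ootimes \ (i_n \Box i_m)$ and lets the monoidal model structure on $\bsSets$ do the work; note that since $P \ootimes Q$ again lies in $\bBox_\bTri$, each attached cell is again a generating (trivial) cofibration, and because the generators involve only constant ind-objects and finite simplicial sets, the distributivity you flag only requires finite coproducts and pushouts, so it is indeed harmless. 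What your route buys is robustness and economy: no appeal to strictness of $h$ or to boundedness of a set-theoretic lift, only formal cell-complex manipulations and Hovey's criterion on generators. What the paper's route buys is that the same strictness argument is then cited to show that $f^{\Box n}/S_n$ is a cofibration, i.e., the commutative monoid axiom invoked right afterwards to transfer the model structure to $\bsComm$ via White's theorem; your reduction does not immediately yield that extra statement, since the $S_n$-quotient mixes the $P^{\ootimes n}$ factor with the simplicial-set factor. As the theorem as stated only claims the symmetric monoidal model structure, this is not a gap in your proof, but be aware that the stronger assertion is what the subsequent corollary actually uses.
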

\begin{proof}
We have to check that these categories satisfy the monoidal axiom and the commutative monoidal axiom. We work out the case of $\bSimp(\bInd^m(\bBox_\bTri))$ because it is the one we care most. For $\bSimp(\bInd(\bBox_\bTri))$ one notice that we proved that it is Quillen equivalent to $\bSimp(\bInd^m(\bBox_\bTri))$ and for $\bSimp(\bBox_\bTri^{\le 1})$ we notice that our proof can be adapted for these categories.

The first axiom to check is the unit axiom, which requires that for each $X \in \bSimp(\bInd^m(\bBox_\bTri))$ and $I \in \bSimp(\bInd^m(\bBox_\bTri))$ the identity of the tensor product one has that
\[ Q (I) \ootimes_{\bTri} X \to I \ootimes_{\bTri} X \]
is a weak equivalence, where $Q(I)$ is the cofibrant replacement of $I$. This is true because in all our categories $I$ is cofibrant, hence $Q(I)$ is actually isomorphic to $I$. Then, we need to check the pushout-product axiom. We can check it for the generating cofibrations (which are described in Corollary \ref{cor:existence_model_structure}) for which it is stated as follows: Let $f: X = P \ \ootimes \ \ol{\Delta}_n \to Y = P \ \ootimes \ \Delta_n$ and $g: X' = Q \ \ootimes \ \ol{\Delta}_n \to Y' = Q \ \ootimes \ \Delta_n$ be two generating cofibrations then 
\[ h: (Y \ootimes_\bTri X') \coprod_{X \ootimes_\bTri X'} (X \ootimes_\bTri Y') \to Y \ootimes_\bTri Y' \]
is a cofibration. Notice that $f$ and $f'$ are cofibrations of the underlying simplicial sets and that every trivial fibration of $\bSimp(\bInd^m(\bBox_\bTri))$ is a trivial fibration of the underlying simplicial sets. Hence the diagram
\[
\begin{tikzpicture}
\matrix(m)[matrix of math nodes,
row sep=2.6em, column sep=2.8em,
text height=1.5ex, text depth=0.25ex]
{ (Y \ootimes_\bTri X') \coprod_{X \ootimes_\bTri X'} (X \ootimes_\bTri Y')  & E   \\
  Y \ootimes_\bTri Y'  & F  \\};
\path[->,font=\scriptsize]
(m-1-1) edge node[auto] {} (m-1-2);
\path[->,font=\scriptsize]
(m-1-1) edge node[auto] {$h$} (m-2-1);
\path[->,font=\scriptsize]
(m-1-2) edge node[auto] {} (m-2-2);
\path[->,font=\scriptsize]
(m-2-1) edge node[auto] {} (m-2-2);
\path[->,dashed, font=\scriptsize]
(m-2-1) edge node[auto] {} (m-1-2);
\end{tikzpicture}
\]
can be completed with the dashed arrow as a diagram of simplicial sets. This follows by the fact that $f$ and $f'$ are cofibrations of the underlying simplicial sets which implies that $h$ is a cofibration. So, since by Lemma \ref{lemma:fib_we} trivial fibrations of bornological simplicial sets induce trivial fibrations of the underlying simplicial sets, the dashed arrow always exists as a map of simplicial sets. 
But since $(Y \ootimes_\bTri X') \coprod_{X \ootimes_\bTri X'} (Y' \ootimes_\bTri X)  \to E$ is bounded and $h$ is a strict morphism, then the diagonal map is always bounded. To see that $h$ is a strict morphism we notice that in the pushout diagram
\[
\begin{tikzpicture}
\matrix(m)[matrix of math nodes,
row sep=2.6em, column sep=2.8em,
text height=1.5ex, text depth=0.25ex]
{ X \ootimes_\bTri X'  & X \ootimes_\bTri Y'   \\
  Y \ootimes_\bTri X'  & (Y \ootimes_\bTri X') \coprod_{X \ootimes_\bTri X'} (Y' \ootimes_\bTri X)  \\};
\path[->,font=\scriptsize]
(m-1-1) edge node[auto] {} (m-1-2);
\path[->,font=\scriptsize]
(m-1-1) edge node[auto] {} (m-2-1);
\path[->,font=\scriptsize]
(m-1-2) edge node[auto] {} (m-2-2);
\path[->,font=\scriptsize]
(m-2-1) edge node[auto] {} (m-2-2);
\end{tikzpicture}
\]
to upper horizontal and the left vertical maps are obviously strict monomorphisms which implies that also the other two are, because pushouts preserve strict monos. Hence in the commutative diagram
\[
\begin{tikzpicture}
\matrix(m)[matrix of math nodes,
row sep=2.6em, column sep=2.8em,
text height=1.5ex, text depth=0.25ex]
{ X \ootimes_\bTri Y'  &   \\
    & (Y \ootimes_\bTri X') \coprod_{X \ootimes_\bTri X'} (Y' \ootimes_\bTri X)   &   X' \ootimes_\bTri Y' \\
    Y \ootimes_\bTri X'  \\};
\path[->,font=\scriptsize]
(m-1-1) edge node[auto] {} (m-2-2);
\path[->,font=\scriptsize]
(m-3-1) edge node[auto] {} (m-2-2);
\path[->,font=\scriptsize] 
(m-1-1) edge node[auto] {} (m-2-3);
\path[->,font=\scriptsize]
(m-3-1) edge node[auto] {} (m-2-3);
\path[->,font=\scriptsize]
(m-2-2) edge node[auto] {$h$} (m-2-3);
\end{tikzpicture}
\]
all maps are known to be strict but $h$ which is then a strict morphism too.

An analogue argument permits to prove that $f^{\Box^n}/S_n$, the $n$-fold pushout product of $f$ with itself modulo the action of the symmetric group, is always a cofibration. Proving the symmetric monoidal axiom and the theorem.
\end{proof}

\begin{cor} 
The categories $\bsComm(\bBox_\bTri^{\le 1})$, $\bsComm(\bInd(\bBox_\bTri))$ and $\bsComm(\bInd^m(\bBox_\bTri))$ inherit a structure of combinatorical model structure from the model structures defined so far on the categories of simplicial modules.
\end{cor}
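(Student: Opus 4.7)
The plan is to apply the standard transfer theorem for commutative monoids in a symmetric monoidal model category (due to White, building on Schwede--Shipley), which states that if $\cC$ is a cofibrantly generated symmetric monoidal model category satisfying the commutative monoid axiom---namely, that for every (trivial) cofibration $f$ the symmetrized pushout-product $f^{\Box n}/S_n$ is again a (trivial) cofibration---then $\bsComm(\cC)$ inherits a cofibrantly generated model structure in which fibrations and weak equivalences are detected by the forgetful functor $U: \bsComm(\cC) \to \cC$, with generating (trivial) cofibrations obtained by applying the free symmetric algebra functor $S_{\cC}$ to the generating (trivial) cofibrations of $\cC$.

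First, I would verify the hypotheses of this transfer theorem for each of $\bSimp(\bBox_\bTri^{\le 1})$, $\bSimp(\bInd(\bBox_\bTri))$ and $\bSimp(\bInd^m(\bBox_\bTri))$. That these are cofibrantly generated symmetric monoidal model categories is precisely the content of Theorem \ref{thm:existence_model_structure}, Proposition \ref{prop:existence_model_structure_banach_ring} and Theorem \ref{thm:model_monoidal}. The cofibration half of the commutative monoid axiom is established in the final paragraph of the proof of Theorem \ref{thm:model_monoidal}; the trivial-cofibration half follows from the cofibration case combined with Lemma \ref{lemma:fib_we}, which reduces checking acyclicity to the underlying simplicial sets, where the symmetrized pushout-product is known to preserve trivial cofibrations.

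Second, I would argue that the resulting model structure is combinatorial. The underlying categories are combinatorial because, as noted in the proof of Theorem \ref{thm:existence_model_structure}, they are either small ($\bBox_\bTri^{\le 1}$) or generated under filtered colimits by the compact subcategory $\bBox_\bTri$ (the two $\bInd$-variants). Since $S_{\cC}$ is a left adjoint, it preserves presentability and smallness, so the candidate generating sets of cofibrations and trivial cofibrations of $\bsComm(\cC)$ are genuine sets whose domains are small relative to themselves, and the small object argument applies.

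The main obstacle I anticipate is bookkeeping rather than conceptual: verifying the commutative monoid axiom on the nose in $\bInd^m(\bBox_\bTri)$, where the quotient by the $S_n$-action could in principle interact badly with the essentially monomorphic condition defining the subcategory. The Quillen equivalence of Theorem \ref{thm:essmono_equiv_ind} allows one to check this after passing to $\bInd(\bBox_\bTri)$, where the question reduces to a levelwise statement on inductive systems that is handled by the same pushout-product computation used in the proof of Theorem \ref{thm:model_monoidal}.
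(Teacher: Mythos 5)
Your proposal is correct and follows essentially the same route as the paper: the paper's proof simply invokes Theorem 3.2 of \cite{Whi}, transferring the model structure along the forgetful functor using the (commutative) monoid axiom verified in Theorem \ref{thm:model_monoidal}, which is exactly your strategy. The extra details you supply (the acyclic half of the commutative monoid axiom via Lemma \ref{lemma:fib_we}, smallness/combinatoriality, and the $\bInd^m$ bookkeeping) are elaborations of points the paper leaves implicit rather than a different argument.
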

\begin{proof}
Theorem \ref{thm:model_monoidal} implies that we can define model structures on the categories $\bsComm(\bBox_\bTri^{\le 1})$, $\bsComm(\bInd(\bBox_\bTri))$ and $\bsComm(\bInd^m(\bBox_\bTri))$ by defining fibrations (resp. weak equivalences) to be the maps which are fibrations (resp. weak equivalences) of the underlying simplicial objects of $\bSimp(\bBox_\bTri^{\le 1})$, $\bSimp(\bInd(\bBox_\bTri))$ and $\bSimp(\bInd^m(\bBox_\bTri))$, respectively, as a consequence of Theorem 3.2 of \cite{Whi}.
\end{proof}

\begin{rmk}
It is a standard fact that the categories $\bsComm(\bBox_\bTri^{\le 1})$, $\bsComm(\bInd(\bBox_\bTri))$ and $\bsComm(\bInd^m(\bBox_\bTri))$ with the models structure defined in Theorem \ref{thm:model_monoidal} can be interpreted as the categories of $\E_\infty$-algebras over $\bSimp(\bBox_\bTri^{\le 1})$, $\bSimp(\bInd(\bBox_\bTri))$ and $\bSimp(\bInd^m(\bBox_\bTri))$ respectively.
\end{rmk}

Let $A$ be a simplicial commutative object of one of the categories introduced so far, then we can associate to $A$ the categoy of its modules, denoted $\bMod(A)$, and the category of $A$-algebras, denoted $\bComm(A)$.

\begin{prop} 
With the notation introduced so far, both $\bMod(A)$ and $\bComm(A)$ inherit a structure of combinatorical model categories for which $\bMod(A)$ is a simplicial model category.
\end{prop}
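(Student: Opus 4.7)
The plan is to use transferred (cofibrantly generated) model structures along the free--forgetful adjunctions
\[ F_A: \bSimp(\bC) \rightleftarrows \bMod(A): U_A, \qquad \Sym_A: \bMod(A) \rightleftarrows \bComm(A): V_A, \]
where $\bC$ denotes one of $\bBox_\bTri^{\le 1}$, $\bInd(\bBox_\bTri)$, $\bInd^m(\bBox_\bTri)$. In each case Theorem \ref{thm:model_monoidal} provides a cofibrantly generated symmetric monoidal model structure on $\bSimp(\bC)$, and Corollary \ref{cor:existence_model_structure} gives explicit generating (trivial) cofibrations $I$ and $J$. The first step is to declare a morphism in $\bMod(A)$ to be a fibration or weak equivalence if its image under $U_A$ has the corresponding property in $\bSimp(\bC)$, and to take $F_A(I)$, $F_A(J)$ as the proposed generating (trivial) cofibrations; analogously for $\bComm(A)$ with $\Sym_A(I)$, $\Sym_A(J)$.

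Next I would verify the hypotheses of Kan's recognition theorem for cofibrantly generated model structures, i.e.\ that $F_A(I)$-- and $F_A(J)$--cell complexes are well-behaved. For $\bMod(A)$ this is the monoid axiom of Schwede--Shipley applied to $\bSimp(\bC)$: one must show that transfinite compositions of pushouts of maps of the form $j\ootimes_\bTri X$ with $j \in J$ are weak equivalences. This reduces, after passing to underlying simplicial sets via Lemma \ref{lemma:fib_we} and Lemma \ref{lemma:cofib}, to a statement about trivial cofibrations of simplicial sets smashed with arbitrary objects, which holds because $-\ootimes_\bTri X$ preserves the relevant filtered colimits and strict monomorphisms (using Proposition \ref{prop:morphisms_ind_normed_sets} and the explicit description of colimits in the ind-categories). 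Combinatoriality is then automatic: $\bSimp(\bC)$ is combinatorial because it is cofibrantly generated and locally presentable (the generators of $\bBox_\bTri$ and of $\bInd(\bBox_\bTri)$ form small sets of compact objects in the chosen universes), and combinatoriality transfers along left adjoints to presentable categories.

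For $\bComm(A)$ I would invoke Theorem 3.2 of \cite{Whi}, exactly as was already done in the proof of the previous corollary for $\bsComm$ over the base categories. The crucial input is the symmetric analogue of the pushout--product axiom, namely that the iterated pushout--product $f^{\Box n}/\Sigma_n$ of a (trivial) cofibration is a (trivial) cofibration; this was established in the proof of Theorem \ref{thm:model_monoidal} for $\bSimp(\bC)$ and the same argument applies verbatim once we know that $\bMod(A)$ is itself a symmetric monoidal model category over $\bSimp(\bC)$. The latter follows from the transferred structure on $\bMod(A)$ together with the pushout--product axiom in $\bSimp(\bC)$, using that $\ootimes_A$ is computed as a reflexive coequaliser of $\ootimes_\bTri$.

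Finally, the simplicial enrichment of $\bMod(A)$ is inherited from the simplicial enrichment of $\bSimp(\bC)$: for $M, N \in \bMod(A)$ the simplicial $\Hom$-set $\ul{\Hom}_{\bMod(A)}(M,N)$ is the equaliser of the two maps $\ul{\Hom}_{\bSimp(\bC)}(M,N) \rightrightarrows \ul{\Hom}_{\bSimp(\bC)}(A\ootimes_\bTri M, N)$ encoding the two possible $A$-actions, and the SM7 axiom reduces to SM7 in $\bSimp(\bC)$. The main obstacle I anticipate is the verification of the monoid axiom in the ind-categories: the key point is that strict epimorphisms and strict monomorphisms in $\bInd^m(\bBox_\bTri)$ can be represented level-wise (Proposition \ref{prop:morphisms_ind_normed_sets}), so that cell attachments along $F_A(J)$ remain controlled under the forgetful functor; once this is in hand, the rest of the argument is formal.
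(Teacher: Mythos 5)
Your proposal is correct and follows essentially the same route as the paper: the paper's proof is just the one-line remark that the statement is "a consequence of Theorem \ref{thm:model_monoidal}", i.e.\ it implicitly relies on exactly the transfer-of-structure machinery (Schwede--Shipley for $\bMod(A)$, White's Theorem 3.2 for $\bComm(A)$, as already invoked for $\bsComm$ in the preceding corollary) that you spell out. Your sketch simply makes explicit the verifications the paper leaves to the reader.
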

\begin{proof}
Again this is a consequence of Theorem \ref{thm:model_monoidal}.
\end{proof}

\begin{rmk} \label{rmk:infty_cat}
Later on, we will use also the language of $\infty$-categories for discussing derived analytic spaces over $\F_1$ and mainly for defining analytic stacks over $\F_1$. For all the purposes of this work the language of model categories and of $\infty$-categories are to be considered equivalent and a model category has to be considered just as a presentation of an $\infty$-category. In particular, notice that the model categories introduced so far define $\infty$-categories.

We will switch between these two languages freely, using the one that fits better in our discussion at each time.
\end{rmk}

\section{Bornological rings} \label{sec:bornological_rings}

Up to now we discussed bornological modules over $\F_1$ or over Banch rings but we will need to use more general base rings. In this section we introduce the notion of (complete) bornological ring and its spectrum and we give some fundamental examples which will be needed in the applications. In the subsequent sections it will be clear why we need to introduce this notion and we cannot bound ourselves to work only with Banach rings (cf. Theorem \ref{thm:main}).

\begin{defn} \label{defn:bornological_ring}
A (resp. \emph{separated}, resp. \emph{complete}) \emph{bornological ring} is an object of $\bComm(\bInd^m(\bSN_\Z))$ (resp. $\bComm(\bInd^m(\bNr_\Z))$, resp. $\bComm(\bInd^m(\bBan_\Z))$)
\end{defn}

We will mainly work with complete bornological rings therefore, if not differently specified, all our bornological rings are supposed to be complete.

\begin{defn} \label{defn:spectrum_bornological_ring}
Let $R$ be a bornological ring. The \emph{spectrum} of $R$ is defined to be the topological space $\cM(R)$ of bounded multiplicative semi-norms on $R$ equipped with the weak topology.
\end{defn}

\begin{rmk}
Notice this notion of spectrum is compatible with the notion of spectrum of a Banach ring given by Berkovich in \cite{Ber1990}, cf. Chapter 1.
\end{rmk}

The next example shows the main examples of bornological rings we are interested in.

\begin{exa} \label{exa:born_rings}
\begin{enumerate}
\item We briefly recall some analytic properties of the ring of integers. The ring of integers with the Euclidean absolute value $(\Z, |\cdot|_\infty)$ is a Banach ring. As explained in the first chapter of \cite{Ber1990} its spectrum, denoted $\cM(\Z)$, is a pro-finite graph as depicted in the following picture.
\begin{figure}[h]
    \begin{tikzpicture}[scale=2.5,cap=round,>=latex]

         \draw[gray] (0cm,0cm) -- (90:1cm);
         \filldraw[black] (90:1cm) circle(0.4pt);
         
         \draw[gray] (0cm,0cm) -- (210:1.3cm);
         \filldraw[black] (210:1.3cm) circle(0.4pt);
         
         \draw[gray] (0cm,0cm) -- (240:1.1cm);
         \filldraw[black] (240:1.1cm) circle(0.4pt);
         
         \draw[gray] (0cm,0cm) -- (270:1cm);
         \filldraw[black] (270:1cm) circle(0.4pt);
         
         \draw[gray, dashed] (0cm,0cm) -- (300:1.1cm);
         \filldraw[black] (300:1.1cm) circle(0.4pt);
         
         \draw[gray] (0cm,0cm) -- (330:1.3cm);
         \filldraw[black] (330:1.3cm) circle(0.4pt);

        \filldraw[black] (0cm:0cm) circle(0.4pt);

       \draw (0cm,1cm) node[above=0pt] {$|\cdot|_\infty$}
             (-1.2cm,-0.7cm)  node[below=1pt] {$2$}
             (1.2cm,-0.7cm)  node[below=1pt] {$p$}
             (0cm,-1cm)  node[below=1pt] {$5$}
             (-0.7cm,-0.9cm)  node[below=1pt] {$3$}
             (0.7cm,-0.9cm)  node[below=1pt] {}
             (0.15cm, 0cm) node[above] {$|\cdot|_0$};
    \end{tikzpicture}
    
    \caption{$\cM(\Z)$}\label{fig:1}

\end{figure}
It is a tree with one node of infinite valence, corresponding to the trivial norm from which start one branch for each prime number and one for the Archimedean place of $\Z$. The topology is equivalent to the topology of the pro-finite tree. In the same way one can describe the spectrum of the ring of integers of a number field $\cO_K \subset K$ when $\cO_K$ is equipped with the norm 
\[  \|x\| = \max_{\sigma: K \rhook \C} |x|_\sigma \ \, x \in \cO_K. \]
The spectrum $\cM((\cO_K, \|\cdot\|))$ is a tree similar to $\cM(\Z)$ with a branch for each place of $\cO_K$ starting from the central node which corresponds to the trivial norm.

\item Berkovich in \cite{BerMix} defined the Banach ring $(\C, \|\cdot\|)$ where
\[ \| x \| \doteq \max \{ |x|_0, |x|_\infty \}. \]
In a more functorial way, one can think about this ring as 
\[ (\C, \|\cdot\|) \cong \Im(\C \stackrel{\Delta}{\to} (\C, |\cdot|_0) \times (\C, |\cdot|_\infty) ) \]
where $\Delta$ is the diagonal map. Notice that $\cM((\C, \|\cdot\|)) \cong [0, 1]$ where $\epsilon \in [0, 1]$ is identified with $|\cdot|_\infty^\epsilon$. The same construction can be done for $\R$ in place of $\C$ and for any arbitrary sub-interval of $[r_1, r_2] \subset [0, 1]$ yielding a Banach ring $(\R, \|\cdot\|_{r_1, r_2})$ whose spectrum can be identified with the interval $[r_1, r_2]$. The concept of bornological ring permits to generalize this construction to open intervals (or half-open) $(r_1, r_2) \subset [0, 1]$ by considering the ring
\[ \R_{(r_1, r_2)} = \limpro_{r_1 < r < r' < r_2} (\R, \|\cdot\|_{r, r'}). \]
Clearly $\cM(\R_{(r_1, r_2)}) \cong (r_1, r_2)$. Notice that the spaces $(\R_{(r_1, r_2)}, \cM(\R_{(r_1, r_2)}))$ can be canonically identified as analytic subspaces (in the sense explained in the next section) of $\cM(\Z)$ and also $((\R, \|\cdot\|_{r_1, r_2}), \cM((\R, \|\cdot\|_{r_1, r_2})))$ if $r_1 > 0$, as Figure \ref{fig:2} shows.

\begin{figure}[h] \label{fig:2}
    \begin{tikzpicture}[scale=2.5,cap=round,>=latex]
         
         \draw[gray] (-0.75cm, 0.75cm) -- (-0.75cm,0.12cm);
         \draw[gray, dashed] (-0.75cm, 0.75cm) -- (90:0.75cm);
         \draw[gray, dashed] (-0.75cm, 0.12cm) -- (90:0.12cm);
        \draw[black] (-0.75cm,0.75cm) circle(0.4pt);
        \draw[black] (-0.75cm,0.12cm) circle(0.4pt);

         \draw[gray] (-1.24cm, -0.45cm) -- (-0.81cm, -0.21cm);
         \filldraw[black] (-1.24cm, -0.45cm) circle (0.4pt);
         \draw[black] (-0.81cm, -0.21cm) circle (0.4pt);
         \draw[gray, dashed] (-1.13cm, -0.65cm) -- (-1.24cm, -0.45cm);
         \draw[gray, dashed] (-0.7cm, -0.4cm) -- (-0.81cm, -0.21cm);    
         
         \draw[gray] (0.97cm, -0.33cm) -- (0.44cm, -0.03cm);
         \draw[black] (0.97cm, -0.33cm) circle (0.4pt);
         \draw[black] (0.44cm, -0.03cm) circle (0.4pt);
         \draw[gray, dashed] (0.87cm, -0.5cm) -- (0.97cm, -0.33cm);
         \draw[gray, dashed] (0.34cm, -0.2cm) -- (0.44cm, -0.03cm);       
         
         \draw[gray] (0cm,0cm) -- (90:1cm);
         \filldraw[black] (90:1cm) circle(0.4pt);
         
         \draw[gray] (0cm,0cm) -- (210:1.3cm);
         \filldraw[black] (210:1.3cm) circle(0.4pt);
         
         \draw[gray] (0cm,0cm) -- (240:1.1cm);
         \filldraw[black] (240:1.1cm) circle(0.4pt);
         
         \draw[gray] (0cm,0cm) -- (270:1cm);
         \filldraw[black] (270:1cm) circle(0.4pt);
         
         \draw[gray, dashed] (0cm,0cm) -- (300:1.1cm);
         \filldraw[black] (300:1.1cm) circle(0.4pt);
         
         \draw[gray] (0cm,0cm) -- (330:1.3cm);
         \filldraw[black] (330:1.3cm) circle(0.4pt);

        \filldraw[black] (0cm:0cm) circle(0.4pt);

       \draw (0cm,1cm) node[above=0pt] {$|\cdot|_\infty$}
             (-1.2cm,-0.7cm)  node[below=1pt] {$2$}
             (1.2cm,-0.7cm)  node[below=1pt] {$p$}
             (0cm,-1cm)  node[below=1pt] {$5$}
             (-0.7cm,-0.9cm)  node[below=1pt] {$3$}
             (0.7cm,-0.9cm)  node[below=1pt] {}
             (0.15cm, 0cm) node[above] {$|\cdot|_0$}
             (-1.2cm, 0.3cm) node[above] {$\cM(\R_{(r_1, r_2)})$}
             (-1.43cm, -0.3cm) node[above] {$\cM((\Z_2)_{[0, r_2)})$}
             (1cm, -0.15cm) node[above] {$\cM((\Q_p)_{(r_1, r_2)})$};
    \end{tikzpicture}
    
    \caption{$\cM(\R_{(r_1, r_2)})$, $\cM((\Q_p){(r_1, r_2)})$ and $\cM((\Z_2)_{[0, r_2)})$ as a subspaces of $\cM(\Z)$}

\end{figure}

\item The same construction of previous example can be worked out for the $p$-adic branches of $\cM(\Z)$. Briefly, we can define
\[ (\Q_p, \|\cdot\|_{r_1, r_2}) \cong \Im(\Q_p \stackrel{\Delta}{\to} (\Q_p, |\cdot|_{r_1}) \times (\Q_p, |\cdot|_{r_2}) ), \]
explicitly $\Q_p$ is equipped with the norm
\[ \|x\|_{r_1, r_2} = \max \{ |x|_p^{r_1}, |x|_p^{r_2} \} , \ \ x \in \Q_p \]
where now $[r_1, r_2] \subset [0, \infty]$. And then
\[ (\Q_p)_{(r_1, r_2)} = \limpro_{r_1 < r < r' < r_2} (\Q_p, \|\cdot\|_{r, r'}). \]
and again the analytic spaces defined by these bornological rings can be canonically identified with sub-spaces of $\cM(\Z)$ as shown in Figure \ref{fig:2}.

\item In the same vein one can define also $(\Z_p)_{(r_1, r_2)}$, but in this case one has that $|\cdot|_p^{r_1} \le |\cdot|_p^{r_2}$ if $r_1 \le r_2$, hence $(\Z_p)_{(r_1, r_2)} \cong (\Z_p)_{[0, r_2)}$. Thus, $\Z_p$ determines open and closed neighborhoods of the maximal point $\{p\}$ in $\cM(\Z)$.

\item Building on the last example, we can give an even more geometric interpretation of the neighborhoods of the form $(\Z_p)_{[0, r)}$ of $\{p\}$ in $\cM(\Z)$. One can introduce the algebras of convergent analytic functions on $\Z$ as the algebra $S_\Z(\rho)$, for any radius $\rho$, as in equation (\ref{eqn:S}). We refer to the appendix and to Section 6 of \cite{BaBe} for general properties of these algebras. In this example we notice that we can define Weierstrass localizations of these algebras in the usual way: For any $f \in S_\Z(\rho)$ we define
\[ S_\Z(\rho) \lt \sigma^{-1} f \gt = \frac{S_\Z(\rho) \lt \sigma^{-1} T \gt}{(T - f)} \]
for $\sigma > 0$, which has the usual geometric interpretation as the subspace of $\cM(\Z)$ where $|f| \le \sigma$. It is easy to show that in the particular case when $f = p$ and $\sigma = r^{-1} p$ one gets
\[ S_\Z(\rho) \lt \sigma^{-1} f \gt = (\Z_p)_{[0, r]}, \]
hence $(\Z_p)_{[0, r)}$ can be seen as a Weierstrass localization of the convergent power-series over $\Z$.
\end{enumerate}
\end{exa}

We will see later on how these segments relate to $p$-adic Hodge theory, now we introduce the base change functor from $\F_1$ for bornological rings.

\begin{defn} \label{defn:base_change_born}
Let $R \cong \underset{i \in I}{``\limind"} R_i$ be a (complete) bornological ring and $M$ a normed set. Then we define the \emph{base change} of $M$ to $R$ as
\[ M \otimes_{\F_1} R \doteq  ``\limind_{i \in I}" {\coprod_{x \in M}}^{\le 1} [R_i]_{|x|_M} \]
and its completed version, using the same formula
\[ M \wotimes_{\F_1} R \doteq  ``\limind_{i \in I}" {\coprod_{x \in M}}^{\le 1} [R_i]_{|x|_M} \]
where in the first case ${\underset{x \in M}\coprod}^{\le 1}$ is computed in $\bNr_R$ and in the second case in $\bBan_R$.
\end{defn}

Definition \ref{defn:base_change_born} can be immediately generalized to arbitrary bornological or $\bInd$-normed sets, but we do not discuss this notion since it will not be used in this work. It is also clear that $M \wotimes_{\F_1} (\cdot)$ is a functor from $\bComm(\bCBorn_\Z)$ to $\bCBorn_\Z$.

\begin{rmk}
In the previous definition we do not write
\[ {\coprod_{x \in M}}^{\le 1} [R_i]_{|x|_M} = M \otimes_{\F_1} R_i \]
because $R_i$ is not a sub-ring of $R$ in general and we reserve the notation $(\cdot) \otimes_{\F_1} R$ for rings.
\end{rmk}

Notice that if $R$ is a Fr\'echet-like bornological ring, \ie one naturally presented as a projective limit
\[ R \cong \limpro_{i \in I} R_i \]
for calculating the base change form $\F_1$ to $R$ of a bornological module we first have to represent $R$ as an inductive limit of normed modules by applying Lemma \ref{lemma:calc_proj_limit_R}. So, if one has a projective limit of bornological rings as above it is not clear that one has
\[ M \wotimes_{\F_1} R \cong \limpro_{i \in I} (M \wotimes_{\F_1} R_i). \]
The next proposition shows that this is true in an important case.

\begin{prop} \label{prop:born_base_change_ban_proj_lim}
Let $M$ be normed set and $R \cong \underset{i \in I}\limpro R_i$ a bornological ring. Then
\[ M \wotimes_{\F_1} R \cong \limpro_{i \in I} (M \wotimes_{\F_1} R_i) \]
as a bornological module.
\end{prop}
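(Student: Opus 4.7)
My plan is to present both sides as ind-objects via Lemma \ref{lemma:calc_proj_limit_R} and then compare them explicitly.

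First, applying Lemma \ref{lemma:calc_proj_limit_R} to the product $\prod_i R_i$ and restricting to the equalizer defining $R = \limpro_i R_i$ viewed as a bornological module, one obtains $R \cong ``\limind_\phi" R_\phi$, where $\phi$ ranges over functions $I \to \N_{\ge 1}$ and $R_\phi$ is the space of compatible families $(r_i) \in \prod_i R_i$ with $\sup_i |r_i|_{R_i}/\phi(i) < \infty$, equipped with this as its norm. Definition \ref{defn:base_change_born} then gives $M \wotimes_{\F_1} R \cong ``\limind_\phi" A_\phi$ with $A_\phi = {\coprod_{x \in M}}^{\le 1} [R_\phi]_{|x|_M}$, whose elements are families $(r_x)_{x \in M}$ with $r_x = (r_{x,i})_i \in R_\phi$ and norm $\sum_x \sup_i \tfrac{|r_{x,i}||x|_M}{\phi(i)}$. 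The same procedure applied to $\limpro_i (M \wotimes_{\F_1} R_i)$, whose factors are Banach modules, gives $\limpro_i(M \wotimes_{\F_1} R_i) \cong ``\limind_\phi" B_\phi$, where $B_\phi$ is the set of compatible families $(y_i)_i$ with $y_i = \sum_x r_{x,i} \cdot x \in M \wotimes_{\F_1} R_i$ and norm $\sup_i \tfrac{\sum_x |r_{x,i}||x|_M}{\phi(i)}$.

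Next I would observe that $A_\phi$ and $B_\phi$ share the same underlying set of doubly-indexed families $(r_{x,i})$ compatible in $i$, their norms differing only in the order of $\sup_i$ and $\sum_x$. The trivial bound $\sup_i \sum_x \le \sum_x \sup_i$ produces a contracting map $A_\phi \to B_\phi$ compatible with the ind-transitions. For the reverse direction I fix a bijection $I \cong \N$ (we are in the Fr\'echet-like case, so $I$ is at most countable) and set $\phi'(i) = 2^i\phi(i)$. Given $(y_i) \in B_\phi$ of norm at most $K$, each $(r_{x,i})_i$ is compatible and defines some $r_x \in R$, and the geometric series estimate
\[ \sum_x \sup_i \frac{|r_{x,i}||x|_M}{\phi'(i)} \le \sum_x \sum_i \frac{|r_{x,i}||x|_M}{2^i \phi(i)} = \sum_i \frac{1}{2^i} \cdot \frac{\sum_x |r_{x,i}||x|_M}{\phi(i)} \le 2K \]
places $(r_x)$ in $A_{\phi'}$ with norm at most $2K$. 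Together with the contracting maps $A_\phi \to B_\phi$, the resulting bounded maps $B_\phi \to A_{\phi'}$ realize the claimed isomorphism in the ind-category.

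The main obstacle is the reverse estimate: interchanging $\sup$ with $\sum$ is impossible without rescaling by a summable weight, which is exactly the role of the geometric factor $2^i$ and the reason the countability of $I$ enters. The Fr\'echet-like setting of the proposition supplies this hypothesis automatically, and one should also note that a set-theoretic check confirms that the forward-backward composites agree with the canonical ind-transitions $A_\phi \to A_{\phi'}$ and $B_\phi \to B_{\phi'}$, so that the identifications are indeed mutually inverse at the level of ind-objects.
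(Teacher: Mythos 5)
Your proof is correct, and it reaches the conclusion by a somewhat different route than the paper. The paper splits $\limpro_i$ into an equalizer and a product: commutation with equalizers is proved via exactness of contracting coproducts over $\Z$, and commutation with products is proved by replacing, cofinally inside the ind-presentation of Lemma \ref{lemma:calc_proj_limit_R}, each weighted $\ell^\infty$-product $R_\phi$ by a weighted $\ell^1$-coproduct (the trick of Theorem \ref{thm:pro_cofinal}), after which the base change is interchanged with the coproducts. You instead write both sides at once as ind-objects over the same index poset $\Phi$ and build explicit interleaving morphisms: the inequality $\sup_i\sum_x \le \sum_x\sup_i$ gives the contraction $A_\phi \to B_\phi$, and the geometric rescaling $\phi'(i)=2^i\phi(i)$ gives the bounded map $B_\phi \to A_{\phi'}$; the compatibility condition defining the equalizer is simply carried along, which is legitimate because the transition maps $M\wotimes_{\F_1}R_j \to M\wotimes_{\F_1}R_i$ act coefficientwise, so no separate exactness step is needed. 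The analytic heart is the same in both arguments --- a summable weight is what allows the $\sup/\sum$ interchange --- but your version makes explicit a point the paper leaves implicit: in the paper's proof the assertion that $R_\phi \to R_{\phi'}$ factors through ${\coprod_{i}}^{\le 1}[R_i]_{1/\phi'(i)}$ needs not just $\phi<\phi'$ pointwise but a summable ratio such as $\phi'=2^i\phi$, which is exactly where (essential) countability of $I$ enters; your parenthetical restriction to the Fr\'echet-like, countable-index case is therefore not an extra loss of generality relative to the paper's own argument, though it is worth noting that the proposition as stated does not impose it. What your approach buys is a completely explicit norm-level verification with effective constants; what the paper's buys is a cleaner functorial statement (commutation with equalizers and with products separately) that slots into the surrounding formalism.
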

\begin{proof}
First we notice that the functor $M \wotimes_{\F_1} (\cdot)$ commutes with equalizers. Indeed, if $R \rightrightarrows S$ is a pair of maps of Banach rings then
\[ \eq(M \wotimes_{\F_1} R \rightrightarrows M \wotimes_{\F_1} S ) = \eq ( {\coprod_{x \in M}}^{\le 1} R_{|x|_M} \rightrightarrows {\coprod_{x \in M}}^{\le 1} S_{|x|_M}) \]
and since taking contracting coproducts of $\Z$-modules is an exact functor in $\bBan_\Z$ we get that last expression is isomorphic to
\[ M \wotimes_{\F_1} \l ( \eq (R \rightrightarrows S)  \r ). \]
Hence it remains to show that $M \wotimes_{\F_1}(\cdot)$ commutes with products. We can apply Lemma \ref{lemma:calc_proj_limit_R} to compute $R = \prod_{i \in I} R_i$ as a bornological ring which yields
\[ R = ``\limind_{\phi \in \Phi}" R_\phi \]
using the notation of the lemma and in this particular case $\Phi$ is the set of maps $I \to \N_{\ge 1}$. Notice that we can write
\[ R_\phi = {\prod_{i \in I}}^{\le 1} [R_i]_{\frac{1}{\phi(i)}} \]
where $[R_i]_{\frac{1}{\phi(i)}}$ denotes the Banach module $R_i$ with the norm rescaled by the factor $\frac{1}{\phi(i)}$. The same lemma applied to 
\[ \prod_{i \in I} (M \wotimes_{\F_1} R_i) \]
yields 
\[ \prod_{i \in I} (M \wotimes_{\F_1} R_i) = ``\limind_{\phi \in \Phi}" M_\phi \]
where denoted
\[ M_\phi = {\prod_{i \in I}}^{\le 1} [M \wotimes_{\F_1} R_i]_{\frac{1}{\phi(i)}}. \]
Now notice that fo each fixed $\phi \in \Phi$ there exists a $\phi'$ such that
\[ \phi(i) < \phi'(i), \ \ \forall i \in I. \]
Hence the system maps
\[ R_\phi \to R_{\phi'} \]
factors through
\[ {\coprod_{i \in I}}^{\le 1} [R_i]_{\frac{1}{\phi'(i)}}  \rhook R_{\phi'}. \]
Hence, we can write
\[ M \wotimes_{\F_1} (``\limind_{\phi \in \Phi}" R_\phi) \cong M \wotimes_{\F_1} (``\limind_{\phi \in \Phi}" {\coprod_{i \in I}}^{\le 1} [R_i]_{\frac{1}{\phi(i)}}) = ``\limind_{\phi \in \Phi}" {\coprod_{x \in M}}^{\le 1} [ {\coprod_{i \in I}}^{\le 1} [R_i]_{\frac{1}{\phi(i)}}]_{|x|_M} \cong \]
\[ \cong ``\limind_{\phi \in \Phi}" {\coprod_{i \in I}}^{\le 1} [ {\coprod_{x \in M}}^{\le 1} [R_i]_{|x|_M}]_{\frac{1}{\phi(i)}} \cong ``\limind_{\phi \in \Phi}" {\coprod_{i \in I}}^{\le 1} [ M \wotimes_{\F_1} R_i ]_{\frac{1}{\phi(i)}} \]
and, by our previous remark, the last object can be identified with $\underset{i \in I}\prod (M \wotimes_{\F_1} R_i)$. Notice that we used the easy-to-check relation
\[ {\coprod_{i \in I}}^{\le 1} [ {\coprod_{x \in M}}^{\le 1} [R_i]_{|x|_M}]_{\frac{1}{\phi(i)}} \cong {\coprod_{i \in I}}^{\le 1}  {\coprod_{x \in M}}^{\le 1} [R_i]_{|x|_M \frac{1}{\phi(i)}} \cong {\coprod_{x \in M}}^{\le 1} {\coprod_{i \in I}}^{\le 1}  [R_i]_{|x|_M \frac{1}{\phi(i)}} \cong \]
\[ \cong  {\coprod_{x \in M}}^{\le 1} [ {\coprod_{i \in I}}^{\le 1} [R_i]_{\frac{1}{\phi(i)}}]_{|x|_M}. \]
\end{proof}

\begin{rmk}
Proposition \ref{prop:born_base_change_ban_proj_lim} does not generalize to any bornological set $M$.
\end{rmk}

\section{Analytic spaces over $\F_1$}

In this section we discuss an analogue of the theory of affinoid spaces and Stein spaces over $\F_1$. We develop a basic dictionary for discussing of analytic spaces over any base and when $\bTri$ is taken to be a complete valued field our notions are compatible with the usual ones of complex analytic geometry and non-Archimedean geometry. We do not develop the abstract theory in depth because it is not needed for the goal of this work. We bound ourself in introducing some basic objects, analogous to the open and closed polydisks of analytic geometry and we show that their base change to valued fields give the usual polydisks discussed in analytic geometry, both over $\C$ and over non-Archimedean fields. Then, we discuss how to put a topology on the category of such spaces using homotopical algebra. This topology generalizes the transcendental topology of analytic geometry and coincides with it when applied for analytic spaces over valued fields. Again, only the basic features of these theories are discussed in this work. Much more details will be given in \cite{BeKr2} and \cite{BeKr3}.

We start by introducing the notion of derived analytic space. Our geometry will be base on the categories of bornological modules, completed when the base is a Banach ring and normed when the base is $\F_1$. Therefore in this section we will use the shorthand notation
\[ \bComm_\bTri = \begin{cases} 
\bComm(\bInd^m(\bNr_{\F_1})) \text{ if } \bTri = \F_1 \\
\bComm(\bInd^m(\bBan_R)) \text{ otherwise}
\end{cases}  \] 
and
\[ \bsComm_\bTri = \begin{cases} 
\bsComm(\bInd^m(\bNr_{\F_1})) \text{ if } \bTri = \F_1 \\
\bsComm(\bInd^m(\bBan_R)) \text{ otherwise}
\end{cases}. \] 
This is motivated by the fact that there exists objects in $\bsComm(\bInd^m(\bNr_{\F_1}))$ which we want to consider as analytic spaces over $\F_1$ which are not Banach sets, therefore the notion of complete normed set over $\F_1$ does not seems to be as useful as the notion of Banach module over Banach rings.

The first step is to introduce the notions of affine analytic spaces.  As remarked in Remark \ref{rmk:infty_cat} we can associate to the model categories we are working with $\infty$-categories in a canonical way. We denote the categories obtained in this way by adding a $\infty-$ in the notation before the name of their model category, as for example $\infty-\bsComm_\bTri$. 

\begin{defn} \label{defn:infty_affine_analytic_spaces}
The category of \emph{$\infty$-affine analytic spaces over $\bTri$} is the category $\infty-\bAff_\bTri = \infty-\bsComm_\bTri^\op$. We denote the duality functor by $\Spec: \infty-\bsComm_\bTri \to \infty-\bAff_\bTri$.
\end{defn}

\begin{defn} \label{defn:derived_affine_analytic_spaces}
The category of \emph{derived affine analytic spaces over $\bTri$} is the category $\bdAff_\bTri = \bHo(\bsComm_\bTri^\op)$, the homotopy category of the opposite of the category of $\bsComm_\bTri$. We denote the duality functor by $\Spec: \bHo(\bsComm_\bTri) \to \bdAff_\bTri$.
\end{defn}

\begin{rmk}
There is a fully faithful inclusion functor
\[ \iota: \bComm_\bTri^\op \rhook \bdAff_\bTri \]
which is adjoint to the functor
\[ \pi_0: \bdAff_\bTri \to \bComm_\bTri^\op. \]
\end{rmk}

\begin{defn} \label{defn:affine_analytic_spaces}
The category of \emph{affine analytic spaces over $\bTri$} is the full sub-category of $\bdAff_\bTri$ identified by the functor $\iota$ and it will be denoted by $\bAff_\bTri$.
\end{defn}

\begin{rmk}
Notice that thanks to the classical results of analytic geometry, when $\bTri = k$, a complete valued field, the categories of affine analytic space, \ie affinoid spaces, Stein spaces, dagger affinoid spaces, etc.. embed naturally in the categories we have defined so far. For more information about this topic see \cite{Bam}, \cite{BaBe}, \cite{BaBeKr}.
\end{rmk}

Recall that we defined a model structure on $\bMod(A)$ for any $A \in \bsComm_\bTri$. This allows us to speak about derived functors amongst these categories and we will use them for defining a Grothendieck topolgoy on $\bHo(\bsComm_\bTri)$. 

\begin{defn} \label{defn:homotopy_epi}
	A morphism $f: A \to B$ in $\bsComm_\bTri$ is said to be a \emph{homotopy epimorphism} if
	\[ B \wotimes^\L_A B \to B \] 
	is an isomorphism in $\bHo(\bMod(A))$. The opposite map in $\infty-\bAff_\bTri$ is called \emph{homotopy monomorphism}.
\end{defn}

\begin{defn} \label{defn:covering}
	We say that a collection of morphisms $\{f_i: A \to B_i\}$ is a \emph{covering} of $\Spec(A)$ if the family of functors $(f_i)^*: \bHo(\bMod(A)) \to \bHo(\bMod(B_i))$ is conservative.
\end{defn}

\begin{prop} \label{prop:homotopy_zariski}
	The family of homotopy monomorphisms with finite coverings in the sense of Definition \ref{defn:covering} define a Grothendieck model topology on $\infty-\bAff_\bTri$ in the sense of HAG (cf. Definition 1.3.1.1 of \cite{TVe3}).
\end{prop}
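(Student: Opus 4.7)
The plan is to verify the three axioms for a Grothendieck model topology in the sense of HAG (Definition 1.3.1.1 of \cite{TVe3}): stability of covers under weak equivalences, under derived base change, and under composition. As a preliminary I would establish basic closure properties of homotopy monomorphisms that will be used throughout.

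First, I would record elementary closure properties of the class of homotopy monomorphisms. Every isomorphism in $\bHo(\bsComm_\bTri)$ is trivially a homotopy monomorphism, and the class is stable under composition: if $A \to B$ and $B \to C$ are homotopy monomorphisms, then associativity of $\wotimes^{\L}$ gives
\[ C \wotimes^{\L}_A C \simeq C \wotimes^{\L}_B (B \wotimes^{\L}_A B) \wotimes^{\L}_B C \simeq C \wotimes^{\L}_B B \wotimes^{\L}_B C \simeq C. \]
Crucially, homotopy monomorphisms are also stable under derived base change: if $A \to B$ is a homotopy monomorphism and $A \to A'$ is arbitrary, then
\[ (A' \wotimes^{\L}_A B) \wotimes^{\L}_{A'} (A' \wotimes^{\L}_A B) \simeq A' \wotimes^{\L}_A (B \wotimes^{\L}_A B) \simeq A' \wotimes^{\L}_A B, \]
so $A' \to A' \wotimes^{\L}_A B$ is again a homotopy monomorphism.

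Next I would verify the three axioms for covering families as defined in Definition \ref{defn:covering}. For the \emph{equivalence axiom}, any weak equivalence $A \to B$ induces an equivalence of homotopy categories $\bHo(\bMod(A)) \to \bHo(\bMod(B))$, which is in particular conservative. For \emph{composition}, given a finite cover $\{A \to B_i\}_{i \in I}$ and finite covers $\{B_i \to C_{ij}\}_{j \in J_i}$, the family $\{A \to C_{ij}\}$ is still finite and its base change functors factor as $(-) \wotimes^{\L}_A C_{ij} \simeq ((-) \wotimes^{\L}_A B_i) \wotimes^{\L}_{B_i} C_{ij}$; conservativity of the composite family follows because the outer family $(-) \wotimes^{\L}_A B_i$ is jointly conservative and, for each $i$, the inner family $(-) \wotimes^{\L}_{B_i} C_{ij}$ is jointly conservative. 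For \emph{base change}, given a cover $\{A \to B_i\}$ and any $A \to A'$, for $M \in \bHo(\bMod(A'))$ associativity gives $M \wotimes^{\L}_{A'} (A' \wotimes^{\L}_A B_i) \simeq M \wotimes^{\L}_A B_i$, where the right-hand side denotes the derived tensor product of $M$ restricted to an $A$-module along $A \to A'$. If every such object vanishes then $M \simeq 0$ in $\bHo(\bMod(A))$, and since the restriction functor $\bHo(\bMod(A')) \to \bHo(\bMod(A))$ is conservative, we conclude $M \simeq 0$ in $\bHo(\bMod(A'))$.

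The main obstacle will be the base change axiom, and specifically two ingredients it rests on: the projection-type isomorphism $M \wotimes^{\L}_{A'} (A' \wotimes^{\L}_A B_i) \simeq M \wotimes^{\L}_A B_i$ and the conservativity of restriction of scalars $\bHo(\bMod(A')) \to \bHo(\bMod(A))$. The former is formal from associativity of the derived bornological tensor product, once one checks that cofibrant replacements over $A$ can be promoted to cofibrant replacements over $A'$ via $(-) \wotimes^{\L}_A A'$, which is a standard consequence of the combinatorial monoidal model structures built in Theorem \ref{thm:model_monoidal}. The latter is where the concrete nature of the categories $\bInd^m(\bBox_\bTri)$ really matters: a morphism of simplicial $A'$-modules is a weak equivalence if and only if the underlying morphism of simplicial abelian groups (equivalently, of simplicial $A$-modules) is, since by Proposition \ref{prop:existence_model_structure_banach_ring} and Theorem \ref{thm:essmono_equiv_ind} weak equivalences are detected by the strict quasi-isomorphism condition on the associated Dold–Kan complexes, which does not depend on the base ring structure. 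Everything else then reduces to book-keeping with finite families, so the proposition follows.
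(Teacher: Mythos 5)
Your proof is correct and, at the level of strategy, does what the paper does: it verifies the axioms of a model (pre)topology from Definition 1.3.1.1 of \cite{TVe3} directly. The difference is in how much is made explicit. The paper disposes of pullback-stability in one line, by remarking that homotopy monomorphisms are monomorphisms in $\bdAff_\bTri$ and hence stable under (homotopy) pullback, and of composition by noting that finite conservative families compose; it does not spell out the equivalence axiom, nor that the \emph{base-changed} family $\{A' \to A' \wotimes^\L_A B_i\}$ is again jointly conservative. You replace the first point by the explicit associativity computations $(A'\wotimes^\L_A B)\wotimes^\L_{A'}(A'\wotimes^\L_A B)\simeq A'\wotimes^\L_A B$ and $C\wotimes^\L_A C\simeq C\wotimes^\L_B(B\wotimes^\L_A B)\wotimes^\L_B C\simeq C$, and you supply the missing conservativity-after-base-change argument via the projection formula $M\wotimes^\L_{A'}(A'\wotimes^\L_A B_i)\simeq M\wotimes^\L_A B_i$ together with conservativity of restriction of scalars, the latter justified by the fact that weak equivalences of simplicial modules are detected on underlying objects (Proposition \ref{prop:existence_model_structure_banach_ring}, Theorem \ref{thm:essmono_equiv_ind}). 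This buys a genuinely complete verification where the paper gives a sketch, at the cost of invoking the monoidal model structure of Theorem \ref{thm:model_monoidal} to justify the projection formula.

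Two small points to tighten. First, in the base-change axiom you phrase conservativity as ``detecting zero objects'' ($M\wotimes^\L_A B_i\simeq 0$ for all $i$ implies $M\simeq 0$); since the homotopy categories of simplicial modules are not stable, detecting zero objects is a priori weaker than conservativity, and it is conservativity (on morphisms) that Definition \ref{defn:covering} requires of the base-changed family. The fix is immediate: run the identical argument with a morphism $f$ of $A'$-modules in place of the object $M$, using $f\wotimes^\L_{A'}(A'\wotimes^\L_A B_i)\simeq f\wotimes^\L_A B_i$, conservativity of the original family, and conservativity of restriction of scalars. Second, the equivalence axiom as you state it uses that a weak equivalence $A\to B$ induces an equivalence $\bHo(\bMod(A))\to\bHo(\bMod(B))$ (and, implicitly, that such a map is a homotopy monomorphism, via $B\wotimes^\L_A B\simeq B\wotimes^\L_A A\simeq B$); this is a standard base-change invariance statement in the HAG framework, but it is an input of the same nature as the projection formula and deserves the same explicit acknowledgement.
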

\begin{proof}
We need to check that the notions we introduced define a Grothendieck topology on $\bdAff_\bTri$. Thus, homotopy monomorphisms are stable by pullbacks because they are monomorphisms in $\bdAff_\bTri$ and the composition of two coverings is clearly a covering because the composite covering clearly defines a conservative family of functors because we are dealing only with finite coverings.
\end{proof}

\begin{defn} \label{defn:homotopy_zar_topology}
	The topology defined in Proposition \ref{prop:homotopy_zariski} is called the \emph{homotopy Zariski topology}.
\end{defn}

\begin{rmk}
We call the topology defined in Definition \ref{defn:homotopy_zar_topology} homotopy Zariski topology whereas formal homotopy Zariski topology would have been a more suitable name. But we do not have a good notion of what is a finitely presented morphism, neither in $\bComm(\bCBorn_k)$, when $k$ is a valued field. The usual categorical notion of finitely presentedness fails to encompass the spaces one would like to call of finite presentation. Hence we take take this topology as our analogue of the Zariski topology, keeping in mind that it is more like the formal Zariski topology of algebraic geometry.
\end{rmk}

We postpone to next sub-section the discussion of some basic examples of homotopy monomorphisms for analytic spaces over $\F_1$ (see Example \ref{exa:homotopy_mono}). We will do that after we have introduced the disks and polydisk over $\F_1$ and then see some examples of open embeddings between them. In the next example we recall how the homotopy Zariski topology looks like in classical settings. 

\begin{exa}
\ben
\item Let's consider $\bTri = k$ a non-Archimedean complete valued field.
 In this case the category $\bsComm_k$ is very big but it contains as a full sub-category the category of affinoid algebras. It has been shown in \cite{BeKr} that a morphism of affinoid algebras $A \to B$ is a homotopy epimorphisms if and only if it is an affinoid subdomain localization, \ie if it identifies $\Spec(B)$ as an affinoid subdomain of $\Spec(A)$. Therefore, the homotopy Zariski topology of $\bsComm_k^\op$ restricts to the weak G-topology of classical rigid geometry on the category of affinoid algebra.

\item Now let's consider $\bTri = k$ any complete (for the sake of simplicity non-trivially) valued field, Archimedean or not. In the same vein as one defines affinoid algebras one can define dagger affinoid algebras and dagger affinoid spaces. The advantage of this point of view is that it works uniformly over any base field (cf. \cite{Bam} and \cite{BaBe} for more details). In \cite{BaBe} it has been shown that the homotopy Zariski topology of $\bsComm_k^\op$ restricts to the weak G-topology of dagger affinoid spaces. In particular, a morphism of dagger affinoid algebras is a homotopy epimorphism if and only if it is dagger affinoid sub-domain localization.

\item Let $\bTri=k$ be as in last example. One has the theory of Stein spaces over $k$, which are spaces completely determined by the Frech\'et algebra of global analytic functions, hence they are affine analytic spaces in the sense of Definition \ref{defn:affine_analytic_spaces}. These spaces are without borders and hence an open embedding between them really identifies a space as a open subspace of the other, even topologically (in particular for $k = \C$ one has the usual topology of complex spaces). In \cite{BaBeKr} it has been shown that the homotopy Zariski topology of $\bsComm_k^\op$ is strictly related to the transcendental topology of analytic spaces. Again, a morphism of Stein spaces is a homotopy monomorphism if and only if it is an open immersion. 

\een
\end{exa}

Of course one can also introduce the notion of \'etale map using the cotangent complex and then the \'etale topology or other topologies of interest. We will discuss \'etale maps and the \'etale topology in a future work. 

We use the functor of points approach for defining analytic spaces over $\F_1$, hence it is useful to recall what is a stack in the settings we are working with.

\begin{defn} \label{defn:der_analytic_stack}
The category of \emph{$\infty$-analytic stacks} over $\bTri$ is defined to be the full-category of $\infty-\bFunc(\infty-\bAff_\bTri^\op, \bsSets)$ given by the objects that preserves weak-equivalences and satisfies descent for homotopy hypercoverings associated to the homotopy Zariski topology and it is denoted $\infty-\bStack_\bTri$.

Let $\sF: \bdAff^\op \to \bHo(\bsSets)$ be a functor, we say that $\sF$ is a \emph{derived analytic stack} if it fits in the diagram
\[
\begin{tikzpicture}
\matrix(m)[matrix of math nodes,
row sep=2.6em, column sep=2.8em,
text height=1.5ex, text depth=0.25ex]
{ \infty-\bAff_\bTri^\op  & \bsSets   \\
  \bdAff_\bTri^\op  & \bHo(\bsSets)  \\};
\path[->,font=\scriptsize]
(m-1-1) edge node[auto] {} (m-1-2);
\path[->,font=\scriptsize]
(m-1-1) edge node[auto] {} (m-2-1);
\path[->,font=\scriptsize]
(m-1-2) edge node[auto] {} (m-2-2);
\path[->,font=\scriptsize]
(m-2-1) edge node[auto] {$\sF$} (m-2-2);
\end{tikzpicture}
\]
where the upper horizontal arrow is an $\infty$-stack. The category of derived analytic stacks over $\bTri$ is denoted $\bdStack_\bTri$. 

A functor $\sF: \bAff^\op \to \bSets$ is said to be an \emph{analytic stack} if it fits in the diagram
\[
\begin{tikzpicture}
\matrix(m)[matrix of math nodes,
row sep=2.6em, column sep=2.8em,
text height=1.5ex, text depth=0.25ex]
{ \bdAff_\bTri^\op  & \bHo(\bsSets)   \\
  \bAff_\bTri^\op  & \bSets  \\};
\path[->,font=\scriptsize]
(m-1-1) edge node[auto] {} (m-1-2);
\path[->,font=\scriptsize]
(m-1-1) edge node[auto] {$\pi_0$} (m-2-1);
\path[->,font=\scriptsize]
(m-1-2) edge node[auto] {$\pi_0$} (m-2-2);
\path[->,font=\scriptsize]
(m-2-1) edge node[auto] {$\sF$} (m-2-2);
\end{tikzpicture}
\]
where the upper horizontal arrow is a derived stack. The category of analytic stacks over $\bTri$ is denoted $\bStack_\bTri$.
\end{defn}

Every $\infty$-affine analytic space defines an $\infty$-analytic stack, every derived affine analytic space defines a derived analytic stack and every affine analytic space defines an analytic stack through the suitable version of Yoneda lemma (\cf Corollary 1.3.2.5 of \cite{TVe3}). Moreover, the class of (resp. $\infty$, resp. derived) analytic stacks form a category and it makes sense to discuss about (homotopy) monomorphisms between them.

\begin{defn} \label{defn:covering_stacks}
Let $X$ be a (resp. $\infty$, resp. derived) affine analytic space.
\begin{itemize}
\item We say that a (resp. $\infty$, resp. derived) analytic stack $\sF$ is a \emph{Zariski open of $X$},  if there exists a family $\{X_i \to X \}_{i \in I}$ of Zariski open embeddings, with $X_i$ (resp. $\infty$, resp. derived) affine, such that $\Im(\underset{i \in I} \coprod X_i \to X) \cong \sF$.
\item We say that a morphism $f: \sF \to \sG$ between (resp. $\infty$, resp. derived) analytic stacks is a \emph{Zariski open embedding} if for all $X$ and all $X \to \sG$ the map $\sF \times_\sG X \to X$ is a (resp. homotopy) monomorphism whose image is a Zariski open of $X$.
\end{itemize}

\end{defn}

\begin{defn} \label{defn:der_analytic_space}
We say that a (resp. $\infty$, resp. derived) analytic stack $\sF$ is a \emph{(resp. $\infty$, resp. derived) analytic space} if there is a family of Zariski embeddings $\{X_i \to \sF \}_{i \in I}$ from (resp. $\infty$, resp. derived) affine analytic spaces such that $\underset{i \in I}\coprod X_i \to \sF$ is a an epimorphism. 
The respective categories are denotes $\infty-\bAn_\bTri$, $\bdAn_\bTri$ and $\bAn_\bTri$.
\end{defn}

So far we defined the base change functor $(\cdot) \wotimes_{\F_1} R: \bInd^m(\bNr_{\F_1}) \to \bInd^m(\bBan_R)$. It is clear that this functor induces a base change $(\cdot) \wotimes_{\F_1} R: \bAff_{\F_1} \to \bAff_R$. The next proposition checks that this functor is compatible with the homotopy Zariski topology.

\begin{prop} \label{prop:base_changes}
The base change functor $(\cdot) \wotimes_{\F_1} R$ induces the following base change functors:
\ben
\item $(\cdot) \wotimes_{\F_1} R: \infty-\bAn_{\F_1} \to \infty-\bAn_R$;
\item $(\cdot) \wotimes_{\F_1}^\L R: \bdAn_{\F_1} \to \bdAn_R$;
\item $(\cdot) \wotimes_{\F_1} R: \bAn_{\F_1} \to \bAn_R$;
\item $(\cdot) \wotimes_{\F_1} R: \infty-\bStack_{\F_1} \to \infty-\bStack_R$;
\item $(\cdot) \wotimes_{\F_1}^\L R: \bdStack_{\F_1} \to \bdStack_R$;
\item $(\cdot) \wotimes_{\F_1} R: \bStack_{\F_1} \to \bStack_R$.
\een
\end{prop}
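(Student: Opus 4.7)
The plan is to show that the base change $(\cdot) \wotimes_{\F_1} R$ defines a continuous morphism of HAG contexts in the sense of Toën--Vezzosi (\cite{TVe3}, \S 1.3); once this is established, all six claimed functors follow by the general machinery, the stack-level functors by left Kan extension of the functor on affines and the restriction to (resp.\ $\infty$-, derived) analytic spaces by preservation of affine covers. Three compatibilities need to be verified: that the base change lifts to a left Quillen functor $\bsComm_{\F_1} \to \bsComm_R$, that its left derived functor preserves homotopy epimorphisms, and that it preserves coverings in the sense of Definition~\ref{defn:covering}.

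The first point is essentially formal. By the lemma preceding Definition~\ref{defn:born_base_change} and Proposition~\ref{prop:base_change_exact_born}, the base change $\bInd^m(\bNr_{\F_1}) \to \bInd^m(\bBan_R)$ is a strong symmetric monoidal left adjoint. Applying it levelwise produces a strong monoidal left Quillen functor between the simplicial model categories of Theorem~\ref{thm:existence_model_structure}: indeed the generating cofibrations $P \ootimes \ol{\Delta}_n \to P \ootimes \Delta_n$ of Corollary~\ref{cor:existence_model_structure} are sent to generating cofibrations $(P \wotimes_{\F_1} R) \ootimes \ol{\Delta}_n \to (P \wotimes_{\F_1} R) \ootimes \Delta_n$, and similarly for the trivial cofibrations. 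Strong monoidality transports the commutative monoid structure, and the Quillen adjunction descends to $\bsComm$ by Theorem~\ref{thm:model_monoidal}, so one obtains a left derived functor $(\cdot) \wotimes_{\F_1}^\L R$ on homotopy categories and, dually, a functor $\bdAff_{\F_1} \to \bdAff_R$.

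Derived strong monoidality then gives, for any $A \to B$ in $\bsComm_{\F_1}$ (replacing $A$ and $B$ by cofibrant resolutions),
\[ (B \wotimes_{\F_1}^\L R) \wotimes^\L_{A \wotimes_{\F_1}^\L R} (B \wotimes_{\F_1}^\L R) \cong (B \wotimes_A^\L B) \wotimes_{\F_1}^\L R, \]
so that if $A \to B$ is a homotopy epimorphism the right-hand side collapses to $B \wotimes_{\F_1}^\L R$, proving preservation of homotopy epimorphisms. For coverings $\{f_i \colon A \to B_i\}_{i \in I}$ the same monoidality provides the base change formula
\[ M \wotimes^\L_{A \wotimes_{\F_1}^\L R} (B_i \wotimes_{\F_1}^\L R) \cong M|_A \wotimes^\L_A B_i \]
for $M \in \bHo(\bMod(A \wotimes_{\F_1}^\L R))$, where $M|_A$ denotes the restriction along $A \to A \wotimes_{\F_1}^\L R$. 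Combined with the conservativity of restriction of scalars (which is faithful on underlying simplicial objects), the original covering property implies that $\{(f_i \wotimes_{\F_1}^\L R)^*\}_{i \in I}$ is again conservative, so the homotopy Zariski topology is preserved.

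The main technical obstacle is the derived monoidal compatibility underlying the two displayed formulas above. At the underived level these are consequences of the lemma preceding Definition~\ref{defn:born_base_change}, but at the derived level one has to verify them by a careful cofibrant replacement argument, using the left Quillen property of the base change to ensure that the derived tensor products over $A$ and over $A \wotimes_{\F_1}^\L R$ are computed by the underived monoidal formulas on cofibrant replacements. Once this is in place, the extension to the categories of $(\infty$-, derived, ordinary$)$ stacks is produced by left Kan extension along $\bdAff_{\F_1} \to \bdAff_R$, and the restriction to (resp.\ $\infty$-, derived) analytic spaces then follows because $(\cdot) \wotimes_{\F_1} R$ is a left adjoint and so sends an affine cover $\coprod_i U_i \to X$ to an affine cover $\coprod_i (U_i \wotimes_{\F_1} R) \to X \wotimes_{\F_1} R$.
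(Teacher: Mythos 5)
Your overall plan --- show that $(\cdot)\wotimes_{\F_1}R$ is a strong symmetric monoidal left Quillen functor, that its derived functor preserves homotopy epimorphisms and coverings, and then pass to stacks by Kan extension and to analytic spaces by preservation of affine atlases --- has the right shape, and it is in the spirit of the paper, whose own proof is a one-sentence reference to Proposition 5.12 of To\"en--Vaqui\'e. Your first two verifications are essentially sound: the monoidality lemma preceding Definition \ref{defn:born_base_change} together with Proposition \ref{prop:base_change_exact_born} does give a strong monoidal left adjoint sending the generating (trivial) cofibrations of Corollary \ref{cor:existence_model_structure} to (trivial) cofibrations, and since the bar construction computes the derived relative tensor in these projective-class model structures, the formula $(B\wotimes^\L_A B)\wotimes^\L_{\F_1}R\cong(B\wotimes^\L_{\F_1}R)\wotimes^\L_{A\wotimes^\L_{\F_1}R}(B\wotimes^\L_{\F_1}R)$ and hence the preservation of homotopy epimorphisms are fine.

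The covering step, however, contains a genuine gap. Your displayed base-change formula $M\wotimes^\L_{A\wotimes^\L_{\F_1}R}(B_i\wotimes^\L_{\F_1}R)\cong M|_A\wotimes^\L_A B_i$ does not typecheck: the left-hand side is an object of $\bHo(\bMod(A\wotimes_{\F_1}R))$, while $M|_A\wotimes^\L_A B_i$, with $M|_A$ the restriction to $\bMod(A)$ over $\F_1$, is computed with the smash-product monoidal structure on bornological sets. Under the only reading that makes the two sides comparable --- applying the forgetful functor $U$ to the left-hand side --- the formula is false, because the relative tensor over $A\wotimes_{\F_1}R$ is built from $\ell^1$-coproducts ${\coprod}^{\le1}$ of rescaled copies of $M$, whereas $U(M)\wotimes_A B_i$ is built from wedges: already for $A=\F_1$, $B_i=S_{\bNr^{\le1}_{\F_1}}(\ast_r)$ and $M=R$ one gets $S_R(r)$ on one side and a wedge of rescaled copies of $U(R)$ on the other. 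Consequently the square comparing $(f_i\wotimes_{\F_1}R)^*$ with $f_i^*$ through restriction of scalars does not commute, and the hypothesis that $\{f_i^*\}$ is conservative on $\bHo(\bMod(A))$ cannot be transported to arbitrary $A\wotimes_{\F_1}R$-modules this way (note also that conservativity is about reflecting weak equivalences; faithfulness on underlying simplicial objects is not the relevant property, although conservativity of $U$ itself does hold by adjunction against free modules). Since preservation of coverings is exactly what is needed to know that a Zariski atlas $\coprod_i U_i\to X$ base changes to an atlas, this step cannot be waved through: it requires a separate argument (for instance along the lines of To\"en--Vaqui\'e's treatment of Zariski covers, or a direct verification for the finite covers generating the topology), and as written your deduction would fail.
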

\begin{proof}
The statements can be proved in the same way as Proposition 5.12 of \cite{Toen} is proved.
\end{proof}

Now that we have the basic tools for doing analytic geometry we introduce disks, polydisks and annuli over $\F_1$.

\subsection{Disks and annuli}

The simplest objects of analytic geometry are disks and polydisks and this is true also for the analytic geometry over $\F_1$ we are defining. These objects are the analogue of the polydisks of usual analytic geometry over valued fields and they come in different flavours: open ones, closed ones and closed overconvergent. Moreover, we also introduce some kind of perfect disks which will be linked to the theory of perfectoid field and $p$-adic Hodge theory later on.

\begin{defn} \label{defn:open_closed_disk}
We define the \emph{closed disk} of radius $r$ over $\F_1$ as
\[ D_r^\bullet = \Spec (S_{\bNr_{\F_1}^{\le 1}}(\ast_r)) \]
and the \emph{open disk} of radius $r$ as
\[ D_r^\circ = \Spec (\limpro_{\rho < r} S_{\bNr_{\F_1}^{\le 1}}(\ast_\rho)) \]
and the \emph{overconvergent disk} of radius $r$ as
\[ D_r^\dagger = \Spec (\limind_{\rho > r} S_{\bNr_{\F_1}^{\le 1}}(\ast_\rho)). \]
\end{defn}

Definition \ref{defn:open_closed_disk} we used the notation for the symmetric algebra introduced in Notation \ref{notation:box} and Example \ref{exa:basic}, which is compatible with the notation used in the appendix (cf. Remark \ref{rmk:S_algebra_compatible_symm}).

\begin{prop} \label{prop:base_change_disks}
Let $K$ be a non-trivially valued field then
\[ D_r^\bullet \wotimes_{\F_1} K = D_r^\bullet(K), \ \ D_r^\circ \wotimes_{\F_1} K = D_r^\circ(K), \ \ D_r^\dagger \wotimes_{\F_1} K = D_r^\dagger(K) \]
where $D_r^\bullet(K)$, $D_r^\circ(K)$ and $D_r^\dagger(K)$ denote the $\ell^1$-closed, open and overconvergent disk over $K$ of radius $r$. 
\end{prop}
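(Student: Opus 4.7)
The plan is to compute each base change directly using the explicit description of $S_{\bNr_{\F_1}^{\le 1}}(\ast_\rho)$ from Example \ref{exa:basic} as the normed monoid $(\N, |\cdot|_\rho)$ with $|n|_\rho = \rho^n$, and then unwind the definition of $\wotimes_{\F_1}$ on each of the three presentations.

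First I would handle the closed disk. By Definition \ref{defn:base_change_born} applied to the normed set $(\N, |\cdot|_r)$,
\[ S_{\bNr_{\F_1}^{\le 1}}(\ast_r) \wotimes_{\F_1} K = {\coprod_{n \in \N}}^{\le 1} [K]_{r^n}, \]
which by definition of the contracting $\ell^1$-coproduct in $\bBan_K$ is the $K$-Banach module of sequences $(a_n)_{n \in \N}$ with $\sum_n |a_n|_K r^n < \infty$, equipped with the norm $\sum_n |a_n|_K r^n$. The multiplicative structure coming from the monoid structure on $\N$ is precisely polynomial multiplication $T^n \cdot T^m = T^{n+m}$, so the resulting Banach $K$-algebra is exactly the $\ell^1$-Tate algebra $K\langle r^{-1}T \rangle_{\ell^1}$, which by definition is $D_r^\bullet(K)$. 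A small remark is needed to observe that the base change of a monoid in $\bNr_{\F_1}^{\le 1}$ is naturally a Banach $K$-algebra with the point-wise multiplication inherited from $\N$, as recorded in the remark following Definition \ref{defn:born_base_change}.

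Next I would handle the open disk. Since each $S_{\bNr_{\F_1}^{\le 1}}(\ast_\rho)$ is a normed set (hence a normed bornological ring), I can apply Proposition \ref{prop:born_base_change_ban_proj_lim} to interchange the base change with the projective limit:
\[ \Big( \limpro_{\rho < r} S_{\bNr_{\F_1}^{\le 1}}(\ast_\rho) \Big) \wotimes_{\F_1} K \;\cong\; \limpro_{\rho < r} \Big( S_{\bNr_{\F_1}^{\le 1}}(\ast_\rho) \wotimes_{\F_1} K \Big) \;\cong\; \limpro_{\rho < r} K\langle \rho^{-1}T \rangle_{\ell^1}, \]
where the last identification comes from the closed disk case. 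The resulting Fr\'echet algebra is exactly the algebra of analytic functions on the open disk of radius $r$, that is $D_r^\circ(K)$.

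Finally, for the overconvergent disk, the key point is that the base change functor $(-) \wotimes_{\F_1} K$ is a left adjoint (Proposition \ref{prop:base_change_exact_born}) and hence commutes with (filtered) colimits. Applying this to the inductive system gives
\[ \Big( \limind_{\rho > r} S_{\bNr_{\F_1}^{\le 1}}(\ast_\rho) \Big) \wotimes_{\F_1} K \;\cong\; \limind_{\rho > r} \Big( S_{\bNr_{\F_1}^{\le 1}}(\ast_\rho) \wotimes_{\F_1} K \Big) \;\cong\; \limind_{\rho > r} K\langle \rho^{-1}T \rangle_{\ell^1}, \]
which is the dagger algebra defining $D_r^\dagger(K)$.

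The main obstacle is not any single computation but rather bookkeeping the monoid/algebra structure across the base change and verifying that the projective-limit case genuinely falls inside the scope of Proposition \ref{prop:born_base_change_ban_proj_lim} (the hypothesis there is that the source is a normed set, which is satisfied here since each $S_{\bNr_{\F_1}^{\le 1}}(\ast_\rho)$ has the underlying set $\N$ with a genuine norm). The only subtlety worth flagging is the choice of $\ell^1$-norms versus max norms: as explained in Remark \ref{rmk:non-arch_base_change}, these produce equivalent Fr\'echet structures after taking the projective limit, so in the open disk case one recovers the usual (max-norm) disk algebra up to canonical isomorphism of bornological algebras.
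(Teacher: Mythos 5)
The closed-disk and overconvergent cases of your argument are fine and coincide with the paper's: the first is the explicit identification $S_{\bNr_{\F_1}^{\le 1}}(\ast_r)\wotimes_{\F_1}K\cong{\coprod_{n\in\N}}^{\le 1}[K]_{r^n}$ with its monoid multiplication, the third is the fact that $(-)\wotimes_{\F_1}K$ is a left adjoint and so commutes with the filtered colimit. The gap is in the open-disk case. Proposition \ref{prop:born_base_change_ban_proj_lim} asserts $M\wotimes_{\F_1}R\cong\limpro_i(M\wotimes_{\F_1}R_i)$ for a \emph{single} normed set $M$ and a bornological ring $R\cong\limpro_i R_i$: the projective limit sits on the Banach-ring side. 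In your application the limit sits on the $\F_1$-side, namely $(\limpro_{\rho<r}S_{\bNr_{\F_1}^{\le 1}}(\ast_\rho))\wotimes_{\F_1}K$ with $K$ a fixed Banach field, and the object being base changed is the limit itself (a Fr\'echet-like bornological monoid), not one of the normed terms $S_{\bNr_{\F_1}^{\le 1}}(\ast_\rho)$. So the proposition you cite simply does not cover this situation; and since the base change is a left adjoint which does not even preserve finite products (Remark \ref{rmk:not_commute_product}), commutation with a projective limit is not formal and must be argued.

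The commutation you need is true, and this is exactly what the paper compresses into the phrase ``direct computation''. Two ways to close the gap within the paper's toolkit: (i) compare the two ind-presentations directly, writing $\limpro_{\rho<r}S_{\bNr_{\F_1}^{\le 1}}(\ast_\rho)$ via Lemma \ref{lemma:product_born_sets} and $\limpro_{\rho<r}(S_{\bNr_{\F_1}^{\le 1}}(\ast_\rho)\wotimes_{\F_1}K)$ via Lemma \ref{lemma:calc_proj_limit_R}, and check that a bound $\sup_n\frac{\sum_x|a_x|\rho_n^x}{\phi_2(n)}\le C$ yields an $\ell^1$-bound for the rescaled function $\psi_2(n)=2^n\phi_2(n)$, since $\sum_x|a_x|\sup_n\frac{\rho_n^x}{2^n\phi_2(n)}\le\sum_n 2^{-n}\frac{\sum_x|a_x|\rho_n^x}{\phi_2(n)}\le 2C$; or (ii) use the cofinality of the $\ell^1$- and sup-systems (Theorem \ref{thm:pro_cofinal}, which is what makes the open disk nuclear, cf. Proposition \ref{prop:nuclear_disks}) together with Proposition \ref{prop:sup_base_change}, which does commute with limits of normed sets --- this is how the paper argues in the analogous situations of Theorem \ref{thm:main} and Lemma \ref{lemma:key}. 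As written, your open-disk step rests on an inapplicable citation and needs one of these arguments substituted.
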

\begin{proof}
By definition 
\[ D_r^\bullet(K) \cong \Spec(S_K(r)), \]
thus the first isomorphism follows from the formula
\[ S_{\bNr_{\F_1}^{\le 1}}(\ast_r) \wotimes_{\F_1} K = S_K(r), \]
and $S_K(r)$ is the algebra of $\ell^1$-analytic functions on the closed disk of radius $r$.

The isomorphism
\[ D_r^\dagger \wotimes_{\F_1} K = D_r^\dagger(K) \]
follows by the fact that 
\[ (\limind_{\rho > r} S_{\bNr_{\F_1}^{\le 1}}(\ast_r)) \wotimes_{\F_1} K \cong (\limind_{\rho > r} S_{\bNr_{\F_1}^{\le 1}}(\ast_r) \wotimes_{\F_1} K) \cong (\limind_{\rho > r} S_{\bNr_{\F_1}^{\le 1}}(r)) \]
because $(\cdot) \wotimes_{\F_1} R$ is a left adjoint functor. Finally 
\[ D_r^\circ \wotimes_{\F_1} K = D_r^\circ(K) \]
follows by the direct computation
\[ (\limpro_{\rho < r} S_{\bNr_{\F_1}^{\le 1}}(\ast_r)) \wotimes_{\F_1} K \cong   (\limpro_{\rho < r} S_{\bNr_{\F_1}^{\le 1}}(\ast_r) \wotimes_{\F_1} K). \]
\end{proof}

The same reasoning given for global analytic spaces in the sense of Poineau, see \cite{Poi}.

\begin{prop} \label{prop:base_change_global_disks}
Let $R$ be a Banach ring then
\[ D_r^\bullet \wotimes_{\F_1} R = D_r^\bullet(R), \ \ D_r^\circ \wotimes_{\F_1} R = D_r^\circ(R), \ \ D_r^\dagger \wotimes_{\F_1} R = D_r^\dagger(R) \]
where $D_r^\bullet(R)$ is the global closed $\ell^1$-disk $D_r^\circ(K)$ is the global open disk $D_r^\dagger(K)$ is the global overconvergent disk.
\end{prop}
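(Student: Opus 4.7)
The strategy is to follow the same template as the proof of Proposition \ref{prop:base_change_disks}, adapting each of the three cases from a non-trivially valued field $K$ to an arbitrary Banach ring $R$, where the disks are now understood in Poineau's global sense (i.e.\ $S_R(r)$ is the global $\ell^1$-analytic symmetric algebra with radius $r$). The key identity to establish first is
\[ S_{\bNr_{\F_1}^{\le 1}}(\ast_r) \wotimes_{\F_1} R \cong S_R(r), \]
which is an immediate consequence of the definition of the base change together with the fact that $S_{\bNr_{\F_1}^{\le 1}}(\ast_r)$ has $\N$ as its underlying monoid with norm $|n|_r = r^n$: the contracting coproduct ${\coprod_{n \in \N}}^{\le 1} [R]_{r^n}$ is precisely the $\ell^1$-completion that defines $S_R(r)$. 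From here the closed disk case $D_r^\bullet \wotimes_{\F_1} R \cong D_r^\bullet(R)$ follows directly from the definition of $D_r^\bullet$ in Definition \ref{defn:open_closed_disk}.

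For the overconvergent disk, I would invoke that $(-)\wotimes_{\F_1} R$ is a left adjoint (Proposition \ref{prop:base_change_exact_born} in the normed-to-Banach variant), hence commutes with the filtered colimit $\limind_{\rho > r}$ used in the definition of $D_r^\dagger$. Combining this with the previously established identity for $S_{\bNr_{\F_1}^{\le 1}}(\ast_\rho) \wotimes_{\F_1} R$ for each $\rho > r$ gives the required isomorphism
\[ D_r^\dagger \wotimes_{\F_1} R \cong \Spec(\limind_{\rho > r} S_R(\rho)) = D_r^\dagger(R). \]

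The main obstacle, and the step that distinguishes this statement from Proposition \ref{prop:base_change_disks}, is the open disk case, since $D_r^\circ$ is defined as a projective limit and left adjoints do not in general commute with limits. Here the essential input is Proposition \ref{prop:born_base_change_ban_proj_lim}: the module underlying $S_{\bNr_{\F_1}^{\le 1}}(\ast_\rho)$ is a normed set and the algebra defining $D_r^\circ$ is a projective limit of Banach rings indexed by $\rho < r$, so the hypotheses of that proposition apply (taking for the normed set the underlying normed set of the $\F_1$-symmetric algebra). This yields
\[ D_r^\circ \wotimes_{\F_1} R \cong \Spec\l(\limpro_{\rho < r}\l( S_{\bNr_{\F_1}^{\le 1}}(\ast_\rho) \wotimes_{\F_1} R\r)\r) \cong \Spec(\limpro_{\rho < r} S_R(\rho)) = D_r^\circ(R), \]
completing the proof. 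One technical point to verify carefully is that Proposition \ref{prop:born_base_change_ban_proj_lim} can be applied at the level of commutative monoids in bornological modules (i.e.\ that the multiplicative structure is preserved by the identifications), but this is automatic since all the identifications are induced by universal properties compatible with the monoidal structure established in Proposition \ref{prop:monoidal_structure} and its bornological extension.
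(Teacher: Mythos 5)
Your handling of the closed and overconvergent disks is correct and is exactly the paper's route: the identification $S_{\bNr_{\F_1}^{\le 1}}(\ast_r)\wotimes_{\F_1}R\cong S_R(r)$ straight from the definition of the base change, and commutation of the left adjoint $(-)\wotimes_{\F_1}R$ with the filtered colimit defining $D_r^\dagger$. The problem is the open disk. Proposition \ref{prop:born_base_change_ban_proj_lim} does not apply there: that proposition is about a \emph{fixed} normed set $M$ base changed to a bornological ring presented as a projective limit of Banach rings, $M\wotimes_{\F_1}(\limpro_i R_i)\cong\limpro_i(M\wotimes_{\F_1}R_i)$ (this is how it is used in Theorem \ref{thm:main}, with the limit on the $(\Q_p)_{(0,\infty)}$ side). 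In the present statement the roles are reversed: the projective limit $\limpro_{\rho<r}S_{\bNr_{\F_1}^{\le 1}}(\ast_\rho)$ lives on the $\F_1$-side (it is a limit of normed monoids over $\F_1$, not of Banach rings), while $R$ is a single Banach ring; there is no presentation of $R$ as a nontrivial projective limit in play, so the hypotheses you invoke are not met and the citation gives nothing.

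The needed commutation $(\limpro_{\rho<r}S_{\bNr_{\F_1}^{\le 1}}(\ast_\rho))\wotimes_{\F_1}R\cong\limpro_{\rho<r}(S_{\bNr_{\F_1}^{\le 1}}(\ast_\rho)\wotimes_{\F_1}R)$ is true, but it must be justified differently; the paper treats it as a direct computation in Proposition \ref{prop:base_change_disks}, to which the proof of the present statement simply refers. Within the paper's toolkit the clean repair is the cofinality/nuclearity mechanism: $S_{\bNr_{\F_1}^{\le 1}}(\ast_\rho)\ootimes_{\F_1}^{\sup}R\cong T_R(\rho)$, the sup-base change commutes with limits of normed sets (Proposition \ref{prop:sup_base_change}), and Theorem \ref{thm:pro_cofinal} (or Corollary \ref{cor:ind_pro_cofinal}) shows the systems $\{S_R(\rho)\}_{\rho<r}$ and $\{T_R(\rho)\}_{\rho<r}$ are cofinal, so the $\ell^1$- and sup-limits agree; this is precisely the content of Proposition \ref{prop:nuclear_disks}. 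Alternatively one can redo, with the roles exchanged, the explicit limit computation of Proposition \ref{prop:born_base_change_ban_proj_lim} using Lemma \ref{lemma:product_born_sets}. Either way, the specific step as you justified it would not go through and needs to be replaced.
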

\begin{proof}
The proof uses the same ideas of \ref{prop:base_change_disks}. We refer to \cite{Poi} for the notion of global analytic spaces.
\end{proof}

The same constructions and results can be given for polydisks. Briefly, the $n$-dimensional closed polydisk of polyradius $r = (r_1, \ldots, r_n) > 0$ is defined by
\[ D_r^{\bullet, n} = \Spec (S_{\bNr_{\F_1}^{\le 1}}(\ast_{r_1}) \otimes_{\F_1} \ldots \otimes_{\F_1} S_{\bNr_{\F_1}^{\le 1}}(\ast_{r_n})). \]
The open and dagger polydisks are defined in analogous ways. We leave the details of these constructions to the reader since in this work we will be only interested to disks of one dimension. In the next examples we discuss basic examples of homotopy epimorphisms.

\begin{exa} \label{exa:homotopy_mono}
\ben
\item Consider two closed disks $D_r^\bullet$ and $D_{r'}^\bullet$ with $r' < r$. The identity map $S_r = S_{\bNr_{\F_1}^{\le 1}}(\ast_r) \to S_{r'} = S_{\bNr_{\F_1}^{\le 1}}(\ast_{r'})$ is bounded (but it is not an isomorphism) and hence induces a morphism $D_{r'}^\bullet \to D_r^\bullet$ of analytic spaces over $\F_1$. We show that this morphism is a homotopy monomorphism so that we can think that the disks over $\F_1$ of smaller radii are embedded in the ones with bigger radii, as in usual analytic geometry. We need to show that 
\[ S_{r'} \otimes_{S_r}^\L S_{r'} \to S_{r'} \]
is an isomorphism. We can use the bar resolution of $S_{r'}$ for representing $S_{r'} \otimes_{S_r}^\L S_{r'}$ and we have to show that
\[ \coker( S_{r'} \otimes_{\F_1} S_{r'} \otimes_{\F_1} S_r \rightrightarrows S_{r'} \otimes_{\F_1} S_r ) \cong S_{r'}. \]
The underlying map of sets is clearly bijective because $S_r \to S_{r'}$ is the identity map, hence we just need to show that it is a strict morphism. Thus, as a monoid
\[ \coker( S_{r'} \otimes_{\F_1} S_{r'} \otimes_{\F_1} S_r \rightrightarrows S_{r'} \otimes_{\F_1} S_r ) \cong \frac{\N^2}{\Delta(\N)} \cong \N \]
and with this identification the norm on the quotient coincides with the norm on $S_{r'}$.

Notice that when $k$ is a (non-Archimedean) valued field the morphism 
\[ D_{r'}^\bullet \wotimes_{\F_1} k \cong D_{r'}^\bullet(k) \subset D_r^\bullet \wotimes_{\F_1} k \cong D_r^\bullet(k)  \]
induced by the base change is the usual inclusion of ($\ell^1$-)disks centered in zero.

\item Of course one can consider inclusions between open disk and overconvergent disk over $\F_1$. For an inclusion of overconvergent disks, one can check that $D_{r'}^\dagger \to D_r^\dagger$  is a homotopy monomorphism reducing to the case of closed disks by noticing that filtered direct limits commute with quotients. Hence, 
\[  \coker ( \limind_{\rho' > r'} S_{\rho'} \otimes_{\F_1} \limind_{\rho' > r'} S_{\rho'} \otimes_{\F_1} \limind_{\rho > r} S_\rho \rightrightarrows \limind_{\rho' > r'} S_{\rho'} \otimes_{\F_1} \limind_{\rho > r} S_\rho ) \cong \]
\[ \cong \limind_{\rho' > r'} \limind_{\rho > r'} \coker( S_{\rho'} \otimes_{\F_1} S_{\rho'} \otimes_{\F_1} S_\rho \rightrightarrows S_{\rho'} \otimes_{\F_1} S_\rho ) \cong \limind_{\rho' > r'} S_{\rho'}. \]
A similar reasoning shows that also the inclusion $D_{r'}^\circ \to D_r^\circ$ are homotopy monomorphisms. In this case one can show that $\underset{\rho > r}{\limpro}^{(1)} = 0$ for reducing to the case of closed disks. We omit the details.

\item Building on the previous examples one can show that also the inclusions of disks of different kinds are homotopy monomorphisms, \ie the inclusion of closed disk in open ones or overconvergents in opens etc.. in all possible combinations are all homotopy epimorphisms.
\item We can define the affine line over $\F_1$ as the analytic space
\[ \A_{\F_1}^1 = \Spec(\limpro_{\rho < \infty} S_\rho). \]
Notice that
\[ \A_{\F_1}^1 \wotimes_{\F_1} \C \cong \A_\C^1 \]
and that
\[ (\limpro_{\rho < \infty} S_\rho) \wotimes_{\F_1} \C \cong \limpro_{\rho < \infty} S_\rho \wotimes_{\F_1} \C \]
is the Fr\'echet algebra of entire analytic functions. Similar considerations hold for any non-Archimedean complete valued field in place of $\C$. Using $n$-dimensional polydisks we can define $n$-dimensional affine spaces in a similar fashion.

\een
\end{exa}

\begin{prop} \label{prop:nuclear_disks}
The functions on the open and the overconvergent disks over $\F_1$ are examples of nuclear bornological sets (in the sense of Definition \ref{defn:nuclear_set}) whereas closed disks are not.
\end{prop}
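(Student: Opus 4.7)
The plan is to address the three cases separately, combining the commutation properties of the two base-change functors with classical interleaving estimates between $\ell^1$- and sup-norms on sequences.

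For the closed disk I would simply unwind the definitions. The Banach $R$-module $S_r \wotimes_{\F_1} R$ consists of sequences $(a_n)$ with $\sum_n |a_n|_R\, r^n < \infty$, whereas $S_r \wotimes_{\F_1}^{\sup} R$ consists of sequences with $\sup_n |a_n|_R\, r^n < \infty$. Taking $R = \C$ and the sequence $a_0 = 0$, $a_n = r^{-n}$ for $n \ge 1$, one has $\sup_n |a_n|_\C\, r^n = 1$ but $\sum_n |a_n|_\C\, r^n = \infty$, so this element lies in $S_r \wotimes_{\F_1}^{\sup} \C$ but not in $S_r \wotimes_{\F_1} \C$. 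The two modules already differ as underlying $R$-modules, hence $S_r$ is not nuclear.

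For the open disk $M = \limpro_{\rho < r} S_\rho$, the same direct computation carried out in the proof of Proposition \ref{prop:base_change_disks} identifies $M \wotimes_{\F_1} R$ with $\limpro_{\rho < r}(S_\rho \wotimes_{\F_1} R)$, the Fréchet $R$-module of sequences $(a_n)$ with $\sum_n |a_n|_R\, \rho^n < \infty$ for every $\rho < r$. The analogous statement for the sup-base change holds by Proposition \ref{prop:sup_base_change}, yielding $M \wotimes_{\F_1}^{\sup} R \cong \limpro_{\rho < r}(S_\rho \wotimes_{\F_1}^{\sup} R)$, the Fréchet $R$-module of sequences with $\sup_n |a_n|_R\, \rho^n < \infty$ for every $\rho < r$. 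The identity on sequences intertwines the two presentations by the standard estimates
\[ \sup_n |a_n|_R\, \rho_1^n \le \sum_n |a_n|_R\, \rho_1^n \le (1 - \rho_1/\rho_2)^{-1} \sup_n |a_n|_R\, \rho_2^n, \]
valid for any $\rho_1 < \rho_2 < r$. Since these estimates provide explicit bounded morphisms in both directions between each pair of terms of the two Fréchet-like presentations, the two bornological $R$-modules are isomorphic.

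For the overconvergent disk $M = \limind_{\rho > r} S_\rho$, both base-change functors commute with filtered colimits: $(-) \wotimes_{\F_1} R$ by Definition \ref{defn:born_base_change}, and $(-) \wotimes_{\F_1}^{\sup} R$ by Proposition \ref{prop:sup_base_change}. The comparison therefore reduces to interleaving the ind-systems $\{S_\rho \wotimes_{\F_1} R\}_{\rho > r}$ and $\{S_\rho \wotimes_{\F_1}^{\sup} R\}_{\rho > r}$. Applied with $r < \rho_1 < \rho_2$, the same estimate shows that any series which is sup-bounded at radius $\rho_2$ is $\ell^1$-summable at the smaller radius $\rho_1$ (still greater than $r$), while the opposite direction is trivial. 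Thus the two ind-systems refine into one another and define isomorphic objects of $\bInd^m(\bBan_R)$.

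The principal subtlety I expect is to confirm that these identifications really are isomorphisms in $\bInd^m(\bBan_R)$, and not merely topological isomorphisms of the underlying Fréchet or LB spaces. This however follows from the concrete description of (strict) morphisms of ind-objects given by Proposition \ref{prop:morphisms_ind_normed_sets}, since the interleaving estimates furnish explicit termwise bounded morphisms between the two presentations whose compositions are the identity at each level of the system.
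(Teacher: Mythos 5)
Your argument is correct and is essentially the paper's own: the paper proves this proposition by citing Theorems \ref{thm:ind_cofinal} and \ref{thm:pro_cofinal}, whose content is exactly the interleaving/cofinality of the $\ell^1$-type systems $S_R(\rho)$ and the sup-type systems $T_R(\rho)$ at nested radii that you establish by hand with your estimate $\sum_n |a_n|_R\,\rho_1^n \le (1-\rho_1/\rho_2)^{-1}\sup_n |a_n|_R\,\rho_2^n$. The only addition on your side is the explicit counterexample showing the canonical map $S_r \wotimes_{\F_1}\C \to S_r\wotimes^{\sup}_{\F_1}\C$ fails to be surjective, which the paper asserts but does not spell out.
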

\begin{proof}
Direct consequence of Theorem \ref{thm:ind_cofinal} and Theorem \ref{thm:pro_cofinal}.
\end{proof}

By replacing the monoid $\N$ with its group completion $\Z$ one gets the definition of analytic functions on closed/open/overconvergent annuli over $\F_1$ whose base change to valued fields agree with the usual notion of analytic functions over annuli. For fixing the notation we give the following definition.

\begin{defn} \label{defn:annuli_F1}
We define the \emph{closed annulus} of radii $0 < r_1 < r_2$ over $\F_1$ as
\[ A_{r_1, r_2}^\bullet = \Spec (\cZ_{r_1, r_2}) \]
(cf. Example \ref{exa:basic} (9) for the notation $\cZ_{r_1, r_2}$); the \emph{open annulus} of radii $0 < r_1 < r_2$ as
\[ A_{r_1, r_2}^\circ = \Spec (\limpro_{r_1 < \rho_1 < \rho_2 < r_2} \cZ_{\rho_1, \rho_2}) \]
and the \emph{overconvergent disk} of radius $r$ as
\[ A_{r_1, r_2}^\dagger = \Spec (\limind_{\rho_1 < r_1, r_2 < \rho_2}  \cZ_{\rho_1, \rho_2}). \]
\end{defn}

The observations made so far for disks can be worked out also for annuli. In particular, the functions on the open and closed annuli are examples of nuclear bornological sets. 

The next kind of disks we are interested to discuss are disks for which the underlying monoid is perfect, in some sense (\ie by requiring some divisibility property). Therefore we introduce the following notions.

\begin{defn} \label{defn:perfect_disks}
We define the \emph{closed perfect disk} of radius $r > 0$ as
\[ P D_r^\bullet \doteq \Spec(\cQ_r^+),  \]
and the \emph{open perfect disk} as
\[ P D_r^\circ \doteq \Spec(\limpro_{\rho < r} \cQ_\rho^+). \]
\end{defn}

Recall that $\cQ_r^+$ is the monoid introduced in Example \ref{exa:basic}. One can easily introduce also the overconvergent perfect disks but we omit to discuss this kind because they will not be used later on.

\begin{defn} \label{defn:p_perfect_disks}
We define the \emph{closed $p$-perfect disk} of radius $r > 0$ as
\[ pP D_r^\bullet \doteq \Spec(\cZ_r^+[\frac{1}{p}]),  \]
and the \emph{open $p$-perfect disk} as
\[ pP D_r^\circ \doteq \Spec(\limpro_{\rho < r} \cZ_\rho^+[\frac{1}{p}]). \]
\end{defn}

Again the notation $\cZ_r^+[\frac{1}{p}]$ refers to Example \ref{exa:basic}.
It is interesting to compute the base change of these perfect disks to some base field, as we do in the next example.

\begin{exa} \label{exa:perfect_fields}
\ben
\item Let $k$ be a trivially valued field. Then, for $0 < r < 1$
\[ P D_r^\bullet \wotimes_{\F_1} k \cong \Spec(\cQ_r^+ \wotimes_{\F_1} k ) \]
is isomorphic to the spectrum of the ring of strictly convergent analytic functions on the disk of radius $r$ with all $n$-th roots of the variable added, \ie the direct limit of
\[ \Spec( {\limind_{T \mapsto T^n}}^{\le 1} k\lt T \gt ) = \Spec( \what{\bigcup_{n \in \N} k [[ T^{\frac{1}{n}}]]}) , \]
with suitable normalizations on the norms (cf. Example \ref{exa:basic} for the normalization of the norms).

\item Let $k$ be trivially valued. Then, for $0 < r < 1$
\[ p P D_r^\bullet \wotimes_{\F_1} k \cong \Spec(\cZ_r[\frac{1}{p}]^+ \wotimes_{\F_1} k ) \]
is isomorphic to the spectrum of the ring of strictly convergent analytic functions on the disk of radius $r$ with all $p$-th roots of the variable added, \ie the direct limit of
\[ \Spec( {\limind_{T \mapsto T^{p^n}}}^{\le 1} k\lt T \gt ) = \Spec( \what{k [[ T^{\frac{1}{p^\infty}}]]}) , \]
with suitable normalizations on the norms.
\een
\end{exa}

We need to introduce the concept of perfect annuli for the applications of the next section.

\begin{defn} \label{defn:perfect_annuli}
We define the \emph{closed perfect annulus} of radius $0 < r_1 < r_2$ as
\[ P A_{r_1, r_2}^\bullet \doteq \Spec(\cQ_{r_1, r_2}),  \]
the \emph{open perfect annulus} as
\[ P A_{r_1, r_2}^\circ \doteq \Spec(\limpro_{r_1 < \rho_1 < \rho_2 < r_2} \cQ_{\rho_1, \rho_2}). \]
the \emph{closed $p$-perfect annulus} as
\[ pP A_{r_1, r_2}^\bullet \doteq \Spec(\cZ_{r_1, r_2}  [ {\scriptstyle \frac{1}{p}} ]),  \]
and the \emph{open $p$-perfect annulus} as
\[ pP A_{r_1, r_2}^\circ \doteq \Spec(\limpro_{r_1 < \rho_1 < \rho_2 < r_2} \cZ_{\rho_1, \rho_2}[{\scriptstyle \frac{1}{p}}]). \]
\end{defn}

Building on Example \ref{exa:perfect_fields} we get the next example.

\begin{exa} \label{exa:perfect_fields2}
Let $k$ be a trivially valued field.

\ben
\item  For $0 < r_1, r_2 < 1$ one has
\[ \Spec(\cQ_{r_1, r_2} \wotimes_{\F_1} k ) \cong \Spec( {\limind_{T \mapsto T^n}}^{\le 1} \Frac (k\lt T \gt ) ) \cong \Spec( \what{\bigcup_{n \in \N} k (( T^{\frac{1}{n}}))}) \]
the affine analytic space associated to the field of fractions of $\cQ_r^+ \wotimes_{\F_1} k$.

\item Again, for $0 < r < 1$ one has
\[ \Spec(\cZ_r[\frac{1}{p}]_r \wotimes_{\F_1} k ) \cong \Spec( {\limind_{T \mapsto T^{p^n}}}^{\le 1} \Frac (k\lt T \gt ) ) \cong \Spec( \what{ k (( T^{\frac{1}{p^\infty}}))}) \]
the affine analytic space associated to the field of fractions of $\cZ_r[\frac{1}{p}]^+ \wotimes_{\F_1} k$.

\een
\end{exa}

\begin{prop} \label{prop:nuclear_perfect_disks}
	The open and the overconvergent disks and annuli over $\F_1$ are examples of nuclear bornological sets whereas closed disks/annuli are not.
\end{prop}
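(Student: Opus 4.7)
The plan is to reduce the claim to the two cofinality theorems of the appendix, Theorem~\ref{thm:ind_cofinal} and Theorem~\ref{thm:pro_cofinal}, in the same style as Proposition~\ref{prop:nuclear_disks}, and to produce a direct counterexample for the closed case. The nuclearity criterion requires checking, for each Banach ring $R$, that the canonical map $M\wotimes_{\F_1}R\to M\ootimes_{\F_1}^{\sup}R$ is an isomorphism, and the role of the two cofinality theorems is to identify the two sides as filtered colimits, respectively projective limits, of equivalent families of Banach $R$-modules.

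For the open disks and annuli I would start from the presentations as projective limits $\limpro_{\rho<r}S_\rho$ and $\limpro_{r_1<\rho_1<\rho_2<r_2}\cZ_{\rho_1,\rho_2}$. Proposition~\ref{prop:sup_base_change} lets me pull $(\cdot)\ootimes_{\F_1}^{\sup}R$ through the projective limit (which is a limit of normed sets), and Proposition~\ref{prop:born_base_change_ban_proj_lim} does the same for $(\cdot)\wotimes_{\F_1}R$ applied to a single normed set. Each side then becomes the projective limit, indexed by shrinking radii, of the ordinary $\ell^1$- and sup-completions of the closed stages over $R$. Even though no single closed stage is nuclear, Theorem~\ref{thm:pro_cofinal} gives precisely the cofinality of the two pro-systems needed to conclude that the projective limits coincide as bornological $R$-modules.

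For the overconvergent disks and annuli the argument is dual: the defining diagram is a filtered colimit indexed by $\rho>r$ (resp.\ $\rho_1<r_1$ and $\rho_2>r_2$), and both functors $(\cdot)\wotimes_{\F_1}R$ and $(\cdot)\ootimes_{\F_1}^{\sup}R$ commute with filtered colimits (by Proposition~\ref{prop:base_change_left_adjoint} and Proposition~\ref{prop:sup_base_change} respectively), so Theorem~\ref{thm:ind_cofinal} supplies the cofinality I need. For the perfect and $p$-perfect variants I expect no new input: adjoining roots only replaces the underlying monoid by $\cQ_r^+$ or by $\cZ_r[\tfrac{1}{p}]$ and presents the algebra of functions as a further filtered colimit (as in Example~\ref{exa:basic} (7)), which both base changes again respect, so the statement reduces to the same two appendix theorems.

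For the negative half I would take $R=\C$ and compare $S_r\wotimes_{\F_1}\C$, which consists of sequences $(a_n)$ with $\sum|a_n|r^n<\infty$, with $S_r\ootimes_{\F_1}^{\sup}\C$, which contains every sequence satisfying $\sup_n|a_n|r^n<\infty$; the choice $a_n=r^{-n}$ lies in the latter but not the former, ruling out nuclearity. The same example transfers to $\cZ_{r_1,r_2}$ by extending trivially on the negative indices and to the perfect variants by choosing a divergent sequence indexed by $\cQ_r^+$ or $\cZ_r[\tfrac{1}{p}]$. The only place I expect to spend real care is verifying that the cofinality arguments of the appendix apply uniformly in all four geometric shapes (disk/annulus, basic/perfect) and keeping track of the rescaling conventions of Example~\ref{exa:basic}; no genuinely new idea beyond Proposition~\ref{prop:nuclear_disks} should be required.
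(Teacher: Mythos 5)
Your overall strategy is the paper's: the positive half is reduced to the cofinality of the $\ell^1$- and sup-systems from the appendix, and the negative half to the standard $\ell^1\ne\ell^\infty$ counterexample (which is fine as you state it, as is the overconvergent case, where both $(\cdot)\wotimes_{\F_1}R$ and $(\cdot)\ootimes_{\F_1}^{\sup}R$ genuinely commute with filtered colimits). Note only that for the perfect disks/annuli and for annuli with two radii you need the generalized Corollary \ref{cor:ind_pro_cofinal} (arbitrary index set $M\subset\R$), not literally Theorems \ref{thm:ind_cofinal} and \ref{thm:pro_cofinal}; this is the form the paper actually invokes.

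There is, however, a genuine gap in the open case, at the step ``each side then becomes the projective limit of the closed stages over $R$''. For the sup-side this is indeed Proposition \ref{prop:sup_base_change}; but for the $\ell^1$-side you cite Proposition \ref{prop:born_base_change_ban_proj_lim}, which commutes the base change with a projective limit in the \emph{ring} variable ($M$ a fixed normed set, $R\cong\limpro_i R_i$), not with the projective limit $\limpro_{\rho<r}S_\rho$ in the \emph{set} variable, which is what you need. Since $(\cdot)\wotimes_{\F_1}R$ is a left adjoint it has no general right to commute with limits (cf. Remark \ref{rmk:not_commute_product}), and the commutation you assert is essentially the content of nuclearity itself: elsewhere the paper derives exactly this kind of commutation (Lemma \ref{lemma:key}) \emph{from} nuclearity together with Proposition \ref{prop:sup_base_change}, so taking it for granted here risks circularity. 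The repair is the argument used in the proof of Theorem \ref{thm:main}: present the Fr\'echet-like bornology of $\limpro_{\rho<r}S_\rho$ as an ind-object via Lemma \ref{lemma:product_born_sets}, apply the $\ell^1$-base change to that presentation, and use Corollary \ref{cor:ind_pro_cofinal} to interchange ${\coprod}^{\le 1}$ with ${\prod}^{\le 1}$ stage by stage without changing the limit; this is what identifies $(\limpro_{\rho<r}S_\rho)\wotimes_{\F_1}R$ with $\limpro_{\rho<r}S_R(\rho)$ and simultaneously compares it with $\limpro_{\rho<r}T_R(\rho)$. In other words, the cofinality corollary must be used to establish your intermediate identification on the $\ell^1$-side, not only to compare the two projective limits at the end.
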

\begin{proof}
Direct consequence of Corollary \ref{cor:ind_pro_cofinal}.
\end{proof}

More in general one can think to ordered Banach monoids as generalized annuli over $\F_1$ and study their base changes to various valued fields or to Banach rings.

\begin{prop} \label{prop:perfectoid_fields}
Let $M$ be a $p$-divisible normed commutative group without torsion and $F$ a trivially valued perfect field of characteristic $p$. Then, if the norm on $M$ is multiplicative and it induces a total order on $M$ one has that
\[ M \wotimes_{\F_1} F \]
is a perfectoid field of characteristic $p$.
\end{prop}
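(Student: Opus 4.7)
The plan is to identify $M \wotimes_{\F_1} F$ with the completion of the monoid algebra $F[M]$ equipped with a natural multiplicative valuation, and then verify directly the defining properties of a perfectoid field of characteristic $p$: completeness, non-triviality and non-discreteness of the valuation, and surjectivity of Frobenius. Unwinding the definition,
\[ M \wotimes_{\F_1} F = \what{{\coprod_{m \in M}}^{\le 1} [F]_{|m|_M}}, \]
whose elements are formal series $\sum_{m} f_m [m]$ with $f_m \in F$ satisfying an appropriate convergence condition, and with multiplication induced by the group law on $M$. Since $F$ is non-Archimedean I would work with the max-version discussed in Remark \ref{rmk:non-arch_base_change}, so the norm is $\|\sum_m f_m [m]\| = \max_{f_m \ne 0}|m|_M$. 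The hypothesis that the norm on $M$ is multiplicative and induces a total order guarantees that every nonzero element has a unique leading monomial; combined with multiplicativity of $|\cdot|_M$, this yields $\|xy\|=\|x\|\|y\|$, so the norm is a valuation, $F[M]$ and its completion are integral domains, and the value group is precisely $|M|\subseteq \R_+^\times$.

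For the field property, any nonzero $x=\sum f_m[m]$ factors as $f_{m_0}[m_0](1+y)$, where $m_0$ is the unique support element realising $\|x\|$ and $\|y\|<1$. The first factor is already a unit in $F[M]$ since $M$ is a group and $F$ a field, while $1+y$ is invertible in the completion via the geometric series $\sum_{k\ge 0}(-y)^k$, which converges because $\|y^k\|=\|y\|^k\to 0$. Hence every nonzero element of $M\wotimes_{\F_1} F$ is invertible.

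For perfectness, I would use the identity $(\sum f_m[m])^p=\sum f_m^p[pm]$ available in characteristic $p$. Perfectness of $F$ together with the torsion-free $p$-divisibility of $M$ makes this map bijective on $F[M]$: for any $\sum c_n[n]$ the unique preimage is $\sum c_n^{1/p}[n/p]$. To transfer this to the completion, I would exploit the freshman's dream $(a-b)^p=a^p-b^p$ in characteristic $p$, which combined with multiplicativity of the valuation gives $\|x^p-y^p\|=\|x-y\|^p$. Consequently any Cauchy sequence $(y_n)$ in the completion with preimages $(x_n)\subset F[M]$ yields a Cauchy sequence $(x_n)$, whose limit in the completion is a $p$-th root of $\lim y_n$.

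Finally, since $M$ is $p$-divisible, $|M|$ is a $p$-divisible subgroup of $\R_+^\times$, hence (assuming the norm is non-trivial, which is necessary for $M\wotimes_{\F_1} F$ to qualify as a perfectoid field in the strict sense) dense in $\R_+^\times$, providing the required non-discreteness. The main obstacle is the fourth step: verifying that Frobenius lifts to a surjection on the completion. This crucially uses characteristic $p$ through the freshman's dream, which upgrades the pointwise identity $\|x^p\|=\|x\|^p$ to a uniform statement on differences and thereby makes Frobenius a homeomorphism onto its image, so that $p$-th roots can be transferred through limits. Everything else is a routine adaptation of the theory of totally ordered valued monoid algebras.
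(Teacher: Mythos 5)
Your proof is correct, but it follows a genuinely different route from the paper's. The paper passes immediately to the non-Archimedean base change $M \wotimes_{\F_1}^{\na} F$ of Remark \ref{rmk:non-arch_base_change}, introduces the submonoid $M^\circ = \{x \in M : |x|_M \le 1\}$, gets the field property from a Laurent-series-type description (finite principal part indexed by $(M^\circ)^{-1}$), identifies $M^\circ \wotimes_{\F_1} F$ as the valuation ring with residue field $F$, and then asserts perfectoidness from completeness, density of the valuation and the residue field being $F$, closing with a brief remark that the norm of $M \wotimes_{\F_1}^{\na} F$ is the spectral norm of $M \wotimes_{\F_1} F$ so the topologies agree. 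You instead work directly with the completed monoid algebra: you prove multiplicativity of the norm via the unique leading monomial (using the total-order hypothesis), invert a general element through the unit-times-$(1+y)$ factorization and the geometric series, and---this is the most substantive difference---you give an explicit argument for perfectness: Frobenius is bijective on $F[M]$ because $F$ is perfect and $M$ is uniquely $p$-divisible (torsion-freeness giving uniqueness of $p$-th roots), and it extends to the completion via $\|x^p - y^p\| = \|x-y\|^p$, with non-discreteness of the value group coming from $p$-divisibility of $|M|$. This is exactly the point where the paper is tersest (perfect residue field alone would not suffice; what is really needed is surjectivity of Frobenius, i.e.\ $p$-divisibility of $M$), so your argument makes explicit what the paper leaves implicit. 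The one thing you should add, as the paper does, is a closing remark transferring the conclusion from the max-version back to $M \wotimes_{\F_1} F$ itself (e.g.\ via the observation that the non-Archimedean norm is the spectral norm of the $\ell^1$ one, so the relevant topological/valued-field structure is unchanged), since the statement is formulated for the $\ell^1$ base change.
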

\begin{proof}
Consider the subset 
\[ M^\circ = \{ x \in M | |x|_M \le 1 \}. \]
Clearly $M = M^\circ \cup (M^\circ)^{-1}$. $M \wotimes_{\F_1} F$ is a field because its elements can be described as Laurent series whose principal part has finite support indexed by $(M^\circ)^{-1}$ and whose positive part is indexed by $M^\circ$.

Now, we can consider the non-Archimedean base change of $M$ to $F$ (cf. Remark \ref{rmk:non-arch_base_change}). Then $M \wotimes_{\F_1}^\na F$ becomes a complete valued field, with dense valuation, whose valuation ring is $M^\circ \wotimes_{\F_1} F$. Since the residue field of $M^\circ \wotimes_{\F_1}^\na F$ is $F$, then $M \wotimes_{\F_1}^\na F$ is a perfectoid field. Finally, we notice that it is easy to check the topology of $M \wotimes_{\F_1} F$ is equivalent to the topology of $M \wotimes_{\F_1}^\na F$, and actually the norm of $M \wotimes_{\F_1}^\na F$ is the spectral norm of $M \wotimes_{\F_1} F$, hence the topology of $M \wotimes_{\F_1} F$ is the one of a perfectoid field, which is the only data required for defining a perfectoid field.
\end{proof}

\section{Applications}

In this section we discuss some applications of the theory developed so far in two important and related topics: the Fargues-Fontaine curve and the $p$-typical Witt vectors. We begin this section with some recall from the theory of the Fargues-Fontaine curve. A detailed account of the theory can be found in \cite{FF} and \cite{FF2}.

\subsection{Recall on the theory of the Fargue-Fontaine curve} \label{sec:recall}

Recall that there exists a functor $(\cdot)^\flat: \bPerfField_{0,p} \to \bPerfField_p$, from the category of perfectoid field of mixed characteristic $(0, p)$ to the category of perfectoid fields of equal characteristic $p$, that associates to a perfectoid field $E$ its tilt $E^\flat$. $E^\flat$ is constructed by considering the valuation ring $\cO_E$ of $E$ and by noticing that the multiplicative monoid
\[ \cO_{E^\flat} = \limpro_{x \mapsto x^p} \cO_E \]
has a natural addition operation which renders $\cO_{E^\flat}$ a valuation ring of equal characteristic $p$. Then $E^\flat$ is defined as the fraction field of $\cO_{E^\flat}$. Notice that the multiplicative monoids of $E$ and $E^\flat$ are isomorphic.

Let now $K$ be a perfectoid field of characteristic $p$. The Fargues-Fontaine curve associated to $K$ classifies the equivalence classes of un-tilts of $K$, more precisely the closed points of the Fargues-Fontaine curve are in bijection with un-tilts of $K$. So, we briefly recall these notions.

\begin{defn} \label{defn:untilt}
Let $K$ be a perfectoid field of characteristic $p$. A perfectoid field $E$ characteristic zero is said to be an \emph{un-tilt of $K$} if there exists a morphism $\iota: K \rhook E^\flat$ which is an embedding of topological fields and a finite extension. 
\end{defn}

Since we are interested on the geometry of the Fargues-Fontaine curve we use the word un-tilt also for un-tilts of degree bigger than $1$ (the degree of an un-tilt is the degree of the extension $E^\flat/\iota(K)$) instead of the use of some authors of the word un-tilt for referring to un-tilts of degree 1.
We say that two un-tilts $\iota_E: K \rhook E^\flat$ and $\iota_F: K \rhook F^\flat$ are \emph{isomorphic} if there exists an isomorphism of topological fields $\sigma: E \to F$ such that $\sigma^\flat \circ \iota_E = \iota_F$. 
The Fargues-Fontaine curve classifies un-tilts up to equivalence.

\begin{defn} \label{defn:equivalence_untilts}
Let $E_1$ and $E_2$ be two un-tilts of $K$ and $\phi$ the Frobenius automorphism of $K$. We say that $E_1$ and $E_2$ are \emph{equivalent} if there exists an $n \in \N$ and an isomorphism $E_1 \to E_2$ such that the diagram
\[
\begin{tikzpicture}
\matrix(m)[matrix of math nodes,
row sep=2.6em, column sep=2.8em,
text height=1.5ex, text depth=0.25ex]
{ K  & E_1^\flat   \\
  K  & E_2^\flat  \\};
\path[->,font=\scriptsize]
(m-1-1) edge node[auto] {$\iota_{E_1}$} (m-1-2);
\path[->,font=\scriptsize]
(m-1-1) edge node[auto] {$\phi^n$} (m-2-1);
\path[->,font=\scriptsize]
(m-1-2) edge node[auto] {} (m-2-2);
\path[->,font=\scriptsize]
(m-2-1) edge node[auto] {$\iota_{E_2}$} (m-2-2);
\end{tikzpicture}
\]
commutes.
\end{defn}

The Fargues-Fontaine curve associated to a perfectoid field $K$, of characteristic $p$, is an adic space constructed in the following way. Consider the ring of $p$-typical Witt vectors $W_p(K^\circ)$ and its ring of fractions $\bW_K = W_p(K^\circ)[\frac{1}{p}][\frac{1}{[\omega}]]$, where $\omega$ is an element in $K^\circ$ with norm less than $1$ and $[\omega]$ its Teichmuller lift. For all  $\underset{n >> -\infty}\sum a_n p^n \in \bW_K$ one can consider the family of norms defined
\begin{equation} \label{eqn:FF_norm}
|\sum_{n >> -\infty} a_n p^n |_r = \sup_{n \in \Z} |a_n|p^{- r n}, \ \ 0 < r < \infty. 
\end{equation}
One can take the Fr\'echet completion of $\bW_K$ with respect to metric induced by these norms (in the sense of the theory of Fr\'echet spaces) obtaining a Fr\'echet algebra that we denote $A_K$. The completion of $W_p(K^\circ)[\frac{1}{p}]$ with respect to the same family of norms is denoted $A_K^+$.
More explicitly, one can form the following system of Banach algebras. For any $1 \le r < \infty$ one can consider on $\bW_K$ the norm
\begin{equation} \label{eqn:FF_norm3} 
|a|_{r, r^{-1}} = \max \{ |a|_r, |a|_{r^{-1}} \}, \ \ a \in \bW_K.
\end{equation}
The completion of $\bW_K$ with respect to the norm $|\cdot|_{r, r^{-1}}$ is a Banach algebra that can be explicitly described as formed by elements of the form $\underset{n >> -\infty}\sum a_n p^n$ such that
\[ \sup_{n \in \N} |a_n|p^{-r^{-1} n} < \infty, \ \ \sup_{n \in \N} |a_{-n}|p^{-r n} < \infty, \]
and it is denoted by $A_{K, [r^{-1}, r]}$. Notice that for each $1 \le r < r'$ the canonical map
\[ A_{K, [(r')^{-1}, r']} \to A_{K, [r^{-1}, r]} \]
is bounded, hence we can compute the projective limit as $r \to \infty$ and it turns out that
\begin{equation} \label{eqn:FF_norm2} 
\limpro_{r \to \infty} A_{K, [r^{-1}, r]} \cong A_K. 
\end{equation}
$A_K$ has the property that the set $\Max(A_K)$ of its closed maximal ideals is canonically in bijection with the isomorphism classes of un-tilts of $K$. The Frobenius endomorphism $\phi$ of $K$ gives rise to an action of $\Z$ on $\Max(A_K)$ whose quotient $[\frac{\Max(A_K)}{\phi^\Z}]$ correspond to un-tilts of $K$ up to equivalence and it can be identified with the closed points of an adic space. This latter adic space is called \emph{the Fargues-Fontaine curve} of $K$ and we denote it by $\FF_K$

In a more geometric fashion, $A_K$ is the ring of analytic functions of an affine adic space, that we can denote $\Spec(A_K)$, whose quotient by the action of the Frobienus is $\FF_K$. We will now describe how to obtain $A_K$ and the Fargues-Fontaine curve as base changes of analytic spaces over $\F_1$ for suitable choices of $K$.

\subsection{Some examples of Fargue-Fontaine curves over $\F_1$}

\begin{notation}
From now on we denote 
\begin{itemize}
\item  $\kappa = \what{\underset{n \in \N} \bigcup \F_p((t^{\frac{1}{n}}))}$, $\kappa^\circ = \what{\underset{n \in \N} \bigcup \F_p[[t^{\frac{1}{n}}]]}$;
\item $\kappa' = \what{\underset{n \in \N} \bigcup \F_p((t^{\frac{1}{p^n}}))} $, $(\kappa')^\circ = \what{\underset{n \in \N} \bigcup \F_p[[t^{\frac{1}{p^n}}]]}$.
\end{itemize}
\end{notation}

The next is our main result.

\begin{thm} \label{thm:main}
Consider the perfectoid field 
\begin{equation} \label{eqn:perf_field}
\cQ_r \wotimes_{\F_1} \F_p \cong \kappa
\end{equation} 
equipped with its valuation normalized in a way that $|t| = r < 1$ (cf. Proposition \ref{prop:perfectoid_fields} and Example \ref{exa:perfect_fields2}).
The following isomorphism of Fr\'echet bornological algebras holds
\[ \cQ_r^+ \wotimes_{\F_1} (\Q_p)_{(0, \infty)} \cong A^+_\kappa, \]
where $(\Q_p)_{(0,\infty)}$ denotes the Fr\'echet-like structure on the $p$-adic numbers defined in Example \ref{exa:born_rings}.
\end{thm}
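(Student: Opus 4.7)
The plan is to reduce the identification to a compatible family of isomorphisms at the Banach level, and then to verify each of them using the universal property of the base change together with some Witt-vector arithmetic. By definition $(\Q_p)_{(0, \infty)} = \limpro_{0 < r_1 < r_2 < \infty} (\Q_p, \|\cdot\|_{r_1, r_2})$, so Proposition \ref{prop:born_base_change_ban_proj_lim} yields
\[ \cQ_r^+ \wotimes_{\F_1} (\Q_p)_{(0, \infty)} \cong \limpro_{0 < r_1 < r_2 < \infty} \cQ_r^+ \wotimes_{\F_1} (\Q_p, \|\cdot\|_{r_1, r_2}). \]
On the Fargues--Fontaine side, the same reasoning as in (\ref{eqn:FF_norm2}), but for the $+$-version (i.e.\ without inverting $[\omega]$), realises $A_\kappa^+$ as $\limpro_{r_1 < r_2} A_{\kappa, [r_1, r_2]}^+$, where $A_{\kappa, [r_1, r_2]}^+$ is the completion of $W_p(\kappa^\circ)[\frac{1}{p}]$ with respect to $|\cdot|_{r_1, r_2} = \max(|\cdot|_{r_1}, |\cdot|_{r_2})$. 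It therefore suffices to produce a compatible family of isomorphisms $\Phi_{r_1, r_2}$ between these Banach algebras.

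For each pair $(r_1, r_2)$, the universal property supplied by Proposition \ref{prop:base_change_left_adjoint} says that a bounded morphism of Banach $(\Q_p, \|\cdot\|_{r_1, r_2})$-algebras out of the base change is the same as a bounded morphism of normed pointed monoids $\cQ_r^+ \to A_{\kappa, [r_1, r_2]}^+$. The Teichm\"uller lift $q \mapsto [t^q]$ is multiplicative, sends the unit $0 \in \cQ_r^+$ to $[1]=1$, and sends the pointed-set basepoint to $0$; by (\ref{eqn:FF_norm}) it satisfies $|[t^q]|_r = |t^q|_\kappa = r^q$ for every $r \in (0, \infty)$, so it is isometric with respect to $\|\cdot\|_{r_1, r_2}$. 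A direct computation also shows that $|\cdot|_{r_1, r_2}$ restricts on $\Q_p \subset A_{\kappa, [r_1, r_2]}^+$ to $\max\{|\cdot|_p^{r_1}, |\cdot|_p^{r_2}\}$, which is exactly $\|\cdot\|_{r_1, r_2}$ from Example \ref{exa:born_rings}, and that $|a [t^q]|_{r_1, r_2} = r^q \|a\|_{r_1, r_2}$ for $a \in \Q_p$. Hence each $\Phi_{r_1, r_2}$ is a well-defined contracting algebra map, and since these morphisms are obviously compatible with shrinking $(r_1, r_2)$ they assemble into a morphism $\Phi$ of Fr\'echet algebras on the projective limits.

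The remaining task is to verify that $\Phi$ is a topological isomorphism. Injectivity rests on the $\Q_p$-linear independence of $\{[t^q]\}_{q \in \Q^{\geq 0}}$ in $W_p(\kappa^\circ)[\frac{1}{p}]$: reduction modulo $p$ gives the $\F_p$-linear independence of $\{t^q\}$ in $\kappa^\circ$, and a Nakayama-style lifting argument upgrades this to linear independence over $\Q_p$; the joint non-vanishing of the seminorms $|\cdot|_r$ transports the property to the Fr\'echet completion. For surjectivity one has to show that the $\Q_p$-span of $\{[t^q]\}$ is dense in each $A_{\kappa, [r_1, r_2]}^+$. The central tool is the Witt addition identity
\[ [c_1] + [c_2] = [c_1 + c_2] + p\, S_1(c_1, c_2) + p^2\, S_2(c_1, c_2) + \cdots, \]
where the $S_i \in \kappa^\circ$ are bounded by $\max(|c_1|, |c_2|)$; since $|p|_r = p^{-r} < 1$, the correction terms are strictly subdominant in every seminorm $|\cdot|_r$, and iterating the identity approximates $[c]$, for $c = \sum_i d_i t^{q_i} \in \kappa^\circ$, by the finite sum $\sum_i d_i [t^{q_i}]$. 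Applying this to each Witt coefficient in the canonical expansion $\sum_n [c_n] p^n$ of an arbitrary element of $A_{\kappa, [r_1, r_2]}^+$ yields the required density.

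The main obstacle is ensuring that the surjective approximations assemble coherently across the entire Fr\'echet structure: the Witt-additivity errors must be controlled simultaneously for all $|\cdot|_r$ with $r \in (0, \infty)$, and the $\ell^1$-type norms inherent in the base-change description must be reconciled with the $\sup$-type norms defining the Banach pieces $A_{\kappa, [r_1, r_2]}^+$. Precisely this equivalence is available only at the projective-limit level, as explained in Remark \ref{rmk:non-arch_base_change} and Appendix \ref{appendix:S_T}, and is the reason the theorem is formulated for the full Fr\'echet algebras rather than for their individual Banach constituents.
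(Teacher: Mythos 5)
Your reduction in the second paragraph is where the argument breaks. You claim it ``suffices to produce a compatible family of isomorphisms $\Phi_{r_1,r_2}$'' between $\cQ_r^+ \wotimes_{\F_1} (\Q_p,\|\cdot\|_{r_1,r_2})$ and $A^+_{\kappa,[r_1,r_2]}$, but these Banach algebras are not isomorphic via your map: the left-hand side is by construction an $\ell^1$-type completion of the monoid algebra (coefficients $(a_q)$ with $\sum_q \|a_q\|_{r_1,r_2} r^q < \infty$), while $A^+_{\kappa,[r_1,r_2]}$ carries sup-type norms on Witt coordinates. At a \emph{fixed} pair $(r_1,r_2)$ these completions genuinely differ --- this is exactly the non-nuclearity of closed perfect disks (Proposition \ref{prop:nuclear_perfect_disks}, Remark \ref{rmk:non-arch_base_change}); the Teichm\"uller map you build is at best injective with dense image. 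Your surjectivity step compounds this: showing that the $\Q_p$-span of $\{[t^q]\}$ is dense in $A^+_{\kappa,[r_1,r_2]}$ does not make a bounded map between Banach spaces surjective, and your injectivity step (algebraic $\Q_p$-linear independence of Teichm\"uller lifts plus ``non-vanishing of seminorms'') does not control $\ell^1$-convergent infinite relations in the completion. Your closing paragraph concedes that the $\ell^1$/sup reconciliation ``is available only at the projective-limit level'', but that concession contradicts the level-wise strategy you actually execute, and you never restructure the proof to exploit it; as written the argument does not go through.

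The missing idea is the cofinality/interleaving mechanism, which is how the paper proceeds: rather than level-wise isomorphisms, one shows that the $\ell^1$ system at one radius embeds into the sup system at a nearby radius and vice versa (Theorem \ref{thm:pro_cofinal}, Corollary \ref{cor:ind_pro_cofinal}), so the two projective systems are cofinal and hence have the same limit. Concretely, the paper replaces each contracting product $\prod^{\le 1}$ by a contracting coproduct $\coprod^{\le 1}$ \emph{without changing the projective limit}, identifies both limits with the same family $\coprod^{\le 1}_{n,x}[\F_p]_{|x|p^{-\rho^{\pm 1}n}}$ (after commuting the base change with the limit via Proposition \ref{prop:born_base_change_ban_proj_lim}, as you do), and then obtains the ring isomorphism not by an explicit Teichm\"uller morphism but by observing that $\cQ_r^+\wotimes_{\F_1}(\Z_p)_{(0,\infty)}$ is a strict $p$-ring with residue ring $\cQ_r^+\wotimes_{\F_1}\F_p$, which characterizes $W_p(\kappa^\circ)$. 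If you want to salvage your morphism-based approach, you must prove an interleaving statement for $\Phi$: for $r_1' < r_1 < r_2 < r_2'$, the image of $A^+_{\kappa,[r_1',r_2']}$ in $A^+_{\kappa,[r_1,r_2]}$ lands in (and is bounded into) the $\ell^1$ piece at $(r_1,r_2)$, and conversely; only then do the maps on the limits become inverse to each other.
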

\begin{proof}
On the one hand,
\begin{equation} \label{eqn:proj_lim}
 A^+_\kappa = \limpro_{\rho \to \infty} (W_p(\cQ_r^+ \wotimes_{\F_1} \F_p)[\frac{1}{p}], |\cdot|_{\rho, \rho^{-1}})
 \end{equation}
where $|\cdot|_{\rho,\rho^{-1}}$ are the norms of equation (\ref{eqn:FF_norm3}) (notice that we are using equation (\ref{eqn:perf_field}) for identifying $\cQ_r^+ \wotimes_{\F_1} \F_p$ with $\kappa^\circ$). As bare bornological sets we can write
\[ W_p(\cQ_r^+ \wotimes_{\F_1} \F_p, |\cdot|_{\rho, \rho^{-1}})[\frac{1}{p}] \cong {\prod_{n \in \N}}^{\le 1} [\cQ_r^+ \wotimes_{\F_1} \F_p]_{p^{- \rho^{-1} n}} \times {\prod_{n \in \N}}^{\le 1} [\cQ_r^+ \wotimes_{\F_1} \F_p]_{p^{-\rho n}} \]
which by the cofinality result of Theorem \ref{thm:pro_cofinal}, can be replaced by 
\[ {\coprod_{n \in \N}}^{\le 1} [\cQ_r^+ \wotimes_{\F_1} \F_p]_{p^{-\rho^{-1} n}} \times {\coprod_{n \in \N}}^{\le 1} [\cQ_r^+ \wotimes_{\F_1} \F_p]_{p^{- \rho n}} \]
without changing the projective limit (\ref{eqn:proj_lim}). Then,
\begin{equation} \label{eqn:one_hand}
 {\coprod_{n \in \N}}^{\le 1} [\cQ_r^+ \wotimes_{\F_1} \F_p]_{p^{-\rho^{-1} n}} = {\coprod_{n \in \N}}^{\le 1} [{\coprod_{x \in \cQ_r^+}}^{\le 1} [\F_p]_{|x|}]_{p^{-\rho^{-1} n}} \cong \underset{n \in \N, x \in \cQ_r^+}{{\coprod}^{\le 1}} [\F_p]_{|x| p^{-\rho^{-1} n}},
\end{equation}
and also 
\begin{equation*}
 {\coprod_{n \in \N}}^{\le 1} [\cQ_r^+ \wotimes_{\F_1} \F_p]_{p^{-\rho n}} \cong \underset{n \in \N, x \in \cQ_r^+}{{\coprod}^{\le 1}} [\F_p]_{|x|p^{-\rho n}},
\end{equation*}
On the other hand,
\[ \cQ_r^+ \wotimes_{\F_1} (\Q_p)_{(0, \infty)} = \cQ_r^+ \wotimes_{\F_1} (\limpro_{\rho < \infty} (\Q_p, \|\cdot\|_{\rho^{-1}, \rho})) \cong \limpro_{\rho < \infty}\cQ_r^+ \wotimes_{\F_1} (\Q_p, \|\cdot\|_{\rho^{-1}, \rho}) \]
where we used Proposition \ref{prop:born_base_change_ban_proj_lim} for the commutation of the base change functor with projective limits. Then
\[ \cQ_r^+ \wotimes_{\F_1} (\Q_p, \|\cdot\|_{\rho, \rho^{-1}}) \cong {\coprod_{x \in \cQ_r^+}}^{\le 1} [(\Q_p, \|\cdot\|_{\rho, \rho^{-1}})]_{|x|} \cong {\coprod_{x \in \cQ_r^+}}^{\le 1} [{\prod_{n \in \N}}^{\le 1} [\F_p]_{p^{-\rho^{-1} n}} \times {\prod_{n \in \N}}^{\le 1} [\F_p]_{p^{-\rho n}} ]_{|x|} \]
again by the cofinality argument of Theorem \ref{thm:pro_cofinal} we can replace the last bornological module with
\[  {\coprod_{x \in \cQ_r^+}}^{\le 1} [{\coprod_{n \in \N}}^{\le 1} [\F_p]_{p^{-\rho^{-1} n}} \times {\coprod_{n \in \N}}^{\le 1} [\F_p]_{p^{-\rho n}}]_{|x|} \cong \underset{x \in \cQ_r^+, n \in \N}{{\coprod}^{\le 1}} [\F_p]_{p^{-\rho^{-1} n}|x|} \times \underset{x \in \cQ_r^+, n \in \N}{{\coprod}^{\le 1}} [\F_p]_{p^{-\rho n}|x|} \]
without changing the value of the projective limit we are computing. By equation (\ref{eqn:one_hand}) this last module is isomorphic with $W_p(\cQ_r^+ \wotimes_{\F_1} \F_p)[\frac{1}{p}]$.

This shows that the underlying Fr\'echet spaces are isomorphic. The fact that this isomorphism is also an isomorphism of algebras follows easily by noticing that 
\[ \cQ_r^+ \wotimes_{\F_1} (\Q_p)_{(0, \infty)} \cong (\cQ_r^+ \wotimes_{\F_1} (\Z_p)_{(0, \infty)} )[\frac{1}{p}] \]
and 
\[ \cQ_r^+ \wotimes_{\F_1} (\Z_p)_{(0, \infty)}  \]
is a strict $p$-ring whose quotient is $\cQ_r^+ \wotimes_{\F_1} \F_p$ (cf. the beginning of section 2 of \cite{CD} for the notion of strict $p$-ring) and this property characterizes the ring $W_p(\cQ_r^+ \wotimes_{\F_1} \F_p)$. Clearly, the ring isomorphism also identifies the underlying bornological modules.
\end{proof}

\begin{cor} \label{cor:main}
	Theorem \ref{thm:main}, can be restated, in a dual fashion, in the category of affine analytic spaces, using perfect disks (cf. Definition \ref{defn:perfect_disks}):
	\[ P D_r^\bullet \wotimes_{\F_1}^\L (\Q_p)_{(0,\infty)} \cong P D_r^\bullet \wotimes_{\F_1} (\Q_p)_{(0,\infty)} \cong \Spec(A^+_\kappa). \]
\end{cor}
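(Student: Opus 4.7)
The assertion consists of two isomorphisms, and the plan is to handle them separately. The second isomorphism, $P D_r^\bullet \wotimes_{\F_1} (\Q_p)_{(0,\infty)} \cong \Spec(A^+_\kappa)$, is the direct $\Spec$-dual of Theorem \ref{thm:main}: applying the contravariant duality $\Spec \colon \bsComm_{\F_1} \to \bAff_{\F_1}$ to the isomorphism $\cQ_r^+ \wotimes_{\F_1} (\Q_p)_{(0,\infty)} \cong A^+_\kappa$ together with the definition $P D_r^\bullet = \Spec(\cQ_r^+)$ and the compatibility of $\Spec$ with base change spelled out in Proposition \ref{prop:base_changes} gives the claim.

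The first isomorphism is the substantive part: we must show the natural comparison map
\[ \cQ_r^+ \wotimes_{\F_1}^\L (\Q_p)_{(0,\infty)} \longrightarrow \cQ_r^+ \wotimes_{\F_1} (\Q_p)_{(0,\infty)} \]
is an equivalence in $\bHo(\bsComm_{\F_1})$, so that the underived pushout already represents the homotopy pushout. The plan is to exploit the presentation (\ref{eqn:limind_Q_r}),
\[ \cQ_r^+ \cong \limind_{n \in \N} \tfrac{1}{n} S_{\bNr_{\F_1}^{\le 1}}(\ast_{\sqrt[n]{r}}), \]
in which each term is free commutative monoid on one generator, and hence is cofibrant in $\bsComm_{\F_1}$ (these are, up to rescaling of norms, the generating cofibrant objects used in Corollary \ref{cor:existence_model_structure}). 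The transition maps in the colimit system are strict monomorphisms, so in our model structure the filtered colimit is a homotopy colimit. Since the derived base change functor is a left Quillen adjoint and so commutes with homotopy colimits of cofibrant objects, the question reduces to showing that the underived base change of each free monoid $\tfrac{1}{n} S_{\bNr_{\F_1}^{\le 1}}(\ast_{\sqrt[n]{r}})$ already computes the derived base change.

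The remaining technical point, and the main obstacle, is to commute the derived base change past the projective limit defining $(\Q_p)_{(0,\infty)}$ (the underived analogue is Proposition \ref{prop:born_base_change_ban_proj_lim}). For this, the plan is to observe that for each $\rho$ the object $\tfrac{1}{n} S_{\bNr_{\F_1}^{\le 1}}(\ast_{\sqrt[n]{r}}) \wotimes_{\F_1} (\Q_p, \|\cdot\|_{\rho^{-1}, \rho})$ is a Tate algebra over a Banach ring, hence behaves flatly, so that the Fr\'echet projective limit produces no higher derived terms. Feeding the resulting identification into the computation already carried out in the proof of Theorem \ref{thm:main} (the cofinality arguments of Theorems \ref{thm:ind_cofinal} and \ref{thm:pro_cofinal} allowing one to replace ${\prod}^{\le 1}$ by ${\coprod}^{\le 1}$ inside the projective limit) then yields the required equivalence with $\Spec(A^+_\kappa)$.
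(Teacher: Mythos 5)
Your handling of the second isomorphism (applying $\Spec$-duality to Theorem \ref{thm:main}) is fine, but the argument for the first isomorphism contains a genuine gap and, separately, misses the much shorter route the model structure already provides. The gap: you declare that the ``main obstacle'' is to commute the \emph{derived} base change past the projective limit defining $(\Q_p)_{(0,\infty)}$, and you dispose of it by asserting that each $\tfrac{1}{n} S_{\bNr_{\F_1}^{\le 1}}(\ast_{\sqrt[n]{r}}) \wotimes_{\F_1} (\Q_p, \|\cdot\|_{\rho^{-1},\rho})$ ``is a Tate algebra over a Banach ring, hence behaves flatly, so that the Fr\'echet projective limit produces no higher derived terms.'' No such flatness or $\limpro^{(1)}$-vanishing statement is established anywhere in the paper, and it is not even the relevant question: the functor $(\cdot)\wotimes_{\F_1}^{\L}(\Q_p)_{(0,\infty)}$ is derived in the $\F_1$-variable (by cofibrant replacement of the source in $\bsComm_{\F_1}$), while $(\Q_p)_{(0,\infty)}$ sits in the undeprived target slot; the interaction with the Fr\'echet limit is an issue only for the \emph{underived} computation, and that is exactly what Proposition \ref{prop:born_base_change_ban_proj_lim} settles inside the proof of Theorem \ref{thm:main}. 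So this step of your argument is both unsupported and unnecessary. A second, smaller unjustified step is the claim that a filtered colimit along strict monomorphisms is automatically a homotopy colimit in this model structure: the paper never proves this (one could try to argue it from compactness of objects of $\bNr_{\F_1}$, but it is extra work you do not need).

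What makes all of this avoidable is that the whole reduction via the presentation (\ref{eqn:limind_Q_r}) is superfluous: in the projective-type model structure of Section \ref{sec:simplicial_module}, every object of $\bNr_{\F_1}$ is projective (strict epimorphisms split, Proposition \ref{prop:projective_class}), and consequently every \emph{discrete} object of $\bComm_{\F_1}$, viewed as a constant simplicial object, is already fibrant and cofibrant in $\bsComm_{\F_1}$. In particular $\cQ_r^+$ itself needs no cofibrant replacement, so $\cQ_r^+ \wotimes_{\F_1}^{\L} (\Q_p)_{(0,\infty)} \cong \cQ_r^+ \wotimes_{\F_1} (\Q_p)_{(0,\infty)}$ on the nose; this is the paper's one-line proof, and together with Theorem \ref{thm:main} and $\Spec$-duality it gives the corollary. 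If you want to salvage your colimit decomposition, you must either prove the homotopy-colimit claim and replace the ``flatness'' sentence by an actual argument, or simply invoke the cofibrancy of discrete objects, at which point the decomposition buys you nothing.
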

\begin{proof}
	The fact that all discrete objects of $\bComm_{\F_1}$ are fibrant and cofibrant in $\bsComm_{\F_1}$ follows easily from the description of the model structure on $\bsComm_{\F_1}$ given so far. Therefore, the derived tensor product coincides with the underived one as claimed.
\end{proof}

Theorem \ref{thm:main}, can be stated very easily also for the monoid $\cZ_r[\frac{1}{p}]^+$ as follows.

\begin{cor} \label{cor:main_2} We have the isomorphism of bornological algebras
	\[ \cZ_r[\frac{1}{p}]^+ \wotimes_{\F_1} (\Q_p)_{(0, \infty)} \cong A^+_{\kappa'} \]
	and of affine analytic spaces
	\[ p P D_r^\bullet \wotimes_{\F_1}^\L (\Q_p)_{(0, \infty)} \cong p P D_r^\bullet \wotimes_{\F_1} (\Q_p)_{(0, \infty)} \cong \Spec(A^+_{\kappa'}). \]
\end{cor}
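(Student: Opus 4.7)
The plan is to mimic the proof of Theorem \ref{thm:main} almost verbatim, substituting $\cZ_r[\frac{1}{p}]^+$ for $\cQ_r^+$ throughout. The original argument uses only formal properties of the bornological monoid — its base change to $\F_p$ gives the ring of integers of a perfectoid field of characteristic $p$, and its base changes admit a concrete description in terms of contracting $\ell^1$-coproducts — and both properties transfer without modification. Concretely, Example \ref{exa:perfect_fields2}(2) gives $\cZ_r[\frac{1}{p}]^+ \wotimes_{\F_1} \F_p \cong (\kappa')^\circ$ and $\cZ_r[\frac{1}{p}] \wotimes_{\F_1} \F_p \cong \kappa'$, and Proposition \ref{prop:perfectoid_fields} confirms that $\kappa'$ is perfectoid (the monoid $\cZ_r[\frac{1}{p}]$ is $p$-divisible, torsion-free, and equipped with a multiplicative norm inducing a total order). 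Thus $\kappa'$ plays precisely the role that $\kappa$ played in Theorem \ref{thm:main}.

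The main computation then proceeds exactly as before. On one side,
\[ A^+_{\kappa'} \cong \limpro_{\rho \to \infty} \bigl( W_p((\kappa')^\circ)[{\scriptstyle \frac{1}{p}}],\ |\cdot|_{\rho, \rho^{-1}} \bigr), \]
and each Banach piece decomposes, via Theorem \ref{thm:pro_cofinal} (which allows one to replace $\prod^{\le 1}$ by $\coprod^{\le 1}$ without affecting the projective limit), into a product of two contracting $\ell^1$-coproducts indexed by $\N \times \cZ_r[\frac{1}{p}]^+$. On the other side, Proposition \ref{prop:born_base_change_ban_proj_lim} permits one to commute $\cZ_r[\frac{1}{p}]^+ \wotimes_{\F_1} (-)$ past the projective limit defining $(\Q_p)_{(0,\infty)}$, and a second application of the cofinality argument brings each factor into the same shape. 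Matching the two descriptions term-by-term identifies the underlying Fr\'echet bornological modules.

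To promote this into an isomorphism of bornological algebras I would reuse the strict $p$-ring argument from the end of Theorem \ref{thm:main}: the ring $\cZ_r[\frac{1}{p}]^+ \wotimes_{\F_1} (\Z_p)_{(0,\infty)}$ is manifestly $p$-adically separated and complete with residue ring $(\kappa')^\circ$, and $p$ is a non-zero-divisor, so it is a strict $p$-ring and therefore canonically identified with $W_p((\kappa')^\circ)$ (\cf Section 2 of \cite{CD}); inverting $p$ then gives the full statement. For the geometric half of the corollary, the observation of Corollary \ref{cor:main} transfers word for word — discrete objects of $\bComm_{\F_1}$ are both fibrant and cofibrant in $\bsComm_{\F_1}$, so the derived tensor product collapses to the underived one, and applying $\Spec$ yields the claimed isomorphism of affine analytic spaces. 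There is essentially no new obstacle: every ingredient of Theorem \ref{thm:main} is stated in sufficient generality to cover the present case, and the only care required is the minor bookkeeping of confirming that $\cZ_r[\frac{1}{p}]^+$ verifies the algebraic hypotheses each cited result demands.
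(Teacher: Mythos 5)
Your proposal is correct and follows essentially the same route as the paper, which proves Corollary \ref{cor:main_2} precisely by rerunning the arguments of Theorem \ref{thm:main} and Corollary \ref{cor:main} with $\cZ_r[\frac{1}{p}]^+$ in place of $\cQ_r^+$ (using Example \ref{exa:perfect_fields2}, the cofinality result, Proposition \ref{prop:born_base_change_ban_proj_lim}, and the strict $p$-ring characterization). You have simply spelled out the substitution in more detail than the paper's one-line proof does.
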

\begin{proof}
The proof goes on the same lines of the proofs given so far.
\end{proof}

\begin{rmk} \label{rmk:general}
Theorem \ref{thm:main} can be generalized in various ways, by replacing the field $(\Q_p)_{(0, \infty)}$ with other non-Archimedean fields equipped with suitable Fr\'echet-like structures or by replacing $\cQ_r^+$ and $\cZ_r[\frac{1}{p}]^+$ by other $p$-divisible Banach monoids. Although such a more general study is possible, we bound ourselves in discussing only the key examples given so far in this work.
\end{rmk}

Last corollary to Theorem \ref{thm:main} is straightforward but nonetheless important.

\begin{cor} \label{cor:main_3} 
	Let $0 < r < 1$, then
	\[ \cQ_r \wotimes_{\F_1} (\Q_p)_{(0, \infty)} \cong A_\kappa. \]
	The same result is true also for $\cZ_r[\frac{1}{p}]$ in place of $\Q$ and for the dual results in the category of affine analytic spaces.
\end{cor}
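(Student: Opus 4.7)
The plan is to run the argument from the proof of Theorem~\ref{thm:main} essentially verbatim, replacing $\cQ_r^+$ by $\cQ_r$; geometrically this corresponds to passing from the perfect closed disk over $\F_1$ to the perfect annulus (allowing negative powers of the generator), which in turn corresponds to inverting the Teichm\"uller lift $[t]$ on the Witt-vector side.

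First I would fix $t \in \cQ_r^+$ with $|t|_r = r$ and present
\[ \cQ_r \cong ``\limind_{k \in \N}" t^{-k} \cdot \cQ_r^+ \]
in $\bInd^m(\bNr_{\F_1})$, where $t^{-k} \cdot \cQ_r^+$ denotes the normed sub-monoid of $\cQ_r$ supported on $\{q \in \Q \mid q \ge -k\}$ with norms inherited from $\cQ_r$, and the transition maps are the obvious strict monomorphic inclusions. Since the base change $(-) \wotimes_{\F_1} (\Q_p)_{(0,\infty)}$ commutes with filtered colimits in its first argument (by Definition~\ref{defn:base_change_born}) and, by Proposition~\ref{prop:born_base_change_ban_proj_lim}, with the projective limit defining the Fr\'echet structure of $(\Q_p)_{(0,\infty)}$, one obtains
\[ \cQ_r \wotimes_{\F_1} (\Q_p)_{(0,\infty)} \cong ``\limind_{k \in \N}" \l( t^{-k} \cdot \cQ_r^+ \r) \wotimes_{\F_1} (\Q_p)_{(0,\infty)}, \]
and by Theorem~\ref{thm:main} (applied after accounting for the rescaling of the norm by $r^{-k}$) each stage is canonically isomorphic to $[A^+_\kappa]_{r^{-k}} \cong [t]^{-k} A^+_\kappa$, the transition maps corresponding to multiplication by $[t]$.

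It then remains to identify this colimit of Fr\'echet algebras with $A_\kappa$ itself. By construction $A_\kappa$ is the Fr\'echet completion of
\[ \bW_\kappa = W_p(\kappa^\circ)[\tfrac{1}{p}][\tfrac{1}{[t]}] = \bigcup_{k \in \N} [t]^{-k} W_p(\kappa^\circ)[\tfrac{1}{p}], \]
and since each Fr\'echet seminorm $|\cdot|_\rho$ is multiplicative on Teichm\"uller lifts, the restriction of the Fr\'echet bornology of $A_\kappa$ to the subspace $[t]^{-k} A^+_\kappa$ is precisely $[A^+_\kappa]_{r^{-k}}$, and these subspaces exhaust $A_\kappa$. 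An entirely analogous argument with $\cZ_r[\tfrac{1}{p}]^+$ in place of $\cQ_r^+$ and Corollary~\ref{cor:main_2} in place of Theorem~\ref{thm:main} yields the statement for $\cZ_r[\tfrac{1}{p}]$; the dual formulations in the category of affine analytic spaces are immediate as in Corollary~\ref{cor:main}, using that the monoids involved are fibrant-cofibrant in $\bsComm_{\F_1}$.

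The main technical obstacle I anticipate is precisely the last identification: verifying that the bornological colimit $``\limind_k" [t]^{-k} A^+_\kappa$ of Fr\'echet algebras really recovers $A_\kappa$, i.e., that ``complete and then invert $[t]$'' produces the same bornological ring as ``invert $[t]$ and then complete''. Everything else is a direct mechanical rerun of the calculation in the proof of Theorem~\ref{thm:main}, with the indexing running over all of $\Z$ rather than only $\N$ for the variable $[t]$.
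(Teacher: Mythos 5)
Your reduction does not work, and the problem is precisely the step you defer to the end as a ``technical obstacle'': it is not an obstacle, it is false. First, the presentation $\cQ_r \cong \limind_k t^{-k}\cdot\cQ_r^+$ is not an isomorphism in $\bInd^m(\bNr_{\F_1})$: $\cQ_r$ in Corollary \ref{cor:main_3} is a single normed set (Example \ref{exa:basic}), i.e.\ an essentially constant ind-object, and the identity of $\cQ_r$ cannot factor boundedly through any stage $t^{-k}\cdot\cQ_r^+$, so the constant object and the formal ind-object are not isomorphic. Second, even if you redefine the left-hand side as that colimit, the base change to the Fr\'echet-like ring $(\Q_p)_{(0,\infty)}$ does not commute with it: Definition \ref{defn:base_change_born} is only stated for normed sets and gives no such commutation (the filtered colimit of the stagewise base changes is exactly what you would be computing, which begs the question). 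Concretely, the colimit $\limind_k [t]^{-k}A^+_\kappa$ has as underlying set the union $\bigcup_k [t]^{-k}A^+_\kappa$, i.e.\ series $\sum_n a_n p^n$ whose coefficients $a_n\in\kappa$ have uniformly bounded norm; but $A_\kappa$ (equivalently $\cQ_r\wotimes_{\F_1}(\Q_p)_{(0,\infty)}$, computed as $\limpro_\rho$ of $\ell^1$-coproducts over all of $\Q$) contains, for instance, $x=\sum_{n\ge 0}[t^{-\lceil\sqrt n\rceil}]\,p^n$: its partial sums lie in $\bW_\kappa$ and converge in every norm $|\cdot|_{\rho,\rho^{-1}}$ since $r^{-\sqrt n}p^{-\rho^{-1}n}\to 0$, yet $\sup_n|a_n|=\infty$, so $x$ lies in no $[t]^{-k}A^+_\kappa$. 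In other words ``complete, then invert $[t]$, then take the union'' produces a strictly smaller (LF-type) bornological ring than ``invert $[t]$, then complete'', so the final identification in your plan fails and the argument collapses.

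The paper's proof avoids this entirely: it reruns the computation of Theorem \ref{thm:main} verbatim with the coproducts indexed by all of $\Q$ rather than $\Q^+$ (Proposition \ref{prop:born_base_change_ban_proj_lim} to pull the base change inside the projective limit over $\rho$, then the cofinality results of the appendix to exchange contracting products and coproducts), which directly identifies $\cQ_r\wotimes_{\F_1}(\Q_p)_{(0,\infty)}$ with $A_\kappa$ as bornological modules; the ring structures then agree because both sides are the completion of $\bW_\kappa=W_p(\kappa^\circ)[\frac{1}{p}][\frac{1}{[\omega]}]$ with respect to the same bornology. If you want to keep a two-step structure, you must work with the single normed set $\cQ_r$ throughout and never pass through the colimit of the $[t]^{-k}A^+_\kappa$; density of $\bW_\kappa$ in $A_\kappa$ is true, but it does not upgrade to a bornological isomorphism of the colimit with the Fr\'echet completion.
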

\begin{proof}
	The proof of the fact that $\cQ_r \wotimes_{\F_1} (\Q_p)_{(0, \infty)}$ and $A_\kappa$ are isomorphic as bornological modules goes on the same lines of the proof of Theorem \ref{thm:main}. The fact that the rings are also isomorphic is consequence of the fact that both rings can be described as the completion of $W_p(K^\circ)[\frac{1}{p}][\frac{1}{[\omega]}]$ with respect to the same bornological structure. 
\end{proof}

Theorem \ref{thm:main} gives the following picture for the affine space associated with $A_\kappa$.

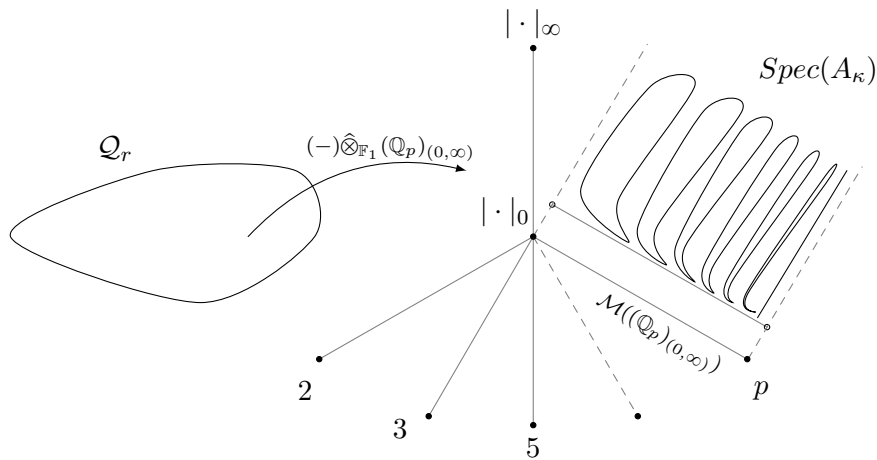
\begin{figure}[H] \label{fig:3}
    \begin{tikzpicture}[scale=2.5,cap=round,>=latex]

\draw [black] plot [smooth cycle] coordinates {(-2.75,0) (-2,0.35) (-1.25,0.35) (-1.15,0) (-1.75,-0.35)};

\draw [black] plot [smooth cycle] coordinates {(0.25,0.25) (0.62,0.8) (0.85,0.8) (0.45, 0.2) (0.5,-0.03)};   

\draw [black] plot [smooth cycle] coordinates {(0.55,0.05) (0.9,0.67) (1.1,0.67) (0.65,0.05) (0.7,-0.15)};   

\draw [black] plot [smooth cycle] coordinates {(0.75,-0.05) (1.1,0.57) (1.25,0.57) (0.80,-0.05) (0.85,-0.24)};   

\draw [black] plot [smooth cycle] coordinates {(0.9,-0.15) (1.28,0.47) (1.38,0.47) (0.95,-0.15) (0.95,-0.3)};   
 
 \draw [black] plot [smooth cycle] coordinates {(1.02,-0.23) (1.415,0.39) (1.485,0.39) (1.05,-0.23) (1.05,-0.35)};
 
 \draw [black] plot [smooth cycle] coordinates {(1.12,-0.3) (1.535, 0.32) (1.565,0.32) (1.13,-0.3) (1.17,-0.4)};
 
 \draw[black] (1.18cm, -0.43cm) -- (1.65cm, 0.35cm);

\draw[color=black, ->] (-1.5, 0) to [bend left=30] (-0.35,0.35);
         
         \draw[gray] (1.23cm, -0.48cm) -- (0.1cm, 0.17cm);
         \draw[black] (1.23cm, -0.48cm) circle (0.4pt);
         \draw[black] (0.1cm, 0.17cm) circle (0.4pt);
         \draw[gray, dashed] (1.13cm, -0.65cm) -- (1.73cm, 0.37cm);
         \draw[gray, dashed] (0cm, 0cm) -- (0.6cm, 1.02cm);       
         
         \draw[gray] (0cm,0cm) -- (90:1cm);
         \filldraw[black] (90:1cm) circle(0.4pt);
         
         \draw[gray] (0cm,0cm) -- (210:1.3cm);
         \filldraw[black] (210:1.3cm) circle(0.4pt);
         
         \draw[gray] (0cm,0cm) -- (240:1.1cm);
         \filldraw[black] (240:1.1cm) circle(0.4pt);
         
         \draw[gray] (0cm,0cm) -- (270:1cm);
         \filldraw[black] (270:1cm) circle(0.4pt);
         
         \draw[gray, dashed] (0cm,0cm) -- (300:1.1cm);
         \filldraw[black] (300:1.1cm) circle(0.4pt);
         
         \draw[gray] (0cm,0cm) -- (330:1.3cm);
         \filldraw[black] (330:1.3cm) circle(0.4pt);

        \filldraw[black] (0cm:0cm) circle(0.4pt);

       \draw (0cm,1cm) node[above=0pt] {$|\cdot|_\infty$}
             (-1.2cm,-0.7cm)  node[below=1pt] {$2$}
             (1.2cm,-0.7cm)  node[below=1pt] {$p$}
             (0cm,-1cm)  node[below=1pt] {$5$}
             (-0.7cm,-0.9cm)  node[below=1pt] {$3$}
             (0.7cm,-0.9cm)  node[below=1pt] {}
             (-0.15cm, 0cm) node[above] {$|\cdot|_0$}
             (-0.75cm, 0.35cm) node[above] {\begingroup\makeatletter\def\f@size{8}\check@mathfonts$(-) \wotimes_{\F_1} (\Q_p)_{(0,\infty)}$\endgroup}
             (1.5cm, 0.75cm) node[above] {$Spec(A_\kappa)$}
             (-2.2cm, 0.35cm) node[above] {$\cQ_r$}
             (0.60cm, -0.62cm) node[above, rotate=-30] {\begingroup\makeatletter\def\f@size{8}\check@mathfonts
$\cM((\Q_p)_{(0, \infty)})$\endgroup};
    \end{tikzpicture}
    
    \caption{The space $\Spec(A_\kappa)$ as an analytic space over $\cM((\Q_p)_{(0, \infty)})$ is fibered over an open subset of $\cM(\Z)$}
\end{figure}

Hence, using Theorem \ref{thm:main} and Corollary \ref{cor:main_3}, we can recover (under suitable hypothesis) the affine covering of the Fargues-Fontaine curve as the base change of an analytic space defined over $\F_1$. Now we would like to recover the action of the Frobenius on it from an action on $\Q$ (or better $\Q^+$ as a first step), \ie an action which is defined over $\F_1$. For doing this we need some lemmata. The problem we have to address is that, even if the multiplication by $p$ is a bounded bijection of $\cQ_r^+$ over itself, because 
\[ r^{p q} \le r^q, \ \ \forall q \in \Q^+ \]
if $r < 1$, the inverse of this morphism, \ie multiplication by $p^{-1}$, is not a bounded map for the same reason, \ie 
\[ r^{p^{-1} q} \ge r^q, \ \ \forall q \in \Q^+. \]
In order to fix this issue, \ie for having multiplication by $p$ is automorphism, we consider on $\Q^+$ the Fr\'echet-like structure of open disk of radius $1$. We need to modify $\cQ_r^+$ for ensuring that the action of the Frobenius is bounded while at the same time not changing its base change as described by Theorem \ref{thm:main}. We now explain how to do it. Later on we will explain for which structure on $\Q$ this action will be bounded.

\begin{lemma} \label{lemma:action_1}
The map of commutative normed monoids $\N_r \to \N_{r^{p^{-1}}}$ given by $n \mapsto p n$ is an isometry onto its image.
\end{lemma}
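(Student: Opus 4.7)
The plan is a direct computation of the two norms involved, so this lemma should be essentially a one-line verification. Recall from Example~\ref{exa:basic} that the norm on $\N_r = S_{\bNr_{\F_1}^{\le 1}}(\ast_r)$ is given by $|n|_r = r^n$ for $n > 0$ (and $|0|_r = 0$), so a fortiori the norm on $\N_{r^{p^{-1}}}$ is $|n|_{r^{p^{-1}}} = (r^{1/p})^n = r^{n/p}$.

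First I would check that the map $n \mapsto pn$ is well-defined as a morphism of commutative monoids (which is trivial since multiplication by $p$ is an endomorphism of the additive monoid $\N$) and that it is injective (which is clear since $\N$ is a cancellative monoid and $p \ne 0$). Then the core computation is simply
\[ |pn|_{r^{p^{-1}}} = (r^{1/p})^{pn} = r^{n} = |n|_r, \]
for every $n > 0$, and both sides are $0$ when $n = 0$. This shows the norm is preserved exactly, which gives isometry onto the image.

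There is no real obstacle here: the statement is essentially tautological once one unwinds the definitions of the rescaled geometric norms. The only point worth noting, and which presumably motivates the lemma, is the asymmetry between $p$ and $p^{-1}$: multiplication by $p$ is bounded (and here even isometric) provided one is willing to simultaneously shrink the radius from $r$ to $r^{1/p}$, while the inverse operation $n \mapsto p^{-1} n$ would require enlarging the radius, which is exactly the difficulty that the surrounding discussion is preparing to address.
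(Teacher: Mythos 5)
Your proposal is correct and matches the paper's own proof, which is exactly the one-line computation $|pn|_{r^{p^{-1}}} = r^{pnp^{-1}} = r^{n} = |n|_r$; your added remarks on injectivity and the $p$ versus $p^{-1}$ asymmetry are consistent with the surrounding discussion but not needed beyond that.
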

\begin{proof}
By definition
\[ |p n|_{r^{p^{-1}}} = r^{p n p^{-1}} = r^n = |n|_r. \]
\end{proof}

\begin{lemma}  \label{lemma:action_2}
The map of commutative normed monoids $\cQ_r^+ \to \cQ_{r^{p^{-1}}}^+$ given by $n \mapsto p n$ is an isometry and onto.
\end{lemma}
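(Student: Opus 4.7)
The plan is to verify both assertions by direct computation, extending Lemma \ref{lemma:action_1} from $\N$ to $\Q^+$ and using the extra divisibility that $\Q^+$ provides.

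First I would check that the map is well-defined and a monoid homomorphism: multiplication by $p$ sends $\Q^+$ into $\Q^+$ and preserves the additive monoid structure (and it sends $0$ to $0$, so it respects the marked basepoint). This is immediate.

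Next I would verify the isometry property by a one-line computation, analogous to the one in the previous lemma: for any $q \in \Q^+$,
\[ |p q|_{r^{p^{-1}}} = (r^{p^{-1}})^{p q} = r^{p^{-1} \cdot p q} = r^{q} = |q|_{r}, \]
so the norm is preserved exactly. Note that on $\N$ the previous lemma could only conclude the map was an isometry onto its image, but here the stronger conclusion of surjectivity is available because $\Q^+$ is divisible.

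Finally I would verify surjectivity: given any $q' \in \Q^+$, the rational $q := q'/p$ belongs to $\Q^+$ and satisfies $p q = q'$. Combined with injectivity (which follows from the isometry property, or directly from the fact that multiplication by $p$ is injective on $\Q$), this gives a bijection. There is no serious obstacle; the only point worth emphasizing is that this lemma really is a strengthening of Lemma \ref{lemma:action_1}, and the strengthening is possible precisely because we have passed from $\N$ to the $p$-divisible monoid $\Q^+$, which is exactly the feature that will later make multiplication by $p$ (and hence the Frobenius) invertible in the bornological setting.
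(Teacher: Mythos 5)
Your proof is correct and follows essentially the same route as the paper: the same one-line norm computation $(r^{p^{-1}})^{pq}=r^{q}$ that underlies Lemma \ref{lemma:action_1}, combined with the $p$-divisibility of $\Q^{+}$ to get surjectivity. The extra checks you include (well-definedness, homomorphism property, injectivity) are harmless elaborations of what the paper leaves implicit.
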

\begin{proof}
Lemma \ref{lemma:action_1} implies that the action is an isometry and it is onto because $\Q^+$ is divisible.
\end{proof}

\begin{prop} \label{prop:frechet_action_Q}
Consider the open perfect disk of radius $1$
\[ pD^\circ_1 = \Spec(\limpro_{r < 1} \cQ_r^+). \]
Then, multiplication by $p$ is an automorphism of $pD^\circ_1$.
\end{prop}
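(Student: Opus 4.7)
The plan is to promote the pointwise isometries of Lemma \ref{lemma:action_2} into an isomorphism of projective systems, and then dualize via $\Spec$. The key point is that multiplication by $p$, viewed as a morphism of commutative normed monoids $\mu_p\colon \cQ_r^+ \to \cQ_{r^{1/p}}^+$, is a bijective isometry for every $r\in(0,1)$, and the reparametrization $r\mapsto r^{1/p}$ is an order-preserving bijection of $(0,1)$ onto itself.

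First, I would check that the family $\{\mu_p\colon \cQ_r^+ \to \cQ_{r^{1/p}}^+\}_{r<1}$ is compatible with the transition maps of the projective system $\{\cQ_r^+\}_{r<1}$. This is immediate: for $r\le r'<1$ both transition maps $\cQ_{r'}^+\to\cQ_r^+$ and $\cQ_{(r')^{1/p}}^+\to\cQ_{r^{1/p}}^+$ are the identity on the underlying set $\Q^+$, and $\mu_p$ is likewise a set-theoretic map ($q\mapsto pq$), so the relevant square commutes tautologically. Hence $\mu_p$ defines a morphism of projective systems.

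Next, since $r\mapsto r^{1/p}$ is a homeomorphism of $(0,1)$ to itself, the family $\{\cQ_{r^{1/p}}^+\}_{r<1}$ is the same projective system as $\{\cQ_s^+\}_{s<1}$ (up to this reindexing), and Lemma \ref{lemma:action_2} provides termwise isometric bijections. Passing to the projective limit in $\bComm(\bInd^m(\bNr_{\F_1}))$ therefore yields an isomorphism
\[ \mu_p\colon \limpro_{r<1}\cQ_r^+ \xrightarrow{\ \cong\ } \limpro_{r<1}\cQ_{r^{1/p}}^+ = \limpro_{s<1}\cQ_s^+, \]
which, by construction, is induced by multiplication by $p$ on $\Q^+$.

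The same argument, applied with $\mu_{p^{-1}}\colon \cQ_r^+\to \cQ_{r^{p}}^+$ (also an isometric bijection by the divisibility of $\Q^+$ and the identity $(r^p)^{p^{-1}q}=r^q$), produces a two-sided inverse, since $r\mapsto r^p$ is again an order-preserving self-bijection of $(0,1)$. Finally, applying the contravariant functor $\Spec$ turns this isomorphism of Fr\'echet-like bornological monoids into an automorphism of $PD_1^\circ$. The only delicate point is bookkeeping for the reparametrization of the projective system, which amounts to noting that $(0,1)$ is stable under $r\mapsto r^{1/p}$ and $r\mapsto r^p$; once this is clear, there is no further obstacle.
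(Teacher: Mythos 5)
Your proposal is correct and follows essentially the same route as the paper: multiplication by $p$ is realized as a morphism of the projective system defining $\limpro_{r<1}\cQ_r^+$, and the inverse is obtained from the termwise isometric bijections of Lemma \ref{lemma:action_2} together with the reindexing $r\mapsto r^{p^{-1}}$ (resp. $r\mapsto r^p$) of $(0,1)$, which is exactly the paper's argument with the bookkeeping made slightly more explicit.
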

\begin{proof}
On the one hand, multiplication by $p$ is a bounded map because it can be written as a (bounded) morphism of the projective system defining $\underset{r < 1}\limpro \cQ_r^+$. On the other hand, multiplication by $p^{-1}$ is an isomorphism of $\cQ^+_{r^{p^{-1}}}$ with $\cQ^+_r$, by Lemma \ref{lemma:action_2}. Hence, we have a morphism of system
\[ \limpro_{r < 1} \cQ_r^+ \to \limpro_{r^{p^{-1}} < 1} \cQ_{r^{p-1}}^+ \cong \limpro_{r < 1} \cQ_r^+ \]
which is by its definition the inverse of multiplication by $p$.
\end{proof}

\begin{lemma}  \label{lemma:action_3}
	Consider on $\Q$ the following norm
	\[ |q|_{r, 1 - r} = \begin{cases}
	r^q \ \text{ if } q \ge 1 \\
	(1 - r)^q \ \text{ if } q \le 1 
	\end{cases}, \ q \in \Q, \ \ r < 1. \]
	\ie the normed group $\cQ_{r, 1 - r}$.
	Then for any $0 < r' < r < 1$ the identity map $\cQ_{r, 1 - r} \to \cQ_{r', 1 - r'}$ is bounded and the multiplication by $p^{-1}$ is an isometry $\cQ_{r, 1 - r} \to \cQ_{r^p, (1 - r)^{-p}}$.
\end{lemma}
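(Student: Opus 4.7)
The plan is to verify both claims by direct computation on each piece of the piecewise definition of the norm $|\cdot|_{r, 1-r}$. On each piece the norm has the closed form $\lambda^q$ for a fixed $\lambda \in (0,1)$, so everything reduces to elementary inequalities among powers of real numbers less than $1$.

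For boundedness of the identity $\cQ_{r, 1-r} \to \cQ_{r', 1-r'}$ for $0 < r' < r < 1$, I will compute on each piece the ratio of target norm to source norm. On the piece where the norm is $r^q$ this ratio becomes $(r'/r)^q$, and on the piece where it is $(1-r)^q$ it becomes $\bigl((1-r')/(1-r)\bigr)^q$. The first base lies in $(0,1)$ while the second lies in $(1,\infty)$, so choosing the sign of $q$ according to which piece we are on shows that both ratios are bounded by $1$: the identity map is in fact contracting on each piece and hence bounded.

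For the isometry claim, I will use that $q \mapsto q/p$ preserves the sign of $q$, so it sends each piece of the source to the corresponding piece of the target. If the source norm on a given piece is $\lambda^q$, isometry requires the target norm at $q/p$ to equal $\lambda^q$; writing that target norm in the form $\mu^{q/p}$ forces $\mu = \lambda^p$, since $(\lambda^p)^{q/p} = \lambda^q$. Applying this to the two pieces produces precisely the rescaled radii of the target annulus appearing in the statement. The main obstacle --- if one can call it that --- is only the careful matching of the piecewise sign conventions, together with extracting the correct exponent $p$ (rather than its reciprocal) on each piece; no deeper input is needed.
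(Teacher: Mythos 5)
Your piecewise computation is correct and is essentially the paper's own argument, which likewise splits $\Q$ into its positive and negative halves and applies the elementary power estimates there (the paper simply cites Lemmas \ref{lemma:action_1} and \ref{lemma:action_2} for each half instead of redoing the computation). One caveat: your isometry calculation forces the target radius $(1-r)^{p}$ on the negative piece, i.e.\ the target is $\cQ_{r^p,(1-r)^{p}}$, so the exponent $-p$ printed in the statement appears to be a typo and your remark that the radii match the statement ``precisely'' should be read modulo that correction.
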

\begin{proof}
	The Lemma is directly deduced from the analogous results for $\Q^+$ applied twice, once for the positive and once for the negative rational numbers. Notice that as $r \to 1$, $1 - r \to 0$ and that for $r' < r$ one has that $1 - r' > 1 - r$.
\end{proof}

The direct corollary to this lemma is that we get a bounded action by powers of $p$ also on $\Q$, equipped with the correct bornological structure.

\begin{cor} \label{cor:frechet_action_Q}
	Consider the bornological group
	\[ \limpro_{r < 1} \cQ_{r, 1 - r}. \]
	Then, multiplication by $p$ is an automorphism on it.
\end{cor}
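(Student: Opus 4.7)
The plan is to follow the pattern of Proposition \ref{prop:frechet_action_Q}, upgrading from the one-sided monoid $\cQ_r^+$ to the two-sided group $\cQ_{r, 1-r}$ by tracking both the positive and negative exponents simultaneously. I will exhibit multiplication by $p$ and by $p^{-1}$ as morphisms of the projective system $\{\cQ_{r, 1-r}\}_{r<1}$ (after suitable reindexing) and note that they are mutually inverse, which forces multiplication by $p$ to be an automorphism of $\limpro_{r<1} \cQ_{r, 1-r}$.

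For the endomorphism realising multiplication by $p^{-1}$, Lemma \ref{lemma:action_3} supplies the isometric isomorphism $\cQ_{r, 1-r} \to \cQ_{r^p,(1-r)^p}$. Composing with the bonding identity map $\cQ_{r^p,(1-r)^p} \to \cQ_{s, 1-s}$, which is contracting whenever $s \le r^p$ and $1-s \ge (1-r)^p$, one obtains a bounded map $\cQ_{r, 1-r} \to \cQ_{s, 1-s}$ for any chosen target index $s<1$, provided $r$ is taken with $r \ge \max\{s^{1/p},\, 1-(1-s)^{1/p}\}$. Such an $r<1$ always exists, since both quantities on the right are strictly less than $1$. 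Letting $s$ vary, these maps assemble into a morphism of projective systems and hence descend to a bounded endomorphism $T$ of $\limpro_{r<1} \cQ_{r,1-r}$.

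Inverting the isometry in Lemma \ref{lemma:action_3} gives an isometric isomorphism $\cQ_{r^p,(1-r)^p} \to \cQ_{r, 1-r}$ induced by multiplication by $p$, and the symmetric argument produces, for each $s<1$ and any $r \ge \max\{s^p,\, 1-(1-s)^p\}$, a bounded map $\cQ_{r, 1-r} \to \cQ_{s,1-s}$ realising multiplication by $p$ as a morphism of systems; call the resulting endomorphism of the limit $S$. Since $S$ and $T$ act on the underlying abelian group $\Q$ as honest multiplication by $p$ and $p^{-1}$ respectively, they are mutually inverse in $\bInd^m(\bNr_{\F_1})$, so both are automorphisms, which is exactly the statement of the corollary.

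The main subtlety compared with Proposition \ref{prop:frechet_action_Q} is that the reindexing must satisfy two inequalities at once — one for the positive and one for the negative exponent — but since both functions $s \mapsto s^{1/p}$ and $s \mapsto 1-(1-s)^{1/p}$ (and their $p$-th powers) map $(0,1)$ to $(0,1)$, suitable indices always exist, so no new difficulty arises beyond bookkeeping.
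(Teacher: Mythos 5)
Your proof is correct and is essentially the paper's own argument: the paper proves the corollary as a ``direct consequence of Lemma \ref{lemma:action_3} using the same reasoning of Proposition \ref{prop:frechet_action_Q}'', which is precisely the reindexing of the projective system by the isometries of Lemma \ref{lemma:action_3} that you carry out. The only difference is that you make explicit the bookkeeping (the two inequalities $r \ge \max\{s^{1/p}, 1-(1-s)^{1/p}\}$, resp.\ $r \ge \max\{s^{p}, 1-(1-s)^{p}\}$) that the paper leaves implicit.
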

\begin{proof}
	Direct consequence of Lemma \ref{lemma:action_3} using the same reasoning of Proposition \ref{prop:frechet_action_Q}.
\end{proof}

Now that we have identified an example of bornological monoid on which $\Z$ acts by the powers of $p$ we can consider the quotient by this action.

\begin{defn} \label{defn:FF_curve_F_1}
Let $M$ be a bornological monoid on which the action of $\Z$ by the powers of $p$ is bounded. We define the \emph{Fargues-Fontaine curve over $\F_1$ defined by $M$ relative to $p$} as 
\[ \FF_{\F_1}^p(M) =  \left [ \frac{M}{\Z} \right ] = \hocolim \left ( \ldots \Z \times^\R \Spec(M) \stackrel{\to}{\rightrightarrows} \Z \times^\R \Spec(M) \rightrightarrows \Spec(M) \right ) \]
where $\Z$ acts on $M$ by $x \mapsto p^n x$.
\end{defn}

Notice that the homotopy colimit that defines $\FF_{\F_1}^p(M)$ is computed in the category of analytic $\infty$-stacks over $\F_1$.

\begin{thm}
	$\FF_{\F_1}^p(M)$ is an analytic $\infty$-stack over $\F_1$, \ie an $\infty$-stack over $\infty-\bAff_{\F_1}$ with respect to the homotopy Zariski topology.
\end{thm}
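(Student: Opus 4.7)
The plan is to argue essentially formally, invoking the cocompleteness of the $\infty$-category of analytic $\infty$-stacks over $\F_1$. The object $\FF_{\F_1}^p(M)$ is defined as a homotopy colimit in $\infty-\bStack_{\F_1}$, so once one knows that this target $\infty$-category has all small colimits and that every simplicial level of the bar construction lies in it, there is nothing further to prove.

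\textbf{Step 1: cocompleteness of $\infty-\bStack_{\F_1}$.} By Definition \ref{defn:der_analytic_stack} it is the full $\infty$-subcategory of $\bFunc(\infty-\bAff_{\F_1}^{\op}, \bsSets)$ carved out by the descent condition for the homotopy Zariski topology of Proposition \ref{prop:homotopy_zariski}. This inclusion is reflective with left adjoint the sheafification functor, so all small colimits exist in $\infty-\bStack_{\F_1}$ and are obtained by sheafifying the colimit computed in the presheaf $\infty$-category.

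\textbf{Step 2: the simplicial diagram lives in $\infty-\bStack_{\F_1}$.} The affine analytic space $\Spec(M)$ is an $\infty$-stack via Yoneda, as recorded right after Definition \ref{defn:der_analytic_stack}. Each higher simplicial level is of the form $\Z^{\times k}\times^{\R}\Spec(M)\simeq\coprod_{\Z^{k}}\Spec(M)$, which is a coproduct of representables and hence an $\infty$-stack by the cocompleteness just established. The face and degeneracy maps of the bar construction are either projections, insertions of the unit, or translations by a power of $p$; the standing hypothesis that the $\Z$-action on $M$ by powers of $p$ is bounded, combined with invertibility coming from the group structure, guarantees that each such translation is an automorphism of $M$ in $\bComm_{\F_1}$, hence induces a morphism in $\bAff_{\F_1}$ and therefore in $\infty-\bStack_{\F_1}$.

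\textbf{Step 3: conclusion.} Since the whole simplicial diagram lies inside the cocomplete $\infty$-category $\infty-\bStack_{\F_1}$, the homotopy colimit $\FF_{\F_1}^p(M)$ exists there and is, tautologically, an object of $\infty-\bStack_{\F_1}$. I do not anticipate a genuine obstacle: the only thing to check carefully is that the bar construction is honestly a simplicial object internal to $\bAff_{\F_1}$, which is exactly the content of the boundedness hypothesis on the $\Z$-action. Compared to classical quotient stacks in algebraic geometry, the only new ingredient is that the Zariski site is replaced by the homotopy Zariski site of Definition \ref{defn:homotopy_zar_topology}, but the descent-plus-sheafification formalism applies in exactly the same way.
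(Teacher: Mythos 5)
Your argument is correct and is essentially the paper's own proof, just spelled out in more detail: the paper likewise observes that each level of the bar construction is (a coproduct of) representables, that the homotopy Zariski topology is subcanonical so these are stacks, and then invokes the general fact that the homotopy colimit taken in the (cocomplete, reflectively localized) category of $\infty$-stacks is again an $\infty$-stack. Your Steps 1--3 merely make the sheafification/cocompleteness and boundedness-of-the-action points explicit, so there is no substantive difference.
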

\begin{proof}
	Notice that at each element of the diagram
	\[ \ldots \Z \times^\R \Spec(M) \stackrel{\to}{\rightrightarrows} \Z \times^\R \Spec(M) \rightrightarrows \Spec(M) \]
	is a representable object of $\bAff_{\F_1}$ (underived). Since the homotopy Zariski topology is sub-canonical, it is a general result that the homotopy colimit is an $\infty$-stack.
\end{proof}

The next step is to compute the base change of $\FF_{\F_1}^p(M)$ with $\Q_p$ for recovering the actual Fargues-Fontaine curve of $p$-adic Hodge Theory. We need some preparatory work for doing that.

The key result is the following lemma.

\begin{lemma} \label{lemma:key}
	Let $K$ be either $\R$ or $\Q_p$, $|\cdot|$ denote the canonical absolute value on $K$ and any $0 < s < \infty$, then
	\begin{equation} \label{eq:equiv_directions}
	(\limpro_{r < 1} \cQ_{r, 1 - r}) \wotimes_{\F_1} (K, |\cdot|^s) \cong \cQ_R \wotimes_{\F_1} (K)_{(0, \infty)},
	\end{equation}
	as bornological ring, for any fixed $0 < R < 1$.
\end{lemma}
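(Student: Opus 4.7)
The plan is to reduce both sides to projective limits of explicit Banach $K$-modules and then match the resulting Fr\'echet-type bornological structures on the group algebra $K[\Q]$.

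First, on the right-hand side I apply Proposition \ref{prop:born_base_change_ban_proj_lim}: since $\cQ_R$ is a normed set and $(K)_{(0,\infty)} \cong \limpro_{0 < \rho_1 < \rho_2 < \infty}(K, \|\cdot\|_{\rho_1, \rho_2})$ is a projective limit of Banach rings, one obtains
\[ \cQ_R \wotimes_{\F_1}(K)_{(0,\infty)} \cong \limpro_{\rho_1 < \rho_2} \cQ_R \wotimes_{\F_1}(K, \|\cdot\|_{\rho_1, \rho_2}). \]
For the left-hand side, an analogous argument — with the roles of the normed set and the Banach ring swapped, again exploiting the exactness of $\ell^1$-coproducts for equalizers together with the explicit description of products via Lemma \ref{lemma:calc_proj_limit_R} — gives
\[ (\limpro_{r < 1} \cQ_{r, 1-r}) \wotimes_{\F_1}(K, |\cdot|^s) \cong \limpro_{r < 1}\bigl(\cQ_{r, 1-r} \wotimes_{\F_1}(K, |\cdot|^s)\bigr). \]
At each level, both are $\ell^1$-completions of $\bigoplus_{q \in \Q} K$ indexed by $\Q$, equipped respectively with the Banach norms
\[ \|(a_q)\|_{r, s} = \sum_q |a_q|_K^s\,|q|_{r, 1-r}, \qquad \|(a_q)\|_{\rho_1, \rho_2} = \sum_q \max\bigl(|a_q|_K^{\rho_1}, |a_q|_K^{\rho_2}\bigr)\, R^q, \]
and the system maps on both sides are the identity on the underlying $\Q$-indexed data.

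An isomorphism of the resulting Fr\'echet bornological $K$-modules must therefore be the identity on $\Q$, and the remaining task is to show that the two bornologies coincide, \ie that a subset $B \subseteq K[\Q]$ satisfies $\sup_{x \in B}\|x\|_{r, s} < \infty$ for every $r < 1$ precisely when $\sup_{x \in B}\|x\|_{\rho_1, \rho_2} < \infty$ for every $\rho_1 < \rho_2$. Both conditions unpack into two-sided decay/growth constraints on the coefficient maps $x \mapsto a_q(x)$: negative $q$ are forced to decay super-exponentially in $|q|$ (by letting $1-r \to 0$ on the LHS, or $\rho_1 \to 0$ on the RHS), while positive $q$ are controlled by any $R' < 1$ (by varying $r$ on the LHS, or by playing the fixed weight $R^q$ against large $\rho_2$ on the RHS). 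Matching these conditions at the level of bounded sets is the technical content of the proof.

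The main obstacle is precisely this last matching: the two Fr\'echet structures parameterize the same bornology in rather different ways — the LHS fixes the $K$-exponent at $s$ and varies the $\Q$-weight via $r$, whereas the RHS fixes the $\Q$-weight $R^q$ and varies the $K$-exponents $\rho_1 < \rho_2$ — so naive term-by-term seminorm comparisons fail when, for instance, $\rho_1 < s$, and one must instead argue at the level of bounded sets, using the summability of elements to absorb the mismatch between $|a_q|^s$ and $\max(|a_q|^{\rho_1}, |a_q|^{\rho_2})$ against the two-parameter weights. Once the bornological equivalence is established, the identity map manifestly respects the $\Q$-monoid convolution on both sides, so the isomorphism of bornological $K$-modules is automatically one of bornological rings, proving the lemma.
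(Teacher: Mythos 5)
Your reduction of the right-hand side via Proposition \ref{prop:born_base_change_ban_proj_lim} is fine, but the corresponding step on the left-hand side is not justified: there the projective limit sits in the $\F_1$-variable, and $(\cdot)\wotimes_{\F_1}(K,|\cdot|^s)$ is a left adjoint (Proposition \ref{prop:base_change_left_adjoint}) which does not commute with limits --- Remark \ref{rmk:not_commute_product} says explicitly that it does not even preserve finite products --- so there is no ``analogous argument with the roles swapped'' available off the shelf, and Lemma \ref{lemma:calc_proj_limit_R} by itself does not give it. The paper obtains this interchange only because the open perfect annulus $\limpro_{r<1}\cQ_{r,1-r}$ is nuclear (Proposition \ref{prop:nuclear_perfect_disks}, resting on the cofinality results of the appendix), so that $\wotimes_{\F_1}$ can be replaced by $\wotimes_{\F_1}^{\sup}$, which does commute with limits of normed sets (Proposition \ref{prop:sup_base_change}); the same cofinality also replaces your level-wise $\ell^1$-norms by the sup-norm models actually used in the comparison. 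You need this input (or a direct cofinality argument of the same kind) before your displayed commutation, and hence your level-wise description, is legitimate.

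More seriously, the heart of the lemma is exactly the step you leave open: after reducing to the claim that the two seminorm families $\|(a_q)\|_{r,s}$ and $\|(a_q)\|_{\rho_1,\rho_2}$ define the same bornology, you declare this matching to be ``the technical content of the proof'' and the ``main obstacle'' without performing it; ``absorbing the mismatch by summability'' is a description of the difficulty, not an argument. The paper's proof carries it out concretely: for each pair of radii it checks, by a computation with $\log_R$, that the identity map is a bounded bijection from the level space with weights $r_1^q, r_2^q$ and exponent $s$ onto the level space with weight $R^q$ and the reparametrized exponents $s_1=\frac{s}{\log_R r_1}$, $s_2=\frac{s}{\log_R r_2}$; it then invokes the Open Mapping Theorem for F-spaces to upgrade this to a homeomorphism, passes to the projective limits (which correspond under $r\to1$ versus $s_i$ sweeping $(0,\infty)$), and concludes using the fact that Fr\'echet spaces are bornologically isomorphic if and only if they are linearly homeomorphic. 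Without some explicit comparison of this kind your proposal does not prove the lemma; the final observation that the identity respects the convolution product is fine, but it only matters once the module isomorphism has actually been established.
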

\begin{proof}
	The right hand side of (\ref{eq:equiv_directions}) has been described by Corollary \ref{cor:main_3} as a Fr\'echet completion of $\bW_\kappa = W_p(\kappa^\circ)[\frac{1}{p}][\frac{1}{[\omega]}]$, were $[\omega]$ is a Teichm\"uller lift of a non-invertible element of $\kappa^\circ$. It is clear, using the same computations of Theorem \ref{thm:main}, that also the left hand side of (\ref{eq:equiv_directions}) can be described as a Fr\'echet completion of $\bW_\kappa$, hence one only has to check that the Fr\'echet structures agree.
	
	Since by Proposition \ref{prop:nuclear_perfect_disks} one has that
	\[ (\limpro_{r < 1} \cQ_{r, 1 - r}) \wotimes_{\F_1} (K, |\cdot|^s) \cong (\limpro_{r < 1}\cQ_{r, 1 - r}) \wotimes_{\F_1}^{\sup} (K, |\cdot|^s) \]
	then the tensor product commutes with the projective limit (cf. Proposition \ref{prop:sup_base_change}). Then we can compute 
	\[ \limpro_{r < 1} (\cQ_{r, 1 - r} \wotimes_{\F_1}^{\sup} (K, |\cdot|^s)). \]
	and compare it with the projective limit
	\[ \limpro_{s \to \infty} \cQ_R \wotimes_{\F_1} (K, |\cdot|_{s, s^{-1}}) \cong \limpro_{s \to \infty} \cQ_R \wotimes_{\F_1}^{\sup} (K, |\cdot|_{s, s^{-1}}). \]
	There is a linear bijection
	\begin{equation} \label{eqn:psi}
	\psi_{r_1, r_2}: \cQ_{r_1, r_2} \wotimes_{\F_1}^{\sup} (K, |\cdot|^s) \to \cQ_R \wotimes_{\F_1}^{\sup} (K)_{(s_1, s_2)}
	\end{equation} 
	for a particular choice of $s_1$ and $s_2$, given as follows. The left hand side of (\ref{eqn:psi}) can be described as the space of $\Q$-sequences
	\[ \{ (a_q)_{q \in \Q} | a_q \in K, \max \{ \sup_{q \in \Q} |a_q|^s r_1^q, \sup_{q \in \Q} |a_q|^s r_2^q \} < \infty \} \]
	whereas the left hand side as the space of $\Q$-sequences
	\[ \{ (a_q)_{q \in \Q} | a_q \in K, \max \{ \sup_{q \in \Q} |a_q|^{s_1} R^q, \sup_{q \in \Q} |a_q|^{s_2} R^q \} < \infty \}. \]
	We can write the equivalences
	\[ \sup_{q \in \Q} |a_q|^s r_1^q < \infty \Leftrightarrow \inf_{q \in \Q} R^{ s \log_R |a_q| + q \log_R r_1} > 0 \Leftrightarrow \sup_{q \in \Q} s \log_R |a_q| + q \log_R r_1 < \infty \]
	because $R < 1$, so
	\[ \sup_{q \in \Q} s \log_R |a_q| + q \log_R r_1 < \infty \Leftrightarrow \sup_{q \in \Q} \frac{s}{\log_R r_1} \log_R |a_q| + q  < \infty \Leftrightarrow \sup_{q \in \Q} |a_q|^{\frac{s}{\log_R r_1}} R^q < \infty. \]
	In the same way
	\[ \sup_{q \in \Q} |a_q|^s r_2^q < \infty \Leftrightarrow \sup_{q \in \Q} |a_q|^{\frac{s}{\log_R r_2}} R^q < \infty. \]
	So, the bijection $\psi_{r_1, r_2}$ is realized by the identity map when $s_1 = \frac{s}{\log_R r_1}$ and $s_2 = \frac{s}{\log_R r_2}$. Notice now that since $r_1 < R < r_2$
	\[ \max \{ \sup_{q \in \Q} |a_q|^s r_1^q, \sup_{q \in \Q} |a_q|^s r_2^q \} = \max \{ \sup_{q \in \Q^+} |a_q|^s r_2^q, \sup_{q \in \Q^+} |a_{-q}|^s r_1^{-q} \} \]
	whence 
	\[ \sup_{q \in \Q^+} |a_q|^{s_1} R^q \le \sup_{q \in \Q^+} |a_q|^s r_2^q   \]
	if $|a_q| \le 1$ for all $q \in \Q^+$ and
	\[ \sup_{q \in \Q^+} |a_q|^{s_2} R^q \le \sup_{q \in \Q^+} |a_q|^s r_2^q  \]
	if $\exists q$ such that $|a_q| \ge 1$. And also 
	\[ \sup_{q \in \Q^+} |a_{-q}|^{s_2} R^{-q} \le \sup_{q \in \Q^+} |a_{-q}|^s r_1^{-q}  \]
	and if $\exists q$ such that $|a_{-q}| \ge 1$ and
	\[ \sup_{q \in \Q^+} |a_{-q}|^{s_1} R^{-q} \le \sup_{q \in \Q^+} |a_{-q}|^s r_1^{-q}   \]
	if $|a_{-q}| \le 1$ for all $q \in \Q^+$. Therefore, we get that the morphism $\psi_{r_1, r_2}$ is bounded.
	The Open Mapping Theorem for F-spaces (or even just for topological groups) implies that $\phi_{r_1, r_2}$ is a homeomorphism. 
	These level-wise homeomorphisms induce a morphism to the projective limit which is a homeomorphism of the Fr\'echet spaces 
	\[
	(\limpro_{r < 1} \cQ_{r, 1 - r}) \wotimes_{\F_1} (K, |\cdot|^s) \cong \cQ_R \wotimes_{\F_1} (K)_{(0, \infty)}. \]
	Hence these spaces are isomorphic as bornological spaces because $\Q_p$-Fr\'echet spaces are bornologically isomorphic if and only if they are linearly homeomorphic.
\end{proof}

The statement of Lemma \ref{lemma:key} needs to be explained more, mainly for its geometric interpretation.

\begin{rmk} \label{rmk:geometric_key_lemma}
	Notice that Lemma \ref{lemma:key} identifies $(\underset{r < 1}\limpro \cQ_{r, 1 - r}) \wotimes_{\F_1} (K, |\cdot|^s)$ and $\cQ_R \wotimes_{\F_1} (K)_{(0, \infty)}$ only as bornological rings but these two come naturally equipped with two different algebra structures. Indeed, as the first base change is over $(K, |\cdot|^s)$ then is it canonically endowed with a structure of $(K, |\cdot|^s)$-algebra whereas the second is equipped with a structure of $(K)_{(0, \infty)}$-algebra. Hence, although these Fr\'echet algebras are shown to be isomorphic (which just mean homeomorphic) their spectra are not identical because the points of $(\underset{r < 1}\limpro \cQ_{r, 1 - r}) \wotimes_{\F_1} (K, |\cdot|^s)$ all lie above the point $\cM(K, |\cdot|^s)$ of $\cM(\Z)$ whereas the points of $\cQ_R \wotimes_{\F_1} (K)_{(0, \infty)}$ lie above the open segment $\cM((K)_{(0, \infty)})$ of $\cM(\Z)$.
	
	Put in another way, the algebras $(\underset{r < 1}\limpro \cQ_{r, 1 - r}) \wotimes_{\F_1} (K, |\cdot|^s)$ and $\cQ_R \wotimes_{\F_1} (K)_{(0, \infty)}$ are homeomorphic but not isometric and the canonical morphisms $(K, |\cdot|^s) \to (\underset{r < 1}\limpro \cQ_{r, 1 - r}) \wotimes_{\F_1} (K, |\cdot|^s)$ and $(K)_{(0, \infty)} \to \cQ_R \wotimes_{\F_1} (K)_{(0, \infty)}$ are isometric embeddings, whereas the morphisms $(K)_{(0, \infty)} \to (\underset{r < 1}\limpro \cQ_{r, 1 - r}) \wotimes_{\F_1} (K, |\cdot|^s)$ and $(K, |\cdot|^s) \to (\underset{r < 1}\limpro \cQ_{r, 1 - r}) \wotimes_{\F_1} (K, |\cdot|^s)$ are just continuous embeddings. Thus, if one asks for a morphism $(\underset{r < 1}\limpro \cQ_{r, 1 - r}) \wotimes_{\F_1} (K, |\cdot|^s) \to L$, where $L$ is a valued field, to be a contraction with respect to the metrics then one gets that $L$ must be a valued field extension of $(K, |\cdot|^s)$ whereas the same condition for $\cQ_R \wotimes_{\F_1} (K)_{(0, \infty)} \to L$ implies that $L$ must be a valued field extension of $(K, |\cdot|^{s'})$ with $0 < s' < \infty$.
\end{rmk}

With the previous lemmata, the definition of the Fargues-Fontaine curve over $\F_1$ and Theorem \ref{thm:main} we can recover the Fargues-Fontaine curves associated to the perfectoid fields $\kappa$ and $\kappa'$ as base changes of the Fargues-Fontaine curves over $\F_1$.

\begin{thm} \label{thm:FF}
For any $0 < s < \infty$ the following isomorphisms of analytic spaces hold
\[ \FF_\kappa \cong \FF_{\F_1}(\limpro_{r < 1} \cQ_{r, 1 - r}) \wotimes_{\F_1}^\L (\Q_p, |\cdot|_p^s), \]
\[ \FF_{\kappa'} \cong \FF_{\F_1}(\limpro_{r < 1} \cZ_{r, 1 - r}[\frac{1}{p}]) \wotimes_{\F_1}^\L (\Q_p, |\cdot|_p^s), \]
\end{thm}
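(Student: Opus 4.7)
The plan is to leverage the fact that the derived base change $(-) \wotimes_{\F_1}^{\L} (\Q_p, |\cdot|_p^s)$ is a derived left adjoint (Proposition \ref{prop:base_changes}), and so commutes with the homotopy colimit of the simplicial diagram appearing in Definition \ref{defn:FF_curve_F_1}. Concretely, for any bornological monoid $M$ carrying a bounded $\Z$-action by powers of $p$, one obtains
\[ \FF_{\F_1}^{p}(M) \wotimes_{\F_1}^{\L} (\Q_p, |\cdot|_p^{s}) \cong \left [ \frac{\Spec \bigl ( M \wotimes_{\F_1}^{\L} (\Q_p, |\cdot|_p^{s}) \bigr )}{\Z} \right ], \]
where the action on the right comes from the original action on $M$ by transport of structure through the base change. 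Since $(\Q_p, |\cdot|_p^{s})$ is discrete (hence simultaneously fibrant and cofibrant in $\bsComm_{\F_1}$), the derived and underived base changes of $M$ agree.

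Next I would use Lemma \ref{lemma:key}, applied with $K = \Q_p$ and $M = \limpro_{r < 1} \cQ_{r, 1-r}$, to produce the isomorphism of bornological algebras
\[ \bigl ( \limpro_{r < 1} \cQ_{r, 1-r} \bigr ) \wotimes_{\F_1} (\Q_p, |\cdot|_p^{s}) \cong \cQ_R \wotimes_{\F_1} (\Q_p)_{(0, \infty)} \]
for any choice of $0 < R < 1$, and then Corollary \ref{cor:main_3} to identify the right-hand side with $A_\kappa$. Combined with the previous step, this already yields an isomorphism between the underlying derived affine analytic space of $\FF_{\F_1}(\limpro_{r<1} \cQ_{r, 1-r}) \wotimes_{\F_1}^{\L} (\Q_p, |\cdot|_p^{s})$ before quotienting, and $\Spec(A_\kappa)$.

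The main step, and the only real point that requires care, is to match the $\Z$-actions on both sides. On the $\F_1$ side the chosen generator of $\Z$ acts on $\limpro_{r < 1} \cQ_{r, 1-r}$ by multiplication by $p$ (this is precisely the content of Corollary \ref{cor:frechet_action_Q}). Unwinding the identification $\cQ_r^+ \wotimes_{\F_1} \F_p \cong \kappa^\circ$ of Theorem \ref{thm:main}, in which the rational exponent $q$ is sent to $t^q$, multiplication by $p$ on $\Q$ pulls back to the map $t^q \mapsto t^{pq}$, which is exactly the Frobenius endomorphism of $\kappa^\circ$. By functoriality of the $p$-typical Witt vector construction (which is identified here with the bornological base change to $(\Z_p)_{(0,\infty)}$ as in the proof of Theorem \ref{thm:main}), this lifts to the Frobenius on $W_p(\kappa^\circ)$ and hence on $A_\kappa$. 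Invoking the definition $\FF_\kappa = [\Spec(A_\kappa)/\phi^{\Z}]$ recalled in Section \ref{sec:recall}, this establishes the first isomorphism.

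The proof of the second isomorphism is entirely parallel: one replaces $\cQ_{r, 1-r}$ by $\cZ_{r, 1-r}[\tfrac{1}{p}]$ throughout, uses the obvious variant of Lemma \ref{lemma:key} (which holds by the same computation, as the argument only used that one has a totally ordered $p$-divisible bornological group), and applies Corollary \ref{cor:main_2} in place of Corollary \ref{cor:main_3} to identify the base change with $A_{\kappa'}$; the Frobenius-compatibility argument transports verbatim because on residues one again recovers the map $t^q \mapsto t^{pq}$ on $(\kappa')^\circ$.
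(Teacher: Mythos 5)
Your proof follows essentially the same route as the paper's: commute the base change with the homotopy colimit defining the quotient stack via Proposition \ref{prop:base_changes}, use Lemma \ref{lemma:key} together with Corollary \ref{cor:main_3} (resp. Corollary \ref{cor:main_2}) to identify the affine piece with $\Spec(A_\kappa)$ (resp. $\Spec(A_{\kappa'})$), and identify the induced $\Z$-action with the Frobenius --- a step you in fact justify in more detail than the paper does, via $t^q \mapsto t^{pq}$ on $\kappa^\circ$ and functoriality of the Witt-vector construction. The only point the paper makes that you omit is the brief remark that the action is totally discontinuous and fixed-point free, so that the quotient stack is actually an analytic space, as the statement of the theorem requires.
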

\begin{proof}
Using Proposition \ref{prop:base_changes} we can write
\[ \FF_{\F_1}(\limpro_{r < 1} \cQ_{r, 1 - r}) \wotimes_{\F_1}^\L (\Q_p, |\cdot|_p^s) \cong \]
\[ \hocolim \left ( \ldots \stackrel{\to}{\rightrightarrows} (\Z \times^\R \Spec(\limpro_{r < 1} \cQ_{r, 1 - r})) \wotimes_{\F_1}^\L (\Q_p, |\cdot|_p^s) \rightrightarrows \Spec(\limpro_{r < 1} \cQ_{r, 1 - r}) \wotimes_{\F_1}^\L (\Q_p, |\cdot|_p^s) \right ). \]
Applying Lemma \ref{lemma:key} and Corollay \ref{cor:main_3}, we see that this analytic stack represents an action of $\Z$ on $\Spec(\limpro_{r < 1} \cQ_{r, 1 - r}) \wotimes_{\F_1}^\L (\Q_p, |\cdot|_p^s) \cong \Spec(A_\kappa)$. Even more, this action is precisely the action on $\Spec(A_\kappa)$ given by the (arithmetic) Frobenius and it is given at the level of algebras by 
\[ \sum_{n \in \Z} a_n p^n \mapsto \sum_{n \in \Z} (a_n)^{p^m} p^n \]
for each $m \in \Z$. Hence the quotient by this action is precisely $\FF_\kappa$ which is an analytic space because the action is totally discontinuous and has no fixed point.

The same reasoning works also for $\FF_{\kappa'}$.
\end{proof}

\subsection{The base change of the Fargues-Fontaine curve to the $p$-adic fields}

In this section we give an description of the curve $\FF_\kappa$ as an analytic space (the same description can be given for $\FF_{\kappa'}$).
So far we showed that the affine covering of $\FF_\kappa$ can be interpreted as analytic spaces over the bornological field $\cM((\Q_p)_{(0,\infty)})$ (see Figure \ref{fig:3}). In this section we describe how the Frobenius acts on this analytic space. It is convenient to work in the framework of usual non-Archimedean geometry, therefore we use the non-Archimedean base change over $\F_1$ (cf. Remark \ref{rmk:non-arch_base_change}) for describring the fibers of $A_\kappa$ over $(\Q_p, |\cdot|_p^s)$. We denote this functor $(\cdot) \wotimes_{\F_1}^{\na} R$, for any Banach ring $R$. Notice that
\[ A_\kappa \wotimes_{\F_1}^\na (\Q_p)_{(0, \infty)} \cong A_\kappa \wotimes_{\F_1} (\Q_p)_{(0, \infty)}. \]

The Frobenius acts on $A_\kappa$ by
\[ \sum_{n \in \Z} a_n p^n \mapsto \sum_{n \in \Z} a_n^p p^n. \]
The fiber of $\Spec(A_\kappa)$ over $\cM((\Q_p, |\cdot|_p^s)) \in \cM(\Z)$ is given by
\[ \cM(A_\kappa \wotimes_{\Z}^{\na} (\Q_p, |\cdot|_p^s)) \cong \cM(A_{\kappa, [s,s]})  \]
using notation of section \ref{sec:recall}, because
\[ A_\kappa \wotimes_{\Z}^{\na} (\Q_p, |\cdot|_p^s) \cong (\cQ_R \wotimes_{\F_1}^{\na} \Z) \wotimes_{\Z}^{\na} (\Q_p, |\cdot|_p^s) \cong \cQ_R \wotimes_{\F_1}^{\na} (\Q_p, |\cdot|_p^s) \cong A_{\kappa, [s,s]}. \]
It is well-know (and easy to check) that the Frobenius induces an isomorphism
\begin{equation} \label{eq:frobenius_iso}
A_{\kappa, [s,s]} \stackrel{\cong}{\to} A_{\kappa, [p s, p s]}.
\end{equation} 
Using the same calculation of Lemma \ref{lemma:key}, one can see that the isomorphism (\ref{eq:frobenius_iso}) is the identity map. This is due to the fact that the elements of $\kappa$ are of the form
\[ a = \sum_{q \in \Q} b_q t^q, \ \ b_q \in \F_p \]
and hence the action of the Frobenius is non-trivial only on $t$. Of course, one can consider in place of $\kappa$, for example, the field $\what{\F_q((t^{\frac{1}{n^\infty}}))}$, with $q = p^f$, and in this case the isomorphism (\ref{eq:frobenius_iso}) is non-trivial. We will consider these kind of perfectoid fields in a future work.

So, in the specific case of $\kappa$, the action of the Frobenius reduces to the action which identically identifies the fiber $A_{\kappa, [s,s]}$ with the fiber $A_{\kappa, [p s, p s]}$. Hence, $\FF_\kappa$ can be seen as a space over the stack
\[ \left [ \frac{\cM((\Q_p)_{(0, \infty)})}{\Z} \right ] \cong \left [ \frac{\R}{\Z} \right ] \cong S^1, \]
where the action is give by $|\cdot|_p^\epsilon \mapsto |\cdot|_p^{p \epsilon}$. The next figure shows how we imagine $\FF_\kappa$. Therefore, the Frobenius action on $A_\kappa$ is herited from the action $|\cdot| \mapsto |\cdot|^p$ on $\cM(\Z)$.

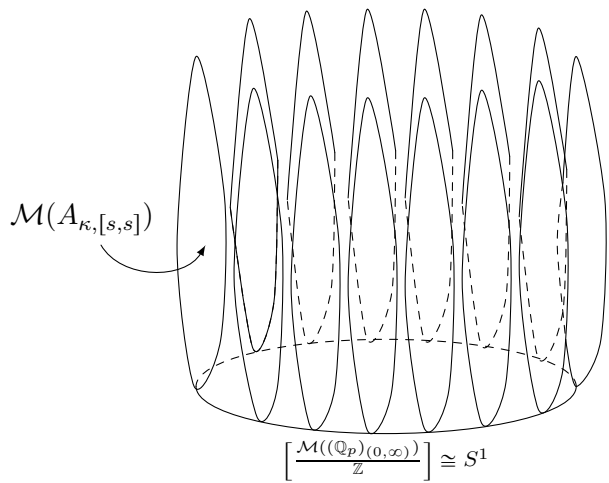
\begin{figure}[H] \label{fig:4}
    \begin{tikzpicture}[scale=2.5,cap=round,>=latex]

    \draw[dashed] (1,0) arc  [x radius = 1cm, y radius = 0.25cm, start angle = 0,
  end angle = 180];
    \draw[black] (1,0) arc  [x radius = 1cm, y radius = 0.25cm, start angle = 0,
  end angle = -180];
  
    \draw (0cm,-0.55cm) node[above=0pt] {\begingroup\makeatletter\def\f@size{8}\check@mathfonts$\left [ \frac{\cM((\Q_p)_{(0, \infty)})}{\Z} \right ]  \cong S^1$\endgroup};
    
    \draw (-1.6cm,0.75cm) node[above=0pt] {$\cM(A_{\kappa, [s,s]})$};

\draw [black] plot [smooth cycle] coordinates {(-1cm, 1.75 cm) (-0.85cm,1cm) (-0.87cm,0.25cm) (-1.01cm, 0.00cm) (-1.1cm, 0.75cm)};         

\draw [black] plot [smooth cycle] coordinates {(-0.70cm, 1.58 cm) (-0.55cm,0.83cm) (-0.57cm,0.01cm) (-0.68cm, -0.16cm) (-0.80cm, 0.58cm)};

\draw [black] plot [smooth cycle] coordinates {(-0.4cm, 1.54 cm) (-0.25cm,0.79cm) (-0.27cm,-0.01cm) (-0.40cm, -0.21cm) (-0.5cm, 0.54cm)};

\draw [black] plot [smooth cycle] coordinates {(-0.1cm, 1.53 cm) (0.05cm,0.78cm) (0.03cm,-0.02cm) (-0.10cm, -0.23cm) (-0.2cm, 0.53cm)};

\draw [black] plot [smooth cycle] coordinates {(0.2cm, 1.53 cm) (0.35cm,0.78cm) (0.33cm,-0.02cm) (0.20cm, -0.22cm) (0.1cm, 0.53cm)};

\draw [black] plot [smooth cycle] coordinates {(0.5cm, 1.57 cm) (0.65cm,0.82cm) (0.63cm,0.02cm) (0.50cm, -0.19cm) (0.4cm, 0.57cm)};

\draw [black] plot [smooth cycle] coordinates {(0.8cm, 1.62 cm) (0.95cm,0.89cm) (0.93cm,0.09cm) (0.80cm, -0.12cm) (0.7cm, 0.64cm)};

\draw [black] plot [smooth] coordinates {(0.93cm, 1.08cm) (1cm, 1.75 cm) (1.15cm,1cm) (1.13cm,0.25cm) (1cm, 0cm) (0.945cm, 0.25cm)};

\draw [black, dashed] plot [smooth] coordinates {(0.93cm, 1.08cm) (0.9cm, 0.75cm)  (0.945cm, 0.25cm)};

\draw [black] plot [smooth] coordinates {(0.7cm, 0.9cm) (0.8cm, 1.9 cm) (0.95cm,1.15cm) };

\draw [black, dashed] plot [smooth] coordinates {(0.95cm,1.15cm) (0.93cm,0.4cm) (0.80cm, 0.15cm) (0.7cm, 0.9cm)};

\draw [black] plot [smooth] coordinates {(0.4cm, 0.97cm) (0.5cm, 1.97 cm) (0.65cm,1.22cm) };

\draw [black, dashed] plot [smooth] coordinates {(0.65cm,1.22cm) (0.63cm,0.47cm) (0.50cm, 0.22cm) (0.4cm, 0.97cm)};

\draw [black] plot [smooth] coordinates {(0.1cm, 1cm) (0.2cm, 2 cm) (0.35cm,1.25cm) };

\draw [black, dashed] plot [smooth] coordinates {(0.35cm,1.25cm) (0.33cm,0.5cm) (0.20cm, 0.25cm) (0.1cm, 1cm)};

\draw [black] plot [smooth] coordinates {(-0.2cm, 1cm) (-0.1cm, 2 cm) (0.05cm,1.25cm)};

\draw [black, dashed] plot [smooth] coordinates {(0.05cm,1.25cm) (0.03cm,0.5cm) (-0.10cm, 0.25cm) (-0.2cm, 1cm)};

\draw [black] plot [smooth] coordinates {(-0.52cm, 0.99cm) (-0.42cm, 1.99 cm) (-0.27cm, 1.24cm) };  

\draw [black, dashed] plot [smooth] coordinates {(-0.27cm, 1.24cm) (-0.29cm, 0.51cm) (-0.42cm, 0.24cm) (-0.52cm, 0.99cm)};

\draw [black] plot [smooth] coordinates {(-0.82cm, 0.95cm) (-0.72cm, 1.95 cm) (-0.57cm, 1.2cm)};

\draw [black, dashed] plot [smooth] coordinates {(-0.57cm, 1.2cm) (-0.59cm, 0.45cm) (-0.7cm, 0.2cm) (-0.82cm, 0.95cm)};

\draw [black] plot [smooth] coordinates {(-0.57cm, 1.2cm) (-0.59cm, 0.45cm) (-0.7cm, 0.2cm) (-0.82cm, 0.95cm)};

\draw[color=black, ->] (-1.5cm, 0.75cm) to [bend left=-60] (-0.95cm,0.75cm);

    \end{tikzpicture}
    
    \caption{The curve $\FF_\kappa$ as a curve over $\left [ \frac{\cM((\Q_p)_{(0, \infty)})}{\Z} \right ]  \cong S^1$}
\end{figure}

One can prove that the fiber of the fibration represented in Figure \ref{fig:4} are contractible (because they are a sort of non-Archimedean annulus), obtaining the following result.

\begin{thm}
The homotopy type of the analytic space associated to $\FF_\kappa$ is the one of a circle.
\end{thm}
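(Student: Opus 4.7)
The plan is to exploit the presentation of $\FF_\kappa$ given by Theorem \ref{thm:FF} together with the geometric picture summarized in Figure \ref{fig:4}. Namely, $\FF_\kappa$ is realized as the quotient $[\Spec(A_\kappa)/\phi^\Z]$ of the affine covering by the Frobenius, and this quotient sits over the quotient of the base $\cM((\Q_p)_{(0,\infty)})$ by the induced action $|\cdot|_p^\epsilon \mapsto |\cdot|_p^{p\epsilon}$. The idea is to check that the structure morphism
\[ \Spec(A_\kappa) \longrightarrow \cM((\Q_p)_{(0,\infty)}) \]
descends to a fibration $\FF_\kappa \to [\cM((\Q_p)_{(0,\infty)})/\phi^\Z]$ whose fibers are contractible, so that $\FF_\kappa$ is homotopy equivalent to its base, and then to identify this base with $S^1$.

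First I would compute the homotopy type of the base. The space $\cM((\Q_p)_{(0,\infty)})$ is canonically homeomorphic to the open half-line $(0,\infty)$ by the parametrization $\epsilon \mapsto |\cdot|_p^\epsilon$, and under this identification the Frobenius acts by $\epsilon \mapsto p\epsilon$. Passing to logarithms turns this into the translation action of $\Z$ on $\R$, whose quotient is the circle $S^1$. So the base of the fibration is a topological circle, with the stacky structure being trivial because the action is free and properly discontinuous (the stabilizers are trivial since no $\epsilon > 0$ is fixed by a nontrivial power of $p$).

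Next I would analyze the fibers. Over the point $|\cdot|_p^s \in \cM((\Q_p)_{(0,\infty)})$, the fiber of $\Spec(A_\kappa)$ is $\cM(A_{\kappa,[s,s]})$, where
\[ A_{\kappa,[s,s]} \cong \cQ_R \wotimes_{\F_1}^{\na} (\Q_p,|\cdot|_p^s), \]
via the computation carried out in the proof of Lemma \ref{lemma:key} and the discussion following it. This is the Berkovich spectrum of a Banach $\Q_p$-algebra which can be recognized as a non-Archimedean ``annulus of zero width'' over the perfectoid field $\kappa$: its Shilov boundary is a single Gauss-type point, and one has a canonical strong deformation retraction of the Berkovich spectrum onto this Gauss point obtained by dilating the coordinate. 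Hence each fiber is contractible. The key technical step here is to verify that this deformation retraction can be performed globally in a family, \emph{i.e.}\ that the induced retraction on $\Spec(A_\kappa)$ is continuous with respect to the parameter $s$; this is the main obstacle and it is handled by writing $A_\kappa$ as the Fr\'echet projective limit of the $A_{\kappa,[s^{-1},s]}$, on each of which the retraction is a standard Berkovich construction.

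Finally I would conclude by homotopy invariance. The Frobenius action is compatible with the fiberwise retractions since it only permutes the fibers by $s \mapsto ps$. Therefore the retraction descends to a strong deformation retraction of $\FF_\kappa$ onto the quotient $[\cM((\Q_p)_{(0,\infty)})/\phi^\Z] \cong S^1$. Combined with the first step this gives a homotopy equivalence $\FF_\kappa \simeq S^1$, proving the theorem.
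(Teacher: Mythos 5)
Your overall outline coincides with the strategy the paper itself sketches (and defers): exhibit $\FF_\kappa$ as fibered over $\left[\cM((\Q_p)_{(0,\infty)})/\Z\right] \cong S^1$ and argue that the fibers $\cM(A_{\kappa,[s,s]})$ are contractible. The identification of the base with a circle, and the observation that the $\Z$-action $\epsilon \mapsto p\epsilon$ on $(0,\infty)$ is free and properly discontinuous, are fine.

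The genuine gap is in the key step, the contractibility of the fibers and its compatibility in families. You justify it by viewing $\cM(A_{\kappa,[s,s]})$ as a ``non-Archimedean annulus of zero width over $\kappa$'' and retracting onto its Shilov point ``by dilating the coordinate''. But $A_{\kappa,[s,s]}$ is not an affinoid annulus over $\kappa$: it is a Banach $\Q_p$-algebra of characteristic zero, obtained by completing $W_p(\kappa^\circ)[\frac{1}{p}][\frac{1}{[\omega]}]$ for the Gauss norm $|\sum a_n p^n|_s = \sup_n |a_n|p^{-sn}$, and the role of the ``coordinate'' is played by $p$ itself. There is no one-parameter family of automorphisms rescaling $p$, so the classical retraction of an annulus onto its skeleton does not transport to this setting; having a multiplicative (Shilov) point does not by itself produce a deformation retraction onto it. The honest argument has to be Witt-vector specific --- e.g.\ an interpolation of seminorms built from the Teichm\"uller coordinates, in the style of Theorem 4.5 of \cite{Ked} --- and this, together with its continuity in the parameter $s$ (which you flag as ``the main obstacle'' but only address by invoking the Fr\'echet presentation $A_\kappa \cong \limpro_s A_{\kappa,[s^{-1},s]}$), is precisely the content that the paper itself does not carry out and defers to future work. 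As written, your proof replaces the hard step by an analogy that does not apply literally, so it does not close the argument.
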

\begin{proof}
This theorem can be proved using ideas similar to ones of the proof of Theorem 4.5 of \cite{Ked}. We defer a detailed proof to a future work.
\end{proof}

Beside these local pictures at primes $p$, there is a global one obtained by the base changing with $(\Z, |\cdot|_\infty)$. By doing this base change we obtain an affine analytic space over $\cM(\Z)$, $\Spec(\cQ_R \wotimes_{\F_1} (\Z, |\cdot|_\infty))$ which, on each $p$-adic branch of $\cM(\Z)$, gives the affine covering of $\FF_\kappa$. Notice that now $p$ is not fixed and varies with the branch. In the center of the tree, over the point $|\cdot|_0$ of $\cM(\Z)$ one has the spectrum of the algebra
\[ \cQ_R \wotimes_{\F_1} (\Q, |\cdot|_0) \cong (\Q(\Q_R), |\cdot|_r) \]
where $\Q(\Q_R)$ is the $\Q$-algebra of polynomial with rational exponents equipped with the norm
\[ |\sum_{i \in \Q} a_i t^i|_R = \sum_{i \in \Q} |a_i|_0 R^i = \sum_{i \in \Q} R^i. \]
Therefore, it makes sense to consider also the Archimedean fiber of $\Spec(\cQ_R \wotimes_{\F_1} (\Z, |\cdot|_\infty))$ which gives an analytic space over $\R$ (or $\C$). In the next section we will give a brief description of the Archimedean fiber of the Fargues-Fontaine curve over $\F_1$.

\subsection{The archimedean Fargues-Fontaine curve}

In this section we would like to briefly describe what happens when one base changes the Fargues-Fontaine curve over $\F_1$ the Archimedean branch of $\cM(\Z)$. In order to do this we start by describing the algebra $A_\R \cong (\underset{r < 1}\limpro \cQ_{r, 1 - r}) \wotimes_{\F_1} \R $. By Proposition \ref{prop:nuclear_perfect_disks} and Proposition \ref{prop:sup_base_change} we have that
\[ A_\R = \underset{r < 1}\limpro (\cQ_{r, 1 - r}) \wotimes_{\F_1} \R) \cong \underset{r < 1}\limpro {\coprod_{x \in \cQ_R}}^{\le 1} [(\R, \|\cdot\|_{\rho, 1 - \rho})]_{|x|_R}. \]
See Example \ref{exa:born_rings} (2) for the definition of the norms $\|\cdot\|_{\rho, 1 - \rho}$. So, we first describe what 
\[ {\coprod_{x \in \cQ_R}}^{\le 1} [(\R, \|\cdot\|_{\rho, 1 - \rho})]_{|x|_R} \]
is for a fixed $\rho \in \R$, with $0 < \rho < 1$. Since the algebra $A_\R$ is an algebra of analytic functions we can study the algebra
\[ A_\C \doteq A_\R \wotimes_\R \C = {\coprod_{x \in \cQ_R}}^{\le 1} [(\C, \|\cdot\|_{\rho, 1 - \rho})]_{|x|_R} \]
for having an easier geometric picture of its spectrum. Then, the spectrum of $A_\R$ is just the spectrum of $A_\C$ modulo complex conjugation.

There is an isomorphism of algebras 
\[ {\underset{x \in \cQ_{R_1, R_2}}{{\coprod}^{\le 1}}} [\C]_{|x|_R} \cong \underset{z \mapsto z^n}{{\limind}^{\le 1}} {\underset{x \in \cZ_{R_1, R_2}}{{\coprod}^{\le 1}}} [\C]_{\sqrt[n]{|x|_{R_1, R_2}}} \]
where the latter algebras (indexed by $\Z$) are algebras of analytic functions on the annuli of radii $R_1 < R_2$ which are $\ell^1$-summable on the boundary. The system morphisms of the direct limit are given by the maps $z \mapsto z^n$ which geometrically correspond to the $n$-fold \'etale covering of the annulus. Therefore
\[ {\underset{x \in \cQ_{R_1, R_2}}{{\coprod}^{\le 1}}} [\C]_{|x|_R} \]
is an algebra of analytic functions on the closed strip of the complex plane $S_{\log R_1, \log R_2}$, where we denote the strips
\[ S_{\a_1, \a_2} = \{ z \in \C | \a_1 \le \Re(z) \le \a_2 \} \]
which is the universal covering of the annulus by the exponential map. Therefore, we have proved the following proposition.

\begin{prop} \label{prop:complex_strip}
The algebra $A_\C$ is isomorphic to a closed sub-algebra of the algebra of analytic functions on the open left-half plane $\mathring{S}_{-\infty, 0}$ which are almost periodic in the imaginary direction.
\end{prop}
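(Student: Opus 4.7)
The plan is to reduce everything to the identification recorded immediately before the statement, namely
\[ \coprod_{x \in \cQ_{R_1, R_2}}^{\le 1} [\C]_{|x|_{R_1, R_2}} \cong \limind_{z \mapsto z^n}^{\le 1} \coprod_{x \in \cZ_{R_1, R_2}}^{\le 1} [\C]_{\sqrt[n]{|x|_{R_1, R_2}}}, \]
whose right-hand side at level $n$ is a Wiener-type algebra of analytic functions on the closed annulus $\{R_1^{1/n} \le |z| \le R_2^{1/n}\}$ whose Laurent expansion is $\ell^1$-summable with respect to the boundary radii, and whose transition maps $z \mapsto z^n$ realise the system as the tower of $n$-fold \'etale coverings. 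The universal cover of the annulus $\{R_1 \le |z| \le R_2\}$ through the exponential $w \mapsto e^w$ is the closed strip $S_{\log R_1, \log R_2}$, under which the generators $z^q$ for $q \in \Q$ become the single-valued characters $e^{qw}$. So for each fixed $\rho$, the building block $\coprod_{x \in \cQ_R}^{\le 1} [(\C,\|\cdot\|_{\rho,1-\rho})]_{|x|_R}$ is identified with the completion, in the induced $\ell^1$-type norm, of the algebra of Dirichlet-type expressions $\sum_{q \in \Q} a_q e^{qw}$ on a closed vertical sub-strip of the left half-plane.

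Next, I would verify that each such completion embeds into the algebra of analytic functions on the corresponding open sub-strip and that every element is Bohr almost periodic in the imaginary direction. The argument is standard: every finite truncation $\sum_{q \in F} a_q e^{qw}$ with $F \subset \Q$ finite is almost periodic by simultaneous Diophantine approximation, and the $\ell^1$ control of the coefficients makes the resulting series converge uniformly on every closed vertical slab, so the limit remains almost periodic.

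Finally, I would pass to the Fr\'echet projective limit in $\rho$. After the Archimedean analogue of the reparametrisation performed in Lemma \ref{lemma:key}, as $\rho$ varies over its admissible range the pair $(\log R_1(\rho),\log R_2(\rho))$ sweeps out exactly $(-\infty,0)$, so the nested strips $S_{\log R_1(\rho),\log R_2(\rho)}$ exhaust $\mathring{S}_{-\infty,0}$. Because the inclusions between successive factors are continuous injections of Banach algebras of analytic functions, the projective limit realises $A_{\C}$ as a closed subalgebra of $\cO(\mathring{S}_{-\infty,0})$ consisting of functions almost periodic along vertical lines.

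The principal technical obstacle is verifying the parameter matching between $\rho$ and the widths $(\log R_1,\log R_2)$: this is the Archimedean counterpart of the explicit change of variables in Lemma \ref{lemma:key}, and one has to exploit the fact that the Banach norms $\|\cdot\|_{\rho,1-\rho}$ on $\C$ have spectrum in $[0,1]$ (Ostrowski), so that the sweep really produces $\mathring{S}_{-\infty,0}$ and nothing larger. A secondary point is the preservation of Bohr almost periodicity in the Fr\'echet limit; this forces one to move between the $\ell^1$ norm controlling the coefficients and the supremum norm on compact vertical slices, and to invoke the stability of almost periodicity under uniform convergence on such slabs.
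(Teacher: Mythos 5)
Your proposal is correct and follows essentially the same route as the paper: both rest on the identification, made just before the statement, of the building blocks as $\ell^1$-type algebras of $\Q$-frequency exponential series on closed strips (universal covers of the annuli via the exponential), and both deduce almost periodicity from the fact that the almost periodic analytic functions on $\mathring{S}_{-\infty,0}$ are exactly the closure of the span of $e^{\lambda z}$, $\lambda \in \R$, with $A_\C$ generated by the rational-frequency ones. The paper's own proof is a two-sentence version of this; your truncation argument, the uniform convergence on vertical slabs, and the exhaustion of $\mathring{S}_{-\infty,0}$ in the projective limit simply make explicit the details the paper leaves implicit.
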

\begin{proof}
It is clear that all functions in $A_\C$ are almost periodic in the imaginary direction because the set of analytic functions which are almost periodic is given by the closure in $\cO(\mathring{S}_{-\infty, 0})$ of the set of functions
\[ \{ e^{\la z} \}_{\la \in \R} \subset \cO(\mathring{S}_{-\infty, 0}). \]
The sub-algebra generated by the closure of 
\[ \{ e^{\la z} \}_{\la \in \Q} \subset \cO(\mathring{S}_{-\infty, 0}) \]
is precisely $A_\C$.
\end{proof}

\begin{rmk}
Proposition \ref{prop:complex_strip} implies thath the space $A_\C$ can be interepreted as a space of Dirichlet series which are convergent on the left complex plane $\mathring{S}_{-\infty, 0}$. 
\end{rmk}

Next proposition describes the spectrum of $A_\C$.

\begin{prop} \label{prop:complex_spectrum}
There is a canonical homeomorphism
\[ \cM(A_\C) \cong \mathring{S}_{-\infty, 0}. \]
\end{prop}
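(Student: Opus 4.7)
The plan is to construct the map $\mathrm{ev}\colon \mathring{S}_{-\infty, 0} \to \cM(A_\C)$, $w \mapsto (f \mapsto |f(w)|)$, using Proposition~\ref{prop:complex_strip} to view $f \in A_\C$ as a holomorphic function on the open left half plane. Well-definedness and continuity are immediate: the Fr\'echet topology on $A_\C$ dominates uniform convergence on the compact subsets of $\mathring{S}_{-\infty, 0}$, so each point evaluation is a bounded multiplicative seminorm, and each $f$ is a continuous function of $w$. Injectivity follows because two distinct points $w_1 \ne w_2$ are separated by some function of the form $e^{\lambda_1 z} + e^{\lambda_2 z}$, whose squared modulus $e^{2\lambda_1 \mathrm{Re}(w)} + e^{2\lambda_2 \mathrm{Re}(w)} + 2 e^{(\lambda_1+\lambda_2)\mathrm{Re}(w)}\cos((\lambda_1-\lambda_2)\mathrm{Im}(w))$ depends nontrivially on both the real and the imaginary part of $w$.

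For surjectivity I would use the Fr\'echet presentation $A_\C \cong \limpro_{r < 1} B_r$ with $B_r = \cQ_{r, 1-r}\wotimes_{\F_1} \C$, together with the general fact that the Berkovich spectrum of a projective limit of Banach algebras is the filtered union of the spectra of the factors, so $\cM(A_\C) = \bigcup_{r < 1} \cM(B_r)$. Each $B_r$ is a commutative unital complex Banach algebra, so by Gelfand--Mazur (and the absence of nontrivial complete Archimedean extensions of $\C$) its bounded multiplicative seminorms are of the form $|\chi(\cdot)|$ for continuous $\C$-algebra characters $\chi\colon B_r \to \C$. Such a character is determined by the multiplicative group homomorphism $\phi\colon \Q \to \C^\times$, $\phi(q) = \chi(e^{q z})$, bounded with respect to $\|\cdot\|_{r, 1-r}$. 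The substance of the argument is to show that every such $\phi$ has the form $\phi(q) = e^{q w}$ for a unique $w$ in the closed substrip of $\mathring{S}_{-\infty, 0}$ cut out by $r$; taking the union over $r$ then recovers the entire left half plane. That $\mathrm{ev}$ is a homeomorphism follows either from the open mapping theorem or by noting that $\mathrm{Re}(w)$ and $\mathrm{Im}(w)$ are recovered continuously from $\log|\chi(e^z)|$ and from the values $|\chi(e^z + e^{\lambda z})|$ above.

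The main obstacle is ruling out ``exotic'' characters in the surjectivity step. A priori a bounded homomorphism $\phi\colon \Q \to \C^\times$ with the correct absolute value can be twisted by an arbitrary element of $\mathrm{Hom}(\Q, S^1) \cong \widehat{\Z} = \limpro_n \mu_n$, giving a whole Pr\"ufer group of candidate characters for each admissible value of $\mathrm{Re}(w)$. The crux is that continuity with respect to the $\ell^1$-norm on $B_r$, together with the density of the finitely supported Dirichlet polynomials, forces the Pr\"ufer twist to be trivial; concretely, one tests $\chi$ against specifically chosen $\ell^1$-convergent series whose image under a nontrivial twist fails to converge, pinning $\phi$ to the form $q \mapsto e^{q w}$ for a unique $w$. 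This equidistribution-style rigidity argument is where the technical heart of the proof lies.
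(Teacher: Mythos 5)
Your reduction to the Banach pieces $B_r$ (via $\cM(A_\C)=\bigcup_{r<1}\cM(B_r)$ for a projective limit with dense transition maps) and the use of Gelfand--Mazur to replace bounded multiplicative seminorms by characters $\chi\colon B_r\to\C$ are fine, and injectivity of the evaluation map is unproblematic. The proof breaks down exactly at the step you yourself call the technical heart: $\ell^1$-continuity does \emph{not} force the twist to be trivial. Concretely, fix a point $w_0$ in the closed substrip attached to $B_r$ and an arbitrary, possibly non-continuous, group homomorphism $\phi_0\colon \Q\to S^1$ (these exist in abundance: $\Hom(\Q,S^1)$ is the Pontryagin dual of the discrete group $\Q$, a solenoid in which the continuous characters $q\mapsto e^{iqy}$ form only a dense subgroup). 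Define $\chi$ on the basis elements $e^{qz}$ by $\chi(e^{qz})=e^{qw_0}\phi_0(q)$ and extend linearly. Since $|\phi_0(q)|=1$, for every $\ell^1$-convergent series one has $|\chi(\sum_q a_q e^{qz})|\le \sum_q |a_q|\,|e^{qw_0}|\le \|\sum_q a_q e^{qz}\|$, so $\chi$ is a contractive character of $B_r$ no matter how wild $\phi_0$ is: no choice of test series can make the image diverge, because the twist never changes absolute values. Moreover, for non-continuous $\phi_0$ the seminorm $|\chi(\cdot)|$ is not evaluation at any point $w$ of the strip: testing against $e^{qz}-\chi(e^{qz})\cdot 1$ shows that equality with $|{\rm ev}_w|$ would force $\phi_0(q)=e^{iq\,\mathrm{Im}(w-w_0)}$ for all $q$. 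So the rigidity claim underlying your surjectivity step is false as stated, and the argument does not close without a genuinely different mechanism for handling these solenoidal characters.

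For comparison, the paper takes a different route to surjectivity: it presents each Banach piece as the completed contracting direct limit of annulus algebras $\cA_{r_1,r_2}$ along the covering maps $z\mapsto z^n$, restricts a character to each $\cA_{r_1,r_2}$ (where every character is evaluation at a point of a closed annulus), and thereby identifies the given point with an element of $\limpro_{z\mapsto z^n}\Max(\cA_{r_1,r_2})$, which it asserts to be the closed strip; the statement for $A_\C$ then follows by passing to the colimit of spectra exactly as in your first step. Be aware, though, that the subtlety you isolated is not absent from that argument either: a compatible system of points of closed annuli under the covering maps is a point of an annular solenoid (an interval times $\limpro\,\R/2\pi\Z$), of which the strip is only a dense subset, and the exotic characters above are precisely the extra points. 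Your proposal has the merit of making this difficulty explicit, but it does not resolve it; as written, the surjectivity step is a genuine gap.
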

\begin{proof}
Since $A_\C$ is a Fr\'echet algebra presented as a projective limit of Banach algebras
\[ A_\C = \limpro_{0 < r_1 < r_2 < 1} (\cQ_{r_2, r_1} \wotimes_{\F_1} \C) \]
where each morphism of the system has dense image, it is well known that (see section 2.5 of \cite{Bam})
\[ \cM(A_\C) = \limind_{0 < r_1 < r_2 < 1} \cM(\cQ_{r_2, r_1} \wotimes_{\F_1} \C). \]
Therefore it is enough to check the spectrum of the Banach algebras $\cQ_{r_2, r_1} \wotimes_{\F_1} \C$ is canonically homeomorphic to the closed strip $S_{\log r_2, \log r_1}$. As explained so far, we have that
\[ \cQ_{r_2, r_1} \wotimes_{\F_1} \C \cong \underset{z \mapsto z^n}{{\limind}^{\le 1}} {\underset{x \in \cZ_{r_1, r_2}}{{\coprod}^{\le 1}}} [\C]_{\sqrt[n]{|x|_{r_1, r_2}}} \]
and the algebra
\[ \cA_{r_1, r_2} = {\underset{x \in \cZ_{r_1, r_2}}{{\coprod}^{\le 1}}} [\C]_{\sqrt[n]{|x|_{r_1, r_2}}} \]
is the algebra of analytic functions on the annulus of radii $r_1 < r_2$ which are absolutely summable on the boundary. 

It is clear that $S_{\log r_2, \log r_1} \subset \cM(\cQ_{r_2, r_1} \wotimes_{\F_1} \C)$ because all functions in $\cQ_{r_2, r_1} \wotimes_{\F_1} \C$ can be evaluated at points of $S_{\log r_2, \log r_1}$ and these evaluation morphism give bounded homomorphisms to $\C$. Now, consider a point $x: \cQ_{r_2, r_1} \wotimes_{\F_1} \C \to \C$. Consider the algebraic direct limit of rings
\[ A = \underset{z \mapsto z^n}{\limind} {\underset{x \in \cZ_{r_1, r_2}}{{\coprod}^{\le 1}}} [\C]_{\sqrt[n]{|x|_{r_1, r_2}}}. \]
Since $A$ is endowed with a canonical morphism to $\cQ_{r_2, r_1} \wotimes_{\F_1} \C$, by composition the point $x$ induces a point of $\Max(A)$. But it is well known that
\[ \Max(A) = \underset{z \mapsto z^n}{\limpro} \Max({\underset{x \in \cZ_{r_1, r_2}}{{\coprod}^{\le 1}}} [\C]_{\sqrt[n]{|x|_{r_1, r_2}}}) \cong S_{\log r_2, \log r_1}. \]
Hence every point of the spectrum of $\cQ_{r_2, r_1} \wotimes_{\F_1} \C$ can be identified canonically and uniquely with a point of $S_{\log r_2, \log r_1}$.
\end{proof}

%
%
%

The action of the Frobenius on $A_\R = (\cQ_R \wotimes_{\F_1} (\R)_{(0, 1)})$ induces an automorphism. In this way, one gets an action of $\Z$ (and even of the multiplicate monoids $\Q_+$, by considering all Frobenii at once, and of $\R_+$, by continuity) on this half plane and the quotient stack by this action is what we call the \emph{Archimedean Fargues-Fontaine curve}. Since vector bundles on $\Spec(A_\R)$ are trivial, the stack $\left [ \frac{\Spec(A_\R)}{\R_+}\right ]$ classifies filtered $\R_+$-vector spaces. Therefore, one can imagine a relation between $\Spec(A_\R)$, the Twistor curve and Simpson's approach to complex Hodge theory.
We will devote a future work to explain in details this stack and its connection with Hodge Theory through Simpson's approach.

\subsection{A remark on Witt vectors} \label{sec:witt}
In this last section we propose some new constructions related to the ring of Witt vectors. The idea is that the ring of Witt vectors is often considered as a purely algebraic construction, whereas the rings to which it is applied usually have more structures, like the rings we worked with so far which have valuations. Moreover, it is often an important aspect of the theory of Witt vectors to study analytic structures over it, as it happens for instance in the theory of the Fargues-Fontaine curve. Therefore, in this section we try a first attempt to axiomatize and make functorial constructions which enrich the Witt vectors with some extra (analytic) structure.

A possible way for introducing the ring of $p$-typical Witt vectors is by following the work of Cuntz and Deninger from \cite{CD}. We recall that, if $A$ is a perfect $\F_p$-algebra, we can consider $A$ as a multiplicative monoid and in this way there is an exact sequence
\[ 0 \to I \to \Z[A] \to A \to 0 \]
where the map $\Z[A] \to A$ just realizes in $A$ the formal sums and $I$ is by definition the kernel of this map. Then
\begin{equation} \label{eq:witt}
 W_p(A) \cong \limpro_{n \in \N} \frac{\Z [A]}{I^n}. 
\end{equation}
Geometrically, equation (\ref{eq:witt}) realizes the ring of Witt vectors as the formal neighborhood of $\Spec(A)$ in $\Spec(\Z[A])$

If $A$ has a norm, then each term of the projective limit is equipped with a norm using by the canonical identification of $A$-modules
\[ \frac{\Z[A]}{I^n} \cong A^n. \]
In particular we choose the norm
\[ |(a_1, \ldots, a_n)| = \max_{i} |a_i| \]
on $A^n$. Therefore, we can compute the projective limit
\[ {\limpro_{n \in \N}}^{\le 1} \frac{\Z[A]}{I^n} \]
in the contracting category of Banach rings. This construction gives a Banach ring, but it is not yet as flexible as we want. Hence, we add a parameter by which we scale the norms.

\begin{rmk}
For the rest of this section we suppose that $A$ is non-Archimedean. It is not a necessary restriction. We do it just for relating and discussing more easily wel-known object whereas discussing the Archimedean version of the subsequent cosntruction will lead us too far in discussing new objects and in relating them with the known ones.
\end{rmk}

\begin{defn} \label{defn:semiring_analytic_Witt_vectors}
	Let $\a \in \R_+$, the \emph{semi-ring of $\a$-analytic $p$-typical Witt vectors} associated to $A$ is defined by
	\[ sW_{p, \a}(A) \doteq {\limpro_{n \in \N}}^{\le 1} \left ( \frac{\Z[A]}{I^n}, |\cdot|_{\a_n} \right ), \]
	where $|\cdot|_{\a_n}$ is the norm
	\[ |(a_1, \ldots, a_n)|_{\a_n} = \max_{i} |a_i| \a^i \] 
	when $\frac{\Z[A]}{I^n}$ is (canonically) identified with $A^n$.
\end{defn}

Notice that $sW_{p, \a}(A)$ is just a semi-ring if $\a \ge 1$ whereas it is a ring if $\a < 1$. This is easy to check because the element $(p-1, p-1, \ldots) \in A^\N$ is the inverse of the identity for the Witt vectors operations.
Nevertheless $sW_{p, \a}(A)$ seems to be an interesting object also when it fails to be a ring.

\begin{defn} \label{defn:ring_analytic_Witt_vectors}
	Let $\a \in \R_+$, the \emph{ring of $\a$-analytic $p$-typical Witt vectors} associated to $A$ is defined 
	\[ W_{p, \a}(A) = K(sW_{p, \a}(A)) \]
	where $K$ denotes the Grothendieck group of the abelian additive semi-group $sW_{p, \a}(A)$. The multiplication is extended by linearity.
\end{defn}

\begin{rmk}
The parameter $\a$ in Definition \ref{defn:ring_analytic_Witt_vectors} can be interpreted geometrically as follows. Put $\beta = \log_p \a$ and consider the Banach ring of power-series
\begin{equation} \label{eq:Witt_geometric}
 \Z \lt A \gt_\a \doteq \{ \sum_{a \in A} n_a a | \max_{a \in A} |a| |n_a|^\beta \to 0, n_a \in \Z \} \cong A \wotimes_{\F_1}^{\na} (\Z, |\cdot|_\infty^\beta).
\end{equation}
Notice that the notation $\max_{a \in A} |a| |n_a|^\beta \to 0$ just means that for all $\epsilon > 0$ there exists only finitely many $a \in A$ such that $|a| |n_a|^\beta > \epsilon$. So, the left-hand side of (\ref{eq:Witt_geometric}) coincides with the right hand side, by the definition of the right hand side, and it can be thought as the completion of the monoid algebra $\Z[A]$ with respect to the norm $\underset{a \in A}\max |a| |n_a|^\beta$. The algebra $\Z \lt A \gt_\a$ sits in the strictly short exact sequence 
\[ 0 \to I \to \Z \lt A \gt_\a \to A \to 0 \]
of Banach modules, which is an analogue of (\ref{eq:witt}). Then the Banach ring
\[ \frac{\Z \lt A \gt_\a}{I^n} \cong A^n \]
is endowed with the quotient norm
\[ |(a_1, \ldots, a_n)| = \max_{i} |a_i| (p^i)^\beta = \max_{i} |a_i| \a^i \]
because
\[ (p^i)^\beta = (p^i)^{\log_p \a} = p^{i \log_p \a} = p^{ \log_p \a^i} = \a^i \]
and because the elements of $\frac{\Z \lt A \gt_\a}{I^n}$ can be represented (\cf Remark after Proposition 2 of \cite{CD}) by elements of $\Z \lt A \gt_\a$ of the form
\[ \sum_{i = 1}^n a_i p^i, \ \ a_i \in A. \]

Therefore, $W_{p, \a}(A)$ can be thought as a sort of an analytic neighborhood of $\Spec(A)$ in $\Spec(\Z \lt A \gt_\a)$.
\end{rmk}

We can put last remark in a form of a proposition

\begin{prop}
For $A$ perfect normed $\F_p$-algebra and $\a \in \R_+$ we have that
\[ W_{p, \a}(A) \cong {\limpro_{n \in \N}}^{\le 1} \frac{A \wotimes_{\F_1}^{\na} (\Z, |\cdot|_\infty^\beta)}{I^n} \]
where $\beta = \log_p(\a)$ and $I$ is the kernel of $A \wotimes_{\F_1}^{\na} (\Z, |\cdot|_\infty^\beta) \to A$.
\end{prop}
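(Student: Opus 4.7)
The key content of the proposition has essentially been unpacked in the remark immediately preceding it; my plan is to assemble those observations into a proof. First I would show that, for each $n$, the canonical inclusion $\Z[A]\hookrightarrow A\wotimes_{\F_1}^{\na}(\Z,|\cdot|_\infty^\beta)$ of the monoid algebra into its $\ell^1$-completion induces a bijective isometry of quotients $\Z[A]/I^n \to (A\wotimes_{\F_1}^{\na}(\Z,|\cdot|_\infty^\beta))/I^n$. On the left, the Cuntz--Deninger identification (\ref{eq:witt}) realises $\Z[A]/I^n$ as $A^n$ equipped with the norm $|\cdot|_{\a_n}$, $(a_1,\ldots,a_n)\mapsto \max_i|a_i|\a^i$, by Definition \ref{defn:semiring_analytic_Witt_vectors}. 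On the right, every class admits a representative of the form $\sum_{i=1}^{n-1}a_i p^i$ (as noted in the remark preceding the proposition), so the right quotient is also isomorphic to $A^n$; and the computation $(p^i)^\beta=p^{i\log_p\a}=\a^i$ carried out there shows that the induced quotient norm is again $\max_i|a_i|\a^i$. In particular the left quotient is already complete, so no completion issue obstructs the map being an isometric bijection.

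Second, I would take contracting projective limits on both sides. Since the projective systems coincide level-by-level as Banach modules, their contracting limits are canonically isomorphic, and the identification respects the ring structure coming from the Witt polynomials at each finite level, hence also in the limit. This gives
\[ sW_{p,\a}(A)\cong {\limpro_{n\in\N}}^{\le 1}\frac{A\wotimes_{\F_1}^{\na}(\Z,|\cdot|_\infty^\beta)}{I^n} \]
as bornological (semi-)rings.

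The final step, passing from $sW_{p,\a}(A)$ to its Grothendieck group $W_{p,\a}(A)$ on the left-hand side, is where I expect the main subtlety. For $\a<1$ there is nothing to do, since $sW_{p,\a}(A)$ is already a ring. For $\a\ge 1$ the additive inverse of $1$ has Witt coordinates $(p-1,p-1,\ldots)$ whose norms blow up, so it is absent as an element from both sides; the argument must then be that the group-completion functor $K(-)$ commutes with the contracting projective limit in this situation, i.e.\ that the formal differences added by group completion correspond exactly to sequences of bounded differences already visible on the right-hand side, and that the multiplication extends consistently. Formalising this compatibility, and in particular checking that the resulting bornology is preserved, is the step I would handle with the most care.
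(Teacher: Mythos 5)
Your proposal follows essentially the same route as the paper: the paper in fact gives no separate proof, presenting the proposition as a direct reformulation of the preceding remark (level-wise identification $\Z\lt A\gt_\a/I^n\cong A^n$ with quotient norm $\max_i|a_i|\a^i$, then passing to the contracting projective limit), which is precisely what you assemble. The two subtleties you flag — that the quotient norm (an infimum over all representatives) really coincides with the Witt-coordinate norm of the canonical representative $\sum_i a_ip^i$, and that forming the Grothendieck group is compatible with the contracting limit when $\a\ge 1$ — are glossed over by the paper as well, so your write-up is, if anything, more explicit about where care is needed.
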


Notice that the norm on $W_{p, \a}(A)$ extends to a norm on $W_{p, \a}(A) [\frac{1}{p}
]$ which can be completed by this norm.

\begin{prop}
The associations $A \mapsto W_{p, \a}(A)$ and $A \mapsto W_{p, \a}(A)[\frac{1}{p}
]$ define functors of $\bComm(\bBan_{\F_p}) \to \bComm(\bBan_\Z)$.
\end{prop}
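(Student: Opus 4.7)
The plan is to realize $W_{p,\a}$ as a composition of four manifestly functorial operations, so that functoriality drops out essentially for free. Let $f \colon A \to B$ be a bounded $\F_p$-algebra homomorphism with constant $C$. First, the monoid-algebra functor $A \mapsto \Z[A]$ produces a ring map $\Z[A] \to \Z[B]$ that carries $I_A = \ker(\Z[A] \to A)$ into $I_B$, and hence descends to ring maps $\Z[A]/I_A^n \to \Z[B]/I_B^n$ for every $n$. Under the Cuntz--Deninger identification with $A^n$ and $B^n$ these are literally $f$ applied componentwise, and with the norm $|(a_1,\dots,a_n)|_{\a_n} = \max_i |a_i|\a^i$ they satisfy $|f^n(x)|_{\a_n} \le C\,|x|_{\a_n}$. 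Thus one obtains a morphism of inverse systems of Banach rings in which every transition is bounded uniformly by $C$.

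Second, the contracting projective limit ${\limpro}^{\le 1}$ sends such a uniformly bounded morphism of systems to a bounded morphism of Banach rings with the same constant: the contracting limit is the subset of the contracting product consisting of coherent sequences, and a uniform componentwise bound $C$ on a morphism of systems extends directly to a bound $C$ on the sup-norm of the image. This yields $sW_{p,\a}(f) \colon sW_{p,\a}(A) \to sW_{p,\a}(B)$ and promotes $sW_{p,\a}$ to a functor into Banach semi-rings, with strict functoriality inherited from the previous step.

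Third, the Grothendieck-group completion, being left adjoint to the inclusion of abelian groups into abelian semi-groups, produces the Banach ring $W_{p,\a}(A) = K(sW_{p,\a}(A))$ with the norm extended canonically (concretely, $K(sW_{p,\a}(A))$ embeds in the algebraic Witt ring $W_p(A)$ and inherits there the sup-norm $|\sum a_i p^i| = \sup_i |a_i|\a^i$). The bounded semi-ring map $sW_{p,\a}(f)$ extends uniquely to a bounded ring map $W_{p,\a}(f) \colon W_{p,\a}(A) \to W_{p,\a}(B)$, and this assembly is strictly functorial in $f$, proving the first half of the statement. For the second half, the algebraic localization $(-)\otimes_{\Z}\Z[\tfrac{1}{p}]$ is functorial, the extended seminorm $|x/p^k|:=|x|/p^k$ is natural in $A$, and the completion functor $\overline{(-)}$ of Proposition \ref{prop:adjoint_inclusion_sets} is functorial as a left adjoint; composition with $W_{p,\a}$ yields the desired functor $A \mapsto W_{p,\a}(A)[\tfrac{1}{p}]$.

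The main subtle point is the second step: one must confirm that a bounded (not merely contracting) morphism of inverse systems descends to a bounded map on the contracting projective limits. This is where the uniformity of the constant $C$ across all $n$ is essential, and without it the argument would collapse. The remaining verifications (preservation of the ring structure through each step, and functoriality of the composite) are formal consequences of the categorical packaging described above.
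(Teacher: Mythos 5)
Your proposal is correct and follows essentially the same route as the paper: the heart of both arguments is that under the identification $\Z[A]/I^n \cong A^n$ a bounded map $f$ with constant $C$ induces componentwise maps $f^n$ satisfying $|f^n(x)|_{\a_n} \le C|x|_{\a_n}$ uniformly in $n$, after which passing to the contracting projective limit, the Grothendieck group, and the localization/completion is formal. The paper's proof records only this uniform bound and leaves the remaining functoriality steps implicit, so your version is simply a more detailed write-up of the same argument.
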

\begin{proof}
Suppose that $f: A \to B$ is a bounded morphism between Banach $\F_p$-algebras, then there exists a $C > 0$ such that $|f(x)| \le C |x|$ for all $x \in A$. Hence, for all $n \in \N$ the map
\[ f^n: (A^n, |\cdot|_{\a_n}) \to (B^n, |\cdot|_{\a_n}) \]
is bounded by the same $C$ because
\[ |f^n((x_1, \ldots, x_n))|_{\a_n} = \max_i |f(x_i)| \a^i \le C \max_i |x_i| \a^i = C |(x_i)|_{\a^n}.  \]
\end{proof}

In the next examples we show how to get the many important examples of normed rings from Definition \ref{defn:ring_analytic_Witt_vectors}.

\begin{exa}
\ben
\item Suppose $A = \F_p$ with the trivial norm and $\a = \frac{1}{p}$, then $W_{p, \a}(A)$ is isometrically isomorphic to $\Z_p$ equipped with the $p$-adic norm. Clearly, by varying $0 < \a < 1$ one gets that $(\Z_p, |\cdot|_p^{-\log_p \a})$.
\item The previous example can be considered also with $\a = 1$ or $\a > 1$. In the first case one gets $\Z_p$ equipped with the trivial norm, whereas in the second example the only bounded sequences are the finite ones, hence one gets the ring $\Z$. 
\item The examples of rings of Witt vectors we have seen so far, related to the Fargues-Fontaine curve, can be recast using the functors $W_{p, a}$. Indeed, if $0 < \a < 1$, it is clear that
\[ \what{W_{p, \a}(\kappa^\circ)[\frac{1}{p}]}
 \cong A_{\kappa, [-\log_p(\a), -\log_p(\a)]}^+, \]
using notation of section \ref{sec:recall}.
\item Of course, for the functor $W_{p, \a}$ to make sense we are not bounded any more to consider perfectoid fields which must be presented as a base change from $\F_1$. Therefore, for any perfectoid field $K$ of characteristic $p$ we have that 
\[ \what{W_{p, \a}(K^\circ)[\frac{1}{p}]} \cong A_{K, [-\log_p(\a), -\log_p(\a)]}^+ \]
for $0 < \a < 1$.
\een 
\end{exa}

Therefore, $W_{p, \a}$ define a family of functors which relate with some of the most basic constructions in arithmetic. By considering limits and colimits of the functors $W_{p, \a}(A)$ and $W_{p, \a}(A)[\frac{1}{p}]$ one can make functorial the definition of $A_K^+$ as bornological ring. In particular, for defining these limits and colimits one has to add a parameter for $\frac{1}{p}$ for extending the norm of $W_{p, \a}(A)$ to $W_{p, \a}(A)[\frac{1}{p}]$ in asymmetrical way. Functorially, this can be done by defining $W_{p, \a_1, \a_2}(A)[\frac{1}{p}]$ as the completion of the ``diagonal" of $W_{p, \a_1}(A)[\frac{1}{p}] \times W_{p, \a_2}(A)[\frac{1}{p}]$.

In the last example we show how the introduction of the functors $W_{p, \a}$ can be handy.

\begin{exa} 
For any $0 < \a_1, < \a_2 < 1$, there is an isomorphism of Banach rings
\[ \what{W_{p, \a_1, \a_2}(K)[\frac{1}{p}]} \cong A_{K, [-\log_p(\a_2), -\log_p(\a_1)]}. \]
Notice that this isomorphism is not so trivial as for the case of $A_{K, [-\log_p(\a_2), -\log_p(\a_1)]}^+$. $A_{K, [-\log_p(\a_2), -\log_p(\a_1)]}$ is defined to be the completion of $W_p(K^\circ)[\frac{1}{p}][\frac{1}{\omega}]$ for a non-zero $\omega \in K^\circ$ with $|\omega| < 1$. It is thus smaller than the usual ring of Witt vectors of $K$ and it is naturally contained in it. But in our construction, $W_{p, \a_1, \a_2}(K)[\frac{1}{p}]$ is defined by computing the contracting limit of Definition \ref{defn:ring_analytic_Witt_vectors}, therefore it contains only elements $(a_n) \in K^{\Z}$ for which the norm 
\[ |(a_n)|_{\a_1, \a_2} = \max \{ \sup_{n \in \N} |a_n| p^{-n \log_p \a_1}, \sup_{n \in \N} |a_{-n}| p^{n \log_p \a_2} \} \]
is bounded, which is precisely the condition that determines the elements of $A_{K, [-\log_p(\a_2), -\log_p(\a_1)]}$. 
\end{exa}

These definitions extends to ramified Witt vectors and perfectoid fields over $\F_q$. We will investigate these construction in more details in future works.

\appendix

\section{Nuclear bornological algebras} \label{appendix:S_T}

In this section we briefly recall some results which are well-known instances of complex functional analysis and complex geometry. The main idea is that for any Banach ring the algebra of analytic functions on open polydisks and on overconvergent closed polydisk are canonically endowed with a structure of nuclear (bornological in our context) vector space which is uniquely determined up to isomorphism. This fact ensures that apparently different presentations of those algebras are actually equivalent. In the main body of the present work one can see how the same phenomenon happens in the construction of the Fargues-Fontaine curve, more precisely in the definition of the ring of analytic functions on its affine covering (cf. section \ref{sec:recall}).

In this appendix $R$ will always denote a commutative Banach ring.

\begin{defn} \label{defn:S_algebra}
	For any real number $\rho > 0$ we denote the algebra of power-series
	\begin{equation} \label{eqn:S}
	 S_R(\rho) = \l \{ \sum_{i \in \N} a_i X^i | a_i \in R, \sum_{i \in \N} |a_i| \rho^i < \infty \r \}. 
    \end{equation}
\end{defn}

\begin{rmk} \label{rmk:S_algebra_compatible_symm}
Notice that this notation is compatible with the notation introduced so far (see Notation \ref{notation:box}) for the symmetric algebras of categories of monoids in the sense that
\[ S_R(\rho) = S_{\bBan_R^{\le 1}}([R]_\rho) \cong (S_{\bBan_{\F_1}^{\le 1}}(*_\rho)) \wotimes_{\F_1} R. \]
\end{rmk}

\begin{prop}
	$S_R(\rho)$ is a $R$-Banach algebra.
\end{prop}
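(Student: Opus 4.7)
The plan is to verify the three ingredients of being an $R$-Banach algebra: that the expression $\|\sum a_i X^i\| = \sum_{i \in \N} |a_i|\rho^i$ defines a norm on $S_R(\rho)$, that $S_R(\rho)$ is complete for this norm, and that multiplication of formal power series is well-defined on $S_R(\rho)$ and is submultiplicative. The two first points can also be read off directly from Remark \ref{rmk:S_algebra_compatible_symm}: as a Banach $R$-module
\[ S_R(\rho) = \underset{n \in \N}{{\coprod}^{\le 1}} [R]_{\rho^n}, \]
which is by construction a Banach $R$-module (a contracting coproduct of Banach $R$-modules in the category $\bBan_R^{\le 1}$). So the normed $R$-module structure and completeness are free.

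The only real content is therefore the algebra structure. I would first observe that the product of two elements $f = \sum a_i X^i$ and $g = \sum b_j X^j$ is, formally, the Cauchy product $fg = \sum_k c_k X^k$ with $c_k = \sum_{i+j=k} a_i b_j$. I would estimate
\[ \sum_k |c_k|\rho^k \le \sum_k \sum_{i+j=k} |a_i||b_j|\rho^i \rho^j = \Bigl(\sum_i |a_i|\rho^i\Bigr)\Bigl(\sum_j |b_j|\rho^j\Bigr) = \|f\|\,\|g\|, \]
using the submultiplicativity of the norm of $R$ and the fact that the sums are absolutely convergent (so the double series can be rearranged). This shows simultaneously that $fg \in S_R(\rho)$ and that the norm is submultiplicative.

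A more conceptual, equivalent way to obtain the product is to use the interpretation given in Remark \ref{rmk:S_algebra_compatible_symm}: $S_R(\rho)$ is the base change to $R$ of the symmetric algebra $S_{\bBan_{\F_1}^{\le 1}}(\ast_\rho)$, which is the free commutative monoid on one generator of norm $\rho$ in $\bBan_{\F_1}^{\le 1}$; by Proposition \ref{prop:monoidal_structure} and the fact that $(-)\wotimes_{\F_1} R$ is symmetric monoidal (as proved in the previous section), the base change is again a commutative monoid in $\bBan_R^{\le 1}$, \ie an $R$-Banach algebra with contracting (in particular bounded and submultiplicative) multiplication. Either route delivers the statement; the first is more explicit, and the main — essentially routine — step is justifying the rearrangement of the double series $\sum_{i,j}|a_i||b_j|\rho^{i+j}$, which follows from absolute convergence in $\R_{\ge 0}$.
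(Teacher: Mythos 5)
Your proof is correct. The paper itself gives no internal argument for this proposition — it simply cites Proposition 6.1 of \cite{BaBe} — so your direct verification is exactly the kind of argument that citation stands in for: the weighted $\ell^1$-expression $\sum_{i \in \N}|a_i|\rho^i$ is a norm, completeness is immediate from the identification $S_R(\rho) \cong \underset{n \in \N}{{\coprod}^{\le 1}}[R]_{\rho^n}$ as a contracting coproduct of Banach $R$-modules, and the Cauchy-product estimate (triangle inequality plus submultiplicativity of the norm of $R$, then Tonelli for nonnegative double series) shows both that the product lands in $S_R(\rho)$ and that the norm is submultiplicative; together with the unit $1$ having norm $|1|_R$ this gives the Banach algebra structure. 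Your alternative, more conceptual route — viewing $S_R(\rho)$ as the base change of the free contracting monoid $S_{\bBan_{\F_1}^{\le 1}}(\ast_\rho)$ and invoking that $(-)\wotimes_{\F_1}R$ intertwines the monoidal structures (the lemma following Remark \ref{rmk:not_commute_product}) — is also sound and fits the spirit of the paper better, though it quietly repackages the same $\ell^1$-estimate inside the statement that $[R]_{|x|}\wotimes_R[R]_{|y|}\cong[R]_{|x||y|}$ and the compatibility of $\ootimes_R$ with contracting coproducts. Either way, no gap.
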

\begin{proof}
	Proposition 6.1 of \cite{BaBe}.
\end{proof}

\begin{defn}
	For any real number $\rho > 0$ we denote the Banach $R$-module
	\[ T_R(\rho) = {\prod_{n \in \N}}^{\le 1} [R]_{\rho^n}. \]
\end{defn}

\begin{thm} \label{thm:ind_cofinal}
	For any $\rho > 0$ one has an isomorphism
	\[ \underset{r > \rho}{``\limind"} S_R(r) \cong \underset{r > \rho}{``\limind"} T_R(r) \]
	of objects of $\bInd(\bBan_R)$.
\end{thm}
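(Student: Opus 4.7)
My plan is to prove the isomorphism by producing explicit morphisms in both directions between the two ind-systems and showing that their compositions agree with the structural transition maps. Both ind-systems have the same underlying inductive set (real numbers $r>\rho$, with transition morphisms indexed by $r_1>r_2>\rho$ going from the smaller radius object to the larger), and at the level of sequences the transition morphisms on each side are literally the identity, so the task really amounts to comparing the $\ell^1$ and $\ell^\infty$ norms on spaces of sequences weighted by $r^n$.

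First, for every $r>\rho$, the identity map on sequences defines a contracting $R$-linear map
\[ \iota_r: S_R(r) \to T_R(r), \qquad (a_i)_{i\in\N} \mapsto (a_i)_{i\in\N}, \]
because $\sup_i |a_i|\, r^i \le \sum_i |a_i|\, r^i$. These maps commute with the transition morphisms of the two systems (both of which are identities on sequences), so they assemble into a morphism $\iota: ``\limind_{r>\rho}"\, S_R(r) \to ``\limind_{r>\rho}"\, T_R(r)$. Second, for every pair $r>r'>\rho$, the identity on sequences defines a bounded map
\[ \pi_{r,r'}: T_R(r) \to S_R(r'), \qquad (a_i) \mapsto (a_i), \]
with operator norm at most $(1-r'/r)^{-1}$, since $\sum_i |a_i|\, (r')^i = \sum_i |a_i|\, r^i (r'/r)^i \le (\sup_i |a_i|\, r^i) \cdot (1-r'/r)^{-1}$. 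Choosing, for each $r>\rho$, a fixed $r'\in(\rho,r)$ produces an element of $\limind_{r'>\rho}\Hom(T_R(r), S_R(r'))$; the choice is inessential in the colimit, and compatibility in $r$ is immediate because all the maps involved are identities on sequences. This defines a morphism $\pi: ``\limind_{r>\rho}"\, T_R(r) \to ``\limind_{r>\rho}"\, S_R(r)$.

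Finally, I check that $\pi\circ\iota$ and $\iota\circ\pi$ are the identity in $\bInd(\bBan_R)$. For any $r>\rho$ and any choice $r'\in(\rho,r)$, the composition $\pi_{r,r'}\circ\iota_r:S_R(r)\to S_R(r')$ is the identity on sequences, hence equals the structural transition morphism $S_R(r)\to S_R(r')$; by the description of morphisms in the ind-category (\cf \S 8 expos\'e 1 of \cite{SGA4}), this means $\pi\circ\iota=\id$. Symmetrically, $\iota_{r'}\circ\pi_{r,r'}:T_R(r)\to T_R(r')$ is the identity on sequences and equals the structural transition morphism $T_R(r)\to T_R(r')$, so $\iota\circ\pi=\id$.

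The only mildly delicate step is the bookkeeping for the definition of $\pi$ as a morphism of ind-objects (making the right choice of $r'<r$ for each $r$ and verifying that different choices become equal in $\limind_{r'>\rho}\Hom(T_R(r),S_R(r'))$); this is routine given that every candidate $\pi_{r,r'}$ is literally the identity on underlying sequences and that the transition maps of $\{S_R(r')\}$ are identities on sequences as well. The substantive analytic input — the Abel-type geometric-series estimate controlling the $\ell^1$ norm at a slightly smaller radius by the $\ell^\infty$ norm at the original radius — is entirely elementary.
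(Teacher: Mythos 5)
Your proof is correct and is essentially the paper's argument: the paper reduces the statement to the observation that $\sup_i |a_i|\,r^i<\infty$ implies $\sum_i |a_i|\,(r')^i<\infty$ for $r'<r$, which is exactly your geometric-series estimate, the two systems being mutually cofinal via the identity maps $S_R(r)\to T_R(r)$ and $T_R(r)\to S_R(r')$. You merely make explicit the ind-category bookkeeping that the paper leaves implicit (and note that your phrase ``from the smaller radius object to the larger'' should be read as the inclusion $S_R(r_1)\subset S_R(r_2)$ for $r_1>r_2$, as your later formulas correctly use).
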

\begin{proof}
	The proof is easily reduced to the (well-known) observation that if $\rho' < \rho$ and $\{ a_n \}_{n \in \N}$ is a sequence of elements $R$ such that $\sup_{n \in \N} |a_n| \rho^n < \infty$ then $\underset{n \in \N}{\sum} |a_n|(\rho')^n < \infty$.
\end{proof}

The same reasoning used to prove Theorem \ref{thm:ind_cofinal} proves also the following theorem.

\begin{thm} \label{thm:pro_cofinal}
	For any $\rho > 0$ one has an isomorphism
	\[ \limpro_{r < \rho} S_R(r) \cong \limpro_{r < \rho} T_R(r) \]
	of objects of $\bInd(\bBan_R)$.
\end{thm}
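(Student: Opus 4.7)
The plan is to adapt the argument of Theorem \ref{thm:ind_cofinal} to the projective setting by exhibiting the two systems $\{S_R(r)\}_{r<\rho}$ and $\{T_R(r)\}_{r<\rho}$ as mutually cofinal. The transition maps in both systems are the identity on coefficient sequences (for $r'<r<\rho$ one has $S_R(r)\hookrightarrow S_R(r')$ since $\sum|a_n|(r')^n \le \sum|a_n|r^n$, and similarly $T_R(r)\hookrightarrow T_R(r')$), so everything reduces to comparing the two norms at each fixed radius.

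First I would record the obvious bounded inclusion $\iota_r : S_R(r) \hookrightarrow T_R(r)$, which exists because every term of an $\ell^1$-convergent series is dominated by its sum, $|a_n|r^n \le \sum_i |a_i|r^i$. In the opposite direction, for any $0<r'<r$ a geometric-series bound gives a bounded inclusion $\jmath_{r',r} : T_R(r) \hookrightarrow S_R(r')$ of operator norm at most $(1-r'/r)^{-1}$, via
\[ \sum_n |a_n|(r')^n \le \bigl(\sup_n |a_n| r^n\bigr)\sum_n (r'/r)^n. \]

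The compositions $\jmath_{r',r}\circ\iota_r$ and $\iota_{r'}\circ\jmath_{r',r}$ are then the identity on coefficients, hence coincide with the transition maps of the two projective systems. This pair of cofinality maps identifies the two systems and yields the desired isomorphism
\[ \limpro_{r<\rho} S_R(r) \cong \limpro_{r<\rho} T_R(r) \]
in $\bInd(\bBan_R)$. No real obstacle is expected: the only subtle point is that the projective limit in $\bInd(\bBan_R)$ is well-defined and depends only on the cofinal equivalence class of the indexing tower (by the explicit description of products in Lemma \ref{lemma:calc_proj_limit_R}), which reduces the isomorphism of limits to the cofinality just established.
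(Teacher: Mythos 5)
Your proof is correct and follows essentially the same route as the paper: the paper's own argument is precisely the observation that a sup-bounded sequence at radius $r$ is $\ell^1$-summable at any smaller radius $r'$, which together with the trivial inclusion $S_R(r)\hookrightarrow T_R(r)$ interleaves the two projective systems. Your write-up merely makes the geometric-series estimate and the identification of the composites with the transition maps explicit, which the paper leaves implicit.
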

\begin{proof}
    Again, the cofinality of the two systems is a consequence of the same observation made in the proof of Theorem \ref{thm:ind_cofinal}.
\end{proof}

Theorems \ref{thm:ind_cofinal} and \ref{thm:pro_cofinal} have the following easy generalization.
Let $M \subset \R$ be any subset, then defines 
\[ S_{M, R}(\rho) = {\coprod_{m \in M}}^{\le 1} [R]_{\rho^m} \]
and
\[ T_{M, R}(\rho) = {\prod_{m \in M}}^{\le 1} [R]_{\rho^m}. \]

\begin{cor} \label{cor:ind_pro_cofinal}
	For any $\rho > 0$ one has an isomorphisms
	\[ \underset{r > \rho}{``\limind"} S_{M, R}(r) \cong \underset{r > \rho}{``\limind"} T_{M, R}(r),  \ \ \limpro_{r < \rho} S_{M, R}(r) \cong \limpro_{r < \rho} T_{M, R}(r) \]
	of objects of $\bInd(\bBan_R)$.
\end{cor}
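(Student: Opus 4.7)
The plan is to reduce the corollary to the same arguments used in the proofs of Theorems~\ref{thm:ind_cofinal} and~\ref{thm:pro_cofinal}, with the key estimate being the standard comparison between $\ell^1$ and $\ell^\infty$ norms under geometric decay.

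First I would observe that for each $r>0$ there is a canonical contractive inclusion
\[ S_{M,R}(r) \rhook T_{M,R}(r), \]
obtained by viewing an $\ell^1$-summable family $(a_m)_{m\in M}$ as an $\ell^\infty$-bounded one, since $\sup_{m\in M}|a_m|r^m \le \sum_{m\in M}|a_m|r^m$. These inclusions are compatible with the transition maps (change of radius) of the systems, and hence they assemble into canonical morphisms of ind-objects and pro-objects in $\bInd(\bBan_R)$. The content of the corollary is therefore the cofinality in the opposite direction.

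For the ind-case, given $r>\rho$ and a family $(a_m)$ with $C:=\sup_{m\in M}|a_m|r^m<\infty$, I would choose an intermediate $r'$ with $\rho<r'<r$ and estimate
\[ \sum_{m\in M}|a_m|(r')^m \;=\; \sum_{m\in M}|a_m|r^m\bigl(r'/r\bigr)^m \;\le\; C\sum_{m\in M}\bigl(r'/r\bigr)^m. \]
Since $r'/r<1$, the right-hand series is a geometric-type sum, and the family is in $S_{M,R}(r')$, which already sits in the directed system. This shows that the natural map $T_{M,R}(r)\to S_{M,R}(r')$ factors through the directed system, giving cofinality. The pro-case is symmetric: starting from a family with $\sup_m|a_m|r^m<\infty$ for $r<\rho$, one picks $r<r''<\rho$ and applies the same geometric estimate to obtain $\ell^1$-summability at level $r$ from $\ell^\infty$-boundedness at level $r''$, which is what the pro-system requires.

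The main obstacle is to make the geometric estimate $\sum_{m\in M}(r'/r)^m<\infty$ rigorous for general $M\subset\R$: this is where the structure of $M$ must enter. The strategy is to decompose $M$ into its non-negative and non-positive parts (as is already done in the paper for the annular constructions $\cZ_{r_1,r_2}$ and $\cQ_{r_1,r_2}$ via asymmetric radii), and on each part apply the relevant one-sided geometric comparison — which is exactly the content of Theorems~\ref{thm:ind_cofinal} and~\ref{thm:pro_cofinal} in the case $M=\N$. Once this estimate is in place, the induced isomorphism in $\bInd(\bBan_R)$ follows formally, as the underlying $R$-Banach module structure is preserved under the reindexing of the ind/pro system.
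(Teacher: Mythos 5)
Your overall strategy --- interleaving the two systems of radii and converting sup-boundedness at radius $r$ into $\ell^1$-summability at a strictly smaller radius $r'$ --- is exactly the argument the paper uses for Theorems~\ref{thm:ind_cofinal} and~\ref{thm:pro_cofinal}, and the paper offers nothing beyond that observation for the corollary itself. For $M=\N$, and after your sign decomposition for $M=\Z$ with the asymmetric radii, the estimate you write down is correct and sufficient.

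However, the step where you claim to handle a general subset $M\subset\R$ has a genuine gap. The obstruction to $\sum_{m\in M}(r'/r)^m<\infty$ is not the sign of the exponents but their accumulation: if $M$ has infinitely many points in some bounded interval --- as happens for $M=\Q^+$, $\Q$ and $\Z[\frac{1}{p}]$, which are precisely the sets to which the paper applies the corollary through the perfect disks and annuli --- then the series diverges for every choice of $r'<r$, since infinitely many terms are bounded below by a fixed positive constant. Splitting $M$ into its non-negative and non-positive parts and quoting the $M=\N$ case therefore does not close the gap, because already $M\cap[0,1]$ can be infinite. Moreover, the failure is not merely of your termwise estimate but of the coefficientwise map itself: the family $(a_m)$ with $a_m=1$ for $m\in M\cap(0,1)$ and $a_m=0$ otherwise lies in $T_{M,R}(r)$ for every $r>0$ (its sup-norm is at most $\max(1,r)$) but lies in no $S_{M,R}(r')$, so no bounded map $T_{M,R}(r)\to S_{M,R}(r')$ extending the identity on coefficients can exist. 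As written, your argument proves the corollary only when $M$ meets every bounded interval in a finite set (with the appropriate one-sided conditions on the radii), not for arbitrary $M\subset\R$ as stated; covering the dense case would require either an extra hypothesis on $M$ or a genuinely different argument, an issue which the paper's own one-line ``easy generalization'' does not address either.
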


\end{document}